\DeclareMathOperator\arcsinh{arcsinh}
\newtheorem{theorem}{Theorem}[section]
\newtheorem{lemma}[theorem]{Lemma}
\newtheorem{corollary}[theorem]{Corollary}
\newtheorem{rem}[theorem]{Remark}
\newcommand{\RR}{\mathbb{R}}
\newcommand{\N}{\mathbb{N}}
\newcommand{\T}{\mathbb{T}}
\newcommand{\F}{\mathcal{F}}
\newcommand{\supp}{{\rm supp}\thinspace}
\newcommand{\al}{\alpha}
\newcommand{\pa}{\partial}
\newcommand{\inti}{\int_{-\infty}^\infty}
\newcommand{\intpi}{\int_{-\pi}^\pi}
\newcommand{\rti}{\tilde{r}}
\newcommand{\sinc}[1]{\sin^2\left(\frac{#1}{2}\right)}
\newcommand{\p}[1]{\left(#1\right)}
\newcommand{\ab}[1]{\left|#1\right|}
\newcommand{\A}{A'[u]}
\newcommand{\Ag}{A'[u+tg]}
\newcommand{\D}{\sin^2\left(\frac{\alpha'}{2}\right)+b^2s'^2}
\newcommand{\dA}{\p{\sin^2\left(\frac{\alpha'}{2}\right)+bs'^2}}
\newcommand{\ddA}{\p{\sin^2\left(\frac{\alpha'}{2}\right)+b\left(\ab{\sin\p{\frac{\al'}{2}}}+|s'|+s'^2\right)}}
\newcommand{\ms}{\ab{\sin\p{\frac{\al'}{2}}}}
\newenvironment{proof}{\begin{trivlist} \item[] {\em Proof:}}{\hfill $\Box$
                       \end{trivlist}}
\renewcommand*\l@section{\@dottedtocline{1}{0em}{1.5em}}
\renewcommand*\l@subsection{\@dottedtocline{2}{1.5em}{2.3em}}
\renewcommand*\l@subsubsection{\@dottedtocline{3}{3.8em}{3.7em}}
\numberwithin{equation}{section}
\begin{document}

\title{Uniformly rotating smooth solutions for the incompressible 2D Euler equations}

\author{Angel Castro, Diego C\'ordoba and Javier G\'omez-Serrano}

\maketitle

\begin{abstract}

In this paper, we show the existence of a family of compactly supported smooth vorticities, which are solutions of the 2D incompressible Euler equation and rotate uniformly in time and space.

\vskip 0.3cm

\textit{Keywords: incompressible, euler, bifurcation theory}

\end{abstract}

\tableofcontents

\section{Introduction}
\label{sectionuno}


We consider the initial value problem for the two dimensional incompressible Euler equation in vorticity form:

\begin{align}
\partial_t \omega (x,t)+ v(x,t) \cdot \nabla \omega(x,t)& = 0, \ \ (x,t) \in \mathbb{R}^2 \times \mathbb{R}_+ \label{euler} \\
 v(x,t) & = (\nabla^{\perp}(-\Delta)^{-1}\omega)(x,t) \nonumber\\
\omega(x,0)&= \omega_0(x)\nonumber.
\end{align}

The aim of this paper is to construct a family of solutions which are compactly supported and smooth, and moreover its dynamics are given by a uniform rotation, both in time and space.

The problem of finding smooth stationary solutions has been addressed by Nadirashvili in \cite{Nadirashvili:stationary-2d-euler}, where he studies the geometry and the stability of solutions, following the works of Arnold \cite{Arnold:geometrie-differentielle-dimension-infinie,Arnold:apriori-estimate-hydrodynamic-stability,Arnold-Khesin:topological-methods-hydrodynamics}. Choffrut and \v{S}ver\'ak \cite{Choffrut-Sverak:local-structure-steady-euler} showed that locally near each stationary smooth solution there exists a manifold of stationary smooth solutions transversal to the foliation, and Choffrut and Sz\'ekelyhidi \cite{Choffrut-Szekelyhihi:weak-solutions-stationary-euler} showed that there is an abundant set of stationary weak ($L^{\infty}$) solutions near a smooth stationary one. Shvydkoy and Luo \cite{Luo-Shvydkoy:2d-homogeneous-euler,Luo-Shvydkoy:addendum-homogeneous-euler} provided a classification of stationary smooth solutions of the form $v = \nabla^{\perp}(r^{\gamma}f(\theta))$, where $(r,\theta)$ are polar coordinates.

Bedrossian and Masmoudi proved in \cite{Bedrossian-Masmoudi:inviscid-damping-2d-euler} that given an initial perturbation of the Couette flow small in a suitable regularity class, the velocity converges strongly in $L^{2}$ to a shear flow which is also close to the Couette flow. Recently, motivated by the numerical simulations of Luo and Hou \cite{Luo-Hou:singularities-euler-3d}, Kiselev and \v{S}verak showed in \cite{Kiselev-Sverak:double-exponential-euler-boundary} double exponential growth of the gradient of the vorticity in the presence of a boundary.

It is well known that radial functions are stationary solutions for \eqref{euler} due to the structure of the nonlinear term. The solutions that will be constructed in this paper are a smooth, compactly supported desingularization of a vortex patch, perturbed in a suitable direction. Moreover, the solutions will enjoy $m$-fold symmetry (invariance by a rotation of a $\frac{2\pi}{m}$ angle) for $m \geq 2$.

In addition the dynamics of these solutions consist of global rotating level sets with constant angular velocity. These level sets are a perturbation of the circle. We remark that we can extend our results to the generalized Surface Quasigeostrophic (gSQG) equations in the case $0 < \alpha < 1$. For those values of the parameter $\alpha$, the question of global existence vs finite time singularities is still open in the smooth case and this would be to our knowledge, the first nontrivial family of solutions for which there is global existence.

These equations interpolate between the 2D incompressible Euler and the Surface Quasigeostrophic (SQG) equations and are indexed by a parameter $\al \in [0,2)$, where $\al = 0$ corresponds to Euler and $\al = 1$ to SQG. They read as follows:

\begin{align*}
\left\{ \begin{array}{ll}
\partial_{t}\theta+u\cdot\nabla\theta=0,\quad(t,x)\in\mathbb{R}_+\times\mathbb{R}^2, &\\
v=-\nabla^\perp(-\Delta)^{-1+\frac{\alpha}{2}}\theta,\\
\theta_{|t=0}=\theta_0,
\end{array} \right.
\end{align*}

Our construction employs bifurcation theory from a patch supported in a disk which solves weakly \eqref{euler}. Our motivation to study the current problem comes from the aforementioned patch setting, where the vorticity is a characteristic function of a time-dependent set $\Omega(t)$. This property is conserved in time. Local existence in this class of solutions (assuming $C^{1,\gamma}$ regularity of the boundary) was first shown by Chemin \cite{Chemin:persistance-structures-fluides-incompressibles} and then simplified by Bertozzi-Constantin \cite{Bertozzi-Constantin:global-regularity-vortex-patches} (see also \cite{Serfati:preuve-directe-existence-globale-patches}). A classical theorem of Yudovich \cite{Yudovich:Nonstationary-ideal-incompressible} asserts that if the initial data is in $L^{1} \cap L^{\infty}$, then there is uniqueness within that class.  In a very recent preprint, \cite{Elgindi-Jeong:symmetries-fluids}, the $L^{1}$ assumption can be dropped upon having an appropriate symmetry ($m$-fold) condition.

The existence of uniformly rotating $m$-fold patches (also known as V-states) started with the numerical work of Deem and Zabusky \cite{Deem-Zabusky:vortex-waves-stationary}. Burbea in \cite{Burbea:motions-vortex-patches} proved the existence of V-states and $C^\infty$-regularity for its boundary was proved by Hmidi at al. in \cite{Hmidi-Mateu-Verdera:rotating-vortex-patch}. It was shown in \cite{Castro-Cordoba-GomezSerrano:analytic-vstates-ellipses} that the boundary of these solutions is actually analytic. The doubly connected setting was studied in \cite{Hmidi-delaHoz-Mateu-Verdera:doubly-connected-vstates-euler}. The existence of V-states was established for the generalized surface quasi-geostrophic equations in the range $0 < \alpha < 1$ in \cite{Hassainia-Hmidi:v-states-generalized-sqg} and in $1 \leq \alpha < 2$ in \cite{Castro-Cordoba-GomezSerrano:existence-regularity-vstates-gsqg}. We note that a desingularization from a Dirac distribution into a patch allows Hmidi and Mateu to prove the existence of corotating and counter-rotating vortex pairs in \cite{Hmidi-Mateu:existence-corotating-counter-rotating}. In fact, our strategy is inspired by their paper.

In \cite{Castro-Cordoba-GomezSerrano:global-smooth-solutions-sqg}, we constructed for the case $\al = 1$ a family of 3-fold solutions that rotated uniformly by looking at perturbations from a smooth annular profile (as opposed to a desingularization of a patch). In that case, the velocity is more singular than in 2D Euler. We had to overcome the following difficulties:

\begin{itemize}
\item The study of the linear problem was a functional equation, as opposed to a scalar equation (which was in the patch case). Even the existence of nontrivial elements in the kernel of the linear part was not evident a priori.

 \item There was no algebraic formula for neither the eigenvalue nor the eigenvector, not even in an implicit way (such as in \cite{Castro-Cordoba-GomezSerrano:analytic-vstates-ellipses}). This made the proof of the dimensionality of the kernel much harder since we had to show that the eigenvalue was simple and have some control of the rest of the eigenvalues. In order to do that, we resorted to a computer-assisted proof to rigorously bound the operators arising in the calculations.
\end{itemize}

The paper is organized as follows: section \ref{sectionequations} is devoted to the reformulation of the equations \eqref{euler}  in new variables and the statement of the main theorem. In section \ref{checking} we check that our equation satisfies the hypotheses of the Crandall-Rabinowitz theorem. Finally, in Appendix \ref{appendixbasicintegrals} we compute some basic integrals used in the calculation of the linear part and Appendix \ref{Aestimations} is devoted to prove some estimates used along section \ref{checking}. We note that since $\al < 1$ and the velocity is less singular than for $\al = 1$, we have been able to remove the computer from the estimates, as opposed to \cite{Castro-Cordoba-GomezSerrano:global-smooth-solutions-sqg}.

The proofs of  the theorems  rely on the Crandall-Rabinowitz theorem. We recall here the statement of this theorem from \cite{Crandall-Rabinowitz:bifurcation-simple-eigenvalues} for expository purposes.

\begin{theorem}[Crandall-Rabinowitz] \label{CR-theorem} Let $X$, $Y$ be Banach spaces, $V$ a neighborhood of
0 in $X$ and
\begin{align*}
\F \,:\, &   V\times (-1,\,1) \rightarrow Y\\
& \,\qquad (r,\,\mu)\,\,\,\,\rightarrow \F[r,\,\mu]
\end{align*}
have the properties
\begin{enumerate}
\item $\F[0,\, \mu] = 0$ for any $|\mu| < 1$.
\item The partial derivatives $\pa_\mu \F$, $\pa_r \F$ and $\pa^2 _{\mu r}\F$ exist and are continuous.
\item  $\mathcal{N}(\pa_r \F[0, 0])$ and $Y/\mathcal{R}(\pa_r \F[0, 0])$ are one-dimensional.
\item  $\pa^2_{\mu r} \F[0, 0]r_0\not\in
 \mathcal {R}(\pa_r \F[0, 0])$, where
$\mathcal{N}(\pa_r \F[0, 0]) = \text{span }r_0.$
\end{enumerate}
(Here $\mathcal{N}$ and $\mathcal{R}$ denote the kernel and range respectively). If $Z$ is any complement of $\mathcal{N}(\pa_r \F(0, 0))$ in $X$, then there is a neighborhood $U$ of (0, 0) in
$\RR \times X$, an interval $(-b, b)$, and continuous functions
\begin{align*}
\phi\, : \, (-b, b) \rightarrow \RR && \psi\, :\, (-b, b) \rightarrow Z\end{align*}
such that $\phi(0) = 0, \psi(0) = 0$ and
$$\F^{-1}(0)\cap U=\{ \xi r_0+\xi\psi(\xi), (\phi(\xi))\,:\, |\xi|<b\}\cup \{(t,0)\,:\, (t,0)\in U\}.$$
\end{theorem}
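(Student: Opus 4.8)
The plan is to prove this by the classical Lyapunov--Schmidt reduction. Since hypothesis (2) grants that $\pa_r\F$ and $\pa_\mu\F$ exist and are continuous, $\F$ is of class $C^1$. Write $L := \pa_r\F[0,0]$. By hypothesis (3) the kernel $\mathcal{N}(L)=\operatorname{span}\{r_0\}$ is one--dimensional, so $X=\mathcal{N}(L)\oplus Z$ is a topological direct sum with the prescribed complement $Z$; likewise $Y=\mathcal{R}(L)\oplus W$ with $\dim W=1$ (here $\mathcal{R}(L)$ is closed, as is implicit in hypothesis (3)). Let $Q\colon Y\to\mathcal{R}(L)$ be the associated bounded projection, so that $I-Q$ projects onto $W\cong\RR$. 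Decomposing $r=\xi r_0+z$ with $\xi\in\RR$ and $z\in Z$, the equation $\F[r,\mu]=0$ is equivalent to the pair
$$Q\,\F[\xi r_0+z,\mu]=0,\qquad (I-Q)\,\F[\xi r_0+z,\mu]=0 .$$

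First I would solve the auxiliary (first) equation for $z$. Set $G(\xi,z,\mu):=Q\,\F[\xi r_0+z,\mu]$; it is $C^1$ near the origin, $G(0,0,\mu)\equiv 0$ by hypothesis (1), and $\pa_z G(0,0,0)=L|_Z\colon Z\to\mathcal{R}(L)$ is an isomorphism (injective because $Z\cap\mathcal{N}(L)=\{0\}$, surjective by the construction of $Q$). The implicit function theorem then provides a $C^1$ map $z=\Psi(\xi,\mu)$ near $(0,0)$ with $\Psi(0,0)=0$ and $G(\xi,\Psi(\xi,\mu),\mu)\equiv 0$; moreover $\Psi(0,\mu)\equiv 0$ since $r=0$ already solves the full equation. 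Differentiating the identity $Q\,\F[\xi r_0+\Psi(\xi,\mu),\mu]\equiv 0$ in $\xi$ at $(0,0)$ and using $Lr_0=0$ gives $QL\,\pa_\xi\Psi(0,0)=0$; since $L\,\pa_\xi\Psi(0,0)\in\mathcal{R}(L)$ and $\pa_\xi\Psi(0,0)\in Z$, this forces $\pa_\xi\Psi(0,0)=0$. Also, because $\Psi(0,\cdot)\equiv 0$, one has $\Psi(\xi,\mu)=\xi\int_0^1\pa_\xi\Psi(t\xi,\mu)\,dt$.

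It remains to treat the bifurcation equation. Define the scalar function $\Phi(\xi,\mu):=(I-Q)\,\F[\xi r_0+\Psi(\xi,\mu),\mu]\in W\cong\RR$. Then $\Phi(0,\mu)\equiv 0$, so I would factor $\Phi(\xi,\mu)=\xi\,\widetilde{\Phi}(\xi,\mu)$ with $\widetilde{\Phi}(\xi,\mu):=\int_0^1\pa_\xi\Phi(t\xi,\mu)\,dt$, a continuous function; and for $\xi\neq 0$ the solutions near the origin are exactly those with $\widetilde{\Phi}(\xi,\mu)=0$. Evaluating, $\widetilde{\Phi}(0,0)=\pa_\xi\Phi(0,0)=(I-Q)L\big(r_0+\pa_\xi\Psi(0,0)\big)=0$. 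To solve $\widetilde{\Phi}(\xi,\mu)=0$ for $\mu$ by the implicit function theorem I need $\pa_\mu\widetilde{\Phi}(0,0)\neq 0$; this is where the precise content of hypothesis (2), that $\pa^2_{\mu r}\F$ exists and is continuous, enters: it is exactly what allows one to differentiate $\pa_\xi\Phi(0,\mu)=(I-Q)\pa_r\F[0,\mu]\big(r_0+\pa_\xi\Psi(0,\mu)\big)$ once more in $\mu$ at the origin, and the identities $\Psi(0,\cdot)\equiv 0$ and $\pa_\xi\Psi(0,0)=0$ annihilate the cross terms, leaving $\pa_\mu\widetilde{\Phi}(0,0)=(I-Q)\,\pa^2_{\mu r}\F[0,0]\,r_0$, which is nonzero precisely by hypothesis (4). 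Thus there are $b>0$ and a continuous $\phi\colon(-b,b)\to\RR$ with $\phi(0)=0$ and $\widetilde{\Phi}(\xi,\phi(\xi))=0$ for $|\xi|<b$. Setting $\psi(\xi):=\int_0^1\pa_\xi\Psi(t\xi,\phi(\xi))\,dt$, so that $\Psi(\xi,\phi(\xi))=\xi\psi(\xi)$ with $\psi(0)=0$ and $\psi$ continuous into $Z$, the local zero set of $\F$ splits into the trivial line $\{(t,0)\}$ (the case $\xi=0$) and the bifurcating curve $\{(\xi r_0+\xi\psi(\xi),\ \phi(\xi)):|\xi|<b\}$, which is the assertion.

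The step I expect to be the real obstacle is the regularity bookkeeping. The hypotheses grant only continuity of $\pa_\mu\F$, $\pa_r\F$ and the single mixed derivative $\pa^2_{\mu r}\F$ --- not that $\F\in C^2$ --- so one must verify carefully that the implicit function theorem can be invoked twice under this limited smoothness, that $\Psi$ (hence $\widetilde{\Phi}$) actually possesses a continuous $\mu$-derivative near the origin, and, most delicately, that the computation of $\pa_\mu\widetilde{\Phi}(0,0)$ is legitimate and genuinely collapses --- via $\Psi(0,\cdot)\equiv 0$, $\pa_\xi\Psi(0,0)=0$, and a Schwarz-type symmetry identity $\pa^2_{r\mu}\F=\pa^2_{\mu r}\F$ --- to the single term $(I-Q)\pa^2_{\mu r}\F[0,0]r_0$. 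The remaining ingredients (the Fredholm/projection setup, the two applications of the implicit function theorem, and the two factorizations by $\xi$) are routine once this point is settled.
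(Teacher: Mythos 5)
The paper does not prove this theorem; it is quoted verbatim from Crandall and Rabinowitz and invoked as a black box, so the comparison is to the standard proof in that reference, which is indeed a Lyapunov--Schmidt argument but organized in a way that turns out to matter.

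Your construction of $\Psi$ and the factorization $\Phi(\xi,\mu)=\xi\,\widetilde{\Phi}(\xi,\mu)$ are fine, and you correctly flag the dangerous step --- but under the stated hypotheses it is not a bookkeeping issue one can ``verify carefully'': your second use of the implicit function theorem is not justified. To solve $\widetilde{\Phi}(\xi,\mu)=0$ for $\mu$ you need $\pa_\mu\widetilde{\Phi}$ to exist and be continuous on a full neighbourhood of $(0,0)$, not only at the origin. Differentiating $\pa_\xi\Phi(\xi,\mu)=(I-Q)\,\pa_r\F[\xi r_0+\Psi,\mu]\,(r_0+\pa_\xi\Psi)$ in $\mu$ produces, via the chain rule, terms of the form $(I-Q)\,\pa^2_{rr}\F[\cdots]\,[\pa_\mu\Psi]\,(r_0+\pa_\xi\Psi)$ and $(I-Q)\,\pa_r\F[\cdots]\,\pa^2_{\mu\xi}\Psi$. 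The hypotheses supply neither $\pa^2_{rr}\F$ nor any second derivative of $\Psi$ (the latter would require $\F\in C^2$). These terms do vanish at $(0,0)$ thanks to $\Psi(0,\cdot)\equiv 0$ and $(I-Q)\pa_r\F[0,0]=0$, but off the origin they do not vanish and, a priori, do not even exist; so $\widetilde{\Phi}$ cannot be seen to have a continuous $\mu$-derivative and the argument stalls exactly where you feared it would.

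The missing idea, which is the heart of the original proof, is to scale out $\xi$ \emph{before} invoking the implicit function theorem rather than after. Write $r=\xi(r_0+z)$ with $z\in Z$ and put
\[
f(\mu,z,\xi):=\int_0^1 \pa_r\F\bigl[t\xi(r_0+z),\,\mu\bigr](r_0+z)\,dt
=\begin{cases}\xi^{-1}\,\F[\xi(r_0+z),\mu], & \xi\neq 0,\\ \pa_r\F[0,\mu](r_0+z), & \xi=0.\end{cases}
\]
For $\xi\neq 0$ the chain rule gives $\pa_z f(\mu,z,\xi)=\pa_r\F[\xi(r_0+z),\mu]\big|_Z$: the factor $\xi$ produced by differentiating the inner argument exactly cancels $\xi^{-1}$, so $\pa_z f$ exists and is continuous using only $\pa_r\F$, never $\pa^2_{rr}\F$; and $\pa_\mu f=\int_0^1\pa^2_{\mu r}\F[\cdots](r_0+z)\,dt$ uses precisely the mixed derivative you are given. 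The linearization $D_{(\mu,z)}f(0,0,0)(\dot\mu,\dot z)=\dot\mu\,\pa^2_{\mu r}\F[0,0]r_0+L\dot z$ is an isomorphism of $\RR\times Z$ onto $Y$ by hypotheses (3) and (4), and a \emph{single} application of the parametric implicit function theorem --- which asks for $C^1$ dependence only on the variables $(\mu,z)$ being solved for, with mere joint continuity in the leftover parameter $\xi$ --- delivers exactly the continuous $\phi,\psi$ claimed. Scaling first is what keeps every derivative the argument needs inside the limited regularity class; that is the point your proposal is missing.
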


\section{The equations}
\label{sectionequations}

In this section we will obtain the equation for the level sets of the vorticity of a global rotating solution of the 2D Euler equation. We assume that at time $t=0$ these level sets can be parameterized by \begin{align}\label{x}x(\alpha,\rho)=r(\alpha,\rho)(\cos(\alpha),\, \sin(\alpha))\end{align} where $r(\alpha,\rho)$ is a scalar function $\alpha\in \T$ and $\rho\in\RR^+$. Since the level sets rotate with constant angular velocity the level sets of the solutions, $z(\alpha,\rho,t)$ satisfies
\begin{align}\label{levelsets}
\omega(z(\alpha,\rho,t),t)=f(\rho)
\end{align}
for some scalar function $f$ (which does not depend neither on $\alpha$ nor on $t$), and can be written as follows,
\begin{align}\label{rota}
z(\alpha,\rho,t) = \mathcal{O}(t)x(\alpha, \rho),
\quad
\mathcal{O}(t) =
\left(
\begin{array}{cc}
\cos(\lambda t) & \sin(\lambda t) \\
-\sin(\lambda t) & \cos(\lambda t)
\end{array}
\right).
\end{align}
By using the Euler equation and \eqref{levelsets} one can obtain that (see \cite{Castro-Cordoba-GomezSerrano:global-smooth-solutions-sqg} for details)
\begin{align*}
\label{leveleq}
&\left(-v(z(\al,\rho,t),t) + z_{t}(\alpha,\rho,t )\right)\cdot z_{\al}^{\perp}(\alpha,\rho,t) \frac{f_\rho(\rho)}{z_{\al}^{\perp} \cdot z_{\rho}(\alpha,\rho,t)}=  0
\end{align*}
where $$v(x,t)=\frac{1}{2\pi}\int_{\RR^2}\log(|x-y|)\nabla^\perp\omega(y,t)dy,$$
and by a change of coordinates we have that
\begin{align}
v(z(\alpha,\rho,t),t)=\frac{1}{2\pi} \int_0^\infty\int_{-\pi}^\pi \log\left(|z(\al,\rho,t)-z(\al',\rho',t)|\right) f_\rho(\rho') z_{\al}(\al',\rho',t) d\al' d\rho'.
\end{align}
Thus equation \eqref{leveleq} can be written in terms of $x(\alpha,\rho)$ in the following way

\begin{align}\label{equationx}
\lambda x (\alpha,\rho)\cdot x_{\al}(\alpha,\rho) + \frac{1}{2\pi} x_{\al}^{\perp}(\alpha,\rho) \cdot \int_{0}^\infty \int_{-\pi}^\pi f_\rho(\rho')\log\left(|x(\al,\rho)-x(\al',\rho')|\right) x_{\al}(\al',\rho') d\al' d\rho'=0.
\end{align}
for $\alpha \in \T$ and $\rho \in \text{supp}\left(f_\rho\right)$.

%
In terms of the function $r(\alpha,\rho)$ we get
\begin{align}\label{ecur}
&\F_{f}[r,\,\lambda] \\&\equiv
\lambda r(\alpha,\rho)r_{\al}(\al,\rho)  + \frac{r(\alpha,\rho)}{2\pi} \int_0^\infty \int_{-\pi}^\pi f_\rho(\rho')\log(|x(\al,\rho)-x(\al',\rho')|)\cos(\al-\al')r_{\al}(\al',\rho') d\al' d\rho' \nonumber\\
& - \frac{r_{\al}(\al,\rho)}{2\pi } \int_0^\infty \int_{-\pi}^\pi f_\rho(\rho')\log(|x(\al,\rho)-x(\al',\rho')|)\cos(\al-\al')r(\al',\rho') d\al' d\rho'\nonumber \\
& + \frac{1}{2\pi} \int_0^\infty \int_{-\pi}^\pi f_\rho(\rho')\log(|x(\al,\rho)-x(\al',\rho')|)\sin(\al-\al')(r(\al,\rho)r(\al',\rho') + r_{\al}(\al,\rho)r_{\al}(\al',\rho')) d\al' d\rho'\nonumber\\
&=0,\nonumber
\end{align}
for $\alpha\in \T$ and $\rho\in (1-a,1+a)$ and $|x(\al,\rho)-x(\al',\rho')|=\sqrt{r(\alpha,\rho)^2+r(\alpha',\rho')^2-2r(\alpha,\rho)r(\alpha',\rho')\cos(\alpha-\alpha')}.$

We discuss the kind of $f$ we will use in this paper.

First we define the profile $F(\rho)$
\begin{align*}
F(\rho)=\left\{\begin{array}{ccc} 1 && \rho \in (-\infty,-1]\\
1+\int_{-1}^\rho \phi(\rho') d\rho' && \rho \in (-1,1) \\
0 && \rho \in [1,\infty)\end{array}\right.
\end{align*}
where $\int_{-1}^1\phi(\rho')d\rho' =-1$ and $\phi \in C^3_c((-1,1))$. Then the function $f^a(\rho)$ (we make explicit the dependence in the parameter $a$) will be given by $$f^a(\rho)=F\left(\frac{\rho-1}{a}\right)$$
and it is easy to check that $\text{supp}(f^a_\rho)\subset (1-a,1+a)$. We also remark that $f^{a}_{\rho}\left(1+as \right)$ is independent of $a$, a fact that will be used later.

The parameter $a$ will be used as the bifurcation parameter in order to find solutions for \eqref{ecur}. Indeed the angular velocity $\lambda$ will depend on $a$ (we will make explicit this dependence later). Thus, we will solve the equation
\begin{align}\label{ecura}
\F[r,a]=0
\end{align}
with $\F[r,a]=\F_{f^a}[r,\lambda(a)].$

\subsection{Rescaling equation \eqref{ecur}}
In order to find solutions we start by  rescaling the equation \eqref{ecur}. Changing variables,  $s=\frac{\rho-1}{a}$, one   obtains that
\begin{align}\label{aux}
&\lambda(a) r(\alpha,1+as)r_{\al}(\al,1+as)  \nonumber\\ &+ \frac{r(\alpha,1+as)}{2\pi} \int_{-1}^1 \int_{-\pi}^\pi F_\rho(s')\log(|x(\al,1+as)-x(\al',1+as')|)\cos(\al-\al')r_{\al}(\al',1+as') d\al' ds' \nonumber\\
& - \frac{r_{\al}(\al,1+as')}{2\pi } \int_{-1}^1 \int_{-\pi}^\pi F_\rho(s')\log(|x(\al,1+as)-x(\al',1+as')|)\cos(\al-\al')r(\al',1+as') d\al' ds' \nonumber\\
& + \frac{1}{2\pi} \int_{-1}^1 \int_{-\pi}^\pi F_\rho(s')\log(|x(\al,1+as)-x(\al',1+as')|)\sin(\al-\al')r(\al,1+as)r(\al',1+as')  d\al' ds',\nonumber\\
& + \frac{1}{2\pi} \int_{-1}^1 \int_{-\pi}^\pi F_\rho(s')\log(|x(\al,1+as)-x(\al',1+as')|)\sin(\al-\al') r_{\al}(\al,1+as)r_{\al}(\al',1+as')) d\al' ds',\nonumber\\
&=0,
\end{align}
 for $\alpha\in (-\pi,\pi]$ and $s\in (-1,1)$.

The equation we will try to solve is the one we obtain dividing \eqref{aux} by $a^2$ and by replacing $r(\alpha,1+as)$  by $1+a^2(s+ \rti(\alpha,s))$ (we will solve in $a$ and $\rti$), in such a way that $r(\alpha,\rho)=1-a+a\rho+a^2\rti(\alpha,\frac{\rho-1}{a})$ will be a solution of \eqref{ecura} for any $a \neq 0$. This equation reads

\begin{align*}
&\lambda(a)\left(1+a^2(s+\rti(\alpha,s))\right)\rti_\alpha(\alpha,s)\\
&+ \frac{1+a^2\rti(\alpha,s)}{4\pi} \int_{-1}^1 \int_{-\pi}^\pi F_\rho(s')\log(A[\rti,a])\cos(\al-\al')\rti_{\al}(\al',s') d\al' ds' \nonumber\\
&- \frac{\rti_{\al}(\al,s)}{4\pi} \int_{-1}^1 \int_{-\pi}^\pi F_\rho(s')\log(A[\rti,a])\cos(\al-\al')(1+a^2(s'+\rti(\alpha',s'))) d\al' ds' \nonumber\\
& + \frac{1}{4\pi} \int_{-1}^1 \int_{-\pi}^\pi F_\rho(s')\frac{1}{a^2}\log(A[\rti,a])\sin(\alpha-\alpha') d\al' ds',\nonumber\\
&+ \frac{1}{4\pi} \int_{-1}^1 \int_{-\pi}^\pi F_\rho(s')\log(A[\rti,a])\sin(\al-\al') (s+\rti(\alpha,s)+s'+\rti(\alpha',s'))  d\al' ds',\nonumber\\
& + \frac{a^2}{4\pi} \int_{-1}^1 \int_{-\pi}^\pi F_\rho(s')\log(A[\rti,a])\sin(\al-\al')\left(\rti_\alpha(\al,s)\rti_\alpha(\al',s')+(s+\rti(\alpha,s))(s'+\rti(\alpha',s'))\right)  d\al' ds' = 0,\nonumber \\
\end{align*}
with

\begin{align*}
&A[\rti,a]=A[\rti,a](s,s',\al,\al')\\&=(1+a^2(s+\rti(\alpha,s)))^2+(1+a^2(s'+\rti(\alpha',s')))^2-2(1+a^2(s+\rti(\alpha,s)))(1+a^2(s'+\rti(\alpha',s')))\cos(\alpha-\alpha')\\
&= 4\sinc{\alpha-\alpha'}+4a^2(s+\rti(\alpha,s)+s'+\rti(\alpha',s'))\sinc{\alpha-\alpha'}\\
&+a^4\left(4(s+\rti(\alpha,s))(s'+\rti(\alpha',s'))\sinc{\alpha-\alpha'}+(s+\rti(\alpha,s)-s-\rti(\alpha',s'))^2\right).
\end{align*}
Since $A[\rti,0]=4\sinc{\alpha-\alpha'}$ we have that
\begin{align*}
&\frac{1}{4\pi} \int_{-1}^1 \int_{-\pi}^\pi F_\rho(s')\frac{1}{a}\log(A[\rti,a])\sin(\al-\al') d\al' ds'\\
&=\frac{1}{4\pi} \int_{-1}^1 \int_{-\pi}^\pi F_\rho(s')\frac{1}{a}\log\left(\frac{A[\rti,a]}{A[\rti,0]}\right)\sin(\al-\al')d\al' ds'
\end{align*}

Therefore we need to solve the equation
\begin{align}\label{ecuG}
\mathcal{G}[\rti,a]=0
\end{align}
with
\begin{align}\label{G}
&\mathcal{G}[\rti,a]\nonumber\\
&=\lambda(a)\left(1+a^2(s+\rti(\alpha,s))\right)\rti_\alpha(\alpha,s)\nonumber\\
&+ \frac{1+a^2(s+\rti(\alpha,s))}{4\pi} \int_{-1}^1 \int_{-\pi}^\pi F_\rho(s')\log(A[\rti,a])\cos(\al-\al')\rti_{\al}(\al',s') d\al' ds' \nonumber\\
&- \frac{\rti_{\al}(\al,s)}{4\pi} \int_{-1}^1 \int_{-\pi}^\pi F_\rho(s')\log(A[\rti,a])\cos(\al-\al')(1+a^2(s'+\rti(\alpha',s'))) d\al' ds' \nonumber\\
& + \frac{1}{4\pi} \int_{-1}^1 \int_{-\pi}^\pi F_\rho(s')\frac{1}{a^2}\log\left(\frac{A[\rti,a]}{A[\rti,0]}\right)\sin(\alpha-\alpha') d\al' ds',\nonumber\\
&+ \frac{1}{4\pi} \int_{-1}^1 \int_{-\pi}^\pi F_\rho(s')\log(A[\rti,a])\sin(\al-\al') (s+\rti(\alpha,s)+s'+\rti(\alpha',s')  d\al' ds',\nonumber\\
& + \frac{a^2}{4\pi} \int_{-1}^1 \int_{-\pi}^\pi F_\rho(s')\log(A[\rti,a])\sin(\al-\al')\left(\rti_\alpha(\al,s)\rti_\alpha(\al',s')+(s+\rti(\alpha,s))(s'+\rti(\alpha',s'))\right)  d\al' ds' ,\nonumber \\
\end{align}

We now state the main theorem of this paper:
\begin{theorem}\label{mainthm}
 Consider the domain $$\Omega\equiv \{ (\alpha,\, s)\,:\,\alpha\in \T,\, -1<s<1\}.$$  and let the function  $ \phi$ be as in section \ref{sectionequations}. Let $m\geq 2$ be an integer and $\lambda(a)=\lambda_m(0)+\frac{d\lambda}{da}(0) a$, with $\lambda_m(0)=\frac{m-1}{2m}$ and $\frac{d\lambda}{da}(0)\neq 0$.  Then there exists a branch of nontrivial smooth solutions, with $m-$fold symmetry, of equation \eqref{ecuG}, in $H^{4,3}(\Omega)$, bifurcating from $\rti(\alpha,s)=0$ and  $a=0$.
\end{theorem}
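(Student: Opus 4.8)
The plan is to verify the four hypotheses of the Crandall--Rabinowitz theorem (Theorem \ref{CR-theorem}) for the operator $\mathcal{G}$ defined in \eqref{G}, working in the spaces $X = H^{4,3}(\Omega)$ restricted to the closed subspace of functions with $m$-fold symmetry (and with the appropriate parity in $\alpha$), and $Y$ the analogous space one order lower in the $\alpha$-derivatives. First I would check hypothesis (1): that $\mathcal{G}[0,a]=0$ for all $a$. With $\rti \equiv 0$ the function $r(\alpha,\rho) = 1-a+a\rho$ is radial, hence a stationary (in fact trivially rotating) solution of \eqref{equationx}, so after the rescaling and division by $a^2$ this should reduce to an identity; one must be slightly careful with the singular-looking term $\frac{1}{a^2}\log(A[\rti,a]/A[\rti,0])$, which is smooth in $a$ because $A[\rti,a] - A[\rti,0] = O(a^2)$, and then check that the remaining $\alpha$-integrals of $\log(A)\sin(\alpha-\alpha')$ etc. vanish by oddness/evenness once $\rti=0$ (the integrand's angular dependence is through $\cos,\sin$ of $\alpha-\alpha'$ against a function of $|\alpha-\alpha'|$). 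The regularity/continuity of $\pa_a \mathcal{G}$, $\pa_{\rti}\mathcal{G}$, $\pa^2_{a\rti}\mathcal{G}$ (hypothesis (2)) I would defer to the estimates collected in Appendix \ref{Aestimations}: the point is that the logarithmic kernel is weakly singular, so differentiation under the integral sign is justified and the resulting operators map $H^{4,3}$ boundedly, with the Fréchet derivatives depending continuously on $(\rti,a)$.

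The heart of the argument is hypotheses (3) and (4), which concern the linearized operator $\pa_{\rti}\mathcal{G}[0,0]$. I would compute this linearization explicitly: differentiating \eqref{G} at $\rti = 0$, $a=0$ kills all terms carrying an explicit $a^2$ factor and freezes $A$ at $A[0,0] = 4\sinc{\alpha-\alpha'}$, so one is left with a linear integro-differential operator of the schematic form
\begin{align*}
(\pa_{\rti}\mathcal{G}[0,0]h)(\alpha,s) = \lambda_m(0)\, h_\alpha(\alpha,s) + (\text{convolution-in-}\alpha \text{ operators against } \log(4\sinc{\alpha-\alpha'}) \text{ times } F_\rho(s')).
\end{align*}
Expanding $h(\alpha,s) = \sum_{k} h_k(s) e^{ik\alpha}$ (restricting to $k$ a multiple of $m$), each Fourier mode decouples because $\log\big(\sinc{\alpha-\alpha'}\big)$ is a convolution kernel; the relevant Fourier coefficients of $\log(\sin^2(\cdot/2))$ are the classical $-1/|k|$ (up to constants), which is precisely why Appendix \ref{appendixbasicintegrals} is needed. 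This turns $\pa_{\rti}\mathcal{G}[0,0]$ into multiplication, mode by mode, by a $2\times2$-type symbol (or scalar, after using the $s$-structure) that vanishes exactly when $\lambda = \lambda_k(0) := \frac{k-1}{2k}$. Setting $k = m$ gives a one-dimensional kernel spanned by $r_0(\alpha,s) = \cos(m\alpha)$ (times a fixed function of $s$ dictated by $F_\rho$), and one checks the range is closed of codimension one — these are the standard V-state computations of Burbea / Hmidi et al., adapted here with the extra variable $s$, the point being that $F_\rho(s')$ enters only as a fixed weight so the $s$-dependence is essentially a parameter.

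For hypothesis (4), the transversality condition, I would differentiate the symbol of $\pa_{\rti}\mathcal{G}[0,0]$ with respect to $a$ (equivalently compute $\pa^2_{a\rti}\mathcal{G}[0,0]$) and evaluate on $r_0$; because $\lambda(a) = \lambda_m(0) + \frac{d\lambda}{da}(0)\,a$ is affine with $\frac{d\lambda}{da}(0)\neq 0$, the term $\pa_a\big(\lambda(a) h_\alpha\big)\big|_{0} = \frac{d\lambda}{da}(0)\, \pa_\alpha r_0$ produces a nonzero multiple of $\cos(m\alpha)$ (resp. $\sin(m\alpha)$) which, paired against the cokernel direction, is nonzero precisely because $\frac{d\lambda}{da}(0)\neq 0$; the remaining $a$-derivatives of the integral terms land in the range and are irrelevant. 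Once all four hypotheses hold, Crandall--Rabinowitz produces the asserted local branch $(\rti(\xi), a(\xi))$ of nontrivial $m$-fold solutions bifurcating from $(0,0)$, and smoothness of the solutions in $\Omega$ follows from elliptic/bootstrap regularity of the fixed-point equation (the kernel being smooth away from the diagonal and log-integrable on it). I expect the main obstacle to be the explicit linearization: correctly identifying the $s$-dependent function multiplying $\cos(m\alpha)$ in $r_0$, verifying that the operator genuinely has a \emph{one}-dimensional kernel (i.e. that no other Fourier mode $k$ a multiple of $m$ accidentally also produces a kernel element for the chosen $\lambda_m(0)$ — this needs monotonicity of $k\mapsto \lambda_k(0)$, which holds, and handling the degenerate low modes $k=0$), and checking the codimension-one closed-range property, all of which require the careful bookkeeping of the integrals in Appendices \ref{appendixbasicintegrals} and \ref{Aestimations}.
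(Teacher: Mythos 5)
Your proposal tracks the paper's proof closely: you correctly identify the Crandall--Rabinowitz framework in the anisotropic spaces $H^{4,3}_{m,\mathrm{even}}\to H^{3,3}_{m,\mathrm{odd}}$, the diagonalization of $\pa_{\rti}\mathcal{G}[0,0]$ in $\alpha$-Fourier modes via the explicit log-kernel integrals of Appendix~\ref{appendixbasicintegrals}, the role of $\lambda_m(0)=\frac{m-1}{2m}$ in making the $m$-th mode degenerate, and transversality coming entirely from the $\frac{d\lambda}{da}(0)\pa_\alpha$ term paired against the cokernel functional. That is exactly the paper's strategy.

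One point you flag as an open issue is worth correcting because it is the crux of the one-dimensionality of the kernel. After projecting to the $k$-th mode and using $\lambda_m(0)=\frac{m-1}{2m}$ and $\int_{-1}^1 F_\rho=-1$, the kernel equation reduces to
\begin{align*}
g_k(s)=-\frac{m}{k}\int_{-1}^1 F_\rho(s')\,g_k(s')\,ds'.
\end{align*}
The right-hand side is a single scalar, independent of $s$, so $g_k$ must be constant in $s$; substituting $g_k\equiv c$ gives $c=\frac{m}{k}c$, which admits a nontrivial $c$ only for $k=m$. Thus the kernel element is $\cos(m\alpha)$ times a \emph{constant}, not times a nontrivial $s$-profile ``dictated by $F_\rho$'' as you guessed. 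Dually, the image is exactly $\{q:\int_{-1}^1 F_\rho(s')\,q_m(s')\,ds'=0\}$, and verifying this requires explicitly solving $g_k(s)=A_k q_k(s)+B_k$ mode by mode and checking that the resulting $g$ lands in $H^{4,3}$ with a summable bound in $k$; your remark that ``the $s$-dependence is essentially a parameter'' undersells this step, since the operator is a rank-one (in $s$) perturbation of multiplication on each $\alpha$-mode, and it is precisely this nonlocal weighted $s$-integral that produces a one-dimensional defect rather than a trivial or infinite-dimensional one.
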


The proof of Theorem \ref{mainthm} consists of checking the hypotheses of Theorem \ref{CR-theorem}. This is done in the next section.

\begin{corollary}
There exist global rotating solutions for the 2D-Euler vorticity equation with $C^2$-regularity with compact support, with $m$-fold symmetry for any integer $m\geq 2$.
\end{corollary}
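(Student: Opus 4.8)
The plan is to derive the Corollary from Theorem \ref{mainthm} by undoing the changes of variables of Section \ref{sectionequations} and checking that the resulting family of rotating level‑set configurations defines genuine compactly supported vorticities solving \eqref{euler} for all time. First I would invoke Theorem \ref{mainthm}: for each integer $m\ge2$, each admissible profile $\phi$, and each choice $\frac{d\lambda}{da}(0)\ne0$, it yields $b>0$ and a branch $0<|a|<b\mapsto\rti_a\in H^{4,3}(\Omega)$ of nontrivial, $m$‑fold symmetric solutions of \eqref{ecuG}. Retracing the substitution $r(\alpha,1+as)=1+a^2(s+\rti(\alpha,s))$ and the division by $a^2$, the function
\[
r_a(\alpha,\rho)=1-a+a\rho+a^2\,\rti_a\!\Big(\alpha,\tfrac{\rho-1}{a}\Big)
\]
solves \eqref{ecura}, i.e. $\F[r_a,a]=0$, which is exactly \eqref{ecur}, hence \eqref{equationx}. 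It is $\frac{2\pi}{m}$‑periodic in $\alpha$, and by the Sobolev embedding $H^{4,3}(\Omega)\hookrightarrow C^2(\overline\Omega)$ it is of class $C^2$ with $\|r_a-1\|_{C^1}=O(a)$.

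Next I would build the vorticity. For $|a|$ small the map $\Phi_a\colon(\alpha,\rho)\mapsto x(\alpha,\rho)=r_a(\alpha,\rho)(\cos\alpha,\sin\alpha)$ of \eqref{x} has nonvanishing Jacobian $-r_a\,\partial_\rho r_a=-r_a\,a\,(1+a\,\partial_s\rti_a)$ and, being an $O(a)$ perturbation of the standard polar parameterization of the unit annulus, is a $C^2$‑diffeomorphism of $\T\times(1-|a|,1+|a|)$ onto an annular neighbourhood $\mathcal{A}_a$ of $\{|x|=1\}$. Since $\phi\in C^3_c((-1,1))$, the profile $f^a$ is $C^4$ and locally constant near each edge of $\mathcal{A}_a$, equal to $1$ on the side facing the bounded component of $\RR^2\setminus\overline{\mathcal{A}_a}$ and to $0$ on the side facing the unbounded one. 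Hence setting $\omega_0(x(\alpha,\rho))=f^a(\rho)$ on $\mathcal{A}_a$, $\omega_0\equiv1$ on the bounded component and $\omega_0\equiv0$ on the unbounded one defines $\omega_0\colon\RR^2\to\RR$ which is compactly supported in a topological disk of radius $\approx1$, $m$‑fold symmetric, and of class $C^2$: on $\mathcal{A}_a$ it is a $C^4$ function composed with the $C^2$ map $\Phi_a^{-1}$, and along $\partial\mathcal{A}_a$ it glues to the locally constant extensions to infinite order.

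Then I would check that $\omega_0$ generates a global rotating solution. Set $\omega(x,t)=\omega_0(\mathcal{O}(-t)x)$, equivalently $\omega(z(\alpha,\rho,t),t)=f^a(\rho)$ with $z=\mathcal{O}(t)x$ as in \eqref{rota}, \eqref{levelsets}, and let $v=\nabla^\perp(-\Delta)^{-1}\omega$. Differentiating $\omega(z,t)=f^a(\rho)$ in $\alpha$ and in $\rho$, and using $z_\alpha^\perp\cdot z_\rho=-r_a\,\partial_\rho r_a\ne0$, gives $\nabla\omega(z,t)=\frac{f^a_\rho(\rho)}{z_\alpha^\perp\cdot z_\rho}\,z_\alpha^\perp$ on $\mathcal{A}_a$ (and $\nabla\omega(z,t)=0$ where $f^a$ is locally constant), while differentiating in $t$ gives $(\partial_t\omega)(z,t)=-z_t\cdot\nabla\omega(z,t)$. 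Therefore, at $x=z(\alpha,\rho,t)$,
\[
(\partial_t\omega+v\cdot\nabla\omega)(x,t)=(v-z_t)(z,t)\cdot\nabla\omega(z,t)=-\big[(-v+z_t)\cdot z_\alpha^\perp\big]\,\frac{f^a_\rho(\rho)}{z_\alpha^\perp\cdot z_\rho},
\]
and this vanishes: after the change of variables for $v$ recorded in Section \ref{sectionequations} and cancellation of the nonzero factor, the right‑hand side is exactly the left side of \eqref{equationx}, which is zero since $r_a$ solves it; where $f^a_\rho=0$ both sides vanish trivially, and the identity extends by continuity across $\partial\mathcal{A}_a$. Thus $\omega$ is a $C^2$, compactly supported, $m$‑fold symmetric solution of \eqref{euler} for all $t\in\RR$ whose vorticity level sets rotate rigidly with angular velocity $\lambda(a)$, which is the Corollary.

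The real work is all in Theorem \ref{mainthm}. The points that still need care are: that $\Phi_a$ is a \emph{global} $C^2$‑diffeomorphism onto an annulus for small $a$ (not merely a local one); that the composition $f^a\circ\Phi_a^{-1}$ and the gluing along $\partial\mathcal{A}_a$ genuinely yield a $C^2$ function on $\RR^2$, which is where the embedding $H^{4,3}\hookrightarrow C^2$ and the choice $\phi\in C^3_c$ are used; and that each step in the chain \eqref{euler} $\Rightarrow$ \eqref{equationx} $\Rightarrow$ \eqref{ecur} $\Rightarrow$ \eqref{ecuG} is a reversible manipulation once the nondegeneracy $z_\alpha^\perp\cdot z_\rho\ne0$ is in force. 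I expect the diffeomorphism and regularity bookkeeping in the middle step to be the main, though routine, obstacle.
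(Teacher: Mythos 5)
Your argument is correct and follows the paper's own route: undo the rescaling to obtain $r_a$ solving \eqref{ecura}, observe that $\pa_\rho r_a = a\bigl(1+(\pa_s\rti_a)\bigr)>0$ for small $a$ so that the polar parameterization is a diffeomorphism, and then read off that the vorticity defined by \eqref{x}, \eqref{levelsets}, \eqref{rota} is a compactly supported $C^2$ rotating solution. The paper states this tersely (reducing everything to the injectivity check $\pa_\rho r>0$), whereas you spell out the gluing across $\partial\mathcal{A}_a$, the Sobolev embedding, and the pointwise verification of \eqref{euler}; these are the same observations, just written in full.
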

\begin{proof}
We fix $a>0$ small enough in such a way that $\rti(\alpha,\rho)$ is the solution of \eqref{ecuG}, given by Theorem \ref{mainthm}, for that $a$. Then $r(\alpha,\rho)=1-a+a\rho+a^2\rti(\alpha,\frac{\rho-1}{a})$ is a solution of equation \eqref{ecura}. In order to check that the vorticity defined through the equations \eqref{x}, \eqref{levelsets} and \eqref{rota} is a $C^2$ rotating solution of the 2D Euler equation we just need to check that $\pa_\rho r(\alpha,\rho)>0$ (in order to have that $z(\alpha,\rho)$ is one-to-one in $\al\in\T$, $\rho \in [1-a,1+a]$. But $\pa_\rho r(\alpha,\rho)=a(1+(\pa_s\rti)(\alpha,\frac{\rho-1}{a}))$
 and since we are bifurcating from $\tilde{r} = 0$ the result holds for small enough $a$.
\end{proof}

\begin{rem}
We notice that that the sufficient stability condition \cite[equation (3.10), p. 116]{Marchioro-Pulvirenti:incompressible-fluids} may not be satisfied by our solutions (take for example any positive $f^{a}(\rho)$).
\end{rem}

\section{Checking the hypotheses of the Crandall-Rabinowitz theorem and proof of Theorem \ref{mainthm}}\label{checking}

\subsection{Step 1. The functional Setting and the hypothesis 1}

The beginning of this section is devoted to defining the spaces we will work with. After that the main purpose   will be to prove lemma \ref{GXY}.

Let us also define the spaces $H^{4,3}(\Omega)$ as follows:
\begin{align}\label{Hkl}
r \in H^{4,3} \Leftrightarrow \left\{ r\in L^2(\Omega) \, : \, ||r||^2_{L^2(\Omega)}+ ||\pa^{3}_\rho r||_{L^2(\Omega)}^2+\sum_{j=0}^3||\pa^{4-j}_\alpha \pa_\rho^{j} r||_{L^2(\Omega)}^{2}<\infty\right\}.
\end{align}

Moreover, we will say that $r \in H^{4,3}_{m,even}$ (resp. $r\in H^{4,3}_{m,odd}$) if $r \in H^{4,3}$, has $m$-fold symmetry and is even (resp. odd).

\begin{lemma}\label{GXY}Let $\mathcal{G}$ be as in \eqref{G}. Then there exist  numbers $\delta>0$ and  $a_0>0$ small enough such that
\begin{align*}
\mathcal{G}\,:\, V^\delta\times (-a_0, a_0) & \to H^{3,3}_{m,odd}(\Omega),\\
\quad (r,a) & \to \mathcal{G}[r,a].
\end{align*}
where
\begin{align*}
V^\delta=\{ r\in H^{4,3}(\Omega)\,:\, ||r||_{H^{4,3}_{m,even}(\Omega)}< \delta\}.
\end{align*}
\end{lemma}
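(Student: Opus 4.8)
The plan is to verify two things for the map $\mathcal{G}$: first, that it is well-defined as a map into $H^{3,3}(\Omega)$ on a small enough ball $V^\delta$ and a small enough interval of parameters $a$, and second, that it lands in the subspace of functions that are $m$-fold symmetric and odd in $\alpha$. I would begin by recording the Banach algebra property of $H^{4,3}(\Omega)$ (and of $H^{3,3}(\Omega)$): since $\Omega$ is a two-dimensional domain and we control enough derivatives, these spaces embed in $C^1$ and are closed under products, so all the polynomial-in-$\tilde r$ factors appearing in \eqref{G} (terms like $1+a^2(s+\tilde r)$, $\tilde r_\alpha \tilde r_{\alpha'}$, etc.) stay in $H^{3,3}$ once we know the integral operators do. I would also note the elementary but crucial lower bound $A[\tilde r,a](s,s',\alpha,\alpha')\geq c\sinc{\alpha-\alpha'}$ for $\|\tilde r\|_{H^{4,3}}<\delta$ and $|a|<a_0$ small (using that the $a^2$ and $a^4$ corrections are small perturbations of $4\sinc{\alpha-\alpha'}$), so that $\log A[\tilde r,a]$ has only a logarithmic singularity on the diagonal $\alpha=\alpha'$, which is integrable.

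Next I would treat each of the integral terms in \eqref{G} as a convolution-type operator in $\alpha$ against a kernel with a logarithmic (or, after differentiation, a mild Cauchy-type) singularity, with smooth dependence on the auxiliary variables $s,s'$. The key estimates are: (i) $\log(A[\tilde r,a]/A[\tilde r,0])$ is actually $C^3$ jointly (the singular factor $\sinc{\cdot}$ cancels in the quotient, so this term is manifestly regular — this is exactly why the equation was rewritten to isolate it with the $1/a^2$ prefactor, and I would check the $1/a^2$ is harmless by Taylor expanding in $a^2$ near $a=0$); and (ii) for the genuinely singular terms $\int F_\rho(s')\log(A[\tilde r,a])(\cdots)d\alpha' ds'$, differentiating up to three times in $\alpha$ and $s$ either hits the smooth multipliers $\cos(\alpha-\alpha')$, $\sin(\alpha-\alpha')$, $F_\rho(s')$, etc., or produces kernels of the form $\partial_\alpha^k \log A$, which near the diagonal behave like $(\alpha-\alpha')^{-(k-1)}$ for $k\geq 2$ and like $\log$ for $k\le 1$; in each case one checks $L^2(\Omega)$ boundedness either directly (logarithmic kernel) or via a Calderón–Zygmund / Hilbert-transform bound (for the $k=2,3$ kernels after integrating by parts to move a derivative onto $\tilde r$, which is in $H^{4,3}$). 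This gives $\mathcal{G}[\tilde r,a]\in H^{3,3}(\Omega)$ with continuous dependence on $(\tilde r,a)$; one loses exactly one derivative relative to $\tilde r\in H^{4,3}$, consistent with the target $H^{3,3}$.

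For the symmetry claim, I would argue structurally rather than computationally. If $\tilde r$ is $m$-fold symmetric and even in $\alpha$, then $x(\alpha,\rho)$ has the corresponding dihedral symmetry, and one checks that each term of $\mathcal{G}$ transforms as an odd, $m$-fold function of $\alpha$: the factor $\tilde r_\alpha$ is odd, $\sin(\alpha-\alpha')$ is odd under $(\alpha,\alpha')\mapsto(-\alpha,-\alpha')$, $\log A$ is even, and the $\alpha'$-integral over $(-\pi,\pi]$ of an even integrand against these parities produces an odd function; the $\frac{2\pi}{m}$-periodicity is inherited from that of $\tilde r$ and of $\cos,\sin,\log A$ jointly in $(\alpha,\alpha')$. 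I would state this as: the subspaces $H^{4,3}_{m,even}$ and $H^{3,3}_{m,odd}$ are preserved by the respective building blocks, and cite the analogous verification in \cite{Castro-Cordoba-GomezSerrano:global-smooth-solutions-sqg}.

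The main obstacle I anticipate is the rigorous $L^2$-boundedness of the top-order ($k=3$) differentiated operators: one must be careful that after distributing three $\alpha$-derivatives across $\log A[\tilde r,a]$ the worst term is a principal-value singular integral operator applied to a first derivative of $\tilde r$ (hence to an $H^{3}$ function in $\alpha$), and closing this requires either an integration by parts to lower the singularity order combined with a commutator estimate, or a direct kernel decomposition separating the smooth part from the Hilbert-transform part; keeping track of the $s,s'$ dependence (which is only $C^3$ through $F_\rho\in C^2$ — wait, $\phi\in C^3_c$ so $F\in C^4$ and $F_\rho\in C^3$) and of the uniformity in $a\in(-a_0,a_0)$ is the technical heart. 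Everything else — the algebra property, the non-vanishing of $A$, the removal of the $1/a^2$ and $1/a$ singularities, and the parity/periodicity bookkeeping — is routine given the setup, and the detailed estimates are deferred to Appendix \ref{Aestimations} as announced.
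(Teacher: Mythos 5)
Your overall structure---kernel estimates for well-definedness into $H^{3,3}$ followed by parity and $m$-fold symmetry bookkeeping---matches the paper's plan, and you correctly note that $\phi\in C^3_c$ yields $F_\rho\in C^3$. However, two of your preparatory claims are wrong in a way that would defeat the estimates if carried through. First, the lower bound $A[\tilde r,a]\geq c\sinc{\alpha-\alpha'}$ you record is too weak: Lemma \ref{A1} establishes the sharper $A'\geq c\left(\sinc{\alpha'}+b^2 s'^2\right)$ with $b=a^2$, and the $b^2 s'^2$ piece is essential rather than a refinement. After differentiating $\log A'$, the numerator acquires factors such as $b\left(\sinc{\alpha'}+bs'^2\right)$ from the $(u-u')^2$ part of $A'$ (Lemma \ref{A1}--3), and these are only controllable because Lemma \ref{A2} integrates kernels with $\sinc{\alpha'}+b^2s'^2$ in the denominator; with your weaker bound one confronts integrals like $\iint s'^2/\sinc{\alpha'}\,d\alpha'\,ds'$, which diverge, so the constants would blow up as $a\to 0$---the very limit the lemma must be uniform in. Second, the assertion that $A[\tilde r,a]/A[\tilde r,0]$ is ``manifestly regular'' and $C^3$ is incorrect: the quotient contains $a^4(u-u')^2/(4\sinc{\alpha'})$, which is unbounded as $\alpha'\to 0$ for fixed $s'\neq 0$, since $u-u'\to s'+\tilde r-\tilde r'$ does not vanish there. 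The paper never claims pointwise cancellation; Lemmas \ref{l2log1}--\ref{l2logfinal} control $\frac{1}{a^2}\log(A'/A'[0])$ only in an \emph{integrated} sense, via computations such as $\int_{-\pi}^\pi\log\left(1+b^2/\sinc{\alpha'}\right)d\alpha'=4\pi\arcsinh(b)\sim b$, and it is this $O(b)$ integral gain, not pointwise smoothness, that absorbs the $1/a^2$ prefactor.

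On method: the paper's estimates are a Schur/Minkowski argument with explicit kernel bounds and the algebraic split $\pa^3\log A'=\pa^3 A'/A'+(\text{lower order})$, where the leading piece is handled by Lemma \ref{A5} using $L^\infty$ of the multiplier and $L^2$ of the third-order factors. It is not a Calder\'on--Zygmund/Hilbert-transform argument, nor does it integrate by parts or invoke commutator estimates. Your proposed alternative route might be workable in principle, but as written it rests on the two incorrect pointwise characterizations above and so does not yet close the key estimate.
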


\begin{proof}

We will use the following notation. Given any function $f(\al,s)$ we will write
\begin{align*}
f=&f(\alpha,s)\\
f'=&f(\alpha-\alpha',s-s').
\end{align*}
In addition either we will omit the dependence of the $A[\rti,a](s,s',\al,\al')$ on $\rti$, $a$, $s$, $s'$, $\al$ and $\al'$ or we will only make it explicit with respect to the parameters we are interested in.   Also
$$A' \equiv A[\rti,a](s,s-s',\alpha-\alpha').$$

We will use the following convention to take derivatives: $\pa^k$ means either $\pa^k_\alpha$ or $\pa^k_\rho$.

It will be convenient to alleviate the notation to use the function $u(\alpha,s)=s+\rti(\alpha,s)$ in such a way that

\begin{align}\label{Gu}
&\mathcal{G}[u-s,a]\nonumber\\
&=\lambda(a)\left(1+a^2 u\right)u_\alpha\nonumber\\
&+ \frac{1+a^2u}{4\pi} \int_{-\infty}^\infty \int_{-\pi}^\pi F_\rho(s-s')\log(A')\cos(\al')u'_{\al} d\al' ds' \nonumber\\
&- \frac{u_{\al}}{4\pi} \int_{-\infty}^\infty \int_{-\pi}^\pi F_\rho(s-s')\log(A')\cos(\al')(1+a^2u')) d\al' ds' \nonumber\\
& + \frac{1}{4\pi} \int_{-\infty}^\infty \int_{-\pi}^\pi F_\rho(s-s')\frac{1}{a^2}\log\left(\frac{A'}{A'[a=0]}\right)\sin(\alpha') d\al' ds',\nonumber\\
&+ \frac{1}{4\pi} \int_{-\infty}^\infty \int_{-\pi}^\pi F_\rho(s-s')\log(A')\sin(\al') (u+u')  d\al' ds',\nonumber\\
& + \frac{a^2}{4\pi} \int_{-\infty}^\infty \int_{-\pi}^\pi F_\rho(s-s')\log(A')\sin(\al')\left(u_\alpha u_\alpha'+u u'\right)  d\al' ds',\nonumber \\
&\equiv \sum_{i=0}^5\mathcal{G}_i[\rti,a].
\end{align}

The proof of the estimates on $A$ that will be used to prove the lemma are left to the Appendix \ref{Aestimations}.

We will bound the terms $\mathcal{G}_i[\rti,a]$ in the equation \eqref{G} in $H^3(\Omega)$. The bound for $\mathcal{G}_0[\rti,a]$ is straightforward since $H^{4,3}\subset  C^2$, as it is proven in \cite[Lemma 4.1]{Castro-Cordoba-GomezSerrano:global-smooth-solutions-sqg}.

The proof for $\mathcal{G}_i[\rti,a]$, with $i=1,2,4,5$ is similar. We will give the details only for $\mathcal{G}_1[\rti,a]$. Finally we will bound $\mathcal{G}_3[\rti,a]$.

The $L^2-$ norm of $\mathcal{G}_1[u-s,a]$ it is easy to bound by using Lemma \ref{A1}--2, then, in order to bound $\mathcal{G}_3[u-s,a]$ in $H^3$, we take three derivatives to obtain
\begin{align*}
&\pa^3\mathcal{G}_1[u-s,a]=\frac{1}{4\pi}\inti\intpi \cos(\alpha')(1+a^2u)F_\rho(s-s')u'_\alpha \pa^3\log(A') d\al' ds'\\
&+\frac{1}{4\pi}\inti\intpi \left(\pa^3\left(\log(A')\cos(\alpha')(1+a^2u)F_\rho(s-s')u'_\alpha\right)-(1+a^2u)F_\rho(s-s')u'_\alpha \pa^3\log(A')\right) d\alpha' ds'.
\end{align*}

\begin{rem}Actually we also need to take mixed derivatives, as for example $\pa^2_\alpha\pa_\rho$. We will not compute explicitly these mixed derivatives but similar arguments with small modifications apply to them.
\end{rem}
To estimate the first term in the right hand side of the previous expression we use lemma \ref{A5} and to bound the second one we use \ref{A4}.

We now bound $\mathcal{G}_3$. The estimate of the $L^2-$norm follows from lemma \ref{l2logfinal}. Then we are concerned with the bound in $L^2$ of $\pa^3\mathcal{G}_3$,
\begin{align*}
&\pa^3\mathcal{G}_3[u-s,a]=\frac{1}{4\pi}\inti\intpi F_\rho(s-s')\sin(\alpha')\frac{1}{a^2}\pa^3\log(A')d\alpha'ds'\\
&+\frac{1}{4\pi}\inti\intpi \left(\pa^3\left(F_\rho(s-s')\frac{1}{a^2}\log\left(\frac{A'}{A'[0]}\right)\right)-F_\rho(s-s')\frac{1}{a^2}\pa^3\log(A')\right)\sin(\al')d\al'ds'.
\end{align*}
To estimate the fist term in the right hand side of the previous expression we can use lemma \ref{A5}. To bound the second one we can use lemmas \ref{l2logfinal} and \ref{A4}. We remark that we could use the cancellation given by the factor $\sin(\alpha')$ in the integrand of $\mathcal{G}_3$ but actually we do not need it to prove the estimate of $\mathcal{G}_3$ in $H^3$.

In addition we need to prove that $\mathcal{G}[\rti,a]$ is continuous with respect to $a$ and $\rti$ in $(-a_0,a_0)\times V^\delta$. In the next section we will prove that, in fact, it is $C^1$ in this domain.

Therefore, in order to prove that, $$\mathcal{G}\,:\, V\times (-1,1)\to H^{3,3}_{3, \text{odd}}(\Omega_a)$$
we just need to show that if \begin{align*} \tilde{r}(-\alpha,\rho) & = \tilde{r}(\alpha,\rho)\end{align*}
and
\begin{align*}
\tilde{r}\left(\alpha+\frac{2n\pi}{m},\rho\right) & = \tilde{r}(\alpha,\rho)
\end{align*} for $n\in \N$, then
\begin{align*}
\mathcal{G}(-\alpha,\rho) & = -\mathcal{G}(\alpha,\rho)
\end{align*}
and
\begin{align*}
\mathcal{G}\left(\alpha+\frac{2n\pi}{m},\rho\right) & = \mathcal{G}(\alpha,\rho)
\end{align*}
for $n\in \N$. These two properties are easy to check.

The last part of this section will be to check that the hypothesis 1 in the C-R theorem holds. This fact is a consequence of radial functions being stationary solutions of the Euler equation but let us check it on \eqref{G}. If we take $\tilde{r} = 0$, the only term in \eqref{G} that is not trivially zero is $\mathcal{G}_{3}[0,a]$. If $a \neq 0$, then it is immediate following from the oddness of the integrand in $\alpha'$. If $a = 0$,

\begin{align*}
\lim_{a\to 0}\mathcal{G}_{3}[0,a] = \frac{1}{4\pi}\int_{-\infty}^{\infty}\int_{-\pi}^{\pi}F_{\rho}(s-s')\frac{(\pa_{a^{2}}A')[0,0]}{A'[0,0]} \sin(\al-\al')d\al'ds',
\end{align*}

where the integrand is also odd in $\alpha'$.

\end{proof}

\subsection{Step 2. The derivatives}

\begin{lemma}
The derivative $\pa_{a}\mathcal{G}[u-s,a]$ is given by:

\begin{align}\label{daG}
\pa_{a}\mathcal{G}[u-s,a] & = \frac{d\lambda}{da}(a)\left(1+a^2u\right)u_\alpha + 2a\lambda(a)u u_\alpha \nonumber\\
&+ \frac{au}{2\pi} \inti \int_{-\pi}^\pi F_\rho(s-s')\log(A')\cos(\al')u'_\al d\al' ds' \nonumber\\
&+ \frac{1+a^2u}{4\pi} \inti \int_{-\pi}^\pi F_\rho(s-s')\frac{\pa_a A'}{A'}\cos(\al')u'_{\al} d\al' ds' \nonumber\\
&- \frac{u_\alpha}{4\pi} \inti \int_{-\pi}^\pi F_\rho(s-s')\frac{\pa_a A'}{A'}\cos(\al')(1+a^2u') d\al' ds' \nonumber\\
&- \frac{a u_{\al}}{2\pi} \inti \int_{-\pi}^\pi F_\rho(s-s')\log(A')\cos(\al')u' d\al' ds' \nonumber\\
& - \frac{1}{4\pi} \inti \int_{-\pi}^\pi F_\rho(s-s')\pa_a\left(\frac{1}{a^2}\log\left(\frac{A'}{A'[a=0]}\right)\right)\sin(\al') d\al' ds',\nonumber\\
&+ \frac{1}{4\pi} \inti \int_{-\pi}^\pi F_\rho(s-s')\frac{\pa_a A'}{A'}\sin(\al') (u+u')  d\al' ds',\nonumber\\
&+ \frac{a^2}{4\pi} \inti \int_{-\pi}^\pi F_\rho(s-s')\frac{\pa_a A'}{A'}\sin(\al') (uu'+u_\alpha u'_\alpha)  d\al' ds',\nonumber\\
&+\frac{a}{2\pi}\inti\intpi F_\rho(s-s')\log(A')\sin(\al') (uu'+u_\alpha u'_\alpha)  d\al' ds',
\end{align}
Moreover this derivative is continuous in $H^3$.
\end{lemma}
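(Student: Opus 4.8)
The plan is to establish the formula \eqref{daG} first and then prove the continuity claim, both following the template already used for $\mathcal{G}$ itself in Lemma \ref{GXY}. For the formula, I would differentiate the six terms $\mathcal{G}_i[\rti,a]$ in \eqref{Gu} one at a time with respect to $a$, using the product rule. The only nontrivial ingredient is that one must be allowed to differentiate under the integral sign; this is justified because the $a$-dependence enters only through the explicit polynomial prefactors $1+a^2u$, through $\lambda(a)$ (which is affine and hence $C^\infty$ in $a$), and through $\log(A')$ (respectively $\tfrac{1}{a^2}\log(A'/A'[a=0])$ in $\mathcal{G}_3$). Since $A'$ is a polynomial in $a$ with $A'[a=0]=4\sinc{\alpha'}$ bounded below away from zero away from $\alpha'=0$, and since the singularity at $\alpha'=0$ is only logarithmic and integrable uniformly for $a$ in a small neighborhood of $0$, the standard dominated-convergence criterion for differentiation under the integral applies. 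The term $\mathcal{G}_3$ requires a little extra care: one has to check that $\pa_a\bigl(\tfrac{1}{a^2}\log(A'/A'[a=0])\bigr)$ extends continuously across $a=0$, which follows from the expansion of $A'$ in powers of $a^2$ (the $a^2$ and $a^4$ coefficients are displayed in the excerpt), so that $\log(A'/A'[a=0]) = O(a^2)$ and the quotient with $a^2$ and its $a$-derivative are smooth; I would record this as a separate observation or cite the estimates in Appendix \ref{Aestimations}.

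Collecting the terms from the product rule then gives exactly the nine lines of \eqref{daG}: the first line is $\pa_a$ of $\mathcal{G}_0$; lines two through five come from $\mathcal{G}_1$ and $\mathcal{G}_2$ (each producing one term where the derivative hits the polynomial prefactor, scaled by $2a$ down to $a$, and one term where it hits $\log(A')$ producing $\pa_a A'/A'$); line six is $\pa_a$ of $\mathcal{G}_3$; line seven is $\pa_a$ of $\mathcal{G}_4$; and lines eight and nine are the two pieces of $\pa_a$ of $\mathcal{G}_5$ (one from the $\pa_a A'/A'$ factor, one from the explicit $a^2$ prefactor differentiating to $2a$, absorbed into $\tfrac{a}{2\pi}$). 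This is bookkeeping and I would present it compactly rather than line by line.

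For the continuity statement, the argument mirrors the $H^3$ bounds already carried out for the $\mathcal{G}_i$ in the proof of Lemma \ref{GXY}. Each summand of \eqref{daG} is either of a form already estimated there (terms with $\log(A')$ times smooth factors, handled by Lemmas \ref{A4}, \ref{A5}, \ref{l2logfinal}) or contains the new factor $\pa_a A'/A'$. For the latter I would note that $\pa_a A' = 2a\cdot(\text{explicit polynomial in }a)\cdot(\text{products of }u,u',\sinc{\alpha'})$ plus higher order, so $\pa_a A'/A'$ enjoys the same kind of bounds as $\log(A')$ does for the purposes of the Appendix lemmas — indeed $\pa_a A'/A'$ is \emph{less} singular at $\alpha'=0$ than $\log A'$ because the numerator vanishes to the same order at $\alpha'=0$ as does a factor of $\sinc{\alpha'}$ in $A'$ — so after taking up to three derivatives (and the mixed derivatives, as in the Remark) one can split into a principal term where all derivatives hit the singular factor (controlled by a lemma of the type \ref{A5}) and a remainder (controlled by \ref{A4}), exactly as before. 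Continuity in $(\rti,a)$ on $V^\delta\times(-a_0,a_0)$ then follows from these uniform bounds together with the continuity of the integrands, via dominated convergence, as in the previous step.

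The main obstacle I anticipate is the term coming from $\mathcal{G}_3$, i.e. the sixth line of \eqref{daG}: one must verify carefully that $\pa_a\bigl(a^{-2}\log(A'/A'[a=0])\bigr)$ is a legitimate, continuous object near $a=0$ — it has the shape $a^{-3}\times(\text{something that vanishes like }a^2)$ plus $a^{-2}\times(\text{something that vanishes like }a^2)$, and a naive count suggests a blow-up that is in fact cancelled by the Taylor expansion of $A'$. Making this precise (writing $A'/A'[a=0] = 1 + a^2 B' + a^4 C'$ with $B',C'$ bounded in the relevant norms away from $\alpha'=0$, expanding the logarithm, and checking the $H^3$ bounds of the resulting quotient and its $a$-derivative survive the removable singularity at $\alpha'=0$) is the delicate point; everything else is a routine, if lengthy, repetition of the estimates in Lemma \ref{GXY} and the Appendix.
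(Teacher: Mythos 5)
Your proposal follows essentially the same route as the paper: differentiate each $\mathcal{G}_i$ term by term, isolate the most singular pieces ($\mathcal{G}_{11}$ and especially $\mathcal{G}_3$), and control everything via the kernel estimates of Appendix \ref{Aestimations}; your identification of $\pa_a\bigl(a^{-2}\log(A'/A'[0])\bigr)$ near $a=0$ as the delicate point, and of the cancellation coming from $A'/A'[0]=1+O(a^2)$, is exactly what Lemmas \ref{B17}--\ref{B19} make precise. One caveat worth stating explicitly: since the claim is differentiability of $a\mapsto\mathcal{G}[\rti,a]$ as a map into $H^3$, pointwise dominated convergence under the integral is not by itself enough at $a=0$; the paper instead proves the $H^3$-limit of the difference quotient directly and separately shows the candidate derivative extends continuously to $a=0$ in $H^3$ (Lemmas \ref{dA4}--\ref{dA5} for the derivative formula, \ref{A4bis}--\ref{A5bis} and \ref{B15}--\ref{B19} for the difference quotient), and this two-sided argument is what you would need to flesh out.
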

\begin{proof}
We will give the details of the differentiability with respect to $a$ of the term
\begin{align*}
\mathcal{G}_{11}[u-s,a]=\frac{1}{4\pi}\inti\intpi F_s(s-s')\cos(\alpha')\log(A') u'_\alpha d\alpha' ds'
\end{align*} and of $\mathcal{G}_3$
which are the most singular ones. First we notice that for $a\neq 0$ the formula
\begin{align*}
\pa_a \mathcal{G}_{11}[u-s,a]=\frac{1}{4\pi}\inti\intpi F_s(s-s')\cos(\alpha') \frac{\pa_a A'}{A'} u'_\alpha d\alpha' ds'
\end{align*}
holds. In addition, by using lemmas \ref{dA4} and \ref{dA5} we obtain that, for $a\neq 0$, $\pa_a\mathcal{G}_{11}\in H^3$ and that $\lim_{a\to 0^\pm} \pa_a \mathcal{G}_{11}[u-s,a]=0$ in $H^3$. It  remains to prove that $\lim_{h\to 0^\pm}\frac{\mathcal{G}_{11}[u-s,h]-\mathcal{G}_{11}[u-s,0]}{h}=0$ in $H^3$. This is a consequence of lemmas \ref{A4bis} and \ref{A5bis}.

Next we compute the derivative with respect to $a$ of $\mathcal{G}_3$ at the point $a=0$. First we show that
\begin{align*}
\lim_{a \to 0}\mathcal{G}_3[u-s,a]=\frac{1}{4\pi}\inti\intpi F_\rho(s-s')(u+u')\sin(\al')d\alpha'ds'
\end{align*}
This is done by using lemmas \ref{B15} and \ref{B16}.

Thus
\begin{align*}
\mathcal{G}_3[u-s,a]-\mathcal{G}_3[u-s,0]=\frac{1}{4\pi }\inti\intpi F_\rho(s-s')\p{\frac{1}{a^2}\log\p{\frac{A'[a]}{A'[0]}}-(u+u')}\sin(\alpha')d\alpha'ds'\end{align*}

And we can check that $$\lim_{a\to 0}\frac{\mathcal{G}_3[u-s,a]-\mathcal{G}_3[u-s,0]}{a}=0$$ in $H^3$ by using again lemmas \ref{B15} and \ref{B16}.

In addition we have that
\begin{align*}
\frac{1}{4\pi} \inti \int_{-\pi}^\pi F_\rho(s-s')\pa_a\left(\frac{1}{a^2}\log\left(\frac{A'}{A'[a=0]}\right)\right)\sin(\al') d\al' ds'
\end{align*}
is in $H^3$ for every $a\neq 0$ and that
\begin{align*}
\lim_{a\to 0}\frac{1}{4\pi} \inti \int_{-\pi}^\pi F_\rho(s-s')\pa_a\left(\frac{1}{a^2}\log\left(\frac{A'}{A'[a=0]}\right)\right)\sin(\al') d\al' ds'=0
\end{align*}
in $H^3$ by lemma \ref{B18} and \ref{B19}.

Finally we will prove the continuity with respect to $u$. Let us focus on the term
\begin{align*}
\pa_a \mathcal{G}_{11}[u-s,a]=\frac{1}{4\pi}\inti\intpi F_s(s-s')\cos(\alpha')\frac{\pa A'[u]}{A'[u]}u'_\alpha d\alpha'ds'.
\end{align*}
We want to estimate the difference $||\pa_a \mathcal{G}_{11}[u-s,a]-\pa_a \mathcal{G}_{11}[v-s,a]||_{H^3}$, with $u$, $v\in H^{4,3}$. Here, the most singular terms are
\begin{align*}
&J_1=\ab{\ab{\frac{1}{4\pi}\inti\intpi F_s(s-s')\cos(\alpha')\p{\frac{\pa_a A'[u]}{A'[u]}\pa^3u'_\alpha-\frac{\pa_a A'[v]}{A'[v]}\pa^3v'_\alpha} d\alpha'ds'}}_{L^2}\\
&J_2=\ab{\ab{\frac{1}{4\pi}\inti\intpi F_s(s-s')\cos(\alpha')\p{\pa^3\p{\frac{\pa_a A'[u]}{A'[u]}}u'_\alpha-\pa^3\p{\frac{\pa_a A'[v]}{A'[v]}}v'_\alpha} d\alpha'ds'}}_{L^2}.
\end{align*}
In $J_1$ the most singular terms come from the differences
\begin{align*}
\frac{\pa^3 \pa_a A'[u]}{A'[u]}u'_\alpha-\frac{\pa^3 \pa_a A'[v]}{A'[v]}v'_\alpha
\end{align*}
and
\begin{align*}
\pa_a A'[u]u'_\alpha \pa^3\frac{1}{A'[u]}-\pa_a A'[v]v'_\alpha \pa^3\frac{1}{A'[v]}.
\end{align*}
We will focus on
\begin{align*}
J_{21}=\ab{\ab{\frac{1}{4\pi}\inti\intpi F_s(s-s')\cos(\alpha')\p{\frac{\pa^3\pa_a A'[u]}{A'[u]}u'_\alpha-\frac{\pa^3\pa_a A'[v]}{A'[v]}v'_\alpha} d\alpha'ds'}}_{L^2}.
\end{align*}
We will deal with this term by splitting
\begin{align*}
&\frac{u'_\alpha}{A'[u]}\pa^3\pa_a A'[u]-\frac{v'_\alpha}{A'[v]}\pa^3\pa_a A'[v]\\
&=\frac{u'_\alpha}{A'[u]}\p{\pa^3\pa_aA'[u]-\pa^3\pa_a A'[v]}+\pa^3\pa_a A'[u]\p{\frac{u'_\alpha}{A'[u]}-\frac{v'_\alpha}{A'[v]}}
\end{align*}
We will give the details of the bound of
\begin{align*}
J_{211}=\ab{\ab{\frac{1}{4\pi}\inti\intpi F_s(s-s')\cos(\alpha')\p{\frac{u'_\alpha}{A[u]}\p{\pa^3\pa_aA'[u]-\pa^3\pa_a A'[v]}} d\alpha'ds'}}_{L^2}.
\end{align*}

We notice that
\begin{align*}
&\pa^3\pa_a A'[u]-\pa^3\pa_a A'[v]=8a \pa^3(u+u')\sinc{\al'}+4a^3\p{4\pa^3(uu')\sinc{\al'}+\pa^3(u-u')^2}\\
&8a \pa^3(v+v')\sinc{\al'}+4a^3\p{4\pa^3(vv')\sinc{\al'}+\pa^3(v-v')^2},
\end{align*}
and the most singular terms here are
\begin{align*}
&8a \pa^3(u-v+u'-v')\sinc{\al'}\\&+4a^3\p{4\p{u'\pa^3u-v'\pa^3v+ u\pa^3u'-v\pa^3v'}\sinc{\al'}+2(u-u')\pa^3(u-u')-2(v-v')\pa^3(v-v')},
\end{align*}
and it is enough to consider the terms
\begin{align*}
&8a \pa^3(u-v+u'-v')\sinc{\al'}\\&+4a^3\p{4\p{u'\pa^3(u-v)+u\pa^3(u'-v')}\sinc{\al'}+2(u-u')\pa^3(u-v-(u'-v'))}.
\end{align*}
Then we have to estimate
\begin{align*}
&\left|\frac{u'_\alpha}{A'[u]}\left(8a \pa^3(u-v+u'-v')\sinc{\al'}\right.\right.\\&\left.\left.+4a^3\p{4\p{u'\pa^3(u-v)+u\pa^3(u'-v')}\sinc{\al'}+2(u-u')\pa^3(u-v-(u'-v'))}\right)\right|\\
&\leq C a \frac{\sinc{\al'}}{\D}\p{|\pa^3(u-v)|+|\pa^3(u'-v')|}+Ca^3\frac{\sinc{\alpha'}+s'^2}{\D}\p{|\pa^3(u-v)|+|\pa^3(u'-v')|},
\end{align*}
where we have used lemma \ref{A1}. Then by applying lemma \ref{A2} we can obtain a suitable estimate for $J_{211}$. The term $J_2$ can be bounded in a similar way.

This concludes the proof of lemma \ref{daG}.

\end{proof}

\begin{lemma}\label{du}
The derivative $\pa_{\rti} \mathcal{G}[\rti,a]g=\pa_u \mathcal{G}[u-s,a]$ is given by:

\begin{align*}
\pa_{u} \mathcal{G}[u-s,a]g & = a^2 \lambda(a) g\,u_\alpha +\lambda(a)\left(1+a^2u\right)g_\alpha \nonumber \\
&+ \frac{a^2g}{4\pi} \int_{-\infty}^\infty \int_{-\pi}^\pi F_\rho(s-s')\log(A')\cos(\al')u'_\al d\al' ds' \nonumber\\
&+ \frac{1+a^2u}{4\pi} \inti \int_{-\pi}^\pi F_\rho(s-s')\frac{(\pa_{u} A')[g]}{A'}\cos(\al')u'_{\al} d\al' ds' \nonumber\\
&+ \frac{1+a^2u}{4\pi} \inti \int_{-\pi}^\pi F_\rho(s-s')\log(A')\cos(\al')g'_{\al} d\al' ds' \nonumber\\
&- \frac{g_{\al}}{4\pi} \inti \int_{-\pi}^\pi F_\rho(s-s')\log(A')\cos(\al')(1+a^2u') d\al' ds' \nonumber\\
&- \frac{u_{\al}}{4\pi} \inti \int_{-\pi}^\pi F_\rho(s-s')\frac{(\pa_{u} A')[g]}{A'}\cos(\al')(1+a^2u') d\al' ds' \nonumber\\
&- \frac{u_{\al}}{4\pi} \inti \int_{-\pi}^\pi F_\rho(s-s')\log(A')\cos(\al')a^2g' d\al' ds' \nonumber\\
& + \frac{1}{4\pi a^2} \inti \int_{-\pi}^\pi F_\rho(s-s')\frac{(\pa_{u} A')[g]}{A'}\sin(\al') d\al' ds',\nonumber\\
&+ \frac{1}{4\pi} \inti \int_{-\pi}^\pi F_\rho(s-s')\frac{(\pa_{u} A')[g]}{A'}\sin(\al') \left(u+u'\right)  d\al' ds',\nonumber\\
&+ \frac{1}{4\pi} \inti \int_{-\pi}^\pi F_\rho(s-s')\log(A')\sin(\al') \left(g+g'\right)  d\al' ds',\nonumber\\
& + \frac{a^2}{4\pi} \inti \int_{-\pi}^\pi F_\rho(s-s')\frac{(\pa_{u} A')[g]}{A'}\sin(\al')\left(uu'+u_\al u_\al'\right)  d\al' ds'\nonumber \\
& + \frac{a^2}{4\pi} \inti \int_{-\pi}^\pi F_\rho(s-s')\log(A')\sin(\al')\left(gu'+g'u+g_\al u_\al'+g'_\al u_\al\right)  d\al' ds'.\nonumber \\
\end{align*}
Moreover this derivative is continuous in $H^3$.
\end{lemma}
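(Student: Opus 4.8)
The plan is to obtain the formula by formally linearizing \eqref{Gu} in the direction $g$, and then to upgrade this formal derivative to a genuine Fr\'echet derivative, continuous in $(u,a)$, by reusing the estimates of Appendix \ref{Aestimations} exactly as in the proofs of Lemmas \ref{GXY} and \ref{daG}.

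\emph{Formal computation of the derivative.} Writing $\mathcal{G}[u-s,a]=\sum_{i=0}^{5}\mathcal{G}_i$ as in \eqref{Gu} and differentiating $t\mapsto\mathcal{G}[(u+tg)-s,a]$ at $t=0$ by the product rule, the one nonelementary ingredient is the derivative of the kernel. Recalling from the explicit formula for $A$ that $A'=4\sinc{\al'}+4a^2(u+u')\sinc{\al'}+a^4\p{4uu'\sinc{\al'}+(u-u')^2}$, the directional derivative of $A'$ is
\begin{align*}
(\pa_u A')[g]=4a^2(g+g')\sinc{\al'}+a^4\p{4(gu'+g'u)\sinc{\al'}+2(u-u')(g-g')},
\end{align*}
so that $\pa_u(\log A')[g]=\frac{(\pa_u A')[g]}{A'}$, and since $A'[a=0]=4\sinc{\al'}$ does not depend on $u$ one also has $\pa_u\p{\frac{1}{a^2}\log\frac{A'}{A'[a=0]}}[g]=\frac{1}{a^2}\frac{(\pa_u A')[g]}{A'}$. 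Feeding these two identities, together with the Leibniz terms coming from the explicit factors $u,u_\alpha,u',u'_\alpha$, into each $\mathcal{G}_i$ reproduces the displayed formula.

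\emph{From formal to genuine derivative, and continuity.} Term by term, the right-hand side of the formula has exactly the structure of the terms in \eqref{Gu} and \eqref{daG}: a product of $C^2$ functions (recall $H^{4,3}\subset C^2$) with integral operators against $F_\rho(s-s')$ whose kernels are $\log A'$, $\frac{(\pa_u A')[g]}{A'}$ or $\frac{1}{a^2}\frac{(\pa_u A')[g]}{A'}$. So the very estimates of Appendix \ref{Aestimations} used for Lemma \ref{GXY} show that, for $\delta$ and $a_0$ small enough, each term maps $V^\delta\times(-a_0,a_0)$ boundedly and linearly in $g$ into $H^{3,3}(\Omega)$ — and into $H^{3,3}_{m,\text{odd}}(\Omega)$ when $g$ has $m$-fold symmetry and is even, by the same parity checks used for $\mathcal{G}$ — so that $g\mapsto\pa_u\mathcal{G}[u-s,a]g$ is a bounded linear operator $H^{4,3}(\Omega)\to H^{3,3}(\Omega)$; as usual one controls $\pa^3$ of an integral term, and the mixed derivatives, by splitting off the most singular piece, in which all derivatives fall on the kernel (lemma \ref{A5}), from the remainder (lemma \ref{A4}). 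To see that this operator really is the derivative and is continuous in $(u,a)$ — so that, with Lemma \ref{daG}, $\mathcal{G}\in C^1$ and hypothesis 2 of Theorem \ref{CR-theorem} holds — I would estimate $\|\pa_u\mathcal{G}[u-s,a]g-\pa_u\mathcal{G}[v-s,a]g\|_{H^3}$ for $u,v\in V^\delta$ (and the dependence on $a$) precisely as in the proof of Lemma \ref{daG}, writing each kernel difference in telescoping form, e.g.
\begin{align*}
\frac{(\pa_u A'[u])[g]}{A'[u]}-\frac{(\pa_u A'[v])[g]}{A'[v]}=\frac{(\pa_u A'[u])[g]-(\pa_u A'[v])[g]}{A'[u]}+(\pa_u A'[v])[g]\,\frac{A'[v]-A'[u]}{A'[u]\,A'[v]},
\end{align*}
and using $A'[u]-A'[v]=4a^2(u-v+u'-v')\sinc{\al'}+a^4\p{4(uu'-vv')\sinc{\al'}+(u-u')^2-(v-v')^2}$ together with lemmas \ref{A1}, \ref{A2}, \ref{A4} and \ref{A5}, just as in the estimate of $J_{211}$ there. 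Continuity of the G\^ateaux derivative then upgrades it to the Fr\'echet derivative.

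\emph{Main obstacle.} As in Lemmas \ref{GXY} and \ref{daG}, the delicate term is the one inherited from $\mathcal{G}_3$,
\begin{align*}
\frac{1}{4\pi a^2}\inti\intpi F_\rho(s-s')\,\frac{(\pa_u A')[g]}{A'}\,\sin(\al')\,d\al'\,ds',
\end{align*}
because of the explicit factor $a^{-2}$ and the need for bounds that are uniform down to $a=0$. The saving grace, already used above, is that $(\pa_u A')[g]$ carries an overall factor $a^2$, so $a^{-2}(\pa_u A')[g]/A'$ stays bounded and in fact tends to $g+g'$ as $a\to0$; this is consistent with the limit $\mathcal{G}_3[u-s,0]=\frac{1}{4\pi}\inti\intpi F_\rho(s-s')(u+u')\sin(\al')\,d\al'\,ds'$ computed in the proof of Lemma \ref{daG}, whose $g$-derivative is $\frac{1}{4\pi}\inti\intpi F_\rho(s-s')(g+g')\sin(\al')\,d\al'\,ds'$. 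The $H^3$ bounds and the continuity at $a=0$ for this term are obtained from the lemmas used for the analogous $\mathcal{G}_3$-terms there (lemmas \ref{l2logfinal}, \ref{B15}, \ref{B16}, \ref{B18}, \ref{B19}, and \ref{A4}, \ref{A5}); as noted in Lemma \ref{GXY}, the $\sin(\al')$ cancellation in the integrand is available but is not actually needed for the $H^3$ estimate. This is the step carrying essentially all of the work.
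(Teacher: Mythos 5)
Your overall strategy --- formal linearization, upgrading via the Appendix~\ref{Aestimations} estimates and telescoping as in Lemmas~\ref{GXY} and~\ref{daG} --- is the same as the paper's, and your computation of $(\pa_u A')[g]$ and the resulting formula for $\pa_u\mathcal{G}$ are correct. The gap is in how you pass from the formal candidate to the genuine derivative. Appealing to ``continuity of the G\^ateaux derivative upgrades it to the Fr\'echet derivative'' presupposes that the directional derivative $\lim_{t\to 0}\tfrac{1}{t}\bigl(\mathcal{G}[(u+tg)-s,a]-\mathcal{G}[u-s,a]\bigr)$ exists \emph{in $H^3$} and equals your candidate; the pointwise formal computation supplies neither the existence of this limit in $H^3$ nor its identification, and boundedness of the candidate operator plus its continuity in $u$ (which you do address) do not give this either. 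You need a separate argument that the difference quotient converges.

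The paper fills exactly that step: for $\mathcal{G}_{11}$ and $\mathcal{G}_3$ it writes out the difference quotient, takes three derivatives, and reduces the $H^3$ convergence to the limits~\eqref{limite1}--\eqref{limite2}, which are controlled by the dedicated appendix lemmas~\ref{estimacionesdu}, \ref{maindu1}, \ref{maindu1bis}, \ref{maindu2}, \ref{maindu2bis} (built on the second-order remainder estimates in Lemma~\ref{popurri}, items 6--8). These estimate precisely $\tfrac{1}{t}\log\bigl(A'[u+tg]/A'[u]\bigr) - \tfrac{(\pa_u A')[g]}{A'[u]}$ and its derivatives. The lemmas you cite for the $\mathcal{G}_3$ term (\ref{l2logfinal}, \ref{B15}, \ref{B16}, \ref{B18}, \ref{B19}) together with \ref{A4}, \ref{A5} are the tools for the $a$-derivative in Lemma~\ref{daG} and for the static mapping bounds of Lemma~\ref{GXY}; they do not control the $u$-difference-quotient remainder. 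So the missing ingredient is the family of $u$-specific difference-quotient estimates.
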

\begin{proof} We will focus in the derivatives of the terms $\mathcal{G}_1$ and $\mathcal{G}_3$ which are the most difficult ones.

In order to differentiate $\mathcal{G}_1$ with respect to $u$ we notice that thanks to lemma \ref{GXY} in order to prove Lemma \ref{du} is enough to compute the derivative of the term
\begin{align*}
\mathcal{G}_{11}[u-s,a]=\frac{1}{4\pi}\inti\intpi F_\rho(s-s')\log\left(A'\right)\cos(\alpha')u'_\alpha d\alpha'ds'.
\end{align*}
Thus we need to show that
\begin{align*}
&\lim_{t\to 0} \frac{\mathcal{G}_{11}[u+t g-s,a]-\mathcal{G}_{11}[u-s,a]}{t}=\frac{1}{4\pi}\inti\intpi F_{\rho}(s-s')\log\left(A'[u]\right)\cos(\al')g'_\alpha d\alpha'ds'\\
&+\frac{1}{4\pi}\inti\intpi F_{\rho}(s-s')\frac{\pa_u A'[g]}{A'}\cos(\alpha')u'_\alpha d\al' ds'
\end{align*}
is in $H^3$ for $g\in H^3$, with $||g||_{H^3}=1$, and that this derivative is continuous. We have that
\begin{align*}
&\mathcal{G}_{11}[u+t g -s, a]-\mathcal{G}_{11}[u-s,a]\\&=\frac{1}{4\pi}\inti\intpi F_{\rho}(s-s')\cos(\alpha')\left(\log\left(A'[u+tg]\right)(u'+tg')_\alpha-\log\left(A'[u]\right)u'_\alpha\right)d\alpha'ds'.
\end{align*}
Taking 3 derivatives yields
\begin{align*}
&\pa^3\left(\mathcal{G}_{11}[u+t g -s, a]-\mathcal{G}_{11}[u-s,a]\right)\\
&=\sum_{j=0}^3\frac{c_j}{4\pi}\inti\intpi \cos(\al') \pa^j\left(\log\left(\frac{A'[u+tg]}{A'[u]}\right)\right)\pa^{3-j}\p{F_\rho(s-s')u'_\alpha} d\alpha' ds'\\
&+t \sum_{j=0}^3 \frac{c_j}{4\pi}\inti\intpi \cos(\al') \pa^j\left(\log\left(\frac{A'[u+tg]}{A'[u]}\right)\right)\pa^{3-j}\p{F_\rho(s-s')g'_\alpha} d\al'ds'\\
&+t \sum_{j=0}^3 \frac{c_j}{4\pi}\inti\intpi \cos(\al') \pa^j \left(\log\left(A'[u]\right)\right)\pa^{3-j}\p{F_\rho(s-s')g'_\alpha} d\alpha'd s'.
\end{align*}
Then it is enough to prove that
\begin{align}\label{limite1}
&\lim_{t\to0}\left|\left|\inti\intpi \cos(\alpha')\pa^j\left(\left(\frac{1}{t}\log\left(\frac{A'[u+tg]}{A'[u]}\right)-\frac{\pa_u A'[g]}{\A}\right)\right)\pa^{3-j}\p{F_\rho(s-s')u'_\alpha} d\al'ds'\right|\right|_{L^2}=0,\\
&\lim_{t\to 0}\left|\left|\inti\intpi \cos(\alpha')\pa^j\left(\log\left(\frac{A'[u+tg]}{A'[u]}\right)\right)\pa^{3-j}\p{F_\rho(s-s')u'_\alpha} d\al'ds'\right|\right|_{L^2}=0\label{limite2}
\end{align}
for all $j$.
These equalities \eqref{limite1} and \eqref{limite2} are a consequence of  lemmas \ref{maindu1} and \ref{maindu1bis}.

For $\mathcal{G}_3$ we have that
\begin{align*}
\mathcal{G}_3[u+tg-s,a]-\mathcal{G}_3[u-s,a]=\frac{1}{4\pi}\inti\intpi F_{\rho}(s-s')\frac{1}{b}\log\p{\frac{\Ag}{\A}}\sin(\alpha')d\alpha'ds'
\end{align*}
and then we have to show that
\begin{align*}
\lim_{t\to 0}\ab{\ab{\inti\intpi \pa^{3-j}F_{\rho}(s-s')\frac{1}{b}\pa^j\p{\frac{1}{t}\log\p{\frac{\Ag}{\A}}-\frac{\pa_u A'[g]}{\A}}\sin(\alpha')d\alpha'ds'}}_{L^2}=0.
\end{align*}
for $j=0,1,2,3$. This is also a consequence of lemmas \ref{maindu2} and \ref{maindu2bis}.
\end{proof}

The proof of the the continuity in $H^3$ follows similar steps to those in lemma \ref{daG}.

\begin{lemma}\label{dudaG}

The derivative $\pa_{u} \pa_{a}\mathcal{G}[u-s,a]g$ is given by:

\begin{align}
\label{deriv_ar_G}
\pa_{u} \pa_{a}\mathcal{G}[u-s,a]g & =
a^2\frac{d\lambda}{da}(a)g u_\alpha  + \frac{d\lambda}{da}(a)(1+a^2u)g_\alpha +2a\lambda(a)g u_\alpha +2a \lambda(a)u g_\alpha \nonumber\\
&+ \frac{ag}{2\pi} \inti \int_{-\pi}^\pi F_\rho(s-s')\log(A')\cos(\al')u'_{\al} d\al' ds' \nonumber\\
&+ \frac{au}{2\pi} \inti \int_{-\pi}^\pi F_\rho(s-s')\frac{(\pa_{u}A')[g]}{A'}\cos(\al')u'_{\al} d\al' ds' \nonumber\\
&+ \frac{au}{2\pi} \inti \int_{-\pi}^\pi F_\rho(s-s')\log(A')\cos(\al')g'_{\al} d\al' ds' \nonumber\\
&+ \frac{a^2g}{4\pi} \inti \int_{-\pi}^\pi F_\rho(s-s')\frac{\pa_a A'}{A'}\cos(\al')u'_{\al} d\al' ds' \nonumber\\
&+ \frac{1+a^2u}{4\pi} \inti \int_{-\pi}^\pi F_\rho(s-s')\left(\frac{(\pa_{u}\pa_a)A'[g]}{A'}-\frac{\pa_a A(\pa_{u}A)[g]}{A'^{2}}\right)\cos(\al')u'_{\al} d\al' ds' \nonumber\\
&+ \frac{1+a^2u}{4\pi} \inti \int_{-\pi}^\pi F_\rho(s-s')\frac{\pa_a A'}{A'}\cos(\al')g'_{\al} d\al' ds' \nonumber\\
&- \frac{g_{\al}}{4\pi} \inti \int_{-\pi}^\pi F_\rho(s-s')\frac{\pa_aA'}{A'}\cos(\al')(1+a^2u') d\al' ds' \nonumber\\
&- \frac{u_{\al}}{4\pi} \inti \int_{-\pi}^\pi F_\rho(s-s')\left(\frac{(\pa_{u}\pa_a A')[g]}{A'}-\frac{\pa_aA'(\pa_{u}A')[g]}{A'^{2}}\right)\cos(\al')(1+a^2u') d\al' ds' \nonumber\\
&- \frac{u_{\al}}{4\pi} \inti \int_{-\pi}^\pi F_\rho(s-s')\frac{\pa_aA'}{A'}\cos(\al')a^2g' d\al' ds' \nonumber\\
&- \frac{ag_{\al}}{2\pi} \inti \int_{-\pi}^\pi F_\rho(s-s')\log(A')\cos(\al')u' d\al' ds' \nonumber\\
&- \frac{au_{\al}}{2\pi} \inti \int_{-\pi}^\pi F_\rho(s-s')\frac{(\pa_{u}A')[g]}{A'}\cos(\al')u' d\al' ds' \nonumber\\
&- \frac{au_{\al}}{2\pi} \inti \int_{-\pi}^\pi F_\rho(s-s')\log(A')\cos(\al')g' d\al' ds' \nonumber\\
&-\frac{1}{4\pi}\inti \intpi F_\rho(s-s')\pa_a\left(\frac{1}{a^2}\frac{(\pa_uA')[g]}{A'}\right)\sin(\al')d\al'ds'\nonumber\\
&+\frac{1}{4\pi}\inti\intpi  F_\rho(s-s')\left(\frac{(\pa_u\pa_a A')[g]}{A'}-\frac{\pa_a A' \pa_uA'[g]}{A'^2}\right)\sin(\al')(u+u')d\al'ds'\nonumber\\
&+\frac{1}{4\pi}\inti\intpi  F_\rho(s-s')\frac{\pa_a A'}{A'}\sin(\al')(g+g')d\al' ds'\nonumber\\
&+\frac{a^2}{4\pi}\inti\intpi  F_\rho(s-s')\left(\frac{(\pa_u\pa_a A')[g]}{A'}-\frac{\pa_a A' (\pa_u)A'[g]}{A'^2}\right)\sin(\al')(uu'+u_\alpha u'_\al)d\al' ds'\nonumber\\
&+\frac{a^2}{4\pi}\inti\intpi  F_\rho(s-s')\frac{\pa_a A'}{A'}\sin(\al')(gu'+g'u+g_\alpha u'_\alpha+g'_\alpha u)d\al'ds'\nonumber\\
&+\frac{a}{2\pi}\inti\intpi  F_\rho(s-s')\log(A')\sin(\al')(ug'+gu'+u_\alpha g'_\alpha+ u'_\alpha g_\alpha)d\alpha' ds'\nonumber\\
&+\frac{a}{2\pi}\inti\intpi  F_\rho(s-s')\frac{\pa_u A'[g]}{A'}\sin(\al')(uu'+u_\alpha u'_\alpha)d\alpha'd s'.
\end{align}
Moreover this derivative is continuous in $H^3$.
\end{lemma}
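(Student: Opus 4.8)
The plan is to derive the formula \eqref{deriv_ar_G} by differentiating the expression \eqref{daG} for $\pa_a\mathcal{G}[u-s,a]$ with respect to $u$ in the direction $g$, term by term, in the same way as in the proof of Lemma \ref{du}. At the level of the kernels the only genuinely new object is the mixed second variation $(\pa_u\pa_a A')[g]$; everything else follows from the product rule applied to the scalar prefactors $1+a^2u$, $u_\al$, $a^2$ and $a$, together with the elementary identities $\pa_u\log(A')[g]=(\pa_u A')[g]/A'$ and $\pa_u\big(\pa_a A'/A'\big)[g]=(\pa_u\pa_a A')[g]/A'-\pa_a A'\,(\pa_u A')[g]/A'^2$. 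Since $A'[a=0]=4\sinc{\al'}$ does not depend on $u$, differentiating the $\mathcal{G}_3$ term in $u$ merely turns $\tfrac1{a^2}\log(A'/A'[a=0])$ into $\tfrac1{a^2}(\pa_u A')[g]/A'$ before the $a$-derivative is taken, which is exactly the contribution $\pa_a\big(\tfrac1{a^2}(\pa_u A')[g]/A'\big)$ appearing in \eqref{deriv_ar_G}. Collecting all the resulting terms produces \eqref{deriv_ar_G}.

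As in Lemmas \ref{GXY}, \ref{daG} and \ref{du}, the real content is to justify these formal manipulations in $H^3(\Omega)$ and to prove continuity. It is enough to treat the two most singular contributions, those coming from $\mathcal{G}_1$ --- and there, thanks to Lemma \ref{GXY}, only from the piece $\mathcal{G}_{11}=\tfrac1{4\pi}\inti\intpi F_\rho(s-s')\cos(\al')\log(A')u'_\al\,d\al'ds'$ --- and from $\mathcal{G}_3$; the remaining $\mathcal{G}_i$ carry strictly better kernels and are handled by the same scheme with minor modifications. For $\mathcal{G}_{11}$ I would first pass (for $a\neq0$) to $\pa_a\mathcal{G}_{11}=\tfrac1{4\pi}\inti\intpi F_\rho(s-s')\cos(\al')\tfrac{\pa_a A'}{A'}u'_\al\,d\al'ds'$, then form the difference quotient in $t$ for $u\mapsto u+tg$, take up to three derivatives (and the relevant mixed ones, as with the mixed derivatives mentioned in the proof of Lemma \ref{GXY}), split off the most singular term --- the one in which all derivatives hit the kernel $\pa_a A'/A'$ --- and show that the remainder tends to zero in $L^2$; after the pointwise bounds of Lemma \ref{A1} on $A'$, $\pa_a A'$, $(\pa_u A')[g]$ and $(\pa_u\pa_a A')[g]$, these limits reduce to the weighted singular-integral estimates of Appendix \ref{Aestimations} already used in Lemmas \ref{daG} and \ref{du} (the lemmas \ref{A2}, \ref{A4}, \ref{A5}, \ref{dA4}, \ref{dA5}, \ref{maindu1}, \ref{maindu2} and their variants used in the proofs above). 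The value at $a=0$ of the $\mathcal{G}_3$ contribution is obtained separately: one shows, using lemmas \ref{B15}, \ref{B16}, \ref{B18} and \ref{B19}, that $\tfrac1{a^2}\log(A'/A'[a=0])\to u+u'$ and that the associated $a$-difference quotients converge in $H^3$ to the terms displayed in \eqref{deriv_ar_G}.

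Continuity in $H^3$ would be proved exactly as at the end of the proof of Lemma \ref{daG}. Given $u,v\in V^\delta$ and $g,h$ of unit $H^3$-norm, one estimates $\|\pa_u\pa_a\mathcal{G}[u-s,a]g-\pa_u\pa_a\mathcal{G}[v-s,a]h\|_{H^3}$ by inserting and subtracting mixed terms so that each integrand splits into a part in which the $A$-dependent factors are frozen and the difference lies in $g-h$ and $u-v$, plus a part in which the $A$-factors are Lipschitz in $u$ --- controlled by $\|u-v\|_{H^{4,3}}$ through Lemma \ref{A1} and the denominators of type $\D$ --- multiplied by fixed factors. In particular the splitting $\tfrac{u'_\al}{A'[u]}\big(\pa^3\pa_a A'[u]-\pa^3\pa_a A'[v]\big)+\pa^3\pa_a A'[u]\big(\tfrac{u'_\al}{A'[u]}-\tfrac{v'_\al}{A'[v]}\big)$ used for $J_{21}$ in Lemma \ref{daG} has an exact analogue here, and the weighted estimates of Appendix \ref{Aestimations} close the bound.

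I expect the main obstacle to be the $\mathcal{G}_3$ term near $a=0$ once both derivatives act: the factor $1/a^2$ now multiplies a kernel that also carries $(\pa_u A')[g]/A'$, which is singular as $\al'\to0$, and the apparent blow-up as $a\to0$ must be absorbed by the cancellation in $\log(A'/A'[a=0])$ together with the Taylor expansion of $A'$ in $a^2$. Tracking simultaneously the powers of $a$, of $\ms$ and of $|s'|$ and $s'^2$ --- in the spirit of the bounds $a\,\sinc{\al'}/\D+a^3(\sinc{\al'}+s'^2)/\D$ appearing in Lemma \ref{daG} --- so as to check that every resulting singular integral is still admissible for the Lemma \ref{A2}-type estimates is the delicate bookkeeping; the rest is a routine, if lengthy, repetition of the arguments of Lemmas \ref{GXY}, \ref{daG} and \ref{du}.
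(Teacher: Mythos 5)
Your proposal follows essentially the same route as the paper: differentiate the formula for $\pa_a\mathcal{G}$ with respect to $u$ term by term, isolate the singular contributions from $\mathcal{G}_{11}$ and $\mathcal{G}_3$, control the difference quotients in $H^3$ by splitting off the most singular kernel pieces, and close the argument with the weighted singular-integral estimates of Appendix \ref{Aestimations}. The only cosmetic difference is that you cite the $B$-lemmas (\ref{B15}--\ref{B19}) for the $\mathcal{G}_3$ analysis, whereas the paper invokes the $u$-differentiated variants \ref{paradifH1}, \ref{paradifH21andH22} and \ref{paradifelultimo}, together with the explicit $H_1+H_2$, $H_{21}+H_{22}$, $J_{1}+J_{2}$ decompositions; these are the same family of bounds so your plan is sound.
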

\begin{proof} We will give the detail of the differentiation with respect to $u$ of the terms
\begin{align*}
\pa_a \mathcal{G}_1[u-s,a]=\inti \int_{-\pi}^\pi F_\rho(s-s')\frac{\pa_a A'}{A'}\cos(\al')u'_{\al} d\al' ds'
\end{align*}
and

\begin{align*}
\pa_a \mathcal{G}_2[u-s,a]=\int_{-\infty}^\infty \int_{-\pi}^\pi F_\rho(s-s')\pa_a\p{\frac{1}{a^2}\log\left(\frac{A'}{A'[a=0]}\right)}\sin(\alpha') d\al' ds'.
\end{align*}

We will split $\pa_a \mathcal{G}_2$ into two terms $\pa_a\mathcal{G}_2=H_1 +H_2$ with
\begin{align*}
H_1=\inti\intpi F_\rho(s-s')\p{-\frac{2}{a^3}\log\p{\frac{A'[a]}{A'[0]}}+\frac{2}{a}(u+u')}\sin{\al'}d\alpha' ds'
\end{align*}

and
\begin{align*}
H_2=\inti\intpi F_\rho(s-s')\p{\frac{1}{a^2}\frac{\pa_aA'[a]}{A'[a]}-\frac{2}{a}(u+u')}\sin{\al'}d\alpha' ds'.
\end{align*}

Next we differentiate $H_2$ with respect to $u$
\begin{align*}
&\frac{H_2[u+tg]-H_2[u]}{t}\\
&=\inti\intpi F_\rho(s-s')\p{\frac{1}{a^2t}\frac{\pa_a \Ag}{\Ag}-\frac{1}{a^2t}\frac{\pa_a \A}{\A}-\frac{2}{a}(g+g')}\sin(\al') d\alpha'ds'
\end{align*}
In order to prove the lemma we need to show that
\begin{align*}
I\equiv &\inti\intpi F_\rho(s-s')\p{\frac{1}{a^2t}\frac{\pa_a \Ag}{\Ag}-\frac{1}{a^2t}\frac{\pa_a \A}{\A}-\frac{2}{a}(g+g')}\sin(\al') d\alpha'ds'\\
 &-\inti\intpi F_\rho(s-s')\p{\frac{1}{a^2}\frac{\pa_u\pa_a A'[g]}{A'[u]}-\frac{1}{a^2}\frac{\pa_a A'[g]\pa_uA'[g]}{\A}-\frac{2}{a}(g+g')}\sin(\al') d\alpha'ds'
\end{align*}
tends to 0 in $H^3$. In order to check this fact we write
\begin{align*}
I=&\inti\intpi F_\rho(s-s')\frac{1}{a^2}\p{\frac{1}{t}\frac{\pa_a \Ag-\pa \A}{\Ag}-\frac{\pa_u\pa_aA'[g]}{\A}}\sin(\alpha')ds'd\al'\\
&+\inti\intpi F_\rho(s-s')\frac{1}{a^2}\p{\frac{\pa_a\A}{t}\p{\frac{1}{\Ag}-\frac{1}{\A}}+\frac{\pa_a A'[g]\pa_u A'[g]}{\A^2}}\sin(\al')d\alpha'ds'\\
&\equiv H_{21}+ H_{22}
\end{align*}
Then
\begin{align*}
H_{21}=&\inti\intpi F_\rho(s-s')\frac{1}{a^2}\p{\frac{1}{t}\frac{\pa_a \Ag-\pa_a \A-t\pa_u\pa_a A'[g]}{\Ag}}\sin(\alpha')d\alpha ds'\\
&\inti\intpi F_\rho(s-s')\frac{1}{a^2}\frac{\pa_u\pa_a A'[g]}{\Ag \A}\p{\A-\Ag}\sin(\alpha')d\al'ds'.
\end{align*}
Then we can apply lemma \ref{paradifH21andH22} in order to prove that $H_{21}\to 0$ when $t\to 0$ in $H^3$. To show an analog result for $H_{22}$ we write
\begin{align*}
H_{22}=&\inti\intpi F_\rho(s-s') \frac{1}{a^2}\frac{\pa_a A'[g]}{tA'[u]\Ag}\p{\A-\Ag+t\pa_u A'[g]}\sin(\alpha')d\alpha' ds'\\
&+\inti\intpi F_\rho(s-s') \frac{1}{a^2}\frac{\pa_a A'[g]\pa_u A'[g]}{\Ag \A^2}\p{\Ag-\A}\sin(\alpha') d\al' ds'.
\end{align*}
and apply lemma \ref{paradifH21andH22}.
\end{proof}

In order to differentiate $H_1$ we proceed as follows

\begin{align*}
&\frac{H_1[u+tg]-H_1[u]}{t}=\inti\intpi F_s(s-s')\p{-\frac{2}{a^3}\frac{1}{t}\log\p{\frac{\Ag}{\A}}+\frac{2}{a}(g+g')}\sin(\alpha')d\al' ds'
\end{align*}
And we would like to show that
\begin{align*}
\inti\intpi F_s(s-s')\p{-\frac{2}{a^3}\frac{1}{t}\log\p{\frac{\Ag}{\A}}+\frac{2}{a^3}\frac{\pa_u \A}{\A}}\sin(\alpha')d\al' ds'
\end{align*}
tends to 0, as $t$ goes to 0 in $H^3$. In order to do it we apply lemma \ref{paradifH1}.

Finally we differentiate $\pa_a G_1[u-s,a]$ with respect to $u$. Let us call $J[u]\equiv \pa_a G_1[u-s,a]$. We have that
\begin{align*}
\frac{J[u+tg]-J[u]}{t}=&\frac{1}{t}\inti\intpi F_s(s-s')\frac{\pa_a \Ag}{\Ag}\cos(\al')(u'_\alpha+tg'_\alpha)d\alpha'ds'\\
&-\frac{1}{t}\inti\intpi F_s[s-s']\frac{\Ag}{\A}\cos(\alpha')d\alpha' ds'\\
&=\inti\intpi F_s(s-s')\frac{1}{t}\left(\frac{\pa_a\Ag}{\Ag}-\frac{\pa_a \A}{\A}\right)\cos(\alpha')(u'_\alpha+tg'_\alpha)d\alpha'ds'\\
&+\inti\intpi F_s(s-s')\frac{\pa_a \Ag}{\Ag}\cos(\alpha')g'_\alpha d\alpha'ds'.
\end{align*}

We need to show that
\begin{align*}
&\frac{J[u+tg]-J[u]}{t}-\inti \intpi F_{s}(s-s')\p{\frac{\pa_u\pa_a \A}{\A}-\frac{\pa_a A'\pa_u \A}{\A^2}}\cos(\alpha') u'_\alpha d\alpha' ds'\\
&-\inti\intpi F_s(s-s')\frac{\pa_a \A}{\A}\cos(\alpha') g'_\alpha d\alpha' ds',
\end{align*}
tends to 0, when $t$ goes to 0 in $H^3$. This means that
\begin{align*}
&\inti\intpi F_s(s-s')\p{\frac{1}{t}\p{\frac{\pa_a \Ag}{\Ag}-\frac{\pa\A}{\A}}-\frac{\pa_u\pa_a \A}{\A}+\frac{\pa_a \A \pa_u \A}{\A^2}}\cos(\alpha')u'_\alpha d\alpha' ds'\\
&\inti\intpi F_s(s-s')\p{\frac{\pa_a\Ag}{\Ag}-\frac{\pa_a \A}{\A}}\cos(\al')g'_\alpha d\alpha' ds'
\end{align*}
tends to 0 when $t$ goes to 0 in $H^3$. Thus, it is enough to prove that, in $H^3$,
\begin{align}\label{guayaba}
\lim_{t\to 0} \inti\intpi F_s(s-s') \p{\frac{1}{t}\p{\frac{\pa_a \Ag}{\Ag}-\frac{\pa\A}{\A}}-\frac{\pa_u\pa_a \A}{\A}+\frac{\pa_a \A \pa_u \A}{\A^2}}\cos(\alpha')u'_\alpha d\alpha' ds'=0.
\end{align}
In order to prove \eqref{guayaba} we will consider two terms separately
\begin{align*}
J_1=&\inti\intpi F_s(s-s')\p{\frac{1}{t}\frac{\pa_a \Ag-\pa_a \A}{\Ag}-\frac{\pa_u\pa_a\A}{\A}}\cos(\alpha')u'_\alpha d\alpha'ds'
\end{align*}
and
\begin{align*}
J_2=\inti\intpi F_s(s-s') \pa_a A'[u]\p{\frac{1}{t}\p{\frac{1}{\Ag}-\frac{1}{\A}}+\frac{\pa_u \A}{\A^2}}
\end{align*}
In addition we will split $J_1$ into two terms
\begin{align*}
J_1=&\inti\intpi F_s(s-s')\frac{1}{t}\p{\frac{\pa_a \Ag-\pa_a \A -t \pa_u\pa_a \A}{A[u+tg]}}\cos(\alpha')u'_\alpha d\alpha'ds'\\
&+ \inti\intpi F_s(s-s')\pa_u\pa_a \Ag \p{\frac{1}{\Ag}-\frac{1}{\A}}\cos(\alpha')u'_\alpha d\alpha'ds'\\
\equiv &J_{11}+J_{12}.
\end{align*}
The term $J_{11}$ and $J_{12}$ can be controlled by using \ref{paradifelultimo}.

In order to deal with $J_2$ we also split it into two parts
\begin{align*}
J_2=&\inti\intpi f_s(s-s')\pa_a \A \frac{1}{t}\frac{\A-\Ag+t\pa_u \A}{\Ag\A}\cos(\alpha')u'_\alpha d\alpha'ds'\\
&+\inti\intpi F_s(s-s')\frac{\pa_a\A\pa_u \A}{\A} \p{\frac{1}{\Ag}-\frac{1}{\A}}\cos(\alpha')u'_\alpha d\alpha' ds'\\
\equiv & J_{21}+J_{22}.
\end{align*}
We control the terms $J_{21}$ and $J_{22}$ by using the lemma \ref{paradifelultimo}.

The proof of the the continuity in $H^3$ follows similar steps to those in lemma \ref{daG}.

This concludes the proof of lemma \ref{dudaG}.

\subsection{Step 3. Analysis of the linear part.}
\label{subsectionlineareuler}
\subsubsection{One dimensionality of the Kernel of the linear operator.}

We will start computing a nontrivial element of the kernel of $\pa_{\tilde{r}} \mathcal{G}[0,0]g = 0$, which is given by:

\begin{align*}
 &  \pa_{\tilde{r}} \mathcal{G}[0,0]g = \lambda(0) g_{\al}(\al,s) \\
& + \frac{1}{4\pi} \int_{-\pi}^{\pi}\int_{-1}^{1} F_\rho(s') \log(2-2\cos(\al-\al'))\cos(\al-\al')g_{\al}(\al',s')d\al'ds' \\
& - \frac{g_{\al}(\al,s)}{4\pi} \int_{-\pi}^{\pi}\int_{-1}^{1} F_\rho(s') \log(2-2\cos(\al-\al'))\cos(\al-\al')d\al'ds' \\
& + \frac{1}{4\pi} \int_{-\pi}^{\pi}\int_{-1}^{1} F_\rho(s') \sin(\al-\al')g(\al',s')d\al'ds' \\
& + \frac{1}{4\pi} \int_{-\pi}^{\pi}\int_{-1}^{1} F_\rho(s') \sin(\al-\al')\log(2-2\cos(\al-\al'))g(\al',s')d\al'ds' \\
\end{align*}

We make the decomposition

\begin{align*}
g(\al,s) = \sum_{n=1}^{\infty} g_{n}(s) \sin(n\al),
\end{align*}

and project onto the $m$-th mode. For each $m$ we get the following system of equations:


\begin{align}
0 & = - m g_{m}(s)\lambda(0)\sin(m\al) \nonumber\\
& - \frac{m}{4\pi} \int_{-\pi}^{\pi}\int_{-1}^{1} F_\rho(s') \log(2-2\cos(\al-\al'))\cos(\al-\al')g_{m}(s')\sin(m\al') d\al'ds' \nonumber\\
& + \frac{m g_{m}(s)\sin(m\al)}{4\pi} \int_{-\pi}^{\pi}\int_{-1}^{1} F_\rho(s') \log(2-2\cos(\al-\al'))\cos(\al-\al')d\al'ds' \nonumber\\
& + \frac{1}{4\pi} \int_{-\pi}^{\pi}\int_{-1}^{1} F_\rho(s') \sin(\al-\al')g_{m}(s')\cos(m\al')d\al'ds' \nonumber\\
& + \frac{1}{4\pi} \int_{-\pi}^{\pi}\int_{-1}^{1} F_\rho(s') g_{m}(s')\sin(\al-\al')\log(2-2\cos(\al-\al'))g_{m}(s')\cos(m\al')d\al'ds' \nonumber\\
& = - m g_{m}(s)\lambda(0)\sin(m\al) \nonumber\\
& - \frac{m}{4\pi} \int_{-1}^{1} F_\rho(s') g_{m}(s') \int_{-\pi}^{\pi}\log(2-2\cos(\al-\al'))\cos(\al-\al')\sin(m\al') d\al'ds' \nonumber\\
& + \frac{m g_{m}(s)\sin(m\al)}{4\pi} \int_{-1}^{1} F_\rho(s') \int_{-\pi}^{\pi}\log(2-2\cos(\al-\al'))\cos(\al-\al')d\al'ds' \nonumber\\
& + \frac{1}{4\pi} \int_{-1}^{1}F_\rho(s') g_{m}(s')\int_{-\pi}^{\pi}\sin(\al-\al')\cos(m\al')d\al'ds' \nonumber\\
& + \frac{1}{4\pi}\int_{-1}^{1} F_\rho(s') g_{m}(s')\int_{-\pi}^{\pi}\sin(\al-\al')\log(2-2\cos(\al-\al'))\cos(m\al')d\al'ds' \nonumber\\
& = - m g_{m}(s)\lambda(0)  - \frac{m}{4\pi} (-2\pi)\frac{m}{m^{2}-1}\int_{-1}^{1} F_\rho(s') g_{m}(s') ds'  \nonumber\\
& + \frac{m g_{m}(s)}{4\pi} (-2\pi)\int_{-1}^{1} F_\rho(s')ds'  + \frac{1}{4\pi}(-2\pi)\frac{1}{m^{2}-1}\int_{-1}^{1} F_\rho(s') g_{m}(s')ds' \nonumber\\
& = -m g_{m}(s)\lambda(0)  + \frac{1}{2}\int_{-1}^{1} F_\rho(s') g_{m}(s') ds'  - \frac{m g_{m}(s)}{2}\int_{-1}^{1} F_\rho(s')ds'. \label{kerneleuler}
\end{align}

We will use now the following lemma:

\begin{lemma}
Let $m \in \N$ and $F_\rho(s)$ be such that $\int_{-1}^{1}F_\rho(s)ds = -1$. Then, for $k \in \N$, the equation

\begin{align*}
g_{k}(s) & = -\frac{m}{k}\int_{-1}^{1} F_\rho(s') g_{k}(s') ds'
\end{align*}

has solutions:

\begin{align*}
g_k(s) = \left\{
\begin{array}{cc}
c & \text{ if } k = m \\
0 & \text{ if } k \neq m \\
\end{array}
\right.
\end{align*}

where $c$ is any real number.
\end{lemma}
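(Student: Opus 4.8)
The plan is to exploit the fact that the right-hand side of the equation does not depend on the variable $s$. Indeed, the quantity $\int_{-1}^1 F_\rho(s') g_k(s')\,ds'$ is a real number, so any solution $g_k$ must itself be a constant function, say $g_k(s) \equiv c_k$. Thus the functional equation collapses to a scalar identity, and it suffices to analyze the latter.

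Substituting $g_k \equiv c_k$ into the equation and pulling the constant outside the integral gives
\begin{align*}
c_k = -\frac{m}{k}\, c_k \int_{-1}^{1} F_\rho(s')\,ds' = -\frac{m}{k}\, c_k\,(-1) = \frac{m}{k}\,c_k,
\end{align*}
where I have used the normalization hypothesis $\int_{-1}^1 F_\rho(s')\,ds' = -1$. Hence $c_k\left(1 - \tfrac{m}{k}\right) = 0$, i.e. $c_k \cdot \tfrac{k-m}{k} = 0$. When $k \neq m$ the factor $\tfrac{k-m}{k}$ is nonzero, forcing $c_k = 0$; when $k = m$ the identity is satisfied for every real $c_k =: c$. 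This produces exactly the asserted family of solutions (and shows it is the full solution set).

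There is no real obstacle here: the only point worth flagging is the very first reduction — that a solution is automatically constant — which is immediate but is what makes the infinite-dimensional functional equation reduce to a one-line scalar computation. One should also note in passing that constant functions lie in whatever function class is relevant (e.g.\ $L^2(-1,1)$ against the weight $F_\rho$), so no regularity issue arises. This lemma is then applied to \eqref{kerneleuler}, where the balance of the three constant terms gives precisely the dispersion relation pinning down $\lambda_m(0) = \frac{m-1}{2m}$ and the one-dimensionality of the kernel.
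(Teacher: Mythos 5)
Your proof is correct and follows essentially the same reasoning as the paper: observe that the right-hand side is independent of $s$, so any solution must be constant, then substitute and use $\int_{-1}^1 F_\rho = -1$ to reduce to the scalar condition $c_k(1 - m/k) = 0$.
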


\begin{proof}
Since the RHS does not depend on $s$, the only possible solutions are the constant ones. Setting $g_{k}(s) = c$ and substituting into the equation yields the result.
\end{proof}

Rearranging terms and using the fact that $\lambda(0) = \frac{m-1}{2m}$ and that $F_\rho$ has integral equal to -1, we are left to solve

\begin{align*}
g_{k}(s) & = -\frac{m}{k}\int_{-1}^{1} F_\rho(s') g_{k}(s') ds'. \\
\end{align*}

By the lemma, there exists a nontrivial solution only whenever $k = m$, and in that case, it is given by a constant function $c$. This shows the existence of a nontrivial kernel of dimension 1.

%
%
%
%
%
%
%
%
%
%
%


\subsubsection{Codimension of the image of the linear operator.}

We now characterize the image of $\pa_{\tilde{r}} \mathcal{G}[0,0]$. We have the following Lemma:

\begin{lemma}
Let

\begin{align*}
Z = \left\{q(s,\alpha) \in H^{3,3}_{m,odd}, q(s,\alpha) = \sum_{k=1}^{\infty}q_{k}(s)\sin(k\alpha), \int_{-1}^{1}F_\rho(s')q_{m}(s') ds' = 0\right\}.
\end{align*}

Then $Z = \text{Im}\left(\pa_{\tilde{r}} \mathcal{G}[0,0]\right)$.

\end{lemma}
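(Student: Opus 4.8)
The plan is to compute the action of $L:=\pa_{\tilde r}\mathcal G[0,0]$ mode by mode, exactly as was done for the kernel computation, and to read off both inclusions $Z\subseteq \mathrm{Im}(L)$ and $\mathrm{Im}(L)\subseteq Z$ from the resulting explicit formula. First I would expand an arbitrary test function $g\in H^{4,3}_{m,\mathrm{even}}$ as $g(\alpha,s)=\sum_{n\ge 1}g_n(s)\sin(n\alpha)$ and, reusing the integral identities from Appendix \ref{appendixbasicintegrals} already invoked in \eqref{kerneleuler}, obtain that the $k$-th sine-mode of $Lg$ equals
\begin{align*}
(Lg)_k(s)=-k\,\lambda(0)\,g_k(s)+\frac{m-1}{2}\int_{-1}^{1}F_\rho(s')g_k(s')\,ds'+\frac{k}{2}g_k(s),
\end{align*}
wait — more precisely I would carry out the same projection as in \eqref{kerneleuler} but without setting $\lambda(0)=\tfrac{m-1}{2m}$ and without assuming the mode index equals $m$, getting for each $k$
\begin{align*}
(Lg)_k(s)=-k\,\lambda(0)\,g_k(s)-\frac{k}{2}g_k(s)\!\int_{-1}^1\!F_\rho(s')ds'+\Big(\tfrac12\tfrac{1}{k^2-1}\cdot\text{something}\Big)\!\int_{-1}^1\!F_\rho(s')g_k(s')ds',
\end{align*}
so that after substituting $\lambda(0)=\tfrac{m-1}{2m}$ and $\int F_\rho=-1$ the formula collapses to
\begin{align*}
(Lg)_k(s)=\frac{k-m}{2m}\,k\,g_k(s)+\Big(\text{const}_k\Big)\int_{-1}^{1}F_\rho(s')g_k(s')\,ds'.
\end{align*}
The essential point, which I would state carefully, is that the only $s$-dependence of $(Lg)_k$ through the nonlocal term is via the constant $\int F_\rho g_k$, and that the coefficient $\tfrac{k-m}{2m}k$ vanishes precisely at $k=m$.

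Given this, the characterization follows. For $k\neq m$ the local coefficient $\tfrac{(k-m)k}{2m}$ is nonzero, so the map $g_k\mapsto (Lg)_k$ is an isomorphism on the relevant slice of $L^2(-1,1)$: given any target mode $q_k(s)$ one solves a rank-one perturbation of a multiple of the identity (invertible because the perturbation is rank one and the ``trace'' adjustment is finite — here I would check the rank-one perturbation is genuinely invertible by the Sherman–Morrison-type argument, using that the relevant scalar $1+\text{const}\cdot\langle F_\rho, (\text{id})^{-1}\rangle$ does not vanish, which is a one-line computation). For $k=m$ the local term drops out, so $(Lg)_m$ is necessarily a constant multiple of $\int_{-1}^1 F_\rho g_m$ times a fixed profile; matching this against a target $q_m$ forces $q_m$ to lie in the appropriate one-dimensional family, and conversely any such $q_m$ is attained. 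Summing over $k$ and tracking the Sobolev regularity — $L$ loses one $\alpha$-derivative, which is exactly why the target space is $H^{3,3}_{m,\mathrm{odd}}$ rather than $H^{4,3}$ — gives $\mathrm{Im}(L)\subseteq Z$; surjectivity onto $Z$ comes from mode-by-mode solvability, provided one checks that the reconstructed preimage $g=\sum_k g_k\sin(k\alpha)$ indeed lies in $H^{4,3}_{m,\mathrm{even}}$, i.e. that the mode-wise solution operators are uniformly bounded in $k$ (the coefficients $\tfrac{2m}{(k-m)k}$ are $O(k^{-2})$, which \emph{gains} regularity, so this is comfortable).

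The main obstacle I anticipate is not the algebra of the mode decomposition but the functional-analytic bookkeeping at $k=m$: one must show that $Z$ as defined is actually \emph{closed} and of codimension one in $H^{3,3}_{m,\mathrm{odd}}$, and that the ``missing direction'' in the image is genuinely a single profile in the $m$-th mode (determined by the fixed kernel $F_\rho$), so that $Y/\mathcal R(\pa_r\mathcal F[0,0])$ is one-dimensional as required by hypothesis (3) of Crandall–Rabinowitz. Concretely, I would identify the complement as the span of $q_*(s)\sin(m\alpha)$ for a suitable fixed $q_*$ with $\int F_\rho q_*\neq 0$, verify $q_*\notin Z$, and confirm $Z\oplus\mathbb R q_*\sin(m\alpha)=H^{3,3}_{m,\mathrm{odd}}$; the continuity/closedness of the linear functional $q\mapsto\int_{-1}^1 F_\rho(s')q_m(s')ds'$ on $H^{3,3}$ (it is clearly bounded since extracting a Fourier mode and integrating against the bounded function $F_\rho$ is continuous) then makes $Z$ closed. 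The regularity gain in the off-diagonal modes means no delicate estimate is needed there; all the content is concentrated in the single resonant mode $k=m$.
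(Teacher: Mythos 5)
Your overall strategy — project $L:=\pa_{\tilde r}\mathcal G[0,0]$ onto sine modes, observe that on each mode it is a multiple of the identity plus the rank-one operator $g_k\mapsto\tfrac12\int_{-1}^1 F_\rho g_k$, and solve mode by mode using a Sherman--Morrison-type invertibility criterion — is the same idea as the paper's (which instead posits the ansatz $g_k=A_kq_k+B_k$ and solves for the constants, a slightly more elementary packaging of the same linear algebra). The regularity bookkeeping at the end and the remark that the solution operator gains one $\alpha$-derivative also match the paper. So the plan is sound.

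However, there is a genuine computational error in your mode-wise formula, and it distorts your description of the resonant mode. You claim the formula ``collapses'' to $(Lg)_k=\tfrac{(k-m)k}{2m}g_k+\text{const}_k\int F_\rho g_k$, so that the \emph{local} coefficient vanishes at $k=m$. It does not. Using $\lambda(0)=\tfrac{m-1}{2m}$ and $\int F_\rho=-1$, the local terms in \eqref{imagen} combine to $k g_k\bigl(\tfrac{1-m}{2m}+\tfrac12\bigr)=\tfrac{k}{2m}g_k$, so the correct expression is
\begin{align*}
(Lg)_k(s)=\frac{k}{2m}\,g_k(s)+\frac12\int_{-1}^{1}F_\rho(s')g_k(s')\,ds',
\end{align*}
and the diagonal part never vanishes. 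The $k-m$ factor shows up only downstream: taking $A=\tfrac{k}{2m}\,\mathrm{Id}$, $u\equiv\tfrac12$, $v=F_\rho$, the Sherman--Morrison denominator is $1+\langle v,A^{-1}u\rangle=1-\tfrac{m}{k}=\tfrac{k-m}{k}$ (equivalently, the coefficient of the constant part $B_k$ of the preimage is $\tfrac{k-m}{2m}$). Because you put the degeneracy in the wrong place, your description of the resonant mode is wrong: you say ``$(Lg)_m$ is necessarily a constant multiple of $\int F_\rho g_m$ times a fixed profile,'' i.e.\ that the image in the $m$-th mode is a one-dimensional family of constants. In fact $(Lg)_m=\tfrac12 g_m+\tfrac12\int F_\rho g_m$ retains full $s$-dependence, and the image in the $m$-th mode is the infinite-dimensional hyperplane $\{q_m:\int F_\rho q_m=0\}$ (the constraint arises because the $B_m$-equation reads $0\cdot B_m=-m\int F_\rho q_m$). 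This is also easy to detect from the forward inclusion you do not actually verify: with your formula $(Lg)_m$ would be a constant $c_m\int F_\rho g_m$, and $\int F_\rho\,(Lg)_m=-c_m\int F_\rho g_m$ is not zero unless $c_m=0$, contradicting $\mathrm{Im}(L)\subset Z$; with the correct formula the two halves cancel. Fixing the local coefficient and rerouting the $k-m$ factor into the invertibility criterion makes your argument agree with the paper's.

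Two smaller points: the scaling of the solution coefficients is $A_k=\tfrac{2m}{k}=O(k^{-1})$ and $B_k=\tfrac{2m^2}{k(m-k)}\int F_\rho q_k=O(k^{-2})$ (you quoted only the $O(k^{-2})$ coefficient), but your conclusion that the reconstruction gains one $\alpha$-derivative is still correct. And the worry about $Z$ being closed is moot: $q\mapsto\int_{-1}^1 F_\rho q_m$ is a bounded linear functional on $H^{3,3}_{m,\mathrm{odd}}$, so $Z$ is trivially a closed hyperplane.
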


\begin{proof}

We start proving that $\text{Im}\left(\pa_{\tilde{r}} \mathcal{G}[0,0]\right) \subset Z$. This follows easily from \eqref{kerneleuler} since the $m-$th mode of $\text{Im}\left(\pa_{\tilde{r}} \mathcal{G}[0,0]\right)g$ is given by

\begin{align*}
\frac{g_{m}(s)}{2}  + \frac{1}{2}\int_{-1}^{1} F_\rho(s') g_{m}(s') ds',
\end{align*}

yielding zero after multiplying by $F(s)$ and integrating over $[-1,1]$.

We now prove the other implication and show that $Z \subset \text{Im}\left(\pa_{\tilde{r}} \mathcal{G}[0,0]\right)$.

Let $q(s,\alpha) \in Z$. We want to show that there exists a $g(s,\alpha) \in H^{4,3}_{m,even}$ such that $\pa_{\tilde{r}} \mathcal{G}[0,0]g = q$. It is enough to show that $g(s,\alpha) \in H^{4,3}$ since the $m$-fold property and the evenness follow trivially. Let us project $g$ into Fourier modes as

\begin{align*}
g(s,\alpha) = \sum_{k=m, k | m}^{\infty}g_{k}(s)\cos(k\alpha), \quad q(s,\alpha) = \sum_{k=m, k | m}^{\infty}q_{k}(s)\sin(k\alpha).
\end{align*}

This yields the following system of equations:

\begin{align}
\label{imagen}
q_{k}(s) & = k g_{k}(s)\frac{1-m}{2m}  + \frac{1}{2}\int_{-1}^{1} F_\rho(s') g_{k}(s') ds' + \frac{k g_{k}(s)}{2},
\end{align}

which implies that $g_{k}(s)$ has to be of the form $g_{k}(s) = A_{k}q_{k}(s) + B_{k}$ for some constants $A_k,B_k$. We distinguish two cases.

If $k = m$, then equation \eqref{imagen} yields:

\begin{align*}
q_{m}(s) & = g_{m}(s)\frac{1}{2}  + \frac{1}{2}\int_{-1}^{1} F_\rho(s') g_{m}(s') ds' \\
& = \frac{A_{m}}{2}q_{m}(s) + \frac12\int_{-1}^{1} F_\rho(s') q_{m}(s'),
\end{align*}

and the only solution is $A_{m} = 2$, $B_m \in \RR$, under the additional constraint

\begin{align*}
\int_{-1}^{1} F_\rho(s') q_{m}(s') = 0,
\end{align*}

and no solution otherwise. If $k \neq m$, then we have to solve

\begin{align*}
q_{k}(s) & = q_{k}(s)\frac{kA_{k}}{2m} + \frac{kB_{k}}{2m} + \frac{A_{k}}{2}\int_{-1}^{1} F_\rho(s') q_{k}(s') ds' - \frac{B_{k}}{2},
\end{align*}

which has a unique solution

\begin{align*}
A_{k} & = \frac{2m}{k} , \quad B_{k} = \frac{2m^{2}}{k(m-k)}\int_{-1}^{1} F_\rho(s') q_{k}(s') ds',
\end{align*}

for any $q_{k}(s)$. This shows the existence and the uniqueness. We discuss the regularity now. To do so, we compute the $H^{4,3}$ norm of $g(s,\al)$:

\begin{align*}
\|g\|_{H^{4,3}}^{2} & \lesssim \sum_{k=1}^{\infty}\|g_{k}\|_{H^{3}}^{2}(1+k^{4})^{2}
\leq \sum_{k=1}^{\infty}\|q_{k}\|_{H^{3}}^{2}\frac{4m^2}{k^2}(1+k^{4})^{2} + C\sum_{k=1, k \neq m}^{\infty} \frac{m^{4}}{k^2(m-k)^2}\|q_{k}\|_{L^{2}}^{2}(1+k^{4})^{2} \\
& \lesssim \sum_{k=1}^{\infty}\|q_{k}\|_{H^{3}}^{2}(1+k^{3})^{2} \lesssim \|q\|_{H^{3,3}}^{2}.
\end{align*}

This concludes that $Z = \text{Im}(\pa_{\tilde{r}} \mathcal{G}[0,0])$ and in particular shows that the codimension of the image of $\pa_{\tilde{r}} \mathcal{G}[0,0]$ is 1, as we needed.

\end{proof}

\subsection{Step 4. The transversality property 4.}

This step is devoted to show the transversality condition. We start writing out the calculations since everything is explicit, including the characterization of the image done in the previous subsection. Based on that, we only need to compute the $m$-th Fourier coefficient of $\pa_{\rti} \pa_{a}\mathcal{G}[\rti,a]g$.

By setting $a = 0, \tilde{r} = 0$ in \eqref{deriv_ar_G}, one obtains:

\begin{align*}
\pa_{\rti} \pa_{a}\mathcal{G}[\rti,a]g|_{\rti=0,a=0} & = \frac{d\lambda}{da}(0)g_\alpha \\
\end{align*}

We now take $g(\al,s) = \cos(m\al)$ and extract the contribution to the $m$-th mode. We get that it is equal to

\begin{align*}
-m\frac{d\lambda}{da}(0)\sin(m\al)
\end{align*}

Testing the coefficient in front of $\sin(m\al)$ against $F_\rho(s)$ and integrating in $[-1,1]$ we obtain

\begin{align*}
m\frac{d\lambda}{da}(0),
\end{align*}

which is different than 0 as long as $\frac{d\lambda}{da}(0) \neq 0$, thus satisfying the transversality condition.

\section*{Acknowledgements}

AC, DC and JGS were partially supported by the grant MTM2014-59488-P (Spain) and ICMAT Severo Ochoa project SEV-2015-556. AC was partially supported by the Ram\'on y Cajal program RyC-2013-14317 and ERC grant 307179-GFTIPFD. JGS was partially supported by an AMS-Simons Travel Grant. Part of this work was done while JGS was visiting ICMAT, to which he is grateful for its support. We thank Jacob Bedrossian and Vlad Vicol for helpful conversations.

\appendix

\section{Basic integrals}
\label{appendixbasicintegrals}

In this section we will outline the basic integral identities used in the spectral analysis of subsection \ref{subsectionlineareuler}.

The first three lemmas follow easily from \cite[Lemma A.3]{Castro-Cordoba-GomezSerrano:analytic-vstates-ellipses}

\begin{lemma}
\label{lemmaA1}
\begin{align*}
\int_{-\pi}^{\pi} \log(2-2\cos(\al'))\cos(\al')d\al' = -2\pi
\end{align*}
\end{lemma}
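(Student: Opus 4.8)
The final statement to prove is Lemma~\ref{lemmaA1}, the integral identity $\int_{-\pi}^{\pi} \log(2-2\cos\alpha')\cos\alpha'\, d\alpha' = -2\pi$.

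\textbf{Proposed proof.} The plan is to exploit the classical Fourier expansion of the logarithmic kernel together with orthogonality of the trigonometric system. First I would recall the standard identity
\[
\log\left(2-2\cos\alpha'\right) = \log\left(4\sin^2\tfrac{\alpha'}{2}\right) = -2\sum_{n=1}^{\infty}\frac{\cos(n\alpha')}{n},
\]
valid in $L^2(-\pi,\pi)$ (it is the real part of $\log(1-e^{i\alpha'})^2$, or equivalently the Fourier series of $2\log\left(2\left|\sin\tfrac{\alpha'}{2}\right|\right)$; the constant term vanishes because $\int_{-\pi}^{\pi}\log\left(2-2\cos\alpha'\right)d\alpha' = 0$, a standard computation). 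Then I would insert this series into the integral, interchange sum and integral (justified since the partial sums converge in $L^2$ and $\cos\alpha' \in L^2$), and use $\int_{-\pi}^{\pi}\cos(n\alpha')\cos(\alpha')\,d\alpha' = \pi\,\delta_{n,1}$. Only the $n=1$ term survives, giving $-2\cdot\frac{1}{1}\cdot\pi = -2\pi$, which is the claim.

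Alternatively, and perhaps cleaner to present since the excerpt says these lemmas ``follow easily from \cite[Lemma A.3]{Castro-Cordoba-GomezSerrano:analytic-vstates-ellipses}'', I would simply invoke that reference: the cited lemma presumably records the values of $\int_{-\pi}^{\pi}\log\left(2-2\cos\alpha'\right)\cos(n\alpha')\,d\alpha'$ for all $n$, namely $-\tfrac{2\pi}{n}$ for $n\geq 1$ and $0$ for $n=0$, and our statement is the special case $n=1$. If one wants a fully self-contained argument avoiding Fourier series, one can instead differentiate under the integral sign: set $I(t) = \int_{-\pi}^{\pi}\log\left(1 - 2t\cos\alpha' + t^2\right)\cos\alpha'\,d\alpha'$ for $|t|<1$, compute $I'(t)$ explicitly using the Poisson-type integral $\int_{-\pi}^{\pi}\frac{\cos\alpha'\,(2\cos\alpha' - 2t)}{1-2t\cos\alpha'+t^2}\,d\alpha'$ via residues or known formulas, obtaining $I'(t) = \tfrac{2\pi}{1}$ up to the relevant normalization, and then let $t\to 1^-$ with a dominated-convergence argument to reach $I(1) = -2\pi$.

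\textbf{Main obstacle.} There is no serious obstacle here; this is a routine identity. The only point requiring a word of care is the interchange of summation and integration (or the $t\to1^-$ limit in the alternative approach), because the logarithmic kernel has an integrable singularity at $\alpha'=0$; this is handled by noting $\log\left(2-2\cos\alpha'\right)\in L^1 \cap L^2(-\pi,\pi)$ and that the Fourier partial sums converge to it in $L^2$, so pairing against the bounded function $\cos\alpha'$ is continuous. Given the paper's stated reliance on \cite[Lemma A.3]{Castro-Cordoba-GomezSerrano:analytic-vstates-ellipses}, I expect the intended proof is simply to cite that computation, so the write-up can be a single line.
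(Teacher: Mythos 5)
Your proposal is correct. The paper's own ``proof'' is a one-line citation to \cite[Lemma A.3]{Castro-Cordoba-GomezSerrano:analytic-vstates-ellipses} (covering this lemma together with the two that follow), which you anticipate explicitly; your self-contained argument via the Fourier expansion $\log(2-2\cos\alpha') = -2\sum_{n\ge 1}\cos(n\alpha')/n$ and orthogonality is a standard and valid way to establish the same identity, and your remarks on justifying the interchange of sum and integral are appropriate.
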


\begin{lemma}
Let $m$ be a positive integer. Then:
\begin{align*}
\int_{-\pi}^{\pi} \log(2-2\cos(\al'))\cos(\al')\cos(m\al')d\al' =
\left\{
\begin{array}{cc}
(-2\pi)\frac{m}{m^2-1} & \text{ if } m \neq 1 \\
-\frac{\pi}{2} & \text{ if } m = 1 \\
\end{array}
\right.
\end{align*}
\end{lemma}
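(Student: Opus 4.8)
The plan is to reduce the integral to the elementary ``master formula''
$$\int_{-\pi}^{\pi}\log(2-2\cos(\al'))\cos(n\al')\,d\al' = \begin{cases} -\dfrac{2\pi}{n} & n\in\N,\ n\geq 1,\\[2mm] 0 & n=0,\end{cases}$$
of which Lemma \ref{lemmaA1} is the case $n=1$, and then to perform a product-to-sum trigonometric manipulation. This master formula is itself obtained by inserting the Fourier expansion $\log(2-2\cos(\al')) = -2\sum_{k\geq 1} k^{-1}\cos(k\al')$ and using orthogonality $\int_{-\pi}^{\pi}\cos(k\al')\cos(n\al')\,d\al' = \pi\delta_{kn}$ for $k,n\geq 1$; the term-by-term integration is legitimate because $\log(2-2\cos(\al'))\in L^2(-\pi,\pi)$ (the logarithmic singularity at $\al'=0$ is square-integrable), so the series converges in $L^2$ and pairs with the trigonometric polynomial via Parseval. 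The value $0$ at $n=0$ is the classical identity $\int_{-\pi}^{\pi}\log(2-2\cos(\al'))\,d\al'=0$.

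With the master formula in hand, I would write $\cos(\al')\cos(m\al') = \tfrac12\big(\cos((m+1)\al') + \cos((m-1)\al')\big)$, so that the integral in question splits as $\tfrac12$ times the sum of the master integral evaluated at $n=m+1$ and at $n=m-1$.

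If $m\neq 1$, then both $m+1\geq 2$ and $m-1\geq 1$ are positive integers, and the integral equals $\tfrac12\big(-\tfrac{2\pi}{m+1} - \tfrac{2\pi}{m-1}\big) = -\pi\big(\tfrac{1}{m+1}+\tfrac{1}{m-1}\big) = -\pi\,\tfrac{2m}{m^2-1} = (-2\pi)\tfrac{m}{m^2-1}$, as claimed. If $m=1$, then $\cos(\al')\cos(\al') = \tfrac12\big(\cos(2\al') + 1\big)$, so the $n=0$ term contributes $0$ and only the $n=2$ term survives, giving $\tfrac12\big(-\tfrac{2\pi}{2}\big) = -\tfrac{\pi}{2}$.

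There is no genuine obstacle in this argument; the only two points that merit a word of care are the justification of the term-by-term integration (handled by the $L^2$ membership of $\log(2-2\cos(\al'))$ noted above, or alternatively by citing the computation already carried out in the companion paper on analytic V-states) and the bookkeeping of the degenerate mode $\cos(0\cdot\al')$ that appears exactly when $m=1$, which is precisely the source of the separate value $-\tfrac{\pi}{2}$.
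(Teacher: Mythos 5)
Your proof is correct, and since the paper itself gives no argument for this lemma --- it simply asserts that the first three appendix lemmas ``follow easily from'' Lemma A.3 of the companion paper on analytic V-states --- you have in effect supplied the missing details. The route you take (expand $\log(2-2\cos\alpha') = -2\sum_{k\geq 1}\cos(k\alpha')/k$, justify term-by-term integration via $L^2$ membership, then use $\cos\alpha'\cos(m\alpha') = \tfrac12\bigl(\cos((m+1)\alpha') + \cos((m-1)\alpha')\bigr)$ and track the degenerate $n=0$ mode when $m=1$) is the standard computation and is surely what the cited reference encodes, so there is no genuine methodological divergence to report; your derivation is simply more explicit than the paper's bare citation.
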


\begin{lemma}
Let $m$ be a positive integer. Then:
\begin{align*}
\int_{-\pi}^{\pi}\log(2-2\cos(\al'))\sin(\al')\sin(m\al')d\al' =
\left\{
\begin{array}{cc}
(-2\pi)\frac{1}{m^2-1} & \text{ if } m \neq 1 \\
\frac{\pi}{2} & \text{ if } m = 1 \\
\end{array}
\right.
\end{align*}
\end{lemma}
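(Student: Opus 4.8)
\emph{Proof proposal.} The plan is to reduce the integral to the Fourier coefficients of $\log(2-2\cos(\al'))$ and then read off the answer by orthogonality, exactly as one does for the two preceding lemmas. First I would record the classical Fourier expansion
\[
\log(2-2\cos(\al')) \;=\; \log\!\p{4\sinc{\al'}} \;=\; -2\sum_{n=1}^{\infty}\frac{\cos(n\al')}{n},
\]
valid for $\al'\not\equiv 0 \pmod{2\pi}$. Since $\log(2-2\cos(\al'))$ lies in $L^2(-\pi,\pi)$ (its only singularity, at the origin, is logarithmic) and the coefficients $-1/n$ are square summable, the series converges in $L^2$, so it may be integrated against any trigonometric polynomial term by term.

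Next I would use the product-to-sum identity $\sin(\al')\sin(m\al') = \tfrac12\big(\cos((m-1)\al')-\cos((m+1)\al')\big)$ together with the orthogonality relations $\intpi \cos(n\al')\cos(k\al')\,d\al' = \pi\,\delta_{nk}$ and $\intpi \cos(n\al')\,d\al' = 0$ for positive integers $n,k$. This reduces the computation to extracting two Fourier coefficients of $\log(2-2\cos(\al'))$.

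For $m\geq 2$ both indices $m-1\geq 1$ and $m+1\geq 3$ are positive, so only the terms $n=m-1$ and $n=m+1$ contribute, giving
\[
\intpi \log(2-2\cos(\al'))\sin(\al')\sin(m\al')\,d\al'
= -2\cdot\tfrac12\cdot\frac{\pi}{m-1} + 2\cdot\tfrac12\cdot\frac{\pi}{m+1}
= \frac{-2\pi}{m^2-1},
\]
which is the claimed value $(-2\pi)\frac{1}{m^2-1}$. For $m=1$ one instead writes $\sin^2(\al') = \tfrac12\big(1-\cos(2\al')\big)$; the constant part integrates to $0$ against each $\cos(n\al')$, while the $n=2$ term yields $(-2)\cdot\tfrac12\cdot\big(-\tfrac12\big)\pi = \tfrac{\pi}{2}$, matching the second case.

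I do not expect any genuine obstacle here: the only point deserving a word of care is the justification of the term-by-term integration, which is harmless since we pair the $L^2$ function $\log(2-2\cos(\al'))$ against a fixed trigonometric polynomial (alternatively one dominates the partial sums and passes to the limit). As an alternative route, the identity also follows directly from \cite[Lemma A.3]{Castro-Cordoba-GomezSerrano:analytic-vstates-ellipses} after rewriting $\sin(\al')\sin(m\al')$ in terms of cosines, exactly as the authors do for the companion integrals.
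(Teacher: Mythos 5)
Your proof is correct, and it is effectively a self-contained version of the argument the paper elides. The paper disposes of this lemma (together with the two preceding ones) with a single citation to \cite[Lemma A.3]{Castro-Cordoba-GomezSerrano:analytic-vstates-ellipses}, whereas you compute the integral directly from the classical Fourier expansion $\log(2-2\cos\al') = -2\sum_{n\geq 1}\cos(n\al')/n$, apply product-to-sum on $\sin(\al')\sin(m\al')$, and extract the $n=m\pm1$ coefficients (treating $m=1$ separately). Your arithmetic checks out in both cases, and your justification of term-by-term integration ($L^2$ convergence of the series paired against a trigonometric polynomial) is adequate. The two routes buy essentially the same thing: the cited lemma is precisely the statement that encodes these Fourier coefficients, so your direct computation simply inlines what the citation outsources — a reasonable choice if one wants the appendix to be self-contained, and you correctly flag the citation as the alternative the authors actually use.
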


The next two integrals are obvious but we state them here for convenience:

\begin{lemma}
Let $m$ be a positive integer. Then:
\begin{align*}
\int_{-\pi}^{\pi}\cos(\al')\cos(m\al') d\al' =
\left\{
\begin{array}{cc}
0 & \text{ if } m \neq 1 \\
\pi & \text{ if } m = 1
\end{array}
\right.
\end{align*}
\end{lemma}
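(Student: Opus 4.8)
The statement is the standard orthogonality relation between $\cos(\al')$ and $\cos(m\al')$ on $[-\pi,\pi]$, and the plan is to reduce it to the elementary fact that $\int_{-\pi}^{\pi}\cos(n\al')\,d\al'$ equals $2\pi$ when the integer $n$ vanishes and $0$ otherwise. First I would apply the product-to-sum identity $\cos A\cos B=\tfrac12\bigl(\cos(A-B)+\cos(A+B)\bigr)$ with $A=\al'$ and $B=m\al'$, which turns the integrand into $\tfrac12\bigl(\cos((m-1)\al')+\cos((m+1)\al')\bigr)$. This splits the integral into two pieces, each of the form $\tfrac12\int_{-\pi}^{\pi}\cos(n\al')\,d\al'$ with $n=m-1$ and $n=m+1$ respectively.

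Next I would evaluate these two pieces according to whether the frequency is zero. Since $m$ is a positive integer, $m+1\geq 2$ is never zero, so the second piece always contributes $0$. The first piece has frequency $m-1$, which is zero precisely when $m=1$: in that case $\int_{-\pi}^{\pi}\cos(0)\,d\al'=2\pi$, so the total is $\tfrac12(2\pi+0)=\pi$; when $m\neq 1$ we have $m-1\geq 1$, hence $\int_{-\pi}^{\pi}\cos((m-1)\al')\,d\al'=0$ as well, and the total is $0$. This gives exactly the claimed case distinction.

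There is essentially no obstacle here: the only ingredient beyond the trigonometric identity is that for a nonzero integer $n$ the primitive $\frac{\sin(n\al')}{n}$ is $2\pi$-periodic, so its values at $\pm\pi$ agree and the definite integral vanishes, while for $n=0$ the integrand is the constant $1$. One could alternatively cite the first three lemmas of Appendix \ref{appendixbasicintegrals}, but this identity is simpler and self-contained, so I would just carry out the two-line computation directly.
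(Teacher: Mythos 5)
Your proof is correct, and the paper itself offers no argument for this lemma (it is listed among the integrals described as ``obvious'' and stated only for convenience), so your product-to-sum computation is exactly the standard verification the paper implicitly relies on. Nothing further is needed.
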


\begin{lemma}
Let $m$ be a positive integer. Then:
\begin{align*}
\int_{-\pi}^{\pi}\sin(\al')\sin(m\al') d\al' =
\left\{
\begin{array}{cc}
0 & \text{ if } m \neq 1 \\
\pi & \text{ if } m = 1
\end{array}
\right.
\end{align*}
\end{lemma}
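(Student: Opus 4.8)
The plan is to reduce the integrand to a sum of pure cosines via the product-to-sum formula and then invoke the elementary orthogonality of the trigonometric system on $[-\pi,\pi]$. First I would write, valid for every positive integer $m$,
\begin{align*}
\sin(\al')\sin(m\al') = \tfrac12\left(\cos((m-1)\al') - \cos((m+1)\al')\right).
\end{align*}
Next I would recall the basic fact that for any integer $k$,
\begin{align*}
\intpi \cos(k\al')\,d\al' = \begin{cases} 2\pi & \text{if } k = 0,\\ 0 & \text{if } k \neq 0,\end{cases}
\end{align*}
which is immediate by antidifferentiating $\cos(k\al')$ when $k\neq 0$ and by noting $\cos(0\cdot\al')\equiv 1$ when $k=0$.

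Then I would split into the two cases in the statement. If $m \neq 1$, both $m-1$ and $m+1$ are nonzero integers (the latter because $m\geq 1$ already forces $m+1\geq 2$), so integrating the right-hand side of the product-to-sum identity term by term gives $\tfrac12(0-0)=0$. If $m=1$, the first summand is $\cos(0\cdot\al')\equiv 1$, contributing $\tfrac12\cdot 2\pi = \pi$, while the second summand $\cos(2\al')$ integrates to $0$; hence the integral equals $\pi$. This exhausts all cases and yields the claimed formula.

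I do not expect any genuine obstacle: the statement is simply the standard $L^2(-\pi,\pi)$ orthogonality relation for sines, in the same spirit as the preceding "obvious" lemma. The only minor point worth noting is that the degenerate index $k=0$ in the product-to-sum expansion is encountered precisely when $m=1$, which is exactly the exceptional value singled out in the statement; keeping track of this is all that is needed to get the case distinction right.
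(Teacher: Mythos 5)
Your proof is correct. The paper itself offers no proof for this lemma, merely remarking that it is "obvious"; your product-to-sum reduction to the elementary integral of $\cos(k\alpha')$ over $[-\pi,\pi]$, with careful attention to the degenerate index $k=0$ appearing exactly when $m=1$, is a clean and complete way to establish it.
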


%
%
%

\section{Estimates on $A'$}\label{Aestimations}

In this section we will assume the following:
\begin{enumerate}
\item The function $\rti=u-s\in V^\delta$, i.e. $||\rti||_{H^{4,3}}\leq \delta$ .
\item $C(\delta)$ will denote a finite  positive constant that just depends on $\delta$, that increases with $\delta$ and such that $C(0)=0.$ Also $c(\delta)$ will be a constant that just depends on $\delta$ and such that $c(\delta)\geq \overline{c}>0$ for all $\delta\in [0,\delta_0]$ and $\delta_0$ small enough.
\item The parameter $a\in [-a_0,a_0],$ for some  small enough $a_0$ and $\delta \in [0,\delta_0]$ for some small enough $\delta_0$.
\item $C$ and $c$ will be constants such that $C<\infty$ and $c>0$.
\item $\partial$ means either $\pa_\alpha$ or $\pa_s$.
\end{enumerate}

We also recall that given any function $f(\al,s)$ we will write
\begin{align*}
f=&f(\alpha,s)\\
f'=&f(\alpha-\alpha',s-s').
\end{align*}

We want to obtain estimates on $\mathcal{G}[u-s,a]$ in \eqref{G}. In order to do it we will set $b=a^2$.

\begin{lemma}\label{A1} The function $$A'=4\sinc{\alpha'}+4b(u+u')\sinc{\alpha'}+b^2\left(4uu'\sinc{\alpha'}+(u-u')^2\right)$$ satisfies
\begin{enumerate}
\item $$A'\geq c\left(\sinc{\alpha'}+b^2 s'^2\right),$$
\item $$\log(A')\leq \log(\sinc{\al'}+b^2s'^2)+C,$$
\item $$\pa A'\leq  Cb\left(\sinc{\alpha'}+b s'^2\right),$$
\item $$\pa^2 A'\leq Cb \left(\sinc{\al'}+b\left(\left|\sin\left(\frac{\al'}{2}\right)\right|+|s'|+s'^2\right)\right)$$
\end{enumerate}
\end{lemma}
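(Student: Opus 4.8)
The plan is to prove the four estimates on
$$
A'=4\sinc{\alpha'}+4b(u+u')\sinc{\alpha'}+b^2\left(4uu'\sinc{\alpha'}+(u-u')^2\right)
$$
by treating the three homogeneous-in-$b$ pieces separately and using the bounds $|u|, |u'| \leq C$ (which follow from $u = s + \rti$, $|s| \le 1$ and $\|\rti\|_{H^{4,3}} \le \delta$ together with the Sobolev embedding $H^{4,3}(\Omega) \subset C^2$ quoted in the proof of Lemma \ref{GXY}), and also $|u-u'| \le C|s'| + |\rti-\rti'|$. First I would establish (1): the leading term $4\sinc{\alpha'}$ is good, the $b$-term is lower order, and the $b^2$-term contributes $b^2(u-u')^2$. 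The point is that $u - u' = s' + (\rti - \rti')$ and, since $\rti$ is Lipschitz (again by $H^{4,3} \subset C^1$), $\rti - \rti' = O(|\alpha'| + |s'|)$; the $|\alpha'|$ part is absorbed into $\sinc{\alpha'}$ (as $|\alpha'| \sim |\sin(\alpha'/2)|$ for $\alpha' \in (-\pi,\pi]$) after possibly shrinking to get $c$, and one is left with a clean lower bound $c(\sinc{\alpha'} + b^2 s'^2)$ once $b = a^2$ is small. I would be a little careful that the cross term $4b(u+u')\sinc{\alpha'}$ with its sign does not destroy positivity: since $|b(u+u')| \le 2Cb \le \tfrac12$ for $a_0$ small, $4\sinc{\alpha'} + 4b(u+u')\sinc{\alpha'} \ge 2\sinc{\alpha'}$, so positivity is safe.

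Next, (2) is an upper bound: $A' \le 4\sinc{\alpha'} + 2Cb\sinc{\alpha'} + b^2(4C^2\sinc{\alpha'} + (u-u')^2) \le C'\sinc{\alpha'} + C'b^2 s'^2 + C'b^2|\rti-\rti'|^2$, and again $|\rti-\rti'|^2 \le C(|\alpha'|^2 + s'^2) \le C(\sinc{\alpha'} + s'^2)$, so $A' \le C''(\sinc{\alpha'} + b^2 s'^2)$ (absorbing $b^2 \le 1$). Taking logarithms and using $\log(xy) = \log x + \log y$ with $\log C'' \le C$ gives the claim. For (3) and (4) I would differentiate the explicit formula term by term. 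Writing $\pa$ for $\pa_\alpha$ or $\pa_s$: $\pa \sinc{\alpha'} = \tfrac12 \sin(\alpha')$ (for $\pa_\alpha$) or $0$ (for $\pa_s$), so $|\pa \sinc{\alpha'}| \le C|\sin(\alpha'/2)|$; and $\pa u$, $\pa u'$ are bounded, $\pa(u-u')$ is bounded. Then $\pa A' = 4b\,\pa(u+u')\sinc{\alpha'} + 4b(u+u')\pa\sinc{\alpha'} + b^2[\ldots]$; every term carries at least one factor $b$, the non-$b^2$ terms are bounded by $Cb(\sinc{\alpha'} + |\sin(\alpha'/2)|)$, and since $|\sin(\alpha'/2)| \le 1$ while we also need the stated form $Cb(\sinc{\alpha'} + b s'^2)$ — here the $b s'^2$ slack is generated by the $b^2$-terms, e.g. $b^2 \pa((u-u')^2) = 2b^2(u-u')\pa(u-u')$ and $|u-u'| \le C(1 + |s'|)$ gives $\le Cb^2(1+|s'|) \le Cb^2 + Cb^2 s'^2 \le Cb\cdot b + Cb\cdot b s'^2$; one checks the $|\sin(\alpha'/2)|$ terms are likewise dominated. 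I would organize this as: list the derivative, bound each summand using Lemma-type inequalities $|\sin(\alpha'/2)| \lesssim 1$, $|\sin(\alpha'/2)|^2 = \sinc{\alpha'}$, $b \le 1$, and collect.

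The computation (4) is the same idea carried one derivative further: $\pa^2 A'$ produces terms like $b\,\pa^2(u+u')\sinc{\alpha'}$, $b\,\pa(u+u')\pa\sinc{\alpha'}$, $b(u+u')\pa^2\sinc{\alpha'}$ (with $\pa^2\sinc{\alpha'}$ bounded), plus $b^2$-terms involving up to second derivatives of $u$ and of $(u-u')^2$. Here the extra roughness appears: $\pa^2_s \rti$ and $\pa^2_\alpha \rti$ are only in $L^2$ (not $L^\infty$) a priori from $H^{4,3}$ — wait, actually $H^{4,3} \subset C^2$ is used above, so $\pa^2 u$ is bounded; the genuinely new lower-order contributions are things like $b^2(u-u')\pa^2(u-u')$ bounded by $Cb^2(1+|s'|)$, and $b^2(\pa(u-u'))^2 \le Cb^2$, which yield the terms $b\cdot b|s'|$ and $b\cdot b$ in the estimate; combined with the $b(\ldots)\cdot(\text{bounded})$ pieces one gets exactly $Cb(\sinc{\alpha'} + b(|\sin(\alpha'/2)| + |s'| + s'^2))$. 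The main obstacle I anticipate is purely bookkeeping: making sure that after differentiating one never loses a factor of $b$ where the statement demands it (the whole point of these bounds is that $A'$ and all its derivatives vanish to the right order as $b \to 0$), and correctly matching the somewhat asymmetric right-hand sides (why $bs'^2$ in (3) but $b(|\sin(\alpha'/2)|+|s'|+s'^2)$ in (4)) to the actual terms produced — i.e., tracking which summand is responsible for each term on the right. There is no conceptual difficulty; it is a careful term-by-term accounting using only the triangle inequality, the Sobolev embedding into $C^2$, elementary trigonometric inequalities, and $b = a^2 \le a_0^2$ small.
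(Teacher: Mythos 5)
Your treatment of (1) and (2) follows the paper's route (split into the $b^0,b^1,b^2$ pieces, expand $(u-u')^2 = (s'+\rti-\rti')^2$, absorb the Lipschitz deviation of $\rti$ into $\sinc{\alpha'}$ and $s'^2$ for $\delta,a_0$ small, then take logarithms), though for (2) note that what is actually used downstream (in Lemma \ref{A4}) is a bound on $|\log A'|$, and that requires the \emph{lower} bound $A'\geq c(\sinc{\al'}+b^2s'^2)$ from (1); an upper bound on $A'$ alone only gives the trivial $\log A' \leq C$, so the lower bound is the operative ingredient.

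However, there is a genuine error in (3) and (4). You differentiate $\sinc{\alpha'}$ in $\alpha$ and obtain $\tfrac12\sin(\alpha')$, producing the term $4b(u+u')\pa\sinc{\alpha'}$. Under the paper's convention (stated just before Lemma \ref{GXY} and recalled at the start of Appendix \ref{Aestimations}), $A'$ denotes $A[\rti,a](s,s-s',\alpha-\alpha')$ \emph{after} the change of variables $\alpha'\mapsto\alpha-\alpha'$, $s'\mapsto s-s'$: so $\alpha',s'$ are dummy integration variables independent of $(\alpha,s)$, only $u=u(\alpha,s)$ and $u'=u(\alpha-\alpha',s-s')$ carry the outer variables, and consequently $\pa_\alpha\sinc{\alpha'}=\pa_s\sinc{\alpha'}=0$. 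The paper's formula
\begin{align*}
\pa A' = 4b(\pa u+\pa u')\sinc{\alpha'}+b^2\left(4(u'\pa u+u\pa u')\sinc{\alpha'}+2(u-u')(\pa u-\pa u')\right)
\end{align*}
has no $\pa\sinc{\alpha'}$ contribution. Your spurious extra term is not harmless: $Cb\,|\sin(\alpha'/2)|$ is \emph{not} dominated by $Cb(\sinc{\alpha'}+bs'^2)$ (take $\alpha'$ small, $s'$ of order one), so if it really appeared, (3) would be false. You are also internally inconsistent: had $\pa\sinc{\alpha'}\neq 0$, you would also have to include $\pa(4\sinc{\alpha'})$, which carries no factor of $b$ at all and would completely destroy the $O(b)$ bound; the fact that (3) and (4) have a global factor $b$ is precisely because the leading, $u$-independent term $4\sinc{\alpha'}$ is annihilated by $\pa$.

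A second, related gap: in (3) you bound $|u-u'|\leq C(1+|s'|)$, which yields a stray $Cb^2$ that again is not controlled by $Cb(\sinc{\alpha'}+bs'^2)$ when $\alpha'$ is small and $s'=0$. You need the same Lipschitz estimate you already invoked for (1), namely $|u-u'|\leq C(|\sin(\alpha'/2)|+|s'|)$, and for (4) also $|\pa u-\pa u'|\leq C(|\sin(\alpha'/2)|+|s'|)$ (using $H^{4,3}\subset C^2$), which is how the paper produces the $b^2(\sinc{\alpha'}+s'^2)$ and $b^2(|\sin(\alpha'/2)|+|s'|)$ pieces and thereby the stated right-hand sides.
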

\begin{proof}
Using that $||\rti||_{H^{4,3}}\leq \delta$ and the Sobolev embedding we have that

\begin{align*}
|A'|\geq4\sinc{\alpha'}-b C(\delta)\sinc{\alpha'}-b^2C(\delta)\sinc{\alpha'}+b^2(u-u')^2.
\end{align*}
In addition,
\begin{align*}
&(u-u')^2=(s'+\rti-\rti')^2 \geq c s'^2-C(\rti-\rti')^2\\&\geq  cs'^2-C(\delta)(\sinc{\alpha'}+s'^2)\geq c(\delta)s'^2-C(\delta)\sinc{\alpha'}.
\end{align*}
Then
\begin{align*}
|A'|\geq c\left(\sinc{\alpha'}+b^2s'^2\right).
\end{align*}

In order to show the second statement we notice that
\begin{align*}
|\log(A')|&\leq |\log(B)| \quad \text{if}\quad 0\leq B \leq A'\leq 1\\
|\log(A')|&\leq |\log(B)|\quad \text{if} \quad 1\leq A'\leq B
\end{align*}
Since \begin{align*}
|A'|&\leq C\\
|A'|&\geq c\left(\sinc{\al'}+b^2s'^2\right)
\end{align*}
we have that

\begin{align*}
|\log(A')|\leq \log\left(\sinc{\al'}^2+b^2s'^2\right)+ C.
\end{align*}
Just a computation shows that
$$\pa A'= 4 b (\pa u +\pa u')\sinc{\alpha'}+b^2\left(4\left(u'\pa u+ u\pa u'\right)\sinc{\alpha'}+ 2(u-u')(\pa u -\pa u')\right).$$

Taking modulus we have that

\begin{align*}
|\pa A'|&\leq C(\delta)b \sinc{\alpha'}+C(\delta)a^2\sinc{\alpha'}+ C(\delta) b^2 (\sinc{\alpha'}+s'^2)\\
&\leq Cb\left(\sinc{\alpha'}+b s'^2\right).
\end{align*}
Taking two derivatives on $A$ yields
\begin{align*}
\pa^2 A'= & 4 b \pa^2(u+u')\sinc{\alpha'}\\ & +b^2\left(4\pa^2(uu')\sinc{\al'}+ 2(u-u')\pa^2(u-u')+2(\pa u -\pa u')^2\right)
\end{align*}
Taking modulus we obtain
\begin{align*}
|\pa^2 A'|\leq C(\delta) b \sinc{\alpha'}+C(\delta)b^2\sinc{\al'}+C(\delta)b^2\left(\left|\sin\left(\frac{\al'}{2}\right)\right|+|s'|\right)+C(\delta)b^2\left(\sinc{\alpha'}+s'^2\right).
\end{align*}

\end{proof}

\begin{lemma}\label{A2}The following estimate holds for $b>0$,
$$
\int_{-\pi}^\pi \int_{-1}^1 \frac{\left|\sin\left(\frac{\al}{2}\right)\right|^m |s|^k}{\left(\sinc{\alpha}+b^2s^2\right)^l}d\alpha ds \leq
\left\{\begin{array}{cc} C b^{m+1-2l} & m<2l-1\\ C \log\left(\frac{1}{b}\right) & m=2l-1 \\ C  & m > 2l-1\end{array}\right.,$$
with $l\geq 1$, $k,m\geq0$ and $k+m+1-2l\geq 0$.
\end{lemma}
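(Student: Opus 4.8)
The plan is to reduce the two-dimensional integral to a one-dimensional one by integrating in $s$ first, and then to analyze the resulting $\alpha$-integral near $\alpha = 0$, where the integrand is singular. Since the integrand is even in both variables, I would work on $[0,\pi]\times[0,1]$ up to a harmless constant. First I would fix $\alpha$ and consider
$$
I(\alpha) = \int_0^1 \frac{s^k}{\left(\sinc{\alpha}+b^2 s^2\right)^l}\,ds.
$$
Writing $D = \sinc{\alpha}$ and substituting $s = \frac{\sqrt D}{b}\,\sigma$ (valid since $b > 0$), one gets $I(\alpha) = \frac{D^{(k+1)/2 - l}}{b^{k+1}} \int_0^{b/\sqrt D} \frac{\sigma^k}{(1+\sigma^2)^l}\,d\sigma$. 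The behaviour of this last integral as its upper limit varies is standard: it is bounded by a constant when $b/\sqrt D \lesssim 1$, and when $b/\sqrt D \gg 1$ it is bounded by a constant if $k < 2l-1$, by $C\log(b/\sqrt D)$ if $k = 2l-1$, and by $C (b/\sqrt D)^{k+1-2l}$ if $k > 2l-1$. Substituting back, in every regime one obtains the clean bound $I(\alpha) \leq C\, b^{k+1-2l}\,D^{l - (k+1)/2 + (k+1-2l)/2}$ — wait, more carefully: in the "large" regime $I(\alpha) \leq C b^{k+1-2l}$ when $k \geq 2l-1$ (with a log correction at equality, but that log is in $b/\sqrt{D}$ and will be absorbed), while in the "small" regime $I(\alpha) \leq C D^{(k+1)/2 - l}\, b^{-(k+1)}$. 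The transition happens at $D \sim b^2$, i.e. $|\sin(\alpha/2)| \sim b$, equivalently $\alpha \sim b$.

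Next I would insert this into the $\alpha$-integral, $\int_0^\pi \ms^m I(\alpha)\,d\alpha$, and split it at $\alpha \sim b$ using $\ms \sim \alpha$ and $D = \sinc{\alpha} \sim \alpha^2$ for $\alpha \in [0,\pi]$. On the "near" region $\alpha \lesssim b$ I use the small-$D$ bound for $I$, giving an integrand $\lesssim \alpha^m \cdot \alpha^{k+1-2l} b^{-(k+1)} = \alpha^{m+k+1-2l}\, b^{-(k+1)}$; integrating over $[0,b]$ (the exponent $m+k+1-2l \geq 0$ by hypothesis guarantees integrability at $0$) produces $\lesssim b^{m+k+2-2l}\, b^{-(k+1)} = b^{m+1-2l}$. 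On the "far" region $b \lesssim \alpha \lesssim \pi$ I use the large-$D$ bound $I(\alpha) \lesssim b^{k+1-2l} D^{(k+1)/2-l}\cdot(\text{stuff})$ — no, cleaner to keep $I(\alpha) \lesssim \alpha^{k+1-2l}\,b^{-(k+1)}\cdot\min(1,(b/\alpha)^{k+1-2l}\cdot\ldots)$; the honest route is: on the far region the inner $\sigma$-integral has upper limit $b/\sqrt D \lesssim 1$, so $I(\alpha) \lesssim D^{(k+1)/2-l} b^{-(k+1)} \sim \alpha^{k+1-2l} b^{-(k+1)}$, and the $\alpha$-integrand is $\lesssim \alpha^{m+k+1-2l} b^{-(k+1)}$. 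Integrating $\alpha^{m+k+1-2l}$ over $[b,\pi]$: if $m+k+1-2l > -1$, i.e. always here, the result is dominated by its value at $\alpha = \pi$ if the exponent is $> -1$ and $<\infty$... this gives $\lesssim b^{-(k+1)}$ when $m+k+2-2l>0$, i.e. a contribution $\lesssim b^{-(k+1)}$, which is the wrong sign of power.

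Let me reorganize: the efficient bookkeeping is to not split $I$ but to bound directly
$$
\int_0^\pi\int_0^1 \frac{\alpha^m s^k}{(\alpha^2 + b^2 s^2)^l}\,ds\,d\alpha,
$$
rescaling both variables by $b$: set $\alpha = b\beta$, $s = b\tau$ (here $\tau$ ranges over $[0,1/b]$, $\beta$ over $[0,\pi/b]$), giving $b^{m+k+2-2l}\int_0^{\pi/b}\int_0^{1/b}\frac{\beta^m\tau^k}{(\beta^2+\tau^2)^l}\,d\tau\,d\beta$. The hard part — and the main obstacle — is extracting the correct asymptotics of this last double integral over the large box $[0,\pi/b]\times[0,1/b]$ as $b\to 0$: passing to polar-type coordinates $\beta = \varrho\cos\phi$, $\tau = \varrho\sin\phi$ one gets $\int\int \varrho^{m+k+1-2l}(\cos\phi)^m(\sin\phi)^k\,d\varrho\,d\phi$ over roughly $\varrho \lesssim 1/b$, and the $\varrho$-integral is $\lesssim 1$ if $m+k+1-2l < -1$ (integrable at infinity; since the near-origin exponent $m+k+1-2l \geq -1$... here I use $k+m+1-2l\geq 0$ hence the integrand is fine at $\varrho = 0$), is $\lesssim \log(1/b)$ if $m+k+1-2l = -1$, and is $\lesssim (1/b)^{m+k+2-2l}$ if $m+k+1-2l > -1$. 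Multiplying by the prefactor $b^{m+k+2-2l}$: the three cases become $b^{m+k+2-2l}$ (which is $\leq C$ precisely when $m \geq 2l-1$, using $k\geq 0$), $b^{0}\log(1/b)$ when $m+k+1 = 2l$, and — in the third case — $b^{m+k+2-2l}(1/b)^{m+k+2-2l} = C$. Matching these against the claimed trichotomy $m < 2l-1$, $m = 2l-1$, $m > 2l-1$ requires care about the role of $k$: when $k > 0$ one must further split the box into the two strips $\{\tau \lesssim \beta\}$ and $\{\beta \lesssim \tau\}$ and handle each, since the genuinely dangerous corner is only along one axis. I would do this strip decomposition explicitly, in each strip bounding $(\beta^2+\tau^2)^l \gtrsim \beta^{2l}$ or $\gtrsim \tau^{2l}$ respectively, reducing to products of one-dimensional power integrals whose divergence/convergence at the far end ($1/b$) reproduces exactly the stated cases. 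The bookkeeping of which exponent controls which endpoint, and verifying the hypotheses $l \geq 1$, $k,m \geq 0$, $k+m+1-2l \geq 0$ are exactly what is needed for integrability at the origin in each strip, is the only real content; everything else is elementary calculus.
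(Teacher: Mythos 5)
The reorganized argument you settle on is based on a miscalculation. If you set $\alpha=b\beta$ and $s=b\tau$, then
$$\alpha^2+b^2s^2 = b^2\beta^2 + b^2\cdot b^2\tau^2 = b^2\bigl(\beta^2+b^2\tau^2\bigr),$$
\emph{not} $b^2(\beta^2+\tau^2)$: the $b$-dependence of the denominator is anisotropic in $\alpha$ and $s$, and scaling both variables by the same factor $b$ does not homogenize it. So the claimed identity
$b^{m+k+2-2l}\int\!\!\int\frac{\beta^m\tau^k}{(\beta^2+\tau^2)^l}\,d\tau\,d\beta$
for the rescaled integral is simply false, and everything built on it (the polar-coordinate trichotomy in $\varrho$, the strip decomposition) is resting on a wrong formula. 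You can also see the conclusion must be wrong: under the lemma's hypotheses one always has $m+k+1-2l\ge 0>-1$, so your trichotomy always lands in the ``third case'' and yields a bound $\lesssim C$ uniformly in $b$. But take $l=1$, $m=0$, $k=1$: then
$\int_0^\pi\int_0^1\frac{s}{\alpha^2+b^2s^2}\,ds\,d\alpha = \frac{1}{2b^2}\int_0^\pi\log\bigl(1+b^2/\alpha^2\bigr)\,d\alpha\sim \frac{\pi}{2b}$,
which blows up — consistent with the lemma's $Cb^{m+1-2l}=Cb^{-1}$ for $m<2l-1$, but incompatible with a uniform bound $C$. So the proof as written does not establish the statement.

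Your \emph{first} approach (integrate in $s$ to get $I(\alpha)=\frac{D^{(k+1)/2-l}}{b^{k+1}}\int_0^{b/\sqrt{D}}\frac{\sigma^k}{(1+\sigma^2)^l}\,d\sigma$ with $D=\sin^2(\alpha/2)$, then split the $\alpha$-integral at $\alpha\sim b$) is in fact sound and does not require polar coordinates at all, but you abandoned it midstream. In the far region $\alpha\gtrsim b$ the $\sigma$-integral is $\sim(b/\sqrt D)^{k+1}$, giving $I(\alpha)\sim D^{-l}\sim\alpha^{-2l}$, and $\int_b^\pi\alpha^{m-2l}\,d\alpha$ produces precisely the stated trichotomy in $m$ versus $2l-1$. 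In the near region $\alpha\lesssim b$ a short computation (using $m+k+1-2l\ge 0$ to handle the $\alpha\to 0$ endpoint) shows the contribution is always $\lesssim b^{m+1-2l}$, which is dominated by the far-region bound in each case. Carrying this through would give a complete proof. For comparison, the paper rescales only $s$ (so the denominator becomes $(\alpha^2+s^2)^l$ while the $s$-range shrinks to $[0,b]$) and then uses polar coordinates on that anisotropic strip, splitting the angular range according to which side of the rectangle the ray exits; your Fubini approach is genuinely different and a bit more elementary, but you have to actually finish it rather than pivot to the erroneous symmetric rescaling.
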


\begin{proof}
First we notice that
$$
I\equiv \int_{-\pi}^\pi \int_{-1}^1 \frac{\left|\sin\left(\frac{\al}{2}\right)\right|^m |s|^k}{\left(\sinc{\alpha}+b^2s^2\right)^l}d\alpha ds =
C\int_{0}^{\frac{\pi}{2}}\int_{0}^1\frac{\sin(\al)^m s^k}{(\sin^2(\al)+b^2s^2)^l}d\al ds $$ and using that $c\alpha\leq \sin(\al)\leq \alpha$ in $[0,\pi/2]$,  we have that
$$I\leq C\int_{0}^\frac{\pi}{2}\int_{0}^1\frac{\al^m s^k}{(\al^2+b^2s^2)^l}d\al ds=\frac{C}{b^{k+1}}\int_{0}^\frac{\pi}{2}\int_{0}^b\frac{\al^m s^k}{(\al^2+s^2)^l}d\al ds.$$
Making the change of variable \begin{align*}
s=& r\cos(\theta)\\
\al=& r\sin(\theta)
\end{align*}
yields
\begin{align*}
&I\leq \frac{C}{b^{k+1}} \left( \int_{0}^{\arctan\left(\frac{\pi}{2b}\right)}\int_{0}^\frac{b}{\cos(\theta)}+\int_{\arctan\left(\frac{\pi}{2b}\right)}^\frac{\pi}{2}\int_{0}^\frac{\pi}{2\sin(\theta)}
\right)r^{k+m+1-2l} \sin^m(\theta)\cos^k(\theta)dr d\theta.
\end{align*}
On one hand
\begin{align*}
&\int_{0}^{\arctan\left(\frac{\pi}{2b}\right)}\int_{0}^\frac{b}{\cos(\theta)}r^{k+m+1-2l} \sin^m(\theta)\cos^k(\theta)drd\theta
\\ & =\frac{b^{k+m+2-2l}}{k+m+2-2l}\int_{0}^{\arctan\left(\frac{\pi}{2b}\right)}\tan^m(\theta)\cos^{2l-2}(\theta) d\theta =Cb^{k+m+2-2l}\int_{0}^{\arctan\left(\frac{\pi}{2b}\right)}
\frac{\tan^m(\theta)}{(1+\tan^2(\theta))^{l-1}}d\theta\\
&=Cb^{k+m+2-2l}\int_{0}^\frac{1}{b}\frac{y^m}{(1+y^2)^{l-1}(1+y^2)} dy\leq b^{k+m+2-2l}\left( 1+\int_{1}^\frac{\pi}{2b}y^{m-2l} dy\right).
\end{align*}
Thus
\begin{align*}
&\int_{0}^{\arctan\left(\frac{\pi}{2b}\right)}\int_{0}^\frac{b}{\cos(\theta)}r^{k+m+1-2l} \sin^m(\theta)\cos^k(\theta)drd\theta\\&
\leq \left\{\begin{array}{cc}C b^{k+m+2-2l}(1+\log\left(\frac{\pi}{2b})\right) & \text{ if $m=2l-1$}\\
b^{k+m+2-2l}\left(1+\frac{1}{m-2l+1}\left(\left(\frac{\pi}{2b}\right)^{m-2l+1}-1\right)\right) & \text{if $m\neq 2l-1$}
\end{array}\right.
\\ &\leq \left\{\begin{array}{ccc}C b^{k+1}\log\left(\frac{\pi}{2b})\right) & \text{ if $m=2l-1$}\\
C b^{k+1} & \text{if $m> 2l-1$}\\
C b^{k+m+2-2l} &\text{if $m<2l-1$}.
\end{array}\right.
\end{align*}
On the other hand
\begin{align*}
&\int_{\arctan\left(\frac{\pi}{2b}\right)}^\frac{\pi}{2}\int_{0}^\frac{\pi}{2\sin(\theta)}
r^{k+m+1-2l} \sin^m(\theta)\cos^k(\theta)dr d\theta\\
&=C \int_{\arctan\left(\frac{\pi}{2b}\right)}^\frac{\pi}{2} \cot^k(\theta)\sin^{2l-2}(\theta)d\theta\leq C \int_{\arctan\left(\frac{\pi}{2a}\right)}^\frac{\pi}{2} \cot^k(\theta) d\theta\\
&=C\int_{\frac{\pi}{2b}}^\infty \frac{1}{y^k}\frac{1}{1+y^2}dy \leq C \int_{\frac{\pi}{2b}}^\infty y^{-k-2}dy=Cb^{k+1}.
\end{align*}
\end{proof}

\begin{lemma}\label{A3}The following estimate holds for $b>0$
\begin{align*}
\int_{-\pi}^\pi\int_{-1}^1\left|\log\left(\sinc{\alpha'}+b^2s'^2\right)\right|d\alpha'ds'\leq C
\end{align*}
\end{lemma}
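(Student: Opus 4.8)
The plan is to isolate the only place where the argument of the logarithm degenerates, namely $\alpha'\to 0$, and to control everything else by a uniform bound. First I would record the two-sided comparison
\[
\tfrac{1}{\pi^{2}}\bigl(\alpha'^{2}+b^{2}s'^{2}\bigr)\ \le\ \sinc{\alpha'}+b^{2}s'^{2}\ \le\ 1+b^{2},\qquad |\alpha'|\le\pi,
\]
which is immediate from $\tfrac{2}{\pi}|x|\le|\sin x|\le|x|$ applied with $x=\alpha'/2$. In the range of interest $b=a^{2}$ is bounded, so the right inequality shows that the positive part of $\log(\sinc{\alpha'}+b^{2}s'^{2})$ is bounded by a constant; combining the left inequality with $\log(\alpha'^{2}+b^{2}s'^{2})\le\log(\pi^{2}+b^{2})\le C$ then yields
\[
\bigl|\log(\sinc{\alpha'}+b^{2}s'^{2})\bigr|\ \le\ C+\bigl|\log(\alpha'^{2}+b^{2}s'^{2})\bigr|.
\]
Hence it suffices to bound $\int_{-\pi}^{\pi}\int_{-1}^{1}\bigl|\log(\alpha'^{2}+b^{2}s'^{2})\bigr|\,d\alpha'\,ds'$, and by the evenness of the integrand in $\alpha'$ and in $s'$ this reduces to an integral over $[0,\pi]\times[0,1]$.

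Next I would split $[0,\pi]=[0,\tfrac12]\cup[\tfrac12,\pi]$. On $[\tfrac12,\pi]\times[0,1]$ one has $\tfrac14\le\alpha'^{2}+b^{2}s'^{2}\le\pi^{2}+b^{2}\le C$, so the integrand is bounded there and this piece contributes at most a constant. On $[0,\tfrac12]\times[0,1]$ the argument obeys $\alpha'^{2}\le\alpha'^{2}+b^{2}s'^{2}\le\tfrac14+b^{2}\le 1$, so for every fixed $\alpha'\in(0,\tfrac12)$ monotonicity of the logarithm gives $\bigl|\log(\alpha'^{2}+b^{2}s'^{2})\bigr|\le\bigl|\log\alpha'^{2}\bigr|=2|\log\alpha'|$, uniformly in $s'$ and $b$. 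Integrating, $\int_{0}^{1/2}\int_{0}^{1}2|\log\alpha'|\,ds'\,d\alpha'=2\int_{0}^{1/2}|\log\alpha'|\,d\alpha'<\infty$, which gives the desired constant bound and completes the estimate.

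There is no serious obstacle here; the only point needing attention is that every step be uniform in $b$, which is why I arrange matters so that the $b$-dependence enters only through the harmless terms $b^{2}s'^{2}\le b^{2}$ and the trivial lower bound $\alpha'^{2}+b^{2}s'^{2}\ge\alpha'^{2}$, neither of which costs anything after integration. One could instead imitate the change of variables used in Lemma \ref{A2}, setting $v=bs'$ and passing to polar coordinates in $(\alpha',v)$, but that route produces a spurious $1/b$ factor that must then be cancelled against the shrinking domain $v\in[0,b]$, whereas the direct comparison above keeps the bound manifestly uniform from the start.
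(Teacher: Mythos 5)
Your argument is correct and follows essentially the same route as the paper: the key inequality $\bigl|\log(\sinc{\alpha'}+b^{2}s'^{2})\bigr|\le\bigl|\log(\sinc{\alpha'})\bigr|+C$ (valid because the argument is bounded above and at least $\sinc{\alpha'}$ from below), combined with the integrability of the logarithmic singularity at $\alpha'=0$. You simply spell out the intermediate comparison $\sinc{\alpha'}\asymp\alpha'^{2}$ and the split near/away from $\alpha'=0$, which the paper leaves implicit in a one-line proof.
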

\begin{proof} We just have to notice that
\begin{align*}
\left|\log(\sinc{\alpha'}+b^2s'^2)\right|\leq \left| \log\left(\sinc{\al'}\right)\right|+C.
\end{align*}

\end{proof}

Next we will use lemmas \ref{A1}, \ref{A2} and \ref{A3} to prove the following
\begin{lemma}\label{A4}Let $f(\alpha,s)\in L^2(\Omega)$ with supp$(f)\subset \T\times [-1,1]$ and $b>0$. Then
\begin{enumerate}
\item \label{A4log} $$\left|\left|\inti\intpi \log(A')f'd\al'ds'\right|\right|_{L^2}\leq C ||f||_{L^2}.$$
\item \label{A4dA} $$\left|\left|\inti\intpi \frac{\pa A'}{A'} f'd\al'ds'\right|\right|_{L^2}\leq C b||f||_{L^2}.$$
\item \label{A4d2A} $$\left|\left|\inti\intpi \left(\frac{\pa^2 A'}{A'}-\frac{(\pa A')^2}{A^2}\right) f'd\al'ds'\right|\right|_{L^2}\leq Cb ||f||_{L^2}.$$
\item \label{A4d3A} $$\left|\left|\inti\intpi \left(\pa^3 \log(A')-\frac{\pa^3A'}{A'}\right) f'd\al'ds'\right|\right|_{L^2}\leq C b||f||_{L^2}.$$
\end{enumerate}
where we remark  that $C$ does not depend on $b$.
\end{lemma}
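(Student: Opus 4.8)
The plan is to estimate each of the four convolution-type operators by reducing the kernel to a positive, explicitly integrable majorant via Lemma \ref{A1}, and then invoking Lemma \ref{A2} (and Lemma \ref{A3} for the logarithmic case) together with Minkowski's integral inequality / Young's inequality for convolutions. The common structure is: for an operator $Tf(\al,s) = \inti\intpi K(\al,s,\al',s') f(\al-\al',s-s')\,d\al'ds'$ whose kernel obeys a pointwise bound $|K|\leq \kappa(\al',s')$ with $\kappa$ \emph{independent} of $(\al,s)$ and supported (in $s'$) so that $\|\kappa\|_{L^1(\T\times[-2,2])}\leq C$, one has $\|Tf\|_{L^2}\leq \|\kappa\|_{L^1}\|f\|_{L^2}$ by Young's inequality (the $s'$-integration runs over a bounded set because $\supp f\subset\T\times[-1,1]$). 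So the whole lemma reduces to producing, in each of the four cases, a pointwise bound on the kernel by a function of $(\al',s')$ alone whose $L^1$ norm is controlled by Lemma \ref{A2} with the stated power counting.

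For part \ref{A4log}: by Lemma \ref{A1}--2, $|\log(A')|\leq |\log(\sinc{\al'}+b^2s'^2)|+C$, and Lemma \ref{A3} gives that this majorant has $L^1$ norm $\leq C$ uniformly in $b$; apply Young. For part \ref{A4dA}: by Lemma \ref{A1}--1 and \ref{A1}--3, $\left|\frac{\pa A'}{A'}\right|\leq C b\,\frac{\sinc{\al'}+b s'^2}{\sinc{\al'}+b^2 s'^2}\leq Cb\left(1 + \frac{b s'^2}{\sinc{\al'}+b^2s'^2}\right)$, and since $b s'^2 = b^{-1}\cdot b^2 s'^2 \leq b^{-1}(\sinc{\al'}+b^2s'^2)$ this is $\leq Cb\cdot b^{-1}\cdot(\ldots)$ — wait, that would lose the factor $b$; instead one keeps $\frac{b s'^2}{\sinc{\al'}+b^2 s'^2}\leq \frac{1}{b}$ only where it is favorable and otherwise bounds $\frac{\sinc{\al'}+bs'^2}{\sinc{\al'}+b^2s'^2}\leq C$ directly when $b\leq 1$ since $bs'^2\leq s'^2$ and $\sinc{\al'}+b^2 s'^2\geq b^2(\sinc{\al'}+s'^2)$ is \emph{not} quite enough — the cleaner route is simply $|\pa A'/A'|\leq Cb\frac{\sinc{\al'}}{\sinc{\al'}+b^2s'^2} + Cb^2\frac{s'^2}{\sinc{\al'}+b^2s'^2}$, bound the first term by $Cb$ and the second, using $b^2 s'^2\leq \sinc{\al'}+b^2s'^2$, by $Cb^2\cdot b^{-2}\cdot\frac{b^2 s'^2}{\sinc{\al'}+b^2s'^2}\leq$ hmm; the honest computation is $Cb^2\frac{s'^2}{\sinc{\al'}+b^2s'^2}\leq Cb^2\frac{s'^2}{b^2 s'^2}=C$, which again loses $b$. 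The correct observation is that the $L^1$ norm $\int\int \frac{s'^2}{\sinc{\al'}+b^2s'^2}$ is itself $O(1/b)$ by Lemma \ref{A2} ($m=0,k=2,l=1$: $k+m+1-2l=1>0$ and $m=0<2l-1=1$, giving $b^{m+1-2l}=b^{-1}$), so $b^2\cdot b^{-1}=b$; thus one does \emph{not} pointwise-bound by a constant but carries the $(\al',s')$-dependent majorant into Lemma \ref{A2} and collects the power of $b$ there. This is the key mechanism and I would state it once and reuse it.

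For parts \ref{A4d2A} and \ref{A4d3A}: expand $\pa^2\log A' = \frac{\pa^2 A'}{A'}-\frac{(\pa A')^2}{A'^2}$ and $\pa^3\log A' - \frac{\pa^3 A'}{A'} = -3\frac{\pa A'\pa^2 A'}{A'^2}+2\frac{(\pa A')^3}{A'^3}$ (Fa\`a di Bruno for $\log$), so in each case the integrand is a sum of terms of the form $\frac{\pa^{j_1}A'\cdots\pa^{j_r}A'}{A'^r}$ with $r\geq 2$ and $\sum j_i\leq 3$, $j_i\geq 1$. Bound each numerator factor by Lemma \ref{A1}--3,4 (each $\pa A'$ contributes $b(\sinc{\al'}+bs'^2)$, each $\pa^2 A'$ contributes $b(\sinc{\al'}+b(\ms+|s'|+s'^2))$) and the denominator below by Lemma \ref{A1}--1 ($A'^r\geq c^r(\sinc{\al'}+b^2s'^2)^r$). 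This produces, after expanding products, a finite sum of terms $b^p\frac{\ms^{m}|s'|^{k}}{(\sinc{\al'}+b^2s'^2)^l}$ with $l=r\geq 2$; feed each into Lemma \ref{A2}, which outputs a power $b^{m+1-2l}$ (or a log, or $O(1)$) according to the sign of $m+1-2l$, and check case by case that $p + (\text{Lemma \ref{A2} power}) \geq 1$ so that the total is $\leq Cb$. Then conclude by Young's inequality as in part \ref{A4log}.

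The main obstacle — and the only place requiring genuine care — is the bookkeeping in parts \ref{A4d2A} and \ref{A4d3A}: one must verify that in \emph{every} term arising from the product expansion the accumulated explicit power of $b$ from the numerators plus the (possibly negative) power of $b$ extracted by Lemma \ref{A2} sums to at least $1$, and in particular that the borderline exponents $m+1-2l\in\{0\}$ (logarithmic case in Lemma \ref{A2}) still leave a net positive power of $b$ to spare. The $\ms$ factors appearing in $\pa^2 A'$ are precisely what makes this work — they raise $m$ and thereby improve the power counting — so I would organize the estimate so that whenever a $\pa^2 A'$ appears, its $\ms$ or $|s'|$ terms are used rather than discarded. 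The worst term in \ref{A4d3A} is $(\pa A')^3/A'^3$, giving $b^3\ms^{3}/(\sinc{\al'}+b^2s'^2)^3$ and mixed terms with $|s'|$ powers up to $6$; with $l=3$ one needs $k+m+1-2l=k+m-5\geq 0$, so $k+m\geq 5$, which holds for every term after expansion, and one checks $3 + (m+1-6) = m-2 \geq 1$ iff $m\geq 3$ — the purely-$|s'|$ terms have $m=0$ but then $k=6$ and Lemma \ref{A2} with $m=0<2l-1=5$ gives $b^{-5}$, total $b^{3-5}=b^{-2}$, which is \emph{wrong}, signalling that for those terms one must instead use the finer bound $|\pa A'|\leq Cb(\sinc{\al'}+bs'^2)$ and keep the extra factor $b$ on the $bs'^2$ piece: $b^3(bs'^2)^3/(b^2s'^2)^3 = b^6 s'^6/(b^6 s'^{6})=O(1)$, no good either, so one genuinely must split according to which region ($\sinc{\al'}\lessgtr b^2 s'^2$) dominates, or more efficiently bound $\frac{bs'^2}{\sinc{\al'}+b^2s'^2}\le \min(b^{-1}, bs'^2/(\sinc{\al'}))$ and integrate — the upshot being that the integral of $(bs'^2)^3/(\sinc{\al'}+b^2s'^2)^3$ over $\T\times[-1,1]$ is $O(b^3)\cdot O(b^{-5}) $ only if one is careless; doing the $\al'$-integral first via Lemma \ref{A2} with $m=0,k=6,l=3$ correctly yields the stated $\leq Cb$ once one also invokes the constraint $k+m+1-2l\geq 0$ is satisfied with equality-or-better. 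I would therefore isolate this single borderline computation, do it carefully by hand splitting into the two regions $\{|s'|b \le |\sin(\al'/2)|\}$ and its complement, and treat all remaining terms by the uniform mechanism above.
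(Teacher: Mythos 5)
Your plan --- produce a pointwise bound on the kernel via Lemma \ref{A1}, feed the resulting $(\al',s')$-dependent majorant into Lemma \ref{A2} to extract the power of $b$, then conclude by Minkowski on the convolution --- is exactly the paper's mechanism, and you describe the key point correctly (one must not downgrade the majorant to a pointwise constant; the factor of $b$ appears only after the $(\al',s')$-integration).

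However, your power counting for the $(\pa A')^3/A'^3$ term in part \ref{A4d3A} contains a slip that led you to the mistaken conclusion that the uniform mechanism fails and a region-splitting is required. Tracking it carefully: by Lemma \ref{A1}--3, $|\pa A'|\le Cb\p{\sinc{\al'}+bs'^2}$, so
$|\pa A'|^3\le Cb^3\p{\sinc{\al'}+bs'^2}^3\le Cb^3\p{\sin^6\p{\tfrac{\al'}{2}}+b^3 s'^6}$,
and the $s'^6$ piece therefore carries the explicit prefactor $b^3\cdot b^3=b^6$, not $b^3$. Lemma \ref{A2} with $m=0$, $k=6$, $l=3$ gives $\int s'^6/(\sinc{\al'}+b^2s'^2)^3\,d\al'ds'\le Cb^{-5}$, and the net contribution is $b^6\cdot b^{-5}=b$, exactly as required. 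Your ``$b^{3-5}=b^{-2}$'' dropped the $b^3$ coming from $(bs'^2)^3$, and your subsequent pointwise manipulation ($b^6 s'^6/(b^6 s'^6)=O(1)$) is lossy precisely because it discards the $\sinc{\al'}$ in the denominator, which is what Lemma \ref{A2} exploits through the change to polar coordinates. Once the bookkeeping is fixed, every monomial $b^p\,\ms^m|s'|^k/(\sinc{\al'}+b^2s'^2)^l$ arising from the expansions in parts \ref{A4dA}--\ref{A4d3A} satisfies $p+(m+1-2l)\ge 1$, with the two borderline cases $m=2l-1$ (namely $m=1,l=1$ with $p=2$, and $m=3,l=2$ with $p=3$) producing $b^{p}\log(1/b)\le Cb$; no case distinction by region is needed, and this is indeed how the paper proceeds.

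A minor aside: the correct identity is $\pa^3\log A'-\frac{\pa^3 A'}{A'}=-3\frac{\pa A'\,\pa^2 A'}{A'^2}+2\frac{(\pa A')^3}{A'^3}$, as you write; the paper's coefficient on the last term is a typo, immaterial since only moduli enter the estimate.
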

\begin{proof}
To prove \ref{A4log} we first use Lemma \ref{A1}--2, then Minkowski inequality and finally Lemma \ref{A3} to obtain that
\begin{align*}
\left|\left|\inti\intpi \log(A')f'd\al'ds'\right|\right|_{L^2}\leq C ||f||_{L^2}\int_{-\pi}^\pi\int_{-2}^2\left(|\log(\sinc{\al'}+b^2 s'^2)|+C\right)d\alpha' ds' \leq C.
\end{align*}
To prove \ref{A4dA} we first use Lemmas \ref{A1}--1 and \ref{A1}--3, then Minkowski inequality and finally Lemma \ref{A2} to get
\begin{align*}\left|\left|\inti\intpi \frac{\pa A'}{A'} f'd\al'ds'\right|\right|_{L^2}\leq C||f||_{L^2}\int_{
\pi}^\pi\int_{-2}^2 \frac{b\left(\sinc{\alpha'}+b s'^2\right)}{\sinc{\al'}+b^2s'^2}d\al' ds'\leq C||f||_{L^2}.\end{align*}
To prove \ref{A4d2A} we first use Lemmas \ref{A1}--1, \ref{A1}--3 and \ref{A1}--4 to obtain that
\begin{align}
&\left|\frac{\pa^2A'}{A'}\right|\leq Cb\frac{\sinc{\al'}+b\left(\left|\sin\left(\frac{\alpha'}{2}\right)\right|+|s'|+s^2\right)}{\sinc{\al'}+b^2s'^2}\label{pa1entrea1}\\
&\left|\frac{\left(\pa A'\right)^2}{A'^2}\right|\leq C b^2\frac{\sin^4\left(\frac{\al'}{2}\right)+b^2 s'^4}{(\sinc{\alpha'}+b^2 s'^2)^2}\label{pa1entrea2}.
\end{align}
Then we can use Minkowski inequality and Lemma \ref{A2} to yield \ref{A4d2A}.

Finally we prove \ref{A4d3A}. First we notice that

\begin{align*}
\pa^3\log(A')-\frac{\pa^3A'}{A'}=-3\frac{\pa A' \pa^2A'}{A'^2}-\frac{(\pa A')^3}{A'^3}
\end{align*}
and that by lemmas \ref{A1}--1, \ref{A1}--3, \ref{A1}--4, we have that
\begin{align*}
&\left|\frac{\pa A' \pa^2 A'}{A'^2}\right|\\&\leq Cb^2\frac{\sin^4\left(\frac{\alpha'}{2}\right)+b\left|\sin\left(\frac{\alpha'}{2}\right)\right|^3+b
\sinc{\al'}(|s'|+s'^2)+b\sinc{\al'}s'^2+b^2s'^2\left|\sin\left(\frac{\alpha'}{2}\right)\right|+b^2s'^2(|s'|+s'^2)}{(\sinc{\al'}+b^2s'^2)^2}\\
+&\left|\frac{(\pa A')^3}{A'^3}\right|\leq Cb^3\frac{\sin^6\left(\frac{\al'}{2}\right)+b^3s'^6}{(\sinc{\al'}+b^2s'^2)^3}.
\end{align*}
Thus we can apply Minkowski inequality and lemma \ref{A2} to prove \ref{A4d3A}.
\end{proof}

\begin{lemma}\label{A5}Let $f(\alpha,s)\in L^\infty(\Omega)$ with supp$(f)\subset \T\times [-1,1]$ and $b>0$. Then $$\left|\left|\inti\intpi \frac{\pa^3 A'}{A'}f'd\al'ds'\right|\right|_{L^2}\leq C b||f||_{L^\infty}.$$
\end{lemma}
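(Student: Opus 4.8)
The plan is to expand $\pa^3A'$ via Leibniz, to isolate the terms carrying a genuine third derivative of $u$ (which, since $H^{4,3}\subset C^2$, is only an $L^2$ quantity) from the remaining terms involving at most $\pa^2u$ (which are $L^\infty$), and to estimate each group by combining the pointwise bounds of Lemma \ref{A1} for $A'$ and its derivatives with the kernel estimates of Lemma \ref{A2}; the required factor $b$ will come from the fact that every term of $\pa^3A'$ carries at least one power of $b=a^2$. Since $\sinc{\alpha'}$ is independent of $(\al,s)$, writing $A'=4\sinc{\alpha'}\p{1+b(u+u')+b^2uu'}+b^2(u-u')^2$ and differentiating three times yields
$$\pa^3A'=4b\sinc{\alpha'}\p{\pa^3u+\pa^3u'}+4b^2\sinc{\alpha'}\p{u'\pa^3u+u\pa^3u'}+2b^2(u-u')\p{\pa^3u-\pa^3u'}+\p{\pa^3A'}_{\mathrm{low}},$$
where $\p{\pa^3A'}_{\mathrm{low}}$ gathers the products involving only $\pa u,\pa u',\pa^2u,\pa^2u'$, and, exactly as in the proof of Lemma \ref{A1}--4, satisfies $\ab{\p{\pa^3A'}_{\mathrm{low}}}\le Cb\p{\sinc{\alpha'}+b\p{\ms+|s'|+s'^2}}$.

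For the low-order part I would use $A'\ge c\p{\D}$ (Lemma \ref{A1}--1), Minkowski's inequality and Lemma \ref{A2}, noting that $\inti\intpi\frac{b\sinc{\alpha'}}{\D}d\al'ds'$, $\inti\intpi\frac{b^2\ms}{\D}d\al'ds'$, $\inti\intpi\frac{b^2|s'|}{\D}d\al'ds'$ and $\inti\intpi\frac{b^2s'^2}{\D}d\al'ds'$ are all $\le Cb$ (the second being $\le Cb^2\log(1/b)$), so that this contribution is $\le Cb\|f\|_{L^2}\le Cb\|f\|_{L^\infty}$ because $\supp f\subset\T\times[-1,1]$. For the top-order part I would split according to whether $\pa^3$ lands on $u=u(\al,s)$ or on $u'=u(\al-\al',s-s')$. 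In the first case $\pa^3u$ leaves the integral, and bounding the coefficients $4b\sinc{\alpha'}$, $4b^2u'\sinc{\alpha'}$ or $2b^2(u-u')$ divided by $A'$ by $C\frac{\sinc{\alpha'}+b^2\p{\ms+|s'|}}{\D}$ (using $A'\ge c\p{\D}$, $\ab{u'}\le C$ and $\ab{u-u'}\le C\p{\ms+|s'|}$), Lemma \ref{A2} makes the $\al',s'$-integral $\le Cb^{-1}$ in $L^\infty_{\al,s}$ (for the first coefficient even $\le C$); since each such term carries at least $b^2$ (resp.\ $b$) and $\|\pa^3u\,h\|_{L^2}\le\|\pa^3u\|_{L^2}\|h\|_{L^\infty}\le\delta\|h\|_{L^\infty}$, the term is $\le Cb\|f\|_{L^\infty}$. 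In the second case $\pa^3u'$ stays inside, but $(\pa^3u)f\in L^2$ with $\|(\pa^3u)f\|_{L^2}\le\delta\|f\|_{L^\infty}$; since the corresponding coefficient over $A'$ is again dominated by a function of $(\al',s')$ only --- again via Lemma \ref{A1}--1 and $\ab{u},\ab{u-u'}\le C$, $\ab{u-u'}\le C\p{\ms+|s'|}$, all uniform in $(\al,s)$ --- Minkowski's inequality and Lemma \ref{A2} give the same bound. Summing the finitely many terms proves the estimate; mixed derivatives such as $\pa_\alpha^2\pa_s$ are handled identically.

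I expect the main obstacle to be exactly the presence of genuine third derivatives of $u$ in $\pa^3A'$: unlike in Lemma \ref{A4}, whose corrections are assembled from $\pa A'$ and $\pa^2A'$ and may be paired against a mere $L^2$ function, here $\pa^3u$ (or $\pa^3u'$) must be placed in $L^2$ and compensated with $f\in L^\infty$. The sharpest point within this is the contribution of $b^2\pa^3\p{(u-u')^2}$: estimating $\ab{u-u'}$ by a constant would leave the kernel $\D^{-1}$, which falls outside the admissible range $k+m+1-2l\ge0$ of Lemma \ref{A2}, so one must use $\ab{u-u'}\le C\p{\ms+|s'|}$ to recover the correct power $b^1$. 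Finally, one must keep in mind that Minkowski's inequality is legitimate only because the lower bound $A'\ge c\p{\D}$ is uniform in the outer variables $(\al,s)$.
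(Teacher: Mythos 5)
Your proof is correct and follows essentially the same route as the paper: expand $\pa^3 A'$, bound $\pa^3 A'/A'$ pointwise via Lemma \ref{A1}, and conclude by Lemma \ref{A2}, with the genuine third derivatives of $u$ placed in $L^2$ against $f\in L^\infty$. The paper's proof leaves the $L^\infty$--$L^2$ pairing and the role of $|u-u'|\lesssim\ms+|s'|$ implicit in the phrase ``apply Lemma \ref{A2}''; you simply spell out those details, so the two arguments coincide.
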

\begin{proof}
Taking three derivatives on $A'$ yields
\begin{align*}
\pa^3 A'= &4b\pa^3(u+u')\sinc{\al'}+\\
&+b^2\left(4\pa^3(uu')\sinc{\al'}+6\pa^2(u-u')\pa(u-u')+(u-u')(\pa^3u-\pa^3u')\right)
\end{align*}
Then
\begin{align}
\left|\frac{\pa^3A'}{A'}\right|&\leq Cb\frac{\sinc{\alpha'}}{\sinc{\alpha'}+b^2s'^2}(\pa^3 u +\pa^3 u')+Cb^2\frac{\sinc{\al'}}{\sinc{\al'}+b^2s'^2}\pa^3(uu')\nonumber\\
&+Cb^2\frac{\left|\sin\left(\frac{\alpha'}{2}\right)\right|+|s'|}{\sinc{\al'}+b^2 s'^2}+Cb^2 \frac{\left|\sin\left(\frac{\alpha'}{2}\right)\right|+|s'|}{\sinc{\alpha'}+b^2s'^2}|\pa^3u-\pa^3u'|. \label{quote1}
\end{align}
Then by applying lemma \ref{A2} we achieve the conclusion on the lemma.
\end{proof}

\begin{lemma} \label{l2log1}The following estimate holds:
\begin{align*}
\left|\log\left(\frac{A'}{A'[0]}\right)\right|\leq \left|\log\left(\frac{(4-Cb)\sinc{\al'}+c b^2s'^2}{4\sinc{\al'}}\right)\right|
+\left|\log\left(\frac{(4+Cb+Cb^2)\sinc{\alpha'}+Cb^2s'^2}{4\sinc{\al'}}\right)\right|
\end{align*}
\end{lemma}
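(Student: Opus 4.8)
The plan is to sandwich $A'$ between the two quantities whose logarithms (relative to $A'[0]$) appear on the right-hand side, and then pass to logarithms using monotonicity. Recall first that, by definition, $A'[0]\equiv A'[a=0]=4\sinc{\al'}$, so the two ratios inside the logarithms are $A'[0]^{-1}$ times explicit lower and upper bounds for $A'$.

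I would next extract a sharpened two-sided bound on $A'$ from the computations already carried out in the proof of Lemma \ref{A1}. Writing $u=s+\rti$ with $\|\rti\|_{H^{4,3}}\le\delta$, the Sobolev embedding gives $|u|,|u'|\le C(\delta)$ for $|s|,|s'|\le 1$ and $(\rti-\rti')^2\le C(\delta)(\sinc{\al'}+s'^2)$. Inserting these into
$$A'=4\sinc{\al'}+4b(u+u')\sinc{\al'}+b^2\p{4uu'\sinc{\al'}+(u-u')^2},$$
together with $(u-u')^2=(s'+\rti-\rti')^2\le C\sinc{\al'}+Cs'^2$, gives the upper bound
$$A'\le (4+Cb+Cb^2)\sinc{\al'}+Cb^2s'^2;$$
using instead $(u-u')^2\ge cs'^2-C(\rti-\rti')^2\ge cs'^2-C\sinc{\al'}$, exactly as in the proof of Lemma \ref{A1}, gives
$$A'\ge (4-Cb-Cb^2)\sinc{\al'}+cb^2s'^2\ge (4-Cb)\sinc{\al'}+cb^2s'^2,$$
the last step being valid for $b=a^2\le a_0^2$ with $a_0$ small. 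For $a_0$ small enough the coefficient $4-Cb$ is positive, so this lower bound is itself a strictly positive quantity.

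Finally, dividing both inequalities by $A'[0]=4\sinc{\al'}>0$ yields
$$\frac{(4-Cb)\sinc{\al'}+cb^2s'^2}{4\sinc{\al'}}\le\frac{A'}{A'[0]}\le\frac{(4+Cb+Cb^2)\sinc{\al'}+Cb^2s'^2}{4\sinc{\al'}}.$$
Since $t\mapsto\log t$ is increasing, $\log\p{A'/A'[0]}$ lies in the interval $[p,q]$, where $p$ and $q$ are the logarithms of the two bounding ratios. A real number lying in $[p,q]$ has absolute value at most $\max\{|p|,|q|\}\le|p|+|q|$; applying this observation produces precisely the asserted estimate. There is no genuine difficulty here: the only point requiring attention is the smallness condition on $a_0$ ensuring $4-Cb>0$, so that the argument of the first logarithm on the right-hand side is positive and the bound is meaningful.
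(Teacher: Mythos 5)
Your proof is correct and follows essentially the same route as the paper: sandwich $A'$ between the two explicit quadratic expressions (reusing the bounds established in the proof of Lemma \ref{A1}), divide by $A'[0]=4\sinc{\al'}$, and convert the two-sided bound on $A'/A'[0]$ into a bound on $|\log(A'/A'[0])|$. The paper phrases the final step via the case distinction $A'/A'[0]\le 1$ or $\ge 1$ used in Lemma \ref{A1}--2, whereas you note that a number in $[p,q]$ has modulus at most $\max\{|p|,|q|\}\le|p|+|q|$; these are the same observation, and your explicit remark that $a_0$ must be small enough to keep $4-Cb>0$ is a welcome clarification.
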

\begin{proof}
We notice as in Lemma \ref{A1}--2 that
\begin{align*}
\left|\log\left(\frac{A'}{A'[0]}\right)\right|\leq \left|\log\left(\frac{(4-C(\delta)b-C(\delta)b^2)\sinc{\al'}+c b^2s'^2}{4\sinc{\al'}}\right)\right|
+\left|\log\left(\frac{(4+Cb+Cb^2)\sinc{\alpha'}+Cb^2s'^2}{4\sinc{\al'}}\right)\right|
\end{align*}
since \begin{align*}
|A'|\geq (1-C(\delta)b-C(\delta)b^2)\sinc{\al'}+c b^2s'^2
\end{align*}  as we checked in the proof of lemma \ref{A1}--1 and
\begin{align*}
A'\leq C\left((1+b+b^2)\sinc{\alpha'}+b^2s'^2\right).
\end{align*}

\end{proof}

\begin{lemma}\label{l2log2}The following inequalities hold for $b>0$.
\begin{align*}
\intpi\int_{-2}^2 \left|\log\left(\frac{(1-b)\sinc{\al'}+b^2s'^2}{\sinc{al'}}\right)\right|d\al ds'\leq C b\\
\intpi\int_{-2}^2 \left|\log\left(\frac{(1+b)\sinc{\al'}+b^2s'^2}{\sinc{al'}}\right)\right|d\al ds'\leq C b
\end{align*}
\end{lemma}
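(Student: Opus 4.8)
\emph{Proposed proof (plan).} The plan is to reduce both inequalities to the single model estimate
\begin{align*}
\intpi\int_{-2}^2 \log\p{1+\frac{b^2s'^2}{\sinc{\al'}}}\,d\al'\,ds'\le Cb ,
\end{align*}
and then to prove that estimate by an explicit integral identity. For the reduction, fix $p\in\{1-b,\,1+b\}$, so $\ab{p-1}=b\le\tfrac12$ for $a_0$ small, and split the logarithm for a.e.\ $(\al',s')$ as
\begin{align*}
\ab{\log\p{\frac{p\sinc{\al'}+b^2s'^2}{\sinc{\al'}}}}\le \ab{\log\p{\frac{p\sinc{\al'}+b^2s'^2}{\sinc{\al'}+b^2s'^2}}}+\log\p{1+\frac{b^2s'^2}{\sinc{\al'}}} .
\end{align*}
The first term on the right equals $\ab{\log\p{1+\frac{(p-1)\sinc{\al'}}{\sinc{\al'}+b^2s'^2}}}$; since the argument of this logarithm lies within $b\le\tfrac12$ of $1$, the elementary bound $\ab{\log(1+t)}\le 2\ab t$ for $\ab t\le\tfrac12$ gives $\le 2b$, which integrated over $(-\pi,\pi)\times(-2,2)$ contributes only $Cb$. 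Hence everything reduces to the model estimate.

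For the model estimate I would invoke the classical identity $\intpi\log\p{1-2a\cos\al'+a^2}\,d\al'=0$, valid for $0<a\le 1$ (it is the mean value property of the harmonic function $\log\ab{1-ae^{i\al'}}$), rewrite $1-2a\cos\al'+a^2=(1-a)^2+4a\sinc{\al'}$, and use $\intpi\log\p{4\sinc{\al'}}\,d\al'=0$. Subtracting logarithms yields $\intpi\log\p{1+\tfrac{(1-a)^2}{4a\sinc{\al'}}}\,d\al'=-2\pi\log a$. Setting $\mu=\tfrac{1-a}{2\sqrt a}\ge 0$, one has $\tfrac{(1-a)^2}{4a}=\mu^2$ and $\tfrac1a=\p{\mu+\sqrt{1+\mu^2}}^2$, hence $-\log a=2\arcsinh\mu$, so
\begin{align*}
\intpi\log\p{1+\frac{\mu^2}{\sinc{\al'}}}\,d\al'=4\pi\arcsinh\mu\le 4\pi\mu .
\end{align*}
Applying this with $\mu=b\ab{s'}$ (so $\mu<1$ for $a_0$ small) and integrating over $s'\in(-2,2)$ gives the model estimate with an explicit constant; combined with the reduction, both claimed inequalities follow with $Cb$ on the right.

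The step I expect to be the real point is ensuring the bound is \emph{linear} in $b$ rather than $b\log(1/b)$: the integrand of the model term has a logarithmic singularity at $\al'=0$, so a crude estimate leaves a stray logarithm. The explicit identity circumvents this cleanly. If one instead wanted a hands-on argument, I would split $(-\pi,\pi)$ at $\ab{\sin(\al'/2)}=b\ab{s'}$: on the set where $\tfrac{b^2s'^2}{\sinc{\al'}}\le 1$ one bounds $\log\p{1+\tfrac{b^2s'^2}{\sinc{\al'}}}\le\tfrac{b^2s'^2}{\sinc{\al'}}$ and integrates using Lemma~\ref{A2} (contribution $\lesssim b\ab{s'}$), while on the complementary set, which has measure $\lesssim b\ab{s'}$, one bounds the logarithm by $2\log(b\ab{s'})-2\log\ab{\sin(\al'/2)}$ and uses the cancellation $\int_0^\beta\p{\log\beta-\log t}\,dt=\beta$ to see that the log-divergence at the origin produces no extra factor. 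Either route delivers the bound $Cb$.
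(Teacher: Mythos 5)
Your proof is correct and takes essentially the same route as the paper: both hinge on the explicit evaluation $\intpi\log\p{1+\frac{\mu^2}{\sinc{\al'}}}d\al'=4\pi\arcsinh\mu$ applied with $\mu=b\ab{s'}$ (the paper states this for the $1-b$ case and the equivalent closed form $\intpi\log\p{1+b+\tfrac{b^2}{\sinc{\al'}}}d\al'=4\pi\log\p{b+\sqrt{1+b+b^2}}$ for the $1+b$ case), which is precisely what produces the linear bound $Cb$ without a stray $\log(1/b)$. Your reduction to the single model integral is a slightly more uniform version of the paper's elementary splittings, and you additionally derive the key identity from the mean-value property, which the paper just quotes.
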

\begin{proof}
We notice that
\begin{align*}
\left|\log\left(1-b+\frac{b^2s'^2}{\sinc{\al'}}\right)\right|\leq |\log(1-b)|+\left|\log\left(1+\frac{b^2s'^2}{\sinc{\al'}}\right)\right|.
\end{align*}
Then the estimate follows from
\begin{align*}
\intpi \log\left(1+\frac{b^2}{\sinc{\al'}}\right) d\al'=4\pi \arcsinh(b).
\end{align*}
In addition, the second inequality follows from
$$\intpi\log\left(1+b+\frac{b^2}{\sinc{\al'}}\right)d\alpha'=4\pi\log\left(b+\sqrt{1+b+b^2}\right).$$
\end{proof}
\begin{lemma}\label{l2logfinal}Let $f(\al,s)\in L^2$ with $\supp(f)\in \T\times [-1,1].$ Then
\begin{align*}
\left|\left|\inti\intpi \frac{1}{b}\log\left(\frac{A'}{A'[0]}\right)f'd\alpha' ds'\right|\right|_{L^2}\leq C ||f||_{L^2}.
\end{align*}
\end{lemma}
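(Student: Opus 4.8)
The plan is to regard the left-hand side as the $L^2(\Omega)$-norm of $f$ under the linear integral operator $Tf=\inti\intpi \tfrac1b\log(A'/A'[0])\,f'\,d\al'ds'$, and to bound its operator norm by Minkowski's integral inequality. First I would invoke Lemma \ref{l2log1}, which dominates the kernel pointwise by a function of the integration variables alone,
$$\frac1b\Bigl|\log\Bigl(\frac{A'}{A'[0]}\Bigr)\Bigr|\ \le\ \widetilde K(\al',s'):=\frac1b\Bigl|\log\Bigl(\frac{(4-Cb)\sinc{\al'}+cb^2s'^2}{4\sinc{\al'}}\Bigr)\Bigr|+\frac1b\Bigl|\log\Bigl(\frac{(4+Cb+Cb^2)\sinc{\al'}+Cb^2s'^2}{4\sinc{\al'}}\Bigr)\Bigr|,$$
all the dependence on $u$, and hence on $(\al,s)$, having been absorbed into the constants. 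Since $\supp f\subset\T\times[-1,1]$ and $s\in(-1,1)$, the shifted variable $s'$ is effectively confined to $(-2,2)$, so Minkowski's inequality yields $\|Tf\|_{L^2(\Omega)}\le \|f\|_{L^2}\int_{-2}^{2}\intpi \widetilde K(\al',s')\,d\al'ds'$. It therefore suffices to bound $\int_{-2}^{2}\intpi \widetilde K$ by a constant uniform in $b\in(0,b_0]$.

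To do this I would normalize each quotient (dividing numerator and denominator by $4$) and split the logarithm multiplicatively via $1\pm Kb+z=(1\pm Kb)\bigl(1+\tfrac{z}{1\pm Kb}\bigr)$: this gives, for $b$ small, $|\log(\cdot)|\le \bigl|\log(1-\tfrac C4 b)\bigr|+\log\bigl(1+\tfrac c4\tfrac{b^2s'^2}{\sinc{\al'}}\bigr)$ for the first term and $|\log(\cdot)|\le \bigl|\log(1+\tfrac C4 b+\tfrac C4 b^2)\bigr|+\log\bigl(1+\tfrac C4\tfrac{b^2s'^2}{\sinc{\al'}}\bigr)$ for the second. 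The two pieces that are constant in $\al'$ contribute, after division by $b$ and integration over the bounded $(\al',s')$-domain, only $C\,b^{-1}\bigl|\log(1\mp\tfrac C4 b+O(b^2))\bigr|\le C$. For the surviving pieces I would use $b^2 s'^2\le 4b^2$ and the identity established in the proof of Lemma \ref{l2log2}, $\intpi\log(1+K^2/\sinc{\al'})\,d\al'=4\pi\arcsinh(K)$, to get $\tfrac1b\int_{-2}^{2}\intpi\log\bigl(1+\tfrac c4\tfrac{b^2s'^2}{\sinc{\al'}}\bigr)\,d\al'ds'\le \tfrac{16\pi}{b}\arcsinh(\sqrt c\,b)$, which stays bounded as $b\to0^+$ since $\arcsinh(x)\sim x$; the $\tfrac C4$-term is handled identically. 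Summing the four pieces yields the required uniform bound (equivalently, one may simply quote Lemma \ref{l2log2} verbatim once these normalizations are in place).

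I do not expect any genuine difficulty beyond bookkeeping; the one delicate point is that the prefactor $\tfrac1b$ must be absorbed, which forces the use of the \emph{sharp} logarithmic estimate — the fact that $\intpi\log(1+b^2/\sinc{\al'})\,d\al'=4\pi\arcsinh(b)$ vanishes \emph{linearly} in $b$ rather than being merely $O(1)$ — and rules out wasteful bounds such as $\log(Kx)\le\log K+\log x$, whose constant term, once divided by $b$, would blow up. The clean device for removing the spurious constants $c,C$ multiplying $b^2 s'^2$ in Lemma \ref{l2log1} without spoiling the $O(b)$ gain is the elementary inequality $\log(1+Kx)\le \max(1,K)\log(1+x)$ for $x\ge 0$, proved by comparing derivatives at $x=0$; this is what legitimizes rescaling the coefficient of $b^2 s'^2$ to unity at the cost of only a multiplicative constant.
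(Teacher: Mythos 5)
Your proof is correct and follows essentially the same route as the paper, which simply combines Lemma \ref{l2log1}, Minkowski's integral inequality, and Lemma \ref{l2log2}. You have additionally spelled out the minor but genuine adjustment needed to reconcile the constants $c,C$ appearing in front of $b^2s'^2$ in Lemma \ref{l2log1} with the coefficient $1$ in the statement of Lemma \ref{l2log2} (via the inequality $\log(1+Kx)\le\max(1,K)\log(1+x)$ or equivalently a rescaling of $b$), a detail the paper leaves implicit.
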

\begin{proof}
This lemma is a consequence of Lemmas \ref{l2log1} and \ref{l2log2} and Minkowski inequality.
\end{proof}

The following lemmas will be used to show the differentiability of the functional $\mathcal{G}$ and the continuity of its derivative.

Here we recall that
\begin{align*}
\pa_a A'= 8a(u+u')\sinc{\al'}+ 4a^3\left(uu'\sinc{\al'}+(u-u')^2\right)
\end{align*}

\begin{lemma}\label{dA1}The following estimates hold:
\begin{enumerate}
\item $$\left|\pa_a A'\right|\leq C |a| \left((1+a^2)\sinc{\al'}+a^2 s'^2\right).$$
\item $$\left|\pa\pa_a A'\right|\leq C |a| \left((1+a^2)\sinc{\al'}+a^2 s'^2\right).$$
\item $$\left|\pa^2\pa_a A'\right|\leq C |a| \left((1+a^2)\sinc{\al'}+ a^2\left|\sin\left(\frac{\al'}{2}\right)\right|+a^2\left(|s'|+s'^2\right)\right).$$
\end{enumerate}
\end{lemma}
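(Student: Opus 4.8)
The plan is to follow, line by line, the scheme of the proof of Lemma \ref{A1}: differentiate the explicit expression
\begin{align*}
\pa_a A' = 8a(u+u')\sinc{\al'} + 4a^3\p{uu'\sinc{\al'}+(u-u')^2}
\end{align*}
in $\al$ and in $s$, take absolute values term by term, and bound each piece using the Sobolev embedding $H^{4,3}\subset C^2$ (see \cite[Lemma 4.1]{Castro-Cordoba-GomezSerrano:global-smooth-solutions-sqg}) together with $||\rti||_{H^{4,3}}\leq\delta$. Concretely, I would use the pointwise bounds $\ab{u},\ab{\pa u},\ab{\pa^2 u}\leq C$ on $\Omega$ (and the same for the primed versions), the inequality $(u-u')^2\leq C(\delta)(\sinc{\al'}+s'^2)$ already established inside the proof of Lemma \ref{A1}--1, and the Lipschitz-type estimates $\ab{u-u'}\leq C(\ms+\ab{s'})$ and $\ab{\pa u-\pa u'}=\ab{\pa\rti-\pa\rti'}\leq C(\delta)(\ms+\ab{s'})$, which hold because $u-u'=s'+\rti-\rti'$, because $\rti$ and $\pa\rti$ are Lipschitz by the $C^2$ embedding, and because $\ab{\al'}\leq\pi\ms$ on $[-\pi,\pi]$. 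I would also record that $\pa\sinc{\al'}=0$, since $\al'$ and $s'$ are integration variables while $\pa$ differentiates in $(\al,s)$.

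For the first estimate nothing more is needed: reading off the displayed formula, $\ab{8a(u+u')\sinc{\al'}}\leq C\ab{a}\sinc{\al'}$, $\ab{4a^3 uu'\sinc{\al'}}\leq C\ab{a}^3\sinc{\al'}$ and $\ab{4a^3(u-u')^2}\leq C\ab{a}^3(\sinc{\al'}+s'^2)$, and adding these yields $\ab{\pa_a A'}\leq C\ab{a}((1+a^2)\sinc{\al'}+a^2 s'^2)$.

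For the second and third estimates I would differentiate once and twice, obtaining schematically
\begin{align*}
\pa\pa_a A' &= 8a\,\pa(u+u')\sinc{\al'}+4a^3\p{\pa(uu')\sinc{\al'}+2(u-u')(\pa u-\pa u')},\\
\pa^2\pa_a A' &= 8a\,\pa^2(u+u')\sinc{\al'}+4a^3\p{\pa^2(uu')\sinc{\al'}+2(u-u')\pa^2(u-u')+2(\pa u-\pa u')^2},
\end{align*}
and estimate term by term. Every summand carrying a factor $\sinc{\al'}$ is $O(\ab{a})$ or $O(\ab{a}^3)$ times $\sinc{\al'}$ by the uniform $C^2$ bounds. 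For the difference terms: in $\pa\pa_a A'$ one has $\ab{(u-u')(\pa u-\pa u')}\leq C(\ms+\ab{s'})^2\leq C(\sinc{\al'}+s'^2)$, contributing $O(\ab{a}^3(\sinc{\al'}+s'^2))$ and hence giving the second estimate with the same weight as the first; in $\pa^2\pa_a A'$ one again has $\ab{(\pa u-\pa u')^2}\leq C(\sinc{\al'}+s'^2)$, whereas for $\ab{(u-u')\pa^2(u-u')}$ only the linear bound $\leq C(\ms+\ab{s'})$ is available, which is precisely why the right-hand side of the third estimate must carry the weaker weight $\ms+\ab{s'}+s'^2$; collecting everything, the $a^3$ part of $\pa^2\pa_a A'$ is $O(\ab{a}^3(\sinc{\al'}+\ms+\ab{s'}+s'^2))$, which fits into $C\ab{a}((1+a^2)\sinc{\al'}+a^2(\ms+\ab{s'}+s'^2))$.

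I do not expect a genuine obstacle: the lemma is essentially bookkeeping of the polynomial degree in $a$ and of the near-diagonal behaviour ($\al',s'\to 0$) of difference quotients of $\rti$, and each individual estimate reproduces a step already carried out in the proof of Lemma \ref{A1}. The only points deserving a little attention are (i) keeping the cubic terms as $\ab{a}^3=\ab{a}\cdot a^2$ rather than merely $\ab{a}$, so that they land in the $a^2$-weighted part of the claimed bounds, and (ii) in the third estimate, exploiting the sharper quadratic bound $(\pa u-\pa u')^2\lesssim\sinc{\al'}+s'^2$ for the squared first-derivative difference while allowing the weaker linear weight $\ms+\ab{s'}$ for the term where $u-u'$ is paired with an undifferentiated $\pa^2(u-u')$.
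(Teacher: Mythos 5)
Your proof is correct and takes essentially the same route as the paper: the paper's own proof of Lemma \ref{dA1} consists solely of the remark that it ``follows similar steps as the proof of Lemma \ref{A1},'' and you carry out precisely those steps, differentiating $\pa_a A'$ once and twice, using the $C^2$ bound on $u$ from the embedding $H^{4,3}\subset C^2$, the Lipschitz estimate $\ab{u-u'}+\ab{\pa u-\pa u'}\leq C(\ms+\ab{s'})$, and the resulting quadratic bound $(\ms+\ab{s'})^2\lesssim \sinc{\al'}+s'^2$. Your bookkeeping of the cubic-in-$a$ pieces as $\ab{a}\cdot a^2$ and your identification of $(u-u')\pa^2(u-u')$ as the term forcing the weaker linear weight $\ms+\ab{s'}$ in the third estimate match exactly what happens in the proof of Lemma \ref{A1}--4.
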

\begin{proof}
The proof of this lemma follows similar steps as the proof of Lemma \ref{A1}.
\end{proof}

\begin{lemma}\label{dA4}Let $f(\alpha,s)\in L^2(\Omega)$ with supp$(f)\subset \T\times [-1,1]$ and $b>0$. Then
\begin{enumerate}
\item \label{dA4log} $$\left|\left|\inti\intpi \frac{\pa_a A'}{A'}f'd\al'ds'\right|\right|_{L^2}\leq C |a|||f||_{L^2}.$$
\item \label{dA4dA} $$\left|\left|\inti\intpi \pa\frac{\pa_a A'}{A'} f'd\al'ds'\right|\right|_{L^2}\leq C |a|||f||_{L^2}.$$
\item \label{dA4d2A} $$\left|\left|\inti\intpi \pa^2\frac{\pa_a A'}{A'} f'd\al'ds'\right|\right|_{L^2}\leq C|a| ||f||_{L^2}.$$
\item \label{dA4d3A} $$\left|\left|\inti\intpi \left(\pa^3 \left(\frac{\pa_aA'}{A'}\right)-\frac{\pa^3\pa_a A'}{A'}+ \frac{\pa_{a}A' \pa^{3}A'}{A'^{2}}\right) f'd\al'ds'\right|\right|_{L^2}\leq C |a|||f||_{L^2}.$$
\end{enumerate}
\end{lemma}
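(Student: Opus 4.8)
The plan is to reduce each of the four estimates to an application of Lemma \ref{A2}, exactly as in the proof of Lemma \ref{A4}, using the pointwise bounds on $\pa_a A'$ and its derivatives from Lemma \ref{dA1} together with the lower bound $A'\geq c(\sinc{\al'}+b^2 s'^2)$ from Lemma \ref{A1}--1. Throughout, recall $b=a^2$, so a factor $|a|$ out front is what we want to produce. First I would handle \ref{dA4log}: by Lemma \ref{dA1}--1 and Lemma \ref{A1}--1,
\begin{align*}
\left|\frac{\pa_a A'}{A'}\right|\leq C|a|\frac{(1+a^2)\sinc{\al'}+a^2 s'^2}{\sinc{\al'}+b^2 s'^2}\leq C|a|,
\end{align*}
so Minkowski's inequality gives the bound with $\|f\|_{L^2}\int_{-\pi}^\pi\int_{-2}^2 C|a|\,d\al' ds'\leq C|a|\|f\|_{L^2}$.

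For \ref{dA4dA} and \ref{dA4d2A} I would expand $\pa\big(\pa_a A'/A'\big)=\pa\pa_a A'/A'-\pa_a A'\,\pa A'/A'^2$ and $\pa^2\big(\pa_a A'/A'\big)$ by the quotient rule, and bound each resulting term by combining Lemma \ref{dA1}--2,3, Lemma \ref{A1}--3,4 for the numerators with Lemma \ref{A1}--1 for the denominators. In each case the numerator is, up to the overall $|a|$, a sum of terms of the form $|\sin(\al'/2)|^m |s'|^k$ times a power of $b$, divided by $(\sinc{\al'}+b^2 s'^2)^l$ with $l=1$ or $2$; the exponents always satisfy $k+m+1-2l\geq 0$ (the same bookkeeping as in Lemma \ref{A4}), so Lemma \ref{A2} applies and, together with Minkowski, yields the desired $C|a|\|f\|_{L^2}$. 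The extra powers of $b$ arising from the $a^3$-part of $\pa_a A'$ in Lemma \ref{dA1} are harmless: they only improve the estimate.

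For \ref{dA4d3A} the point is the algebraic cancellation of the most singular third-derivative term: writing out $\pa^3(\pa_a A'/A')$ by Leibniz, the term $\pa^3\pa_a A'/A'$ is subtracted off explicitly, and the term $-3\,\pa^2\pa_a A'\,\pa A'/A'^2$ is what combines with $\pa_a A'\,\pa^3 A'/A'^2$ after one rewrites $\pa^3(1/A')=-\pa^3 A'/A'^2+(\text{lower-order in }A')$. What remains is a sum of terms each carrying at most two derivatives on any single factor of $A'$ or $\pa_a A'$ in the numerator and at most $A'^3$ in the denominator, of exactly the type controlled by Lemma \ref{A2}; this mirrors the reduction used to prove Lemma \ref{A4}--\ref{A4d3A}. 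I expect the main obstacle to be purely organizational: correctly enumerating all the terms produced by the quotient and Leibniz rules in \ref{dA4d3A}, verifying that after the stated cancellation each survivor has a numerator whose $|\sin(\al'/2)|$- and $|s'|$-exponents $m,k$ obey $k+m+1-2l\geq 0$ against its denominator power $l$, so that Lemma \ref{A2} is genuinely applicable — once that bookkeeping is done, each piece is a one-line Minkowski-plus-Lemma \ref{A2} estimate, and the uniformity of $C$ in $b$ follows because the only $b$-dependence that survives is nonnegative powers of $b$.
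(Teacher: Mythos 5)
Your plan for parts \ref{dA4dA}--\ref{dA4d3A} is essentially the paper's argument (quotient/Leibniz expansion, pointwise bounds from Lemmas \ref{A1} and \ref{dA1}, then Minkowski plus Lemma \ref{A2}), but there is a genuine error in your treatment of part \ref{dA4log}. You claim the pointwise bound
\begin{align*}
\left|\frac{\pa_a A'}{A'}\right|\leq C|a|\frac{(1+a^2)\sinc{\al'}+a^2 s'^2}{\sinc{\al'}+b^2 s'^2}\leq C|a|,
\end{align*}
but the second inequality is false. With $b=a^2$, in the region where $\sinc{\al'}\ll a^2 s'^2$ the middle quantity is
\begin{align*}
C|a|\cdot\frac{a^2 s'^2}{a^4 s'^2}=\frac{C}{|a|},
\end{align*}
which blows up as $a\to 0$; so $\pa_a A'/A'$ is \emph{not} uniformly $O(|a|)$ pointwise. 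The correct move, as in the paper, is to split
\begin{align*}
\left|\frac{\pa_a A'}{A'}\right|\leq C|a|+C|a|^3\frac{s'^2}{\sinc{\al'}+a^4 s'^2},
\end{align*}
and then control the second term by Minkowski together with Lemma \ref{A2} (with $m=0$, $k=2$, $l=1$, $b=a^2$), which gives $\int\int \frac{s'^2}{\sinc{\al'}+a^4 s'^2}\,d\al'\,ds'\leq C a^{-2}$, so that $|a|^3\cdot a^{-2}=|a|$ recovers the claim. The estimate is not a Minkowski-of-a-constant estimate; Lemma \ref{A2} is needed even for part \ref{dA4log}.

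This is worth flagging because your framing suggests the mechanism is simply ``pointwise $O(|a|)$, then integrate,'' whereas the real mechanism throughout the lemma is that the pointwise bounds are of the form $|a|\cdot(\text{bounded})+|a|^{1+2j}\cdot(\text{singular, controlled by Lemma \ref{A2} after integrating})$, and one must check in each case that the exponent of $|a|$ in front compensates the negative power of $b=a^2$ returned by Lemma \ref{A2}. Your remark that the extra powers of $b$ ``only improve the estimate'' is also slightly off: they do not just improve it, they are exactly what makes the total come out as $O(|a|)$ rather than diverging. Once you correct part \ref{dA4log} and keep this accounting explicit, the rest of your plan (including the algebraic cancellation in part \ref{dA4d3A}, which produces the five remainder terms $-3\pa^2\pa_a A'\,\pa A'/A'^2-3\pa\pa_a A'\,\pa^2 A'/A'^2+6\pa\pa_a A'(\pa A')^2/A'^3+6\pa_a A'\,\pa^2 A'\,\pa A'/A'^3-6\pa_a A'(\pa A')^3/A'^4$, each estimable by Lemmas \ref{A1}, \ref{dA1}, \ref{A2}) matches the paper's proof.
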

\begin{proof}
In order to prove 1 we notice that by Lemmas \ref{A1}-1 and \ref{dA1}-1
\begin{align}\label{quote2}
\left|\frac{\pa_a A'}{A'}\right|\leq C |a|\frac{\sinc{\al'}+a^2s'^2}{\sinc{\al'}+a^4s'^2}\leq C |a| +|a|^3\frac{s'^2}{\sinc{\al'}+a^4s'^2}
\end{align}
Therefore we obtain the first inequality in lemma \ref{dA1} by applying Minkowski's inequality and Lemma \ref{A2}.

To obtain 2 we proceed as follows
\begin{align*}
\pa\frac{\pa_a A'}{A'}=\frac{\pa \pa_a A'}{A'}-\frac{\pa_aA' \pa A'}{A'^2}.
\end{align*}
Then, by applying Lemmas \ref{A1}-1, \ref{A1}-3, \ref{dA1}-1 and \ref{dA1}-2, we have that
\begin{align*}
\left|\pa\frac{\pa_a A'}{A'}\right|\leq C|a|\frac{\sinc{\al'}+a^2 s'^2}{\sinc{\al'}+a^4s'^2}+C|a|^3\frac{\sin^4\left(\frac{\al'}{2}\right)+a^4s'^4}{\left(\sinc{\al'}+a^4s'^2\right)^2}
\end{align*}
Then we can use again Minkowski's inequality and Lemma \ref{A2} to yield 2.

Taking one more derivative yields
\begin{align*}
\pa^2 \frac{\pa_a A'}{A'}= \frac{\pa^2\pa_a A'}{A'}-2\frac{\pa\pa_a A'\pa A'}{A'^2}-\frac{\pa_aA'\pa^2A'}{A'^2}+2\frac{\pa_aA' (\pa A')^2}{A'^3}.
\end{align*}
By using Lemmas \ref{A1}-1, \ref{A1}-3 and \ref{dA1}-3 we can estimate
\begin{align*}
\left|\frac{\pa\pa_a A'\pa A'}{A'^2}\right|\leq C |a|^3\frac{\sin^4\left(\frac{\al'}{2}\right)+a^4s'^4}{(\sinc{\al'}+a^4 s'^2)^2}.
\end{align*}
By using Lemmas \ref{A1}-1, \ref{dA1}-1 and \ref{A1}-3 we can estimate
\begin{align*}
\left|\frac{\pa_a A'(\pa A')^2}{A'^3}\right|\leq C |a|^5\frac{(\sinc{\al'}+a^2s'^2)^3}{(\sinc{\al'}+a^4 s'^2)^3}\leq C |a|^7\frac{\sin^6(\al')+a^6s'^6}{(\sinc{\al'}+a^4 s'^2)^3}.
\end{align*}
By using Lemmas \ref{A1}-1 and \ref{dA1}-3 we can estimate
\begin{align*}
\left|\frac{\pa^2\pa_a A'}{A'}\right|\leq C |a| \frac{\left(\sinc{\al'}+ a^2\left|\sin\left(\frac{\al'}{2}\right)\right|+a^2\left(|s'|+s'^2\right)\right)}{\sinc{\al'}+a^4s'^2}.
\end{align*}
Finally, by using Lemmas \ref{A1}-1, \ref{A1}-3 and \ref{dA1}-1 we have that
\begin{align*}
&\left|\frac{\pa_a \pa^2 A'}{A'^2}\right|\leq  C|a|^3\frac{(\sinc{\al'}+a^2 s'^2)\left(\sinc{\al'}+ a^2\left|\sin\left(\frac{\al'}{2}\right)\right|+a^2\left(|s'|+s'^2\right)\right)}{(\sinc{\al'}+a^4s'^2)^2}\\
&\leq C|a|^3 \frac{\sin^4\left(\frac{\alpha'}{2}\right)+a^2 \left|\sin\left(\frac{\alpha'}{2}\right)\right|^3+a^2 \sinc{\alpha'}(|s'|+s'^2)+a^2\sinc{\al'}s'^2+ a^4 \left|\sin\left(\frac{\alpha'}{2}\right)\right|s'^2+a^4 s'^2(|s'|+s'^2)}{(\sinc{\alpha'}+a^4 s'^2)^2}.
\end{align*}
Then by applying Minkowski's inequality and Lemma \ref{A2} we achieve 3.

In order to prove 4, we start taking 3 derivatives:

\begin{align*}
&\pa^{3}\frac{\pa_{a}A'}{A'} - \frac{\pa^{3}\pa_{a}A'}{A'} + \frac{\pa_{a}A' \pa^{3}A'}{A'^{2}}
 = - 3 \frac{\pa^{2}\pa_{a}A' \pa A'}{A'^{2}} - 3 \frac{\pa \pa_{a} A'\pa^{2} A'}{A'^{2}}\\&
 + 6 \frac{\pa \pa_{a} A' (\pa A')^{2}}{A'^{3}} + 6 \frac{\pa_{a} A' \pa^{2} A' \pa A'}{A'^{3}}  - 6 \frac{\pa_{a} A' (\pa A')^{3}}{A'^{4}}
\end{align*}

By using Lemmas \ref{A1}-1, \ref{A1}-3 and \ref{dA1}-3 we can estimate

\begin{align*}
&\left|\frac{\pa^{2} \pa_{a} A' \pa A'}{A'^{2}}\right| \\
 &\leq C|a|^{3} \frac{\left(\sinc{\al'} + a^2s'^{2}\right)\left(\sinc{\al'} + a^{2}\left|\sin\left(\frac{\al'}{2}\right)\right|+a^{2}\left(|s'|+s'^{2}\right)\right)}{(\sinc{\al'}+a^{4}s'^{2})^{2}} \\
 &\leq C|a|^{3} \frac{\sin^{4}\left(\frac{\al'}{2}\right) + a^{2}\left|\sin\left(\frac{\al'}{2}\right)\right|^{3} + a^{2}\sinc{\al'}(|s'|+s'^{2}) + a^{2}s'^{2}\sinc{\al'} + a^{4}s'^{2}\left|\sin\left(\frac{\al'}{2}\right)\right| + a^{4}s'^{2}(|s'|+s'^{2})}{(\sinc{\al'}+a^{4}s'^{2})^{2}} \\
\end{align*}

By using Lemmas \ref{A1}-1, \ref{A1}-4 and \ref{dA1}-2 we can estimate

\begin{align*}
\left|\frac{\pa \pa_{a} A' \pa^{2} A'}{A'^{2}}\right| & \leq C|a|^{3} \frac{\left(\sinc{\al'} + a^2s'^{2}\right)\left(\sinc{\al'} + a^{2}\left|\sin\left(\frac{\al'}{2}\right)\right|+a^{2}\left(|s'|+s'^{2}\right)\right)}{(\sinc{\al'}+a^{4}s'^{2})^{2}},
\end{align*}

and this term is done as the previous one.

By using Lemmas \ref{A1}-1, \ref{A1}-3 and \ref{dA1}-2 we can estimate

\begin{align*}
\left|\frac{\pa \pa_{a} A' (\pa A')^{2}}{A'^{3}}\right| & \leq C|a|^{5} \frac{\left(\sinc{\al'} + a^2s'^{2}\right)^{3}}{(\sinc{\al'}+a^{4}s'^{2})^{3}} \\
& \leq C|a|^{5}\frac{\left(\sin^{6}\left(\frac{\al'}{2}\right) + a^{6}s'^{6}\right)}{(\sinc{\al'}+a^{4}s'^{2})^{3}}.
\end{align*}

By using Lemmas \ref{A1}-1, \ref{A4}-3, \ref{A1}-4 and \ref{dA1}-1 we can estimate

\begin{align*}
&\left|\frac{\pa_{a} A' \pa^{2} A' \pa A'}{A'^{3}}\right| \\
 &\leq C|a|^{5} \frac{\left(\sinc{\al'} + a^2s'^{2}\right)^{2}\left(\sinc{\al'} + a^{2}\left|\sin\left(\frac{\al'}{2}\right)\right|+a^{2}\left(|s'|+s'^{2}\right)\right)}{(\sinc{\al'}+a^{4}s'^{2})^{3}} \\
 &\leq C|a|^{5} \frac{\left(\sin^{4}\left(\frac{\al'}{2}\right) + a^4s'^{4}\right)\left(\sinc{\al'} + a^{2}\left|\sin\left(\frac{\al'}{2}\right)\right|+a^{2}\left(|s'|+s'^{2}\right)\right)}{(\sinc{\al'}+a^{4}s'^{2})^{3}} \\
 &\leq C|a|^{3} \frac{\sin^{6}\left(\frac{\al'}{2}\right) + a^{2}\left|\sin\left(\frac{\al'}{2}\right)\right|^{5} + a^{2}\sin^{4}\left(\frac{\al'}{2}\right)(|s'|+s'^{2}) + a^{4}s'^{4}\sinc{\al'} + a^{6}s'^{4}\left|\sin\left(\frac{\al'}{2}\right)\right| + a^{6}s'^{4}(|s'|+s'^{2})}{(\sinc{\al'}+a^{4}s'^{2})^{3}} \\
\end{align*}

By using Lemmas \ref{A1}-1, \ref{A4}-3 and \ref{dA1}-1 we can estimate

\begin{align*}
\left|\frac{\pa_{a} A' (\pa A')^{3}}{A'^{4}}\right| & \leq C|a|^{7} \frac{\left(\sinc{\al'} + a^2s'^{2}\right)^{4}}{(\sinc{\al'}+a^{4}s'^{2})^{4}} \\
& \leq C|a|^{7}\frac{\left(\sin^{8}\left(\frac{\al'}{2}\right) + a^{8}s'^{8}\right)}{(\sinc{\al'}+a^{4}s'^{2})^{4}}.
\end{align*}

Finally, by applying Minkowski's inequality and Lemma \ref{A2} we conclude 4.

\end{proof}

\begin{lemma}\label{dA5}Let $f(\alpha,s)\in L^\infty(\Omega)$ with supp$(f)\subset \T\times [-1,1]$ and $b>0$. Then:

$$\left|\left|\inti\intpi \frac{\pa^3 \pa_{a} A'}{A'}f'd\al'ds'\right|\right|_{L^2}\leq C |a| ||f||_{L^\infty}.$$

and

$$\left|\left|\inti\intpi \frac{\pa^3 A' \pa_{a} A'}{A'^{2}}f'd\al'ds'\right|\right|_{L^2}\leq C |a| ||f||_{L^\infty}.$$
\end{lemma}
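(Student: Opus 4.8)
The plan is to repeat, essentially verbatim, the scheme of Lemmas~\ref{A5} and~\ref{dA4}: expand $\pa^3\pa_a A'$ and $\pa^3 A'\,\pa_a A'/A'^2$ as explicit finite sums, bound each summand pointwise by a product of a $C^0$-bounded factor built from $u$, $u'$ and their derivatives up to second order (all controlled since $H^{4,3}\subset C^2$ and $\rti\in V^\delta$), at most one top-order factor $\pa^3 u$ or $\pa^3 u'$ (which lies in $L^2$), and a kernel in $(\al',s')$ of the type handled by Lemma~\ref{A2}, and finally sum up by Minkowski's inequality exactly as in the proof of Lemma~\ref{A5}.

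First I would treat $\pa^3\pa_a A'/A'$. Using $\pa_a A'=8a(u+u')\sinc{\al'}+4a^3\p{4uu'\sinc{\al'}+(u-u')^2}$ and that $\sinc{\al'}$ is independent of $(\al,s)$, Leibniz gives
\begin{align*}
\pa^3\pa_a A'=8a\,\sinc{\al'}\,\pa^3(u+u')+4a^3\p{4\,\sinc{\al'}\,\pa^3(uu')+2(u-u')\p{\pa^3 u-\pa^3 u'}+6\,\pa(u-u')\pa^2(u-u')}.
\end{align*}
Dividing by $A'$ and using the lower bound $A'\geq c\p{\sinc{\al'}+b^2 s'^2}$ of Lemma~\ref{A1}--1, together with $|u-u'|+|\pa(u-u')|\leq C\p{\ms+|s'|}$ and $|\pa^2(u-u')|\leq C$ (mean value theorem and $H^{4,3}\subset C^2$), one gets a pointwise estimate of the same shape as~\eqref{quote1} with an extra factor $|a|$ in front: a sum of terms bounded by
\begin{align*}
C|a|\,\frac{\sinc{\al'}}{\D}\p{1+|\pa^3 u|+|\pa^3 u'|}\qquad\text{and}\qquad C|a|^3\,\frac{\ms+|s'|}{\D}\p{1+|\pa^3 u|+|\pa^3 u'|}.
\end{align*}
By Lemma~\ref{A2} (with $b=a^2$) the kernels $\frac{\sinc{\al'}}{\D}$, $\frac{\ms}{\D}$ and $\frac{|s'|}{\D}$ are integrable over $\Omega$, with $L^1$ norms $\leq C$, $\leq C\log(1/a^2)$ and $\leq Ca^{-2}$ respectively; since the last two multiply $|a|^3$, the whole bound is $\leq C|a|$. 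Splitting each summand as (bounded)$\,\times\,$($\pa^3 u$ or $\pa^3 u'$)$\,\times f'$ and applying Minkowski's inequality as in Lemma~\ref{A5}, its $L^2$ norm is $\leq C|a|\p{1+\|\rti\|_{H^{4,3}}}\|f\|_{L^\infty}\leq C|a|\|f\|_{L^\infty}$.

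For the second estimate I would write $\frac{\pa^3 A'\,\pa_a A'}{A'^2}=\frac{\pa^3 A'}{A'}\cdot\frac{\pa_a A'}{A'}$ and combine the two pointwise bounds already at hand: the estimate~\eqref{quote1} for $\pa^3 A'/A'$ from the proof of Lemma~\ref{A5}, and, from Lemma~\ref{dA1}--1 together with Lemma~\ref{A1}--1 (as in~\eqref{quote2}), $\ab{\frac{\pa_a A'}{A'}}\leq C|a|+C|a|^3\,\frac{|s'|^{2}}{\D}$. Multiplying these, every resulting summand is --- up to a $C^0$ factor and at most one $L^2$ factor $\pa^3 u$ or $\pa^3 u'$ --- of the form $C\,a^{j}\,\frac{\ms^{m}\,|s'|^{k}}{\p{\sinc{\al'}+b^2 s'^2}^{l}}$ with $j\in\{3,5,7\}$ and $k+m+1-2l\geq 0$, so Lemma~\ref{A2} applies. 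Since Lemma~\ref{A2} replaces such a kernel by at worst $C\,a^{2(m+1-2l)}$ when $m<2l-1$ (and by only $C$ or $C\log(1/a^2)$ otherwise, which is harmless against $j\geq 3$), one verifies term by term that $j+2\min\p{0,\,m+1-2l}\geq 1$; hence the surviving power of $a$ is always $\geq 1$, and Minkowski's inequality once more gives $\ab{\ab{\inti\intpi\frac{\pa^3 A'\,\pa_a A'}{A'^2}f'\,d\al'ds'}}_{L^2}\leq C|a|\|f\|_{L^\infty}$.

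The only genuine work is the bookkeeping in this last paragraph: one must check, term by term, that the favourable prefactors $8a$, $4a^3$ of $\pa_a A'$ and $4b$, $b^2$ of $\pa^3 A'$ always dominate the (possibly negative) powers of $b=a^2$ produced by Lemma~\ref{A2}, so that no summand degenerates as $a\to 0$. The tightest case is the product of the $b^2$-piece of $\pa^3 A'/A'$ carrying $|s'|$ in the numerator with the $C|a|^3\frac{|s'|^2}{\D}$-piece of $\pa_a A'/A'$: it produces the kernel $|s'|^3\p{\sinc{\al'}+b^2 s'^2}^{-2}$ (so $m=0$, $k=3$, $l=2$ and $b^{m+1-2l}=b^{-3}=a^{-6}$) against a prefactor $|a|^{7}$, so that the balance is exact and yields precisely the claimed $C|a|$.
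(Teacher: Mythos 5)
Your proof is correct and follows essentially the same approach as the paper: expand $\pa^3\pa_a A'$ via Leibniz, bound pointwise using Lemma \ref{A1}--1, then conclude by Minkowski and Lemma \ref{A2}, and handle the second estimate by combining the bounds \eqref{quote1} for $\pa^3 A'/A'$ and \eqref{quote2} for $\pa_a A'/A'$. The paper's proof is terser — it simply cites \eqref{quote1} and \eqref{quote2} for the second estimate without spelling out the bookkeeping — but your identification of the tightest term ($|a|^7$ against the kernel $|s'|^3(\sinc{\al'}+b^2 s'^2)^{-2}$, exactly balancing to $|a|$) is a worthwhile sanity check.
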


\begin{proof}
We start computing

\begin{align*}
\pa^{3} \pa_{a} A' = 8a \pa^{3}(u+u')\sinc{\al'} + 4a^{3}\left(\pa^{3}(uu')\sinc{\al'} + 6 \pa(u-u')\pa^{2}(u-u') + 2(u-u')\pa^{3}(u-u')\right).
\end{align*}

Then, by using Lemma \ref{A1}-1:

\begin{align*}
\frac{|\pa^{3} \pa_{a} A'|}{|A|} & \leq C|a| \frac{\sinc{\al'}}{(\sinc{\al'}+a^{4}s'^{2})}|\pa^{3}(u+u')|
 + C|a|^{3} \frac{\sinc{\al'}}{(\sinc{\al'}+a^{4}s'^{2})}|\pa^{3}(uu')| \\
 & + C|a|^{3} \frac{\left|\sin\left(\frac{\al'}{2}\right)\right| + |s'|}{(\sinc{\al'}+a^{4}s'^{2})}
 + C|a|^{3} \frac{\left|\sin\left(\frac{\al'}{2}\right)\right| + |s'|}{(\sinc{\al'}+a^{4}s'^{2})}|\pa^{3} u - \pa^{3} u'|,
\end{align*}

and we can conclude as in Lemma \ref{A5}. This shows the first estimate. In order to prove the second one, we use \eqref{quote1} and \eqref{quote2}.
\end{proof}

\begin{lemma}\label{A4bis}Let $f(\alpha,s)\in L^2(\Omega)$ with supp$(f)\subset \T\times [-1,1]$ and $h\neq 0$. Then
\begin{enumerate}
\item \label{A4logbis} $$\left|\left|\inti\intpi \frac{1}{h} \log\left(\frac{A'[h]}{A'[0]}\right)f'd\al'ds'\right|\right|_{L^2}\leq C |h| ||f||_{L^2}.$$
\item \label{A4dAbis} $$\left|\left|\inti\intpi \frac{1}{h} \frac{\pa A'[h]}{A'[h]} f'd\al'ds'\right|\right|_{L^2}\leq C |h|||f||_{L^2}.$$
\item \label{A4d2Abis} $$\left|\left|\inti\intpi \frac{1}{h} \left(\frac{\pa^2 A'[h]}{A'[h]}-\frac{(\pa A'[h])^2}{A'^2[h]}\right) f'd\al'ds'\right|\right|_{L^2}\leq C|h| ||f||_{L^2}.$$
\item \label{A4d3Abis} $$\left|\left|\inti\intpi \frac{1}{h}\left(\pa^3 \log(A'[h])-\frac{\pa^3A'[h]}{A'[h]}\right) f'd\al'ds'\right|\right|_{L^2}\leq C |h|||f||_{L^2}.$$
\end{enumerate}
\end{lemma}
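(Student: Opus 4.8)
The plan is to recognize Lemma~\ref{A4bis} as the rescaled, finite-difference counterpart of Lemmas~\ref{A4} and~\ref{l2logfinal}. In those lemmas the small quantity is $b=a^{2}$; here one simply takes $a=h$, so that $A'[h]=A'[\rti,h]$ and $b=h^{2}$, and then divides the resulting inequalities by $|h|$. First I would record two preliminary remarks that make this reduction legitimate: (i) $A'[\rti,h]$ and each of its $\alpha$- or $s$-derivatives depends on $h$ only through $h^{2}$ and $h^{4}$, so the sign of $h$ plays no role; and (ii) the standing hypotheses in force here ($\|\rti\|_{H^{4,3}}\le\delta$ with $\delta$ small, $|h|$ small) coincide with those under which Lemmas~\ref{A4} and~\ref{l2logfinal} were established, and the constants there are uniform in $b$ — which is exactly what is needed when one divides through by $|h|$.

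For item~\ref{A4logbis} I would apply Lemma~\ref{l2logfinal} with $b=h^{2}$ to the given $f$, which yields $\left\|\inti\intpi \frac{1}{h^{2}}\log\!\left(\frac{A'[h]}{A'[0]}\right) f'\,d\al'\,ds'\right\|_{L^{2}}\le C\|f\|_{L^{2}}$; multiplying by $h^{2}$ and then dividing by $|h|$ gives precisely $\left\|\inti\intpi \frac{1}{h}\log\!\left(\frac{A'[h]}{A'[0]}\right) f'\,d\al'\,ds'\right\|_{L^{2}}\le C|h|\,\|f\|_{L^{2}}$. If one prefers not to invoke Lemma~\ref{l2logfinal}, the same bound follows directly from Lemmas~\ref{l2log1} and~\ref{l2log2}, which together give $\inti\intpi \left|\log\!\left(A'[h]/A'[0]\right)\right|\,d\al'\,ds'\le Ch^{2}$, after which Minkowski's inequality closes the estimate.

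For items~\ref{A4dAbis}, \ref{A4d2Abis} and~\ref{A4d3Abis} I would observe that these are exactly items~\ref{A4dA}, \ref{A4d2A} and~\ref{A4d3A} of Lemma~\ref{A4} read with $A'=A'[h]$ and $b=h^{2}$: each of those furnishes a bound of the form $C\,b\,\|f\|_{L^{2}}=C\,h^{2}\,\|f\|_{L^{2}}$ for the corresponding integral operator, so dividing by $|h|$ produces the asserted $C|h|\,\|f\|_{L^{2}}$. I do not anticipate any genuine obstacle: all of the analytic work has already been done in Lemmas~\ref{A4} and~\ref{l2logfinal} (and, further upstream, in the change-of-variables estimate of Lemma~\ref{A2}), and the only point requiring — minimal — attention is the uniformity of the constants in the small parameter, which is built into the statement of Lemma~\ref{A4} and is what guarantees that the division by $|h|$ does not destroy the estimate.
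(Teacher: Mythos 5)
Your proof is correct. For items 2--4 you take exactly the paper's route: Lemma \ref{A4} with $b=h^{2}$ gives a bound $Ch^{2}\|f\|_{L^{2}}$, and dividing by $|h|$ yields $C|h|\|f\|_{L^{2}}$; the paper states this in one line ("follow directly from lemma \ref{A4} by making $b=h^{2}$").

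For item 1 you take a genuinely different and in fact shorter route. You invoke Lemma \ref{l2logfinal} (with $b=h^{2}$), which already encapsulates the Minkowski + $L^{1}$-kernel estimate from Lemmas \ref{l2log1} and \ref{l2log2}, and then multiply both sides of $\bigl\|\frac{1}{h^{2}}\int\!\!\int \log(A'[h]/A'[0])f'\bigr\|_{L^{2}}\le C\|f\|_{L^{2}}$ by $|h|$. The paper instead re-derives the estimate from scratch: it writes
\begin{align*}
\frac{1}{h}\log\!\left(\frac{A'[h]}{A'[0]}\right)=h\int_{0}^{1}
\frac{4(u+u')\sinc{\alpha'}+h^{2}\bigl(4uu'\sinc{\alpha'}+(u-u')^{2}\bigr)}
{4\sinc{\alpha'}+4\mu h^{2}(u+u')\sinc{\alpha'}+\mu h^{4}\bigl(4uu'\sinc{\alpha'}+(u-u')^{2}\bigr)}\,d\mu,
\end{align*}
proves a $\mu$-dependent lower bound on the denominator of the form $c\bigl(\sinc{\alpha'}+(\sqrt{\mu}\,h^{2})^{2}s'^{2}\bigr)$, obtains the pointwise bound $|h^{-1}\log(A'[h]/A'[0])|\le C|h|+C|h|^{3}\int_{0}^{1}s'^{2}\bigl(\sinc{\alpha'}+\mu h^{4}s'^{2}\bigr)^{-1}\,d\mu$, and closes via Minkowski and Lemma \ref{A2}. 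Both arguments are sound and give the same constant order; yours is more economical because it reuses Lemma \ref{l2logfinal} as a black box, whereas the paper's version sets up the $\mu$-integral representation (with the sharpened lower bound \eqref{conmu}) that it reuses in later, harder estimates such as Lemma \ref{B17}. One small remark: you correctly observe that since $A'[h]$ depends on $h$ only through $h^{2}$ and $h^{4}$, the sign of $h$ is irrelevant and only $|h|$ enters; this justifies stating the bound for all $h\ne 0$.
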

\begin{proof}
The inequalities \ref{A4dAbis}, \ref{A4d2Abis} and \ref{A4d3Abis} follow directly from lemma \ref{A4} by making $b=h^2$. Then it  remains to show \ref{A4logbis}. In order to do it we notice that
\begin{align*}
\frac{1}{h}\log\left(\frac{A'[h]}{A'[0]}\right)=h\int_{0}^1\frac{4(u+u')\sinc{\alpha'}+4h^2u u' \sinc{\alpha'}+ h^2(u-u')^2}
{4\sinc{\al}+4\mu h^2 (u+u')\sinc{\al'}+4\mu h^4 u u' \sinc{\al}+\mu h^4 (u-u')^2}d\mu.
\end{align*}
Similar computations that those ones to prove \ref{A1}-1 yield
\begin{align}\label{conmu}
4\sinc{\al}+4\mu h^2 (u+u')\sinc{\al'}+4\mu h^4 u u' \sinc{\al}+\mu h^4 (u-u')^2\geq c\left(\sinc{\alpha'}+\left(\sqrt{\mu}h^2\right)^2s'^2\right)
\end{align}
And we can estimate
\begin{align*}
\left|\frac{1}{h}\log\left(\frac{A'[h]}{A'[0]}\right)\right|\leq C h +Ch^3\int_{0}^1\frac{s'^2}{\sinc{\al'}+(\sqrt{\mu}h^2)^2s'^2} d\mu
\end{align*}
Then we achieve \ref{A4logbis} by using Minkowski's inequality and lemma \ref{A2}.
\end{proof}

\begin{lemma}\label{A5bis}Let $f(\alpha,s)\in L^\infty(\Omega)$ with supp$(f)\subset \T\times [-1,1]$ and $b>0$. Then $$\left|\left|\inti\intpi \frac{1}{h}\frac{\pa^3 A'}{A'}f'd\al'ds'\right|\right|_{L^2}\leq C |h|||f||_{L^\infty}.$$
\end{lemma}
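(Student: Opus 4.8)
The plan is to obtain this estimate by a direct reduction to Lemma \ref{A5}. In the notation of this appendix, the quantity $A'$ depends on $h$ only through $b=h^2$, so the first step is simply to apply Lemma \ref{A5} with that value of $b$, which gives
$$\left\|\inti\intpi \frac{\pa^3 A'}{A'}f'd\al'ds'\right\|_{L^2}\leq C h^2\|f\|_{L^\infty}.$$
The crucial point — already emphasized in Lemma \ref{A5} — is that the constant $C$ does not depend on $b$, since it comes ultimately from the $b$-uniform bounds of Lemma \ref{A2}. The second step is then to divide both sides by $|h|$, which yields
$$\left\|\inti\intpi \frac{1}{h}\frac{\pa^3 A'}{A'}f'd\al'ds'\right\|_{L^2}\leq C|h|\|f\|_{L^\infty},$$
exactly the claimed bound; passing from the prefactor $b=h^2$ to $|h|$ costs only one power of $h$, which is harmless for $|h|\leq a_0$ small.

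I do not expect any genuine obstacle here. This lemma is the $\pa^3$-analogue of items \ref{A4dAbis}--\ref{A4d3Abis} of Lemma \ref{A4bis}, which are proved in the same way from Lemma \ref{A4} via the substitution $b=h^2$; and, in contrast to item \ref{A4logbis} of that lemma, there is no logarithmic difference quotient to handle separately, because the integrand here is already of the form $\tfrac1h\tfrac{\pa^3A'}{A'}$, so the factor $\tfrac1h$ simply sits outside the estimate of Lemma \ref{A5}. Should a self-contained argument be preferred, one could instead copy the computation from the proof of Lemma \ref{A5}: expand $\pa^3A'$, bound $|\pa^3A'/A'|$ by $C|h|$ times the kernels appearing in \eqref{quote1} (with $b=h^2$), and conclude with Minkowski's inequality and Lemma \ref{A2}, carrying the benign factor $1/h$ along — but the reduction above is shorter.
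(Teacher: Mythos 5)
Your proof is correct and coincides exactly with the paper's: the authors also reduce directly to Lemma \ref{A5} via the substitution $b=h^2$, after which the factor $1/h$ converts the $Ch^2$ bound into $C|h|$. Nothing further is required.
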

\begin{proof} This lemma follows from lemma \ref{A5} by making again $b=h^2$.
\end{proof}

\begin{lemma}\label{otropopurri}The following estimates hold
\begin{enumerate}
\item
\begin{align*}
&\ab{\frac{1}{b}\log\p{\frac{A[a]}{A[0]}}-(u+u')}\leq C b\int_{0}^1\frac{\sinc{\al'}+s'^2}{\sinc{\alpha'}+b^2\mu s'^2}d\mu.
\end{align*}
\item \begin{align*}
\ab{\pa\frac{1}{b}\log\p{\frac{A[a]}{A[0]}}-\pa(u+u')}\leq C b \frac{\sinc{\alpha'}+s'^2}{\D}
\end{align*}
\item \begin{align*}
\ab{\pa^2\p{\frac{1}{b}\log\p{\frac{A[a]}{A[0]}}-(u+u')}}\leq & Cb \frac{\sinc{\alpha'}+s'^2+\ab{\sin\p{\frac{\alpha'}{2}}}+|s'|}{\D}\\
&+ Cb \frac{\p{\sinc{\alpha'}+bs'^2}^2}{\p{\D}^2}.
\end{align*}
\item \begin{align*}
&\ab{\pa^3\p{\frac{1}{b}\log\p{\frac{A[a]}{A[0]}}-(u+u')}-\p{\frac{1}{b}\frac{\pa^3A[a]}{A[a]}-\pa^3(u+u')}}\\
&\leq Cb \frac{\p{\sinc{\alpha'}+bs'^2}\p{\sinc{\alpha'}+b\p{|\sin\p{\frac{\alpha'}{2}}+s'^2+|s'|}}}{\p{\D}^2}\\
&+Cb^2\frac{\p{\sinc{\alpha'}+bs'^3}^3}{\p{\D}^3}.
\end{align*}
\item \begin{align*}
&\ab{\frac{1}{b}\frac{\pa^3A[a]}{A[a]}-\pa^3(u+u')}\leq Cb \frac{\sinc{\alpha'}+bs'^2}{\D}\ab{\pa^3(u+u')}\\
& C b \frac{\sinc{\alpha'}}{\D}\ab{\pa^3(uu')}+Cb \frac{\ab{\sin\p{\frac{\alpha'}{2}}}+|s'|}{\D}\ab{\pa^3(u-u')}
+Cb \frac{\ab{\sin\p{\frac{\alpha'}{2}}}+|s'|}{\D}.
\end{align*}
\end{enumerate}

\end{lemma}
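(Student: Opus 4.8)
The plan is to reduce items~2--5 to the pointwise estimates for $\pa^k A'$ already proved in Lemmas \ref{A1} and \ref{A5}, together with the lower bound $A'\ge c(\sinc{\al'}+b^2s'^2)$ from Lemma \ref{A1}--1, and to treat item~1 separately by an integral representation. Set $b=a^2$, $A'[a=0]=4\sinc{\al'}$, and record the elementary identity
\begin{align*}
4\sinc{\al'}-A'=-bR,\qquad R=4(u+u')\sinc{\al'}+b\p{4uu'\sinc{\al'}+(u-u')^2},
\end{align*}
so that, by the Sobolev embedding $H^{4,3}\subset C^2$ and the mean value bounds $|u-u'|,|\pa(u-u')|\le C(\delta)(\ms+|s'|)$, $|\pa^2(u-u')|\le C(\delta)$, one has $|R|\le C(\delta)(\sinc{\al'}+bs'^2)$.

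The observation that makes items~2--5 short is that $\al'$ is the dummy integration variable, so $\pa^k(4\sinc{\al'})=0$ for every $k\ge1$ and every $\pa\in\{\pa_\al,\pa_s\}$; hence $\pa^k\big(\tfrac1b\log(A'/A'[a=0])\big)=\tfrac1b\pa^k\log A'$. For item~2 I would write $\tfrac1b\pa\log A'-\pa(u+u')=\big(\pa(u+u')(4\sinc{\al'}-A')+(\tfrac1b\pa A'-4\pa(u+u')\sinc{\al'})\big)/A'$, insert $4\sinc{\al'}-A'=-bR$ and the explicit formula for $\pa A'$ from the proof of Lemma \ref{A1}--3, bound the numerator by $C(\delta)b(\sinc{\al'}+s'^2)$ using the mean value bounds, and divide by $A'\ge c(\sinc{\al'}+b^2s'^2)$. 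For item~3 I would use $\pa^2\log A'=\tfrac{\pa^2A'}{A'}-\tfrac{(\pa A')^2}{A'^2}$: the term $\tfrac1b\tfrac{(\pa A')^2}{A'^2}$ is bounded directly by $Cb\tfrac{(\sinc{\al'}+bs'^2)^2}{\D^2}$ via Lemma \ref{A1}--3, and $\tfrac1b\tfrac{\pa^2A'}{A'}-\pa^2(u+u')$ is treated exactly as in item~2 with the formula for $\pa^2 A'$ from Lemma \ref{A1}--4.

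For item~4 I would use the identity $\pa^3\log A'-\tfrac{\pa^3A'}{A'}=-3\tfrac{\pa A'\pa^2A'}{A'^2}-\tfrac{(\pa A')^3}{A'^3}$ (already used in the proof of Lemma \ref{A4}--4); since $\pa^3(u+u')$ cancels, the left-hand side of item~4 equals $-\tfrac3b\tfrac{\pa A'\pa^2A'}{A'^2}-\tfrac1b\tfrac{(\pa A')^3}{A'^3}$, and Lemmas \ref{A1}--3, \ref{A1}--4 together with $A'\ge c(\sinc{\al'}+b^2s'^2)$ yield precisely the two terms claimed. For item~5 I would insert the formula for $\pa^3A'$ from the proof of Lemma \ref{A5} into $\tfrac1b\tfrac{\pa^3A'}{A'}-\pa^3(u+u')=\tfrac{\pa^3(u+u')(4\sinc{\al'}-A')}{A'}+\tfrac{b\,(4\pa^3(uu')\sinc{\al'}+6\pa^2(u-u')\pa(u-u')+(u-u')(\pa^3u-\pa^3u'))}{A'}$, use $4\sinc{\al'}-A'=-bR$ and the mean value bounds, and divide by $A'\ge c(\sinc{\al'}+b^2s'^2)$; this produces, term by term, the four summands on the right of item~5. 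Finally item~1 does not reduce this way, since $\pa^0$ does not kill the logarithm; instead I would use $\tfrac1b\log(A'/A'[a=0])=\int_0^1\tfrac{R}{4\sinc{\al'}+\mu bR}\,d\mu$, subtract $u+u'=\int_0^1(u+u')\,d\mu$, simplify the integrand with $R-4(u+u')\sinc{\al'}=b(4uu'\sinc{\al'}+(u-u')^2)$, and use that the numerator is $\le C(\delta)(\sinc{\al'}+s'^2)$ while $4\sinc{\al'}+\mu bR\ge c(\sinc{\al'}+\mu b^2s'^2)$ by the argument of Lemma \ref{A1}--1 (cf. \eqref{conmu}).

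The only slightly delicate part is the term-by-term bookkeeping in items~2, 3 and~5: one must check that, after inserting the explicit formulas for $\pa^kA'$, every summand of the numerator carries either a factor $\sinc{\al'}$, a factor $(u-u')^2$, or a product $(u-u')\cdot(\ms+|s'|)$, so that division by $A'\ge c(\sinc{\al'}+b^2s'^2)$ leaves exactly the advertised quantities $\tfrac{\sinc{\al'}+bs'^2}{\D}$, $\tfrac{\ms+|s'|}{\D}$, etc., with the correct power of $b$ in front. Beyond that, no new analytic obstacle arises past Lemmas \ref{A1}, \ref{A5} and \ref{A2}.
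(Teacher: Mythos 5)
Your proposal is correct and, after unwinding the bookkeeping, it is essentially the paper's own argument: item~1 via the integral representation $\tfrac1b\log(A'/A'[0])=\int_0^1 R/(4\sinc{\al'}+\mu bR)\,d\mu$ and the $\mu$-dependent lower bound \eqref{conmu}, items~2--3 via $\pa^k\log A'$ with the explicit formulas for $\pa A'$, $\pa^2A'$ from Lemma~\ref{A1}, item~4 via the identity $\pa^3\log A'-\tfrac{\pa^3A'}{A'}=-3\tfrac{\pa A'\pa^2A'}{A'^2}+2\tfrac{(\pa A')^3}{A'^3}$, and item~5 via the explicit expansion of $\pa^3A'$. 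Making explicit the observation that $\pa^k\log A'[0]=0$ for $k\ge1$, and packaging the numerator through $4\sinc{\al'}-A'=-bR$, is a slightly cleaner way to present what the paper does by direct expansion, but it changes nothing substantive; the term-by-term bounds you indicate are exactly those in the paper.
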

\begin{proof}
\begin{align*}
&\frac{1}{b}\log\p{\frac{A[a]}{A[0]}}=\frac{1}{b}\int_{0}^1\frac{d}{d\mu}\log\p{4\sinc{\al'}+4b\mu(u+u')\sinc{\al'}+b^2\mu \p{4uu'\sinc{\al'}+(u-u')^2}}d\mu\\
&=\int_{0}^1 \frac{4(u+u')\sinc{\al'}+b\p{4uu'\sinc{\al'}+(u-u')^2}}{4\sinc{\al'}+4b\mu(u+u')\sinc{\al'}+b^2\mu\p{4uu'\sinc{\al'}+(u-u')^2}}d\mu.
\end{align*}
Then
\begin{align*}
&\frac{1}{b}\log\p{\frac{A[a]}{A[0]}}-(u+u')=b\int_{0}^1\frac{4uu'\sinc{\al'}+(u-u')^2+4\mu(u+u')^2\sinc{\al'}}{4\sinc{\al'}+4b\mu (u+u')\sinc{\alpha'}+b^2\mu \p{4uu'\sinc{\alpha'}+(u-u')^2}}d\mu\\
&+b^2\int_{0}^1\frac{\mu\p{u+u'}\p{4uu'\sinc{\alpha'}+(u-u')^2}}{4\sinc{\alpha'}+4b\mu(u+u')\sinc{\alpha'}+b^2\mu \p{4uu'\sinc{\alpha'}+(u-u')^2}}d\mu.
\end{align*}
Therefore
\begin{align*}
&\ab{\frac{1}{b}\log\p{\frac{A[a]}{A[0]}}-(u+u')}\leq C b\int_{0}^1\frac{\sinc{\al'}+s'^2}{\sinc{\alpha'}+b^2\mu s'^2}d\mu.
\end{align*}

Taking one derivative we have that
\begin{align*}
&\pa\p{\frac{1}{a^2}\log\p{\frac{A[a]}{A[0]}}-(u+u')}=\frac{1}{b}\frac{\pa A[a]}{A[a]}-\pa(u+u')\\
&=\frac{4\pa(u+u')\sinc{\al'}+b\p{4\pa(uu')\sinc{\al'}+2(u-u')\pa(u-u')}}{4\sinc{\alpha'}+4b(u+u')\sinc{\alpha'}+b^2\p{4uu'\sinc{\alpha'}+(u-u')^2}}-\pa(u+u')\\
&=\frac{b\p{4\pa(uu')\sinc{\alpha'}+2(u-u')\pa(u-u')}-\pa(u+u')\p{4b(u+u')\sinc{\alpha'}+b^2\p{4uu'\sinc{\al'}+(u-u')^2}}}{4\sinc{\alpha'}+4b(u+u')\sinc{\alpha'}+b^2\p{4uu'\sinc{\alpha'}+(u-u')^2}}
\end{align*}
Therefore
\begin{align*}
\ab{\pa\frac{1}{b}\log\p{\frac{A[a]}{A[0]}}-\pa(u+u')}\leq C b \frac{\sinc{\alpha'}+s'^2}{\D}
\end{align*}

Taking two derivatives we obtain
\begin{align*}
\pa^2\p{\frac{1}{b}\log\p{\frac{A[a]}{A[0]}}-(u+u')}=\frac{1}{b}\frac{\pa^{2} A[a]}{A[a]}-\frac{1}{b}\frac{\p{\pa A[a]}^2}{A[a]^2}-\pa^2(u+u')
\end{align*}
On one hand
\begin{align*}
&\frac{1}{b}\frac{\pa^2A[a]}{A[a]}-\pa^2 (u+u')\\
&=b\frac{4\pa^2(uu')\sinc{\alpha'}+2(u-u')\pa^2(u-u')+2(\pa(u-u'))^2}{A[a]}\\
&+\frac{4\pa^2(u+u')\sinc{\al'}}{4\sinc{\alpha'}+4b(u+u')\sinc{\alpha'}+b^2(4 uu'\sinc{\alpha'}+(u-u')^2)}-\pa^2(u-u')\\
&=b\frac{4\pa^2(uu')\sinc{\alpha'}+2(u-u')\pa^2(u-u')+2(\pa(u-u'))^2}{A[a]}\\
&-b\frac{\p{4(u+u')\sinc{\alpha'}+b(4uu'\sinc{\alpha'}+(u-u')^2)}\pa^2(u+u')}{A[a]}
\end{align*}
And we have that
\begin{align*}
\ab{\frac{1}{b}\frac{\pa^2A[a]}{A[a]}-\pa^2 (u+u')}\leq C b \frac{\sinc{\alpha'}+s'^2+\ab{\sin\p{\frac{\alpha'}{2}}}+|s'|}{\D}
\end{align*}
On the other hand
\begin{align*}
\frac{(\pa A[a])^2}{A[a]^2}=b\frac{\p{4\pa(u+u')\sinc{\alpha'}+b\p{4\pa(uu')\sinc{\alpha'}+2(u-u')\pa(u-u')}}^2}{A[a]^2}
\end{align*}
and then
\begin{align*}
\ab{\frac{(\pa A[a])^2}{A[a]^2}}\leq Cb \frac{\p{\sinc{\alpha'}+bs'^2}^2}{\p{\D}^2}.
\end{align*}
Taking three derivatives yields
\begin{align*}
\pa^3\p{\frac{1}{a^2}\log\p{\frac{A[a]}{A[0]}}-(u+u')}=\frac{1}{b}\frac{\pa^3A[a]}{A[a]}-\pa^3(u+u')-\frac{3}{b}\frac{\pa^2 A[a]\pa A[a]}{A[a]^2}+\frac{2}{b}\frac{\p{\pa A[a]}^3}{A[a]^3}.
\end{align*}

The inequalities \ref{A1}-3 and \ref{A1}-4 yield
\begin{align*}
\ab{\frac{1}{b}\frac{\pa A[a]\pa^2 A[a]}{A[a]^2}}\leq Cb \frac{\p{\sinc{\alpha'}+bs'^2}\p{\sinc{\alpha'}+b\p{|\sin\p{\frac{\alpha'}{2}}+s'^2+|s'|}}}{\p{\D}^2}
\end{align*}
and
\begin{align*}
\ab{\frac{1}{b}\frac{(\pa A[a])^3}{A[a]^3}}\leq Cb^2\frac{\p{\sinc{\alpha'}+bs'^3}^3}{\p{\D}^3}
\end{align*}
And \ref{otropopurri}-4 is then proven. To prove \ref{otropopurri}-5 we notice that
\begin{align*}
&\frac{1}{b}\frac{\pa^3 A[a]}{A[a]}-\pa^3(u+u')=\frac{4\pa^3(u+u')\sinc{\alpha'}}{A[a]}-\pa^3(u+u')\\
&+b\frac{4\pa^3(uu')\sinc{\alpha'}+2(u-u')\pa^3(u-u')}{A[a]}\\
&+b\frac{6\pa(u-u')\pa^2(u-u')}{A[a]}.
\end{align*}
And we can compute
\begin{align*}
\frac{4\pa^3(u+u')\sinc{\alpha'}}{A[a]}-\pa^3(u+u')=-b\frac{4(u+u')\sinc{\alpha'}+b\p{4uu'\sinc{\alpha'}+(u-u')^2}}{A[a]}\pa^3(u+u').
\end{align*}
Thus
\begin{align*}
\ab{\frac{4\pa^3(u+u')\sinc{\alpha'}}{A[a]}-\pa^3(u+u')}\leq Cb \frac{\sinc{\alpha'}+bs'^2}{\D}\ab{\pa^3(u+u')}.
\end{align*}
Therefore
\begin{align*}
&\ab{\frac{1}{b}\frac{\pa^3A[a]}{A[a]}-\pa^3(u+u')}\leq Cb \frac{\sinc{\alpha'}+bs'^2}{\D}\ab{\pa^3(u+u')}\\
& C b \frac{\sinc{\alpha'}}{\D}\ab{\pa^3(uu')}+Cb \frac{\ab{\sin\p{\frac{\alpha'}{2}}}+|s'|}{\D}\ab{\pa^3(u-u')}
+Cb \frac{\ab{\sin\p{\frac{\alpha'}{2}}}+|s'|}{\D}.
\end{align*}

\end{proof}

\begin{lemma}\label{B15} Let $g\in H^{4,3}$ with $||g||_{H^{4,3}}=1$ and  $f(\alpha,s)\in L^2(\Omega)$ with supp$(f)\subset \T\times [-1,1]$. Then the following estimates hold:

\begin{align*}
\ab{\ab{\inti\intpi \pa^j\p{\frac{1}{b}\log\p{\frac{A[a]}{A[0]}}-(u+u')} \ms f'd\al'ds' }}_{L^2}\leq Cb\log\p{\frac{1}{b}},
\end{align*}
for $j=0,1,2$.
\item \begin{align*}\ab{\ab{\inti\intpi \p{\pa^3\p{\frac{1}{b}\log\p{\frac{A[a]}{A[0]}}-(u+u')}-Z(s,s',\alpha,\alpha')}\ms f'd\alpha'ds'}}_{L^2}\leq Cb
\end{align*}
where
\begin{align*}
&Z(s,s',\al,\alpha')=\frac{1}{b}\frac{\pa^3A[a]}{A[a]}-\pa^3(u+u')
\end{align*}

\end{lemma}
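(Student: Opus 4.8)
The plan is to reduce both estimates to the pointwise bounds of Lemma~\ref{otropopurri}, combined with the scalar integral estimate of Lemma~\ref{A2}, through Minkowski's inequality --- exactly the template used for the preceding appendix lemmas. The key point is that every bound in Lemma~\ref{otropopurri} is \emph{uniform} in $(\al,s)$, that is, the controlling kernel depends only on $\al'$ and $s'$. Since $f$ is supported in $\T\times[-1,1]$ and $s\in(-1,1)$, the variable $s'$ ranges over a bounded set, and Minkowski's integral inequality gives
\begin{align*}
\ab{\ab{\inti\intpi K(\al',s')\,f'\,d\al'ds'}}_{L^2}\leq \|f\|_{L^2}\inti\intpi \ab{K(\al',s')}\,d\al'ds',
\end{align*}
where $K=K(\al',s')$ denotes any pointwise bound for the integrand that is uniform in $(\al,s)$. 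Thus the whole proof reduces to estimating integrals of the type in Lemma~\ref{A2}.

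For the first statement I would insert the bounds of Lemma~\ref{otropopurri}-1, \ref{otropopurri}-2 and \ref{otropopurri}-3 for $j=0,1,2$ respectively, multiply by $\ms$, and expand the numerators into monomials $\ms^m|s'|^k/\p{\D}^l$. (For $j=0$ the auxiliary parameter $\mu\in(0,1)$ is carried along: one applies Lemma~\ref{A2} with $b\sqrt{\mu}$ in place of $b$ and then integrates in $\mu$, which converges since $\int_0^1\log(1/\sqrt{\mu})\,d\mu<\infty$.) Tracking the exponents, the only borderline monomials are those with $m=2l-1$, such as $\ms\,s'^2/\p{\D}$ and $\ms\,|s'|/\p{\D}$, for which Lemma~\ref{A2} contributes a factor $\log(1/b)$; every other monomial (e.g. $\ms^3/\p{\D}$, $\ms^5/\p{\D}^2$, $b^2\ms\,s'^4/\p{\D}^2$) yields $O(1)$ or smaller. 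Since Lemma~\ref{otropopurri}-1,2,3 carry the overall prefactor $Cb$, this produces exactly $Cb\log(1/b)$.

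For the second statement I observe that
\begin{align*}
\pa^3\p{\frac{1}{b}\log\p{\frac{A[a]}{A[0]}}-(u+u')}-Z=\pa^3\p{\frac{1}{b}\log\p{\frac{A[a]}{A[0]}}}-\frac{1}{b}\frac{\pa^3A[a]}{A[a]},
\end{align*}
which is precisely the quantity estimated in Lemma~\ref{otropopurri}-4. Inserting that bound, multiplying by $\ms$ and expanding into monomials, one checks term by term that Lemma~\ref{A2} controls every integral: the first block (prefactor $Cb$) contributes only $O(1)$, $O(b)$ or $O(b\log(1/b))$ terms, while in the second block (prefactor $Cb^2$) the most singular monomial, after weighting by $\ms$, is $b^3\,\ms\,s'^6/\p{\D}^3$, whose integral is $O(b^{-1})$ by Lemma~\ref{A2} (case $m<2l-1$ with $m=1,k=6,l=3$); combined with the $Cb^2$ in front this is still $O(b)$. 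Summing all contributions gives the bound $Cb$.

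The main obstacle is organizational rather than conceptual: one must expand several products $\p{\sinc{\al'}+bs'^2}^j$, weight them by $\ms$, and verify for each monomial that the triple $(m,k,l)$ falls into the correct branch of Lemma~\ref{A2} --- in particular that the borderline terms (those with $m=2l-1$, which produce the logarithm in the first statement, and the $b^{-1}$-divergent one in the second) are exactly absorbed by the powers of $b$ in front of the pointwise bounds of Lemma~\ref{otropopurri}. It is worth noting that $Z$ has been separated off precisely so that the roughest derivatives $\pa^3\rti$, which lie in $L^2$ but not in $L^\infty$, cancel in $\pa^3(\cdots)-Z$; the remaining kernel then multiplies $f'$ and can be estimated with $f\in L^2$ as above, whereas $Z$ itself must be handled separately (Lemma~\ref{B16}).
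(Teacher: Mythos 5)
Your argument is correct and follows exactly the route the paper takes: the paper's own proof of Lemma \ref{B15} consists of the single line ``The proof follows from Lemma \ref{otropopurri}, Lemma \ref{A2} and Minkowski's inequality,'' and you have spelled out precisely that reduction, including the treatment of the auxiliary $\mu$-integral for $j=0$, the identification of the borderline $m=2l-1$ monomials that generate the logarithm, and the cancellation of the $\pa^3\rti$ terms built into the definition of $Z$.
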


\begin{proof}
The proof follows from Lemma \ref{otropopurri}, Lemma \ref{A2} and Minkowski's inequality.
\end{proof}

\begin{lemma} \label{B16} Let  $f(\alpha,s)\in L^\infty(\Omega)$ with supp$(f)\subset \T\times [-1,1]$. Then the following estimates hold:
\begin{align*}
\ab{\ab{\inti\intpi Z(\alpha,\alpha',s,s') \ms f'd\alpha'ds'}}_{L^2}\leq Cb\log\p{\frac{1}{b}}
\end{align*}
where
\begin{align*}
&Z(\alpha,\alpha',s,s')=\frac{1}{b}\frac{\pa^3A[a]}{A[a]}-\pa^3(u+u')
\end{align*}
\end{lemma}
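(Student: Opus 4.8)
The plan is to reduce the estimate to the pointwise bound on $Z$ furnished by Lemma~\ref{otropopurri}--5 and then argue exactly as in the proof of Lemma~\ref{A5}, the only new feature being that the extra factor $\ms$ in the integrand, combined with the kernels appearing in $Z$, brings us precisely onto the borderline case of Lemma~\ref{A2} responsible for the logarithmic factor.

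First, writing $D:=\D$ (a function of $\alpha'$ and $s'$ only), Lemma~\ref{otropopurri}--5 gives
\[
\ab{Z}\le Cb\,\frac{\sinc{\alpha'}+bs'^2}{D}\ab{\pa^3(u+u')}+Cb\,\frac{\sinc{\alpha'}}{D}\ab{\pa^3(uu')}+Cb\,\frac{\ms+|s'|}{D}\ab{\pa^3(u-u')}+Cb\,\frac{\ms+|s'|}{D}.
\]
Since $\rti=u-s\in V^\delta\subset H^{4,3}\hookrightarrow C^2$, every third derivative of $u$ lies in $L^2(\Omega)$ while every derivative of order at most two lies in $L^\infty(\Omega)$; hence each Leibniz expansion $\pa^3(uu')$, $\pa^3(u\pm u')$ is bounded by $C(\delta)\big(\ab{\pa^3u}+\ab{\pa^3u'}+1\big)$. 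Multiplying the displayed inequality by $\ms\,\ab{f'}$ and using $\sinc{\alpha'}\ms=\ms^3$ thus reduces the claim to estimating, in $L^2(\Omega)$, a finite number of integrals of the three types
\[
\ab{\pa^3u(\alpha,s)}\inti\intpi K(\alpha',s')\ab{f'}d\alpha'ds',\qquad \inti\intpi K(\alpha',s')\ab{\pa^3u'}\ab{f'}d\alpha'ds',\qquad \inti\intpi K(\alpha',s')\ab{f'}d\alpha'ds',
\]
where $K$ runs over $b\,\ms^3/D$, $b^2\,s'^2\ms/D$, $b\,\ms^2/D$ and $b\,|s'|\ms/D$, the remaining (lower-order) factors of $u$ and $u'$ being harmlessly absorbed into $\|\cdot\|_{L^\infty}$.

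For the first type one pulls $\ab{\pa^3u(\alpha,s)}$ out and, after taking $L^2$ norms in $(\alpha,s)$, is left with $\|\pa^3u\|_{L^2}\|f\|_{L^\infty}\inti\intpi K\,d\alpha'ds'$; for the second type one applies Minkowski's integral inequality, uses the translation invariance of the $L^2$ norm together with $\|f\|_{L^\infty}$, and is again left with $\|\pa^3u\|_{L^2}\|f\|_{L^\infty}\inti\intpi K$; the third type is estimated similarly by $\|f\|_{L^\infty}|\Omega|^{1/2}\inti\intpi K$. It then remains to invoke Lemma~\ref{A2}: the kernels $\ms^3/D$ and $\ms^2/D$ fall into the regime $m>2l-1$, so their integrals are $O(1)$ and contribute $Cb$; the kernel $s'^2\ms/D$ has $m=2l-1$, so its integral is $O(\log(1/b))$ and, with the prefactor $b^2$, contributes $Cb^2\log(1/b)$; finally the kernel $|s'|\ms/D$ also has $m=2l-1$, its integral is $O(\log(1/b))$, and it contributes the dominant term $Cb\log(1/b)$. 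Summing the bounds gives
\[
\ab{\ab{\inti\intpi Z(\alpha,\alpha',s,s')\,\ms\,f'\,d\alpha'ds'}}_{L^2}\le C(\delta)\,b\log\p{\frac1b}\,\|f\|_{L^\infty},
\]
and since $\|f\|_{L^\infty}$ is a fixed constant in the intended application this is the asserted estimate $\le Cb\log(1/b)$.

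The only delicate point — and the reason the logarithm is genuinely present here, in contrast with the $j=3$ estimate of Lemma~\ref{B15}, where the leftover kernels carry higher powers of $D$ in the denominator — is that the factor $\ms$ multiplying the $|s'|/D$ part of $Z$ produces the critical kernel $|s'|\ms/D$, which sits exactly on the borderline $m=2l-1$ of Lemma~\ref{A2}. Apart from identifying which factors go in $L^\infty$ (the lower-order derivatives of $u$ via $H^{4,3}\hookrightarrow C^2$, and $f$) and which in $L^2$ (the top-order $\pa^3u$), the argument is the same bookkeeping as in Lemma~\ref{A5}.
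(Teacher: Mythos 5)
Your argument is correct and is exactly the argument the paper intends (its own proof of this lemma simply reads ``The proof follows from Lemma~\ref{otropopurri}, Lemma~\ref{A2} and Minkowski's inequality''). You have supplied the bookkeeping the paper omits: starting from the pointwise bound on $Z$ in Lemma~\ref{otropopurri}--5, multiplying by $\ms$, applying Leibniz together with $H^{4,3}\hookrightarrow C^2$ to separate the $L^2$ factor $\pa^3u$ from the $L^\infty$ lower-order factors, and then invoking Lemma~\ref{A2} with $l=1$ on the resulting kernels $\ms^3/D$, $s'^2\ms/D$, $\ms^2/D$, $|s'|\ms/D$. Your identification of the borderline kernel $b\,|s'|\ms/D$ (with $m=2l-1$) as the source of the $\log(1/b)$ is the right diagnosis and explains why this lemma, unlike item 2 of Lemma~\ref{B15}, picks up a logarithm.
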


\begin{proof}
The proof follows from Lemma \ref{otropopurri}, Lemma \ref{A2} and Minkowski's inequality.
\end{proof}

\begin{lemma} \label{B17}
The following estimates hold:
\begin{enumerate}
\item
\begin{align*}&\ab{\pa_a \p{\frac{1}{a^2}\log\p{\frac{A[a]}{A[0]}}}}\\
&\leq C a \int_{0}^1\frac{\mu \sinc{\alpha'}}{\sinc{\alpha'}+\p{\sqrt{\mu}b}^2s'^2}d\mu + C a^3\int_{0}^1 \frac{\mu s'^2}{\sinc{\alpha'}+\p{\sqrt{\mu}b}^2s'^2}d\mu\\
&+C a \frac{\sinc{\alpha'}}{\sinc{\alpha'}+b^2s'^2} + C a^3\frac{ s'^2}{\sinc{\alpha'}+b^2s'^2}\\
&+ C a \int_{0}^1\frac{\sinc{\alpha'}+s'^2}{\sinc{\alpha'}+(\sqrt{\mu}b)^2s'^2}d\mu\\
&+C a \frac{\sinc{\alpha'}+s'^2}{\sinc{\alpha'}+b^2s'^2}
\end{align*}
\item \begin{align*}
\ab{\pa\pa_a \p{\frac{1}{a^2}\log\p{\frac{A[a]}{A[0]}}}}\leq C a\frac{\sinc{\alpha'}+s'^2}{\D}+C a \frac{\p{\sinc{\alpha'}+bs'^2}^2}{\p{\D}^2}.
\end{align*}
\item
\begin{align*}
&\ab{\pa^2\pa_a \p{\frac{1}{a^2}\log\p{\frac{A[a]}{A[0]}}}}\leq C a \frac{\p{\sinc{\alpha'}+\ms+s'^2+|s'|}}{\D}\\
&+C a \frac{\p{\sinc{\alpha'}+bs'^2}^2}{\p{\D}^2}+ C a \frac{\p{\sinc{\alpha'}+b s'^2}\p{\sinc{\alpha'}+b\p{\ms+s'^2+|s'|}}}{\p{\D}^2}\\
&+C a^3\frac{\p{\sinc{\alpha'}+bs'^2}^3}{\p{\D}^3}.
\end{align*}
\item \begin{align*}
&\ab{\pa^3\pa_a \p{\frac{1}{a^2}\log\p{\frac{A[a]}{A[0]}}}+\frac{2}{a^2}\frac{\pa^3A[a]}{A[a]}-\frac{1}{a^2}\frac{\pa^3\pa_aA[a]}{A[a]}+\frac{1}{a^2}\frac{\pa A[a]\pa^3 A[a]}{A[a]^2}}\\
&\leq  C a\frac{\p{\sinc{\alpha'}+bs'^2}\p{\sinc{\alpha'}+b\p{\ms+s'^2+|s'|}}}{\p{\D}^2}\\
&+C a^3 \frac{\p{\sinc{\alpha'}+bs'^2}^3}{\p{\D}^3} +C a\frac{\p{\sinc{\alpha'}+bs'^2}^2\p{\sinc{\alpha'}+b\p{\ms+s'^2+|s'|}}}{\p{\D}^3}\\ &+ C a^5\frac{\p{\sinc{\alpha'}+bs'^2}^4}{\p{\D}^4}\\
\end{align*}
\item \begin{align*}
&\ab{\frac{2}{a^2}\frac{\pa^3A[a]}{A[a]}-\frac{1}{a^2}\frac{\pa^3\pa_aA[a]}{A[a]}+\frac{1}{a^2}\frac{\pa A[a]\pa^3 A[a]}{A[a]^2}}\leq  C a \frac{\sinc{\al'}}{\D}\ab{\pa^3(uu')}\\
&+C a \frac{\ms +|s'|}{\D}+Ca \frac{\ms+|s'|}{\D}|\pa^3(u-u')|.
\end{align*}
\end{enumerate}

\end{lemma}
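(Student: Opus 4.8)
The plan is to derive all five bounds in the same spirit as Lemmas \ref{otropopurri} and \ref{dA1}: by purely algebraic manipulations combined with the pointwise estimates of Lemma \ref{A1} (for $A'$ and its spatial derivatives), Lemma \ref{dA1} (for $\pa_aA'$ and its spatial derivatives), the lower bound $A'\geq c\p{\sinc{\al'}+b^2s'^2}$ together with its $\mu$-interpolated form \eqref{conmu}, and the embedding $H^{4,3}\hookrightarrow C^2$, which on $V^\delta$ makes $u,u',\pa u,\pa u',\pa^2u,\pa^2u'$ bounded and gives $\ab{u-u'}+\ab{\pa(u-u')}\leq C\p{\ms+|s'|}$, $\ab{\pa^2(u-u')}\leq C$, and $4uu'\sinc{\al'}+(u-u')^2=O\p{\sinc{\al'}+s'^2}$. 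Throughout I write $b=a^2$. The organizing identity is the splitting already exploited in the proof of Lemma \ref{dudaG},
\begin{align*}
\pa_a\p{\frac{1}{a^2}\log\p{\frac{A[a]}{A[0]}}}=\p{-\frac{2}{a^3}\log\p{\frac{A[a]}{A[0]}}+\frac{2}{a}(u+u')}+\p{\frac{1}{a^2}\frac{\pa_aA[a]}{A[a]}-\frac{2}{a}(u+u')}=:T_1+T_2,
\end{align*}
in which the $a^{-1}$ singularity of each parenthesis cancels internally.

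For item 1, I would bound $\ab{T_1}\leq \frac{2}{|a|}\ab{\frac{1}{b}\log\p{\frac{A[a]}{A[0]}}-(u+u')}$ and invoke Lemma \ref{otropopurri}--1, while rewriting $T_2$, via $\pa_aA[a]=8a(u+u')\sinc{\al'}+4a^3\p{4uu'\sinc{\al'}+(u-u')^2}$, as
\begin{align*}
T_2=-\frac{2a(u+u')\p{4(u+u')\sinc{\al'}+b\p{4uu'\sinc{\al'}+(u-u')^2}}}{A[a]}+\frac{4a\p{4uu'\sinc{\al'}+(u-u')^2}}{A[a]},
\end{align*}
after which item 1 follows from Lemma \ref{A1}--1; the argument in fact produces only some of the terms listed on the right-hand side of item 1, the rest being harmless over-estimates. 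For items 2 and 3 I would apply $\pa$, resp. $\pa^2$, to $T_1+T_2$: the derivatives of $T_1$ are handled by Lemma \ref{otropopurri}--2,3, while those of $T_2$ are expanded by the quotient rule and each summand estimated by Lemmas \ref{A1}--1,3,4 and \ref{dA1}; the $\ms+|s'|$ factors appearing in item 3 come precisely from $\pa^2\p{(u-u')^2}=2(u-u')\pa^2(u-u')+2\p{\pa(u-u')}^2$, exactly as in Lemma \ref{otropopurri}--3.

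For items 4 and 5 I would differentiate $T_1+T_2$ three times and isolate the only terms carrying a third derivative of $u$: those in which all three derivatives fall on $A[a]$ (arising from $\frac{\pa^3A[a]}{A[a]}$ inside $\pa^3\log A[a]$ in $\pa^3T_1$) or on $A[a]$ or $\pa_aA[a]$ (arising from $\frac{\pa^3\pa_aA[a]}{A[a]}$ and $\frac{\pa_aA[a]\,\pa^3A[a]}{A[a]^2}$ inside $\pa^3\!\p{\pa_aA[a]/A[a]}$ in $\pa^3T_2$). These are precisely the correction terms subtracted on the left of item 4; everything else is of order at most two in the derivatives of $u$ and, after the $a^{-k}$ cancellations, is bounded by the algebraic right-hand side of item 4 using Lemmas \ref{A1}, \ref{dA1} and quotient-rule expansions, exactly as in Lemma \ref{dA4}--4. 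Within the correction terms the $\pa^3(u+u')$ contributions cancel — because $A[a]-A[0]$ and $\pa_aA[a]$ agree, at the level of their $\sinc{\al'}$-coefficients, up to a power of $a$ — leaving only $\pa^3(uu')$ and $\pa^3(u-u')$ terms multiplied by $\frac{\sinc{\al'}}{A[a]}$, resp. $\frac{\ms+|s'|}{A[a]}$, which is the right-hand side of item 5; this is the exact analogue of Lemma \ref{otropopurri}--5 and of the passage from Lemma \ref{A4} to Lemma \ref{A5}.

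The step I expect to be the main obstacle is the bookkeeping for items 4 and 5: after expanding $\pa^3$ of the several quotients and substituting $b=a^2$, one has to check that every surviving term containing $\pa^3u$ is one of the three listed correction terms, and that inside those corrections the $\pa^3(u+u')$-pieces cancel identically, so that the remainder controlled in item 5 sees only $\pa^3(uu')$ and $\pa^3(u-u')$ — which are later paired with $L^\infty$ test functions, as in Lemmas \ref{A5} and \ref{B16}. Once this cancellation structure is pinned down, all five bounds close by Minkowski's inequality and Lemma \ref{A2}, exactly as in the earlier lemmas of this appendix.
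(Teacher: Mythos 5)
Your approach is correct and uses the same toolkit as the paper (Lemmas~\ref{A1}, \ref{dA1}, \ref{A2}, \ref{otropopurri}, Minkowski's inequality), but organizes the computation through a different decomposition, and the difference is worth noting. The paper works with the raw identity $\pa_a\p{\frac{1}{a^2}\log\p{\frac{A[a]}{A[0]}}} = -\frac{2}{a^3}\log\p{\frac{A[a]}{A[0]}} + \frac{1}{a^2}\frac{\pa_a A[a]}{A[a]}$ and, at each order of differentiation in items~2--5, exhibits the cancellation of the $a^{-1}$-singular parts at the numerator level between $-\frac{2}{a^3}\pa^j A[a]$ and $\frac{1}{a^2}\pa^j\pa_a A[a]$ (for example $-\frac{2}{a^3}\frac{\pa A[a]}{A[a]} + \frac{1}{a^2}\frac{\pa\pa_a A[a]}{A[a]} = 2a\frac{4\pa(uu')\sinc{\al'}+2(u-u')\pa(u-u')}{A[a]}$), then controls the remaining quotient terms by Lemmas~\ref{A1}, \ref{dA1}. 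You instead insert $\pm\frac{2}{a}(u+u')$ and split into $T_1+T_2$, which is exactly the $H_1$, $H_2$ decomposition the paper uses in the proof of Lemma~\ref{dudaG}; the payoff is modularity, since $T_1 = -\frac{2}{a}\p{\frac{1}{b}\log\p{\frac{A[a]}{A[0]}}-(u+u')}$ is $\frac{2}{|a|}$ times a quantity already bounded at every relevant order in Lemma~\ref{otropopurri}, while $T_2 = \frac{1}{a^2}\frac{\pa_a A[a]}{A[a]}-\frac{2}{a}(u+u')$ reduces to a pure algebraic quotient whose derivatives are handled directly by Lemmas~\ref{A1} and \ref{dA1}, each $T_i$ being internally $O(a)$. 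Both routes yield the same kind of pointwise bounds, and both close via Lemma~\ref{A2}. One small imprecision in your writeup: the claim that the $\pa^3(u+u')$ contributions in the item~4/5 correction cancel ``identically'' is a slight over-statement. The pair $-\frac{2}{a^3}\frac{\pa^3A[a]}{A[a]}+\frac{1}{a^2}\frac{\pa^3\pa_aA[a]}{A[a]}$ does collapse exactly to $\frac{2a}{A[a]}\pa^3\p{4uu'\sinc{\al'}+(u-u')^2}$, but the remaining correction $\frac{1}{a^2}\frac{\pa_aA[a]\pa^3A[a]}{A[a]^2}$ retains a $\sin^4\p{\frac{\al'}{2}}$-weighted $\pa^3(u+u')$ term; the paper's own estimate of item~5 also carries that term, so this matches the paper rather than constituting a gap, but it should be recorded accurately before invoking Lemma~\ref{B19}.
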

\begin{proof}
First we notice that
\begin{align*}
\pa_a \p{\frac{1}{a^2}\log\p{\frac{A[a]}{A[0]}}}=-\frac{2}{a^3}\log\p{\frac{A[a]}{A[0]}}+\frac{1}{a^2}\frac{\pa_a A[a]}{A[a]}
\end{align*}
We have that
\begin{align*}
&-\frac{2}{a^3}\log\p{\frac{A[a]}{A[0]}}=-\frac{2}{a^3}\int^1_{0}\frac{4b (u+u')\sinc{\alpha'}+b^2\p{4uu'\sinc{\alpha'}+(u-u')^2}}{4\sin{\alpha'}+4b\mu (u+u')\sinc{\alpha'}+b^2\mu\p{4uu'\sinc{\alpha'}+(u-u')^2}}d\mu\\
&=\int_{0}^1\frac{-8\frac{1}{a} (u+u')\sinc{\alpha'}-2a\p{4uu'\sinc{\alpha'}+(u-u')^2}}{4\sinc{\alpha'}+4b\mu (u+u')\sinc{\alpha'}+b^2\mu\p{4uu'\sinc{\alpha'}+(u-u')^2}}d\mu.
\end{align*}
In addition
\begin{align*}
\frac{1}{a^2}\frac{\pa_a A[a]}{A[a]}=\frac{8\frac{1}{a}(u+u')\sinc{\alpha'}+4a\p{4uu'\sinc{\alpha'}+(u-u')^2}}{A[a]}
\end{align*}
Thus
\begin{align*}
&-\frac{2}{a^3}\log\p{\frac{A[a]}{A[0]}}+\frac{1}{a^2}\frac{\pa_a A[a]}{A[a]}\\
&=\frac{8}{a}\sinc{\alpha'}\int_0^1\p{\frac{1}{4\sinc{\alpha'}+4b\mu (u+u')\sinc{\alpha'}+b^2\mu\p{4uu'\sinc{\alpha'}+(u-u')^2}}-\frac{1}{A[a]}}d\mu\\
&+\int_{0}^1\frac{-2a\p{4uu'\sinc{\alpha'}+(u-u')^2}}{4\sinc{\alpha'}+4b\mu (u+u')\sinc{\alpha'}+b^2\mu\p{4uu'\sinc{\alpha'}+(u-u')^2}}d\mu\\
&+\frac{4a\p{4uu'\sinc{\alpha'}+(u-u')^2}}{A[a]}
\end{align*}
We will split the first term of the right hand side of the previous expression in the following way
\begin{align*}
&\frac{8}{a}\sinc{\alpha'}\int_0^1\p{\frac{1}{4\sinc{\alpha'}+4b\mu (u+u')\sinc{\alpha'}+b^2\mu\p{4uu'\sinc{\alpha'}+(u-u')^2}}-\frac{1}{A[a]}}d\mu\\
&=\frac{8}{a}\sinc{\alpha'}\int_0^1\p{\frac{1}{4\sinc{\alpha'}+4b\mu (u+u')\sinc{\alpha'}+b^2\mu\p{4uu'\sinc{\alpha'}+(u-u')^2}}-\frac{1}{4\sinc{\alpha'}}}d\mu\\
&+\frac{8}{a}\sinc{\alpha'}\p{\frac{1}{4\sinc{\alpha'}}-\frac{1}{A[a]}}.
\end{align*}
And we can estimate
\begin{align*}
&\ab{\frac{8}{a}\sinc{\alpha'}\int_0^1\p{\frac{1}{4\sinc{\alpha'}+4b\mu (u+u')\sinc{\alpha'}+b^2\mu\p{4uu'\sinc{\alpha'}+(u-u')^2}}-\frac{1}{4\sinc{\alpha'}}}d\mu}\\&
\leq 2\ab{\int_{0}^1\frac{4a\mu (u+u')\sinc{\alpha'}+a^3\mu\p{4uu'\sinc{\alpha'}+(u-u')^2}}{4\sinc{\alpha'}+4b\mu (u+u')\sinc{\alpha'}+b^2\mu\p{4uu'\sinc{\alpha'}+(u-u')^2}}d\mu}\\
&\leq C a \int_{0}^1\frac{\mu \sinc{\alpha'}}{\sinc{\alpha'}+\p{\sqrt{\mu}b}^2s'^2}d\mu + C a^3\int_{0}^1 \frac{\mu\p{\sinc{\alpha'}+s'^2}}{\sinc{\alpha'}+\p{\sqrt{\mu}b}^2s'^2}d\mu\\
&\leq C a \int_{0}^1\frac{\mu \sinc{\alpha'}}{\sinc{\alpha'}+\p{\sqrt{\mu}b}^2s'^2}d\mu + C a^3\int_{0}^1 \frac{\mu s'^2}{\sinc{\alpha'}+\p{\sqrt{\mu}b}^2s'^2}d\mu,
\end{align*}
where we have used \eqref{conmu}. In a similar way
\begin{align*}
&\ab{\frac{8}{a}\sinc{\alpha'}\p{\frac{1}{4\sinc{\alpha'}}-\frac{1}{A[a]}}}\leq C a \frac{\sinc{\alpha'}}{\sinc{\alpha'}+b^2s'^2} + C a^3\frac{ s'^2}{\sinc{\alpha'}+b^2s'^2}.
\end{align*}

In addition we have that
\begin{align*}
&\ab{\int_{0}^1\frac{-2a\p{4uu'\sinc{\alpha'}+(u-u')^2}}{4\sinc{\alpha'}+4b\mu (u+u')\sinc{\alpha'}+b^2\mu\p{4uu'\sinc{\alpha'}+(u-u')^2}}d\mu}\\ &
\leq C a \int_{0}^1\frac{\sinc{\alpha'}+s'^2}{\sinc{\alpha'}+(\sqrt{\mu}b)^2s'^2}d\mu\\
&\ab{\frac{4a\p{4uu'\sinc{\alpha'}+(u-u')^2}}{A[a]}}\leq C a \frac{\sinc{\alpha'}+s'^2}{\sinc{\alpha'}+b^2s'^2},
\end{align*}
and we have already proven the first inequality of the lemma.

Taking one derivative we have that
\begin{align*}
\pa\pa_a \p{\frac{1}{a^2}\log\p{\frac{A[a]}{A[0]}}} =-\frac{2}{a^3}\frac{\pa A[a]}{A[a]}+\frac{1}{a^2}\frac{\pa\pa_a A[a]}{A[a]}-\frac{1}{a^2}\frac{\pa_a A[a]\pa A[a]}{A[a]^2}.
\end{align*}
On one hand, we can compute that
\begin{align*}
-\frac{2}{a^3}\frac{\pa A[a]}{A[a]}+\frac{1}{a^2}\frac{\pa\pa_a A[a]}{A[a]}=2a\frac{4\pa(uu')\sinc{\alpha'}+2(u-u')\pa(u-u')}{A[a]}
\end{align*}
and then
\begin{align*}
\ab{-\frac{2}{a^3}\frac{\pa A[a]}{A[a]}+\frac{1}{a^2}\frac{\pa\pa_a A[a]}{A[a]}}\leq C a\frac{\sinc{\alpha'}+s'^2}{\D}.
\end{align*}
On the other hand
\begin{align*}
\ab{\frac{1}{a^2}\frac{\pa_a A[a]\pa A[a]}{A[a]^2}}\leq C a \frac{\p{\sinc{\alpha'}+bs'^2}^2}{\p{\D}^2}.
\end{align*}
Therefore
\begin{align*}
\ab{\pa\pa_a \p{\frac{1}{a^2}\log\p{\frac{A[a]}{A[0]}}}}\leq C a\frac{\sinc{\alpha'}+s'^2}{\D}+C a \frac{\p{\sinc{\alpha'}+bs'^2}^2}{\p{\D}^2}.
\end{align*}
Taking two derivatives we have that
\begin{align*}
&\pa^2\pa_a \p{\frac{1}{a^2}\log\p{\frac{A[a]}{A[0]}}}=-\frac{2}{a^3}\frac{\pa^2 A[a]}{A[a]}+\frac{2}{a^3}\frac{\pa A[a]^2}{A[a]^2}+\frac{1}{a^2}\frac{\pa^2\pa_a A[a]}{A[a]}-\frac{2}{a^2}\frac{\pa \pa_a A[a]\pa A[a]}{A[a]^2}\\
&-\frac{1}{a^2}\frac{\pa_a A[a]\pa^2 A[a]}{A[a]^2}+ \frac{2}{a^2}\frac{\pa_a A[a]\pa A[a]^2}{A[a]^3}.
\end{align*}
And we can estimate
\begin{align*}
&\ab{-\frac{2}{a^3}\frac{\pa^2 A[a]}{A[a]}+\frac{1}{a^2}\frac{\pa^2\pa_a A[a]}{A[a]}}\leq C a \frac{\p{\sinc{\alpha'}+\ms+s'^2+|s'|}}{\D}\\
&\ab{\frac{2}{a^3}\frac{\pa A[a]^2}{A[a]^2}}\leq C a \frac{\p{\sinc{\alpha'}+bs'^2}^2}{\p{\D}^2},\\
&\ab{\frac{2}{a^2}\frac{\pa \pa_a A[a]\pa A[a]}{A[a]^2}}\leq C a \frac{\p{\sinc{\alpha'}+bs'^2}^2}{\p{\D}^2},\\
&\ab{\frac{1}{a^2}\frac{\pa_a A[a]\pa^2 A[a]}{A[a]^2}}\leq C a \frac{\p{\sinc{\alpha'}+b s'^2}\p{\sinc{\alpha'}+b\p{\ms+s'^2+|s'|}}}{\p{\D}^2},\\
&\ab{\frac{2}{a^2}\frac{\pa_a A[a]\pa A[a]^2}{A[a]^3}}\leq C a^3\frac{\p{\sinc{\alpha'}+bs'^2}^3}{\p{\D}^3}
\end{align*}
This proves the third inequality.

Taking three derivatives yields
\begin{align*}
&\pa^3\pa_a \p{\frac{1}{a^2}\log\p{\frac{A[a]}{A[0]}}}=-\frac{2}{a^3}\frac{\pa^3 A[a]}{A[a]}+\frac{6}{a^3}\frac{\pa^2A[a]\pa A[a]}{A[a]^2}-\frac{4}{a^3}\frac{\p{\pa A[a]}^3}{A[a]^3}+\frac{1}{a^2}\frac{\pa^3\pa_a A[a]}{A[a]}\\
&-\frac{3}{a^2}\frac{\pa^2\pa_a A[a]\pa A[a]}{A[a]^2}-\frac{3}{a^2}\frac{\pa \pa_a A[a]\pa^2 A[a]}{A[a]^2}+\frac{6}{a^2}\frac{\pa \pa_a A[a]\p{\pa A[a]}^2}{A[a]^3}-\frac{1}{a^2}\frac{\pa_a A[a]\pa^3 A[a]}{A[a]^2}\\
&+\frac{6}{a^2}\frac{\pa A[a]\pa^2A[a]\pa A[a]}{A[a]^3}-\frac{6}{a^2}\frac{\pa_a A[a]\p{\pa A[a]}^3}{A[a]^4}.
\end{align*}
And we can estimate
\begin{align*}
&\ab{\frac{6}{a^3}\frac{\pa^2A[a]\pa A[a]}{A[a]^2}}\leq C a\frac{\p{\sinc{\alpha'}+bs'^2}\p{\sinc{\alpha'}+b\p{\ms+s'^2+|s'|}}}{\p{\D}^2}\\
&\ab{\frac{4}{a^3}\frac{\p{\pa A[a]}^3}{A[a]^3}}\leq C a^3 \frac{\p{\sinc{\alpha'}+bs'^2}^3}{\p{\D}^3}\\
&\ab{\frac{3}{a^2}\frac{\pa^2\pa_a A[a]\pa A[a]}{A[a]^2}}\leq C a\frac{\p{\sinc{\alpha'}+bs'^2}\p{\sinc{\alpha'}+b\p{\ms+s'^2+|s'|}}}{\p{\D}^2}\\
&\ab{\frac{3}{a^2}\frac{\pa \pa_a A[a]\pa^2 A[a]}{A[a]^2}} \leq C a\frac{\p{\sinc{\alpha'}+bs'^2}\p{\sinc{\alpha'}+b\p{\ms+s'^2+|s'|}}}{\p{\D}^2}\\
&\ab{\frac{6}{a^2}\frac{\pa \pa_a A[a]\p{\pa A[a]}^2}{A[a]^3}} \leq C a^3 \frac{\p{\sinc{\alpha'}+bs'^2}^3}{\p{\D}^3}\\
&\ab{\frac{6}{a^2}\frac{\pa_a A[a]\pa^2A[a]\pa A[a]}{A[a]^3}}\leq  C a\frac{\p{\sinc{\alpha'}+bs'^2}^2\p{\sinc{\alpha'}+b\p{\ms+s'^2+|s'|}}}{\p{\D}^3}\\
&\ab{\frac{6}{a^2}\frac{\pa_a A[a]\p{\pa A[a]}^3}{A[a]^4}}\leq C a^5\frac{\p{\sinc{\alpha'}+bs'^2}^4}{\p{\D}^4}.
\end{align*}
Then we have proven the fourth inequality.

Finally we notice that
\begin{align*}
&\ab{-\frac{2}{a^3}\frac{\pa^3 A[a]}{A[a]}+\frac{1}{a^2}\frac{\pa^3\pa_a A[a]}{A[a]}}\leq C a \frac{\sinc{\al'}}{\D}\ab{\pa^3(uu')}\\
&+C a \frac{\ms +|s'|}{\D}+Ca \frac{\ms+|s'|}{\D}|\pa^3(u-u')|\\
&\ab{\frac{1}{a^2}\frac{\pa_a A[a]\pa^3 A[a]}{A[a]^2}}\\
&\ab{\frac{1}{a^2}\frac{\pa_a A [a]\pa^3 A[a]}{A[a]}}\leq C a \frac{\sinc{\al'}+bs'^2}{\D}\sinc{\al'}|\pa^3(u+u')|\\
&+ C a^3 \frac{\sinc{\al'}+bs'^2}{\D}\sinc{\al'}|\pa^3(uu')|+C a^3 \frac{\sinc{\al'}+bs'^2}{\D}\p{\ms+|s'|}\\
&+ C a^3 \frac{\sinc{\al'}+bs'^2}{\D} \p{\ms+|s'|}|\pa^3(u-u')|.
\end{align*}

\begin{align*}
\end{align*}
\end{proof}

\begin{lemma}\label{B18}
\begin{align*}
\ab{\ab{\int_{-\infty}^\infty\pa^j\p{\pa_a \frac{1}{a^2}\log\p{\frac{A[a]}{A[0]}}}\ms f'd\alpha'ds'}}_{L^2}\leq C a\log\p{\frac{1}{b}}
\end{align*}
for $j=0,1,2.$
\end{lemma}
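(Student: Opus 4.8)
The plan is to reduce the claimed $L^{2}$ bound to a pointwise‑times‑integral estimate via Minkowski's integral inequality, and then feed in the pointwise bounds of Lemma \ref{B17} together with the weighted integral estimates of Lemma \ref{A2}. Since $f$ is supported in $\T\times[-1,1]$ and the outer variables $(\alpha,s)$ range over $\Omega$, the $s'$–convolution only sees $s'$ in a bounded interval, so I would first write
\begin{align*}
\left|\left|\inti\intpi \pa^{j}\p{\pa_a \tfrac{1}{a^2}\log\p{\tfrac{A[a]}{A[0]}}}\ms\, f'\,d\al'ds'\right|\right|_{L^2}\leq ||f||_{L^2}\,\intpi\int_{-2}^{2}\ab{\pa^{j}\p{\pa_a \tfrac{1}{a^2}\log\p{\tfrac{A[a]}{A[0]}}}}\ms\, d\al'ds',
\end{align*}
so that everything comes down to bounding this last double integral by $Ca\log(1/b)$ for $j=0,1,2$.

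Next I would substitute the pointwise bounds from Lemma \ref{B17}--1, \ref{B17}--2 and \ref{B17}--3 (for $j=0,1,2$ respectively). For the summands containing an auxiliary $\int_0^1\!\cdots d\mu$ I would interchange the $\mu$– and $(\al',s')$–integrations by Fubini; the resulting inner integrals are uniform in $\mu\in(0,1]$ after replacing $b$ by $\sqrt{\mu}\,b$, and the $\mu$–weights that appear ($1$, $\mu$, and the logarithmic factors produced by Lemma \ref{A2}) are integrable on $(0,1)$, using $\int_0^1\log(1/\mu)\,d\mu=1$ and $\int_0^1\mu\log(1/\mu)\,d\mu=\tfrac14$; so that step costs only harmless constants. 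What remains is to expand each term into monomials of the form $a^{p}b^{q}\,\dfrac{|\sin(\al'/2)|^{\sigma}|s'|^{k}}{(\sinc{\al'}+b^2 s'^2)^{l}}$ — with $\sigma\ge 1$, thanks to the extra factor $\ms$ — and apply Lemma \ref{A2} to each, which returns $C$, $C\log(1/b)$, or $Cb^{\sigma+1-2l}$ according to the sign of $\sigma-(2l-1)$.

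The delicate part — essentially the only obstacle — is the bookkeeping: checking that in every case the accumulated powers of $a$ and $b=a^{2}$ combine to at worst $Ca\log(1/b)$. I expect the tightest term to be $a^{3}\,\ms\,\dfrac{b^{3}s'^{6}}{(\sinc{\al'}+b^2 s'^2)^{3}}$, the top monomial coming from the last term $a^{3}(\sinc{\al'}+bs'^2)^3/\p{\D}^3$ of Lemma \ref{B17}--3: Lemma \ref{A2} bounds its integral by $Cb^{-4}$, so it contributes $Ca^{3}b^{3}b^{-4}=Ca^{3}b^{-1}=Ca$, exactly on the edge. Every other monomial I expect to be $O(Ca)$ as well, with the single logarithm in the statement arising only from the borderline integrals $\intpi\int_{-2}^{2}\frac{\ms\, s'^2}{\sinc{\al'}+b^2 s'^2}\,d\al'ds'$ and $\intpi\int_{-2}^{2}\frac{\ms\,|s'|}{\sinc{\al'}+b^2 s'^2}\,d\al'ds'$, which are the $m=2l-1$ case of Lemma \ref{A2}. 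Summing these contributions over the finitely many terms then gives the bound for $j=0,1,2$.
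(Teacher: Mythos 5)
Your proof is correct and follows essentially the same route as the paper, which simply states that the lemma follows from Lemma \ref{B17}, Lemma \ref{A2}, and Minkowski's inequality. You fill in the same bookkeeping the authors leave implicit, and your identification of the tightest term and of where the $\log(1/b)$ arises (the $m=2l-1$ borderline cases in Lemma \ref{A2}) is accurate.
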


\begin{proof}
The proof follows from Lemma \ref{B17}, Lemma \ref{A2} and Minkowski's inequality.
\end{proof}

\begin{lemma}\label{B19}
\begin{align*}
\ab{\ab{\int_{-\infty}^\infty\intpi\pa^3\p{\pa_a \frac{1}{a^2}\log\p{\frac{A[a]}{A[0]}}}\ms f'd\alpha'ds'}}_{L^2}\leq C a\log\p{\frac{1}{b}}
\end{align*}
\end{lemma}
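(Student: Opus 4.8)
The plan is to prove Lemma~\ref{B19} along exactly the lines of Lemma~\ref{B16}: one splits the third derivative $\pa^3\pa_a\p{\frac{1}{a^2}\log\p{\frac{A[a]}{A[0]}}}$ into a part in which all the top-order derivatives $\pa^3u$ have cancelled — which can then be paired with the weight $f$ in $L^2$ — and a part that still carries $\pa^3u$, which is dealt with as in Lemma~\ref{A5}, i.e.\ by pairing against $f\in L^\infty$. Concretely, using the identity written out in the proof of Lemma~\ref{B17}, I would set
\begin{align*}
\mathcal{W}&=\frac{2}{a^2}\frac{\pa^3A[a]}{A[a]}-\frac{1}{a^2}\frac{\pa^3\pa_aA[a]}{A[a]}+\frac{1}{a^2}\frac{\pa A[a]\,\pa^3 A[a]}{A[a]^2},\\
\mathcal{E}&=\pa^3\pa_a\p{\frac{1}{a^2}\log\p{\frac{A[a]}{A[0]}}}+\mathcal{W},
\end{align*}
so that $\pa^3\pa_a\p{\frac{1}{a^2}\log\p{\frac{A[a]}{A[0]}}}=\mathcal{E}-\mathcal{W}$; here $\mathcal{E}$ is precisely the combination estimated pointwise in Lemma~\ref{B17}--4 (which, by the cancellations recorded there, carries no factor $\pa^3u$), and $\mathcal{W}$ is the combination estimated pointwise in Lemma~\ref{B17}--5.

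For the contribution of $\mathcal{E}$ I would substitute the four summands of the bound in Lemma~\ref{B17}--4, multiply by $\ms$, take the $L^2$ norm using Minkowski's inequality, and estimate each resulting kernel of the form $\frac{\ms^{\,m}\,|s'|^{k}}{\p{\D}^{l}}$ by Lemma~\ref{A2}; one checks that in every summand $l\ge1$ and $k+m+1-2l\ge0$, so that the lemma applies and the largest contribution is bounded by $Ca\log(1/b)$. For the contribution of $\mathcal{W}$ I would use the pointwise estimate of Lemma~\ref{B17}--5, a sum of $Ca\frac{\sinc{\alpha'}}{\D}\ab{\pa^3(uu')}$, $Ca\frac{\ms+|s'|}{\D}$ and $Ca\frac{\ms+|s'|}{\D}\ab{\pa^3(u-u')}$. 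The middle term, multiplied by $\ms$, is handled directly by Minkowski and Lemma~\ref{A2}, and it is here — through the piece $\frac{\ms\,|s'|}{\D}$, which falls into the borderline case $m=2l-1$ of Lemma~\ref{A2} — that the factor $\log(1/b)$ appears. For the two terms carrying $\pa^3(uu')$ and $\pa^3(u-u')$ I would expand $\pa^3(uu')=u\pa^3u'+u'\pa^3u+3\pa u\,\pa^2u'+3\pa^2u\,\pa u'$ and $\pa^3(u-u')=\pa^3u-\pa^3u'$, bound the lower-order factors in $L^\infty$ via the embedding $H^{4,3}\subset C^2$, write the remaining integrals as convolutions of an $L^1$ kernel (whose $L^1$ norm is controlled by Lemma~\ref{A2}) against $\ab{\pa^3u}\,\ab{f}$, and apply Young's inequality together with $\|\pa^3\rti\|_{L^2}\le\delta$ and $f\in L^\infty$; this again gives a bound by $Ca\log(1/b)$. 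Adding the two contributions yields the claimed estimate.

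As in Lemmas~\ref{B15}, \ref{B16} and \ref{B18}, I expect the main obstacle to be purely a matter of bookkeeping: once $\pa^3\pa_a$ and the products $uu'$, $(u-u')^2$ are fully expanded one is left with a long list of monomial kernels $\frac{\ms^{\,m}|s'|^{k}}{\p{\D}^{l}}$, and for each one has to verify that the triple $(m,k,l)$ lies in the admissible range of Lemma~\ref{A2} and to locate the unique term that saturates the $m=2l-1$ threshold producing the logarithm. The one genuinely structural point — not mere bookkeeping — is the necessity of pairing the $\mathcal{W}$-part against $f\in L^\infty$ rather than $L^2$, which is forced by the surviving top-order derivative $\pa^3u$ and is exactly the mechanism already exploited in Lemma~\ref{A5}; no continuity-type refinement is required here, since Lemma~\ref{B19} is a fixed-$a$ estimate.
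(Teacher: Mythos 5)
Your proof is correct and follows exactly the route the paper takes: the paper's own proof of Lemma~\ref{B19} is the one-line statement ``follows from Lemma~\ref{B17}, Lemma~\ref{A2} and Minkowski's inequality,'' and your decomposition into $\mathcal{E}$ (the combination bounded in Lemma~\ref{B17}--4, free of $\pa^3u$) and $\mathcal{W}$ (the combination bounded in Lemma~\ref{B17}--5, carrying $\pa^3u$), paired against $f\in L^2$ and $f\in L^\infty$ respectively, is precisely the implicit mechanism behind that one-liner, in the same spirit as the split between Lemmas~\ref{A4} and~\ref{A5} (and between~\ref{B15} and~\ref{B16}). The identification of the borderline kernel $\ms|s'|/\p{\D}$ as the source of the $\log(1/b)$ is also correct.
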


\begin{proof}
The proof follows from Lemma \ref{B17}, Lemma \ref{A2} and Minkowski's inequality.
\end{proof}

\begin{lemma}\label{popurri}Let $g\in H^{4,3}$ with $||g||_{H^{4,3}}=1$ then the following estimates hold
\begin{enumerate}
\item $$\ab{A'[u+tg]}\geq c\p{\sinc{\al'}+b^2s'^2}.$$
\item $$\ab{\pa_uA'[g]}\leq Cb\p{\sinc{\al'}+bs'^2}.$$
\item  \begin{align*}
\left|A'[u+t g] -A'[u]\right|\leq C bt\p{\sinc{\alpha'}+bs'^2}.
\end{align*}
\item
\begin{align*}
 \left|\pa A'[u+t g] -\pa A'[u]\right|\leq  C bt\p{\sinc{\alpha'}+bs'^2}.
\end{align*}
\item \begin{align*}
 \left|\pa^2 A'[u+t g] -\pa^2 A'[u]\right|\leq  C bt\p{\sinc{\alpha'}+b\p{\ab{\sin\p{\frac{\alpha'}{2}}}+|s'|+s'^2}}.
\end{align*}
\item
\begin{align*}
\ab{A'[u+tg]-A'[u]-t\pa_u A[g]}\leq Ct^2b^2\p{\sinc{\al'}+s'^2}.
\end{align*}
\item
\begin{align*}
\ab{\pa A'[u+tg]-\pa A'[u]-t \pa \pa_u A[g]}\leq Ct^2b^2\p{\sinc{\al'}+s'^2}
\end{align*}
\item
\begin{align*}
\ab{\pa^2\p{A'[u+tg]-A'[u]-t\pa_uA'[u]}}\leq Ct^2b^2\p{\sinc{\al'}+s'^2+\ab{\sin\p{\frac{\al'}{2}}}+|s'|}
\end{align*}
\end{enumerate}
\end{lemma}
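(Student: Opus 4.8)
The plan is to exploit that $A'$ is a quadratic polynomial in the pair $(u,u')$ whose coefficients depend only on $b=a^2$ and $\sinc{\alpha'}$; thus $A'[u+tg]$, which replaces $u$ by $u+tg$ and $u'$ by $u'+tg'$, is a quadratic polynomial in $t$, and every quantity appearing in the statement is an explicit polynomial in $t,g,g',u,u',b,\sinc{\alpha'}$. The estimates therefore reduce to bounding those monomials against the stated weights, for which two elementary facts suffice. First, $\|tg\|_{H^{4,3}}=|t|\le a_0$, so $u+tg-s=\rti+tg$ lies in a ball of radius $\delta+a_0$ in $H^{4,3}$, and by $H^{4,3}\hookrightarrow C^2$ (\cite[Lemma 4.1]{Castro-Cordoba-GomezSerrano:global-smooth-solutions-sqg}) its $C^2$ norm is as small as we wish. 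Second, the mean value theorem gives $|g-g'|+|\pa g-\pa g'|\le C(\ms+|s'|)$ and $|\pa^2(g-g')|\le 2\|g\|_{C^2}\le C$, and likewise $|u-u'|\le C(\ms+|s'|)$, while $u,u',g,g'$ and their derivatives up to order $2$ are bounded.

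I would first prove items $1$ and $2$ by mimicking Lemma \ref{A1}. For item $1$, write $|A'[u+tg]|\ge 4\sinc{\alpha'}-C(\delta+a_0)(b+b^2)\sinc{\alpha'}+b^2\p{(u+tg)-(u'+tg')}^2$ and bound $\p{(u+tg)-(u'+tg')}^2=\p{s'+(\rti-\rti')+t(g-g')}^2\ge c\,s'^2-C(\delta+a_0)^2(\sinc{\alpha'}+s'^2)$, which for $\delta_0,a_0$ small yields $|A'[u+tg]|\ge c(\sinc{\alpha'}+b^2s'^2)$. For item $2$, differentiating the quadratic $A'$ in the direction $g$ gives
\begin{align*}
\pa_u A'[g]=4b(g+g')\sinc{\alpha'}+b^2\p{4(ug'+u'g)\sinc{\alpha'}+2(u-u')(g-g')},
\end{align*}
and taking moduli with the bounds above produces $|\pa_u A'[g]|\le Cb\sinc{\alpha'}+Cb^2(\sinc{\alpha'}+s'^2)\le Cb(\sinc{\alpha'}+bs'^2)$.

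Next I would record the exact second--order Taylor remainder, which because $A'$ is quadratic in $(u,u')$ is
\begin{align*}
A'[u+tg]-A'[u]-t\,\pa_u A'[g]=t^2b^2\Big((g-g')^2+4\sinc{\alpha'}\,g\,g'\Big)\equiv R .
\end{align*}
Items $6$, $7$, $8$ then follow by bounding $R$, $\pa R$ and $\pa^2 R$: one has $|(g-g')^2+4\sinc{\alpha'}gg'|\le C(\sinc{\alpha'}+s'^2)$; after one derivative the extra term $4\sinc{\alpha'}\pa(gg')$ is $\le C\sinc{\alpha'}$ while $2(g-g')\pa(g-g')\le C(\sinc{\alpha'}+s'^2)$; after two derivatives the term $2(g-g')\pa^2(g-g')$ contributes precisely the linear weight $C(\ms+|s'|)$ of item $8$, the remaining pieces being controlled by $C(\sinc{\alpha'}+s'^2)$. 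Finally, items $3$, $4$ and $5$ are obtained from the identity $A'[u+tg]-A'[u]=t\,\pa_u A'[g]+R$ together with the bound on $\pa_u A'[g]$ from step $2$, its derivatives $\pa\pa_u A'[g]$ and $\pa^2\pa_u A'[g]$ (estimated by differentiating the explicit formula exactly as in Lemma \ref{A1}), and items $6$--$8$ for $R,\pa R,\pa^2R$; since $0<tb\le 1$, the remainder terms $t^2b^2(\cdots)$ are absorbed into the claimed $tb$--order bounds using $tb\,\sinc{\alpha'}\le\sinc{\alpha'}$, $t\,b\,s'^2\le b\,s'^2$, $tb\,|s'|\le b\,|s'|$ and $\sinc{\alpha'}\le\ms$.

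The main obstacle is not conceptual --- everything is polynomial bookkeeping --- but the careful accounting of which admissible weight ($\sinc{\alpha'}$, $b\sinc{\alpha'}$, $b\,s'^2$, $b\,|s'|$, $b\,\ms$) each of the many monomials carries, and in particular checking that the $t^2b^2$ remainder $R$ (and $\pa R,\pa^2R$) is dominated by the first--order contribution, which hinges only on $tb\le 1$ and $\sinc{\alpha'}\le\ms$. One should also bear in mind that $\pa$ (that is, $\pa_\alpha$ or $\pa_s$) differentiates only $u,u',g,g'$ and not $\sinc{\alpha'}$, since $\alpha'$ is an integration variable, and that the bound $|\pa^2(g-g')|\le C$ uses the full strength of $H^{4,3}\hookrightarrow C^2$.
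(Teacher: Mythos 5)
Your proposal is correct and follows essentially the same route as the paper: both exploit that $A'$ is quadratic in $(u,u')$, so $A'[u+tg]$ is an exact second-degree polynomial in $t$ whose linear coefficient is $\pa_u A'[g]$ and whose quadratic coefficient $R=t^2b^2\bigl(4gg'\sinc{\al'}+(g-g')^2\bigr)$ gives items 6--8 immediately, while items 1, 2 are handled exactly as in Lemma \ref{A1} and items 3--5 reduce to the same Taylor identity. The only (harmless) organizational difference is that you derive 3--5 from 6--8 plus bounds on $\pa_u A'[g]$ and its derivatives, whereas the paper declares 3--5 ``straightforward'' and only spells out the $\pa^2$ case; one small imprecision in your write-up is the identification ``$\|tg\|_{H^{4,3}}=|t|\le a_0$''---the parameter $t$ is independent of $a_0$ and one should simply, as the paper does, require $t$ small enough so that $u+tg-s$ stays in the working ball.
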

\begin{proof}
We recall that
\begin{align*}
A'[u+tg]=&4\sinc{\al'}+4b\left(u+u'+t(g+g')\right)\sinc{\al'}\\+&b^2\left(4(u+tg)(u'+tg')\sinc{\al'}+(u-u'+t(g-g'))^2\right)\\
=& A'[u]+t\left(b(g+g')\sinc{\al'}+4\left((ug'+u'g)\sinc{\alpha'}+2(u-u')(g-g')\right)b^2\right)\\
+& t^2b^2\left(4gg'\sinc{\al'}+(g-g')^2\right).
\end{align*}
Therefore
\begin{align*}
A'[u+tg]\geq A'[u]-tc (b+b^2)\sinc{\alpha'}-ct^2 b^2\left(\sinc{\alpha'}+s'^2\right).
\end{align*}
And for $t$ small enough ($b\in[0,1]$)
\begin{align*}
A'[u+tg]\geq c \left(\sinc{\al'}+b^2 s'^2\right).
\end{align*}
To prove the second inequality in the statement of the lemma we recall that
\begin{align*}
\pa_u A'[g]= b(g+g')\sinc{\al'}+4b^2\p{\p{ug'+u'g}\sinc{\al'}+2(u-u')(g-g')}
\end{align*}
and then
\begin{align*}
\ab{\pa_u A'}\leq C\p{b \sinc{\alpha'}+b^2s'^2}.
\end{align*}
The third, fourth and  fifth  inequalities are straightforward. Taking two derivatives we have that
\begin{align*}
&\pa^2 A'[u+tg]-\pa^2A'[u]\\&=tb \pa^2(g+g')\sinc{\al'}\\ &+t4b^2\p{\pa^2(ug'+u'g)\sinc{\al'}+2\pa^2(u-u')(g-g')+4\pa(u-u')\pa(g-g')+2(u-u')\pa^2(g-g')}\\
&+t^2b^2\p{4\pa^2(gg')\sinc{\al'}+2(\pa g -\pa g')^2+2(g-g')\pa^2(g-g')}
\end{align*}
and we can bound
\begin{align*}
&\ab{\pa^2 A'[u+tg]-\pa^2A'[u]}\leq t\p{b\sinc{\alpha'}+b^2\p{\sinc{\alpha'}+\left|\sin{\frac{\alpha'}{2}}\right|+s'^2+|s'|}}\\
&+t^2b^2\p{\sinc{\alpha'}+\left|\sin{\frac{\alpha'}{2}}\right|+s'^2+|s'|}.
\end{align*}
To prove 6, 7 and 8 we just notice that
\begin{align*}
A'[u+tg]-A'[u]-t\pa_u A'[g]=t^2b^2\p{4gg'\sinc{\al'}+(g-g')^2}.
\end{align*}
\end{proof}

\begin{lemma}\label{estimacionesdu}The following estimates hold:
\begin{enumerate}
\item \begin{align*}
&\ab{\frac{1}{t}\log\p{\frac{\Ag}{\A}}-\frac{\pa_u A'[g]}{\A}}\\
&\leq C\frac{b\sinc{\al'}+b^2s'^2}{\sinc{\al'}+b^2s'^2}
\p{t\frac{b\sinc{\al'}+b^2 s'^2}{\sinc{\al'}+b^2s'^2}+t^2b^2 \frac{\sinc{\al'}+s'^2}{\sinc{\al'}+b^2s'^2}}
+Cb^2 t \frac{\sinc{\al'}+s'^2}{\sinc{\al'}+b^2 s'^2}.
\end{align*}
\item \begin{align*}
&\ab{\frac{1}{t}\pa \log\p{\frac{\Ag}{\A}}-\pa\frac{\pa_u A[g]}{\A}}\\
&\leq  C t b^2\frac{\p{\sinc{\alpha'}+bs'^2}^2}{\p{\D}^2}+ Ctb^2 \frac{\sinc{\alpha'}+s'^2}{\D}\\
&+ Ct b^3 \frac{\p{\sinc{\alpha'}+bs'^2}^3}{\p{\D}^3}+ Ct b^3 \frac{\p{\sinc{\alpha'}+bs'^3}\p{\sinc{\alpha'}+s'^2}}{\p{\D}^2}.
\end{align*}

\item
\begin{align*}
&\ab{\frac{1}{t}\pa^2 \log\p{\frac{\Ag}{\A}}-\pa^2\frac{\pa_u A[g]}{\A}}\\
&\leq  Cb^2t \frac{\p{\sinc{\alpha'}+bs'^2}\p{\sinc{\alpha'}+b\p{\ab{\sin\p{\frac{\al'}{2}}}+|s'|+s'^2}}}{\p{\D}^2}\\
&+ Cb^2t \frac{\p{\sinc{\alpha'}+bs'^2}^2}{\p{\D}^2}+ Cb^3 t \frac{\p{\sinc{\alpha'}+bs'^2}^3}{\p{\D}^3}\\
&+Cb^2t \frac{\sinc{\alpha'}+\ms+s'^2+|s'|}{\D}+ Cb^3t \frac{\p{\sinc{\alpha'}+bs'^2}\p{\sinc{\alpha'}+s'^2}}{\p{\D}^2}\\
&+C b^3t \p{\frac{\p{\sinc{\alpha'}+b\p{\ms+s'^2+|s'|}}}{\p{\D}^3}+b\frac{\p{\sinc{\alpha'}+bs'^2}^2}{\p{\D}^4}}\p{\sinc{\alpha'}+bs'^2}^2\\
&+ Cb^3t  \frac{\left(\sinc{\al'}+b\left(\left|\sin\left(\frac{\al'}{2}\right)\right|+|s'|+s'^2\right)\right)}{\p{\D}^2}\p{\sinc{\al'}+s'^2}\\
&+ Cb^4t
\frac{\p{\sinc{\al}+bs'^2}^2}{\p{\D}^3}\p{\sinc{\al'}+s'^2}
\end{align*}
\item
\begin{align*}
&\ab{\frac{1}{t}\pa^3 \log\p{\frac{\Ag}{\A}}-\pa^3\frac{\pa_u A[g]}{\A}}\leq \\
&Cb\frac{\sinc{\alpha'}+bs'^2}{\p{\D}^2}\\
&\times \left( bt\sinc{\al'}\ab{\pa^3(g+g')}+Cb^2t \sinc{\al'}\ab{\pa^3(ug'+gu')}\right.\\
&+b^2t \p{\ms+|s'|}\ab{\pa^3(u-u')}+b^2t\p{\ms+|s'|}\ab{\pa^3(g-g')}\\&\left.+b^2t\p{\ms+|s'|}+b^2t\sinc{\al'}\ab{\pa^3(gg')}\right)\\
& +Cb^2 t\frac{\sinc{\alpha'}}{\D}|\pa^3(gg')|+Cb^2 t \frac{\ms+|s'|}{\D}\ab{\pa^3(g-g')}\\
&+Cb^2 t \dA^2\\
&\times \left(b\frac{\sinc{\alpha'}}{\p{\D}^3}\ab{\pa^3(u+u')}+b^2\frac{\sinc{\alpha'}}{\p{\D}^3}\ab{\pa^3\p{uu'}}\right.\\
&\left.+Cb^2\frac{\ms+|s'|}{\p{\D}^3}\ab{\pa^3(u-u')}\right)\\
&+Cb^2t\p{\sinc{\al'}+s'^2}\\
&\times\left( b\frac{\sinc{\alpha'}}{\p{\D}^2}\ab{\pa^3(u+u')}+b^2\frac{\sinc{\alpha'}}{\p{\D}^2}\ab{\pa^3\p{uu'}}\right.\\
&\left.+b^2\frac{\ms+|s'|}{\p{\D}^2}\ab{\pa^3(u-u')}\right)\\
&+ \text{KERNEL}_b(\alpha',s').
\end{align*}
where
\begin{align*}
||\text{KERNEL}_b ||_{L^1(\Omega_a)}\leq Cbt
\end{align*}
\end{enumerate}
\end{lemma}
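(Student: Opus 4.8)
The plan is to handle all four estimates with a single scheme whose difficulty grows with the order of the derivative. The common starting point is the exact identity
\[
\log\p{\frac{\Ag}{\A}}=\int_0^1\frac{\Ag-\A}{\A+\mu(\Ag-\A)}\,d\mu,
\]
combined with the algebraic decomposition already isolated in the proof of Lemma~\ref{popurri},
\[
\Ag-\A=t\,\pa_uA'[g]+t^2R,\qquad R:=b^2\p{4gg'\sinc{\al'}+(g-g')^2},
\]
which is \emph{quadratic} in $t$; in particular the remainder $\Ag-\A-t\,\pa_uA'[g]=t^2R$ is completely explicit. I would first record, by exactly the argument of Lemma~\ref{popurri}--1, the $\mu$-uniform lower bound $\A+\mu(\Ag-\A)\geq c\,\D$ valid for $t$ small (here $b=a^2$); this controls every denominator below.

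For part~1 I subtract $\pa_uA'[g]/\A$ under the integral, use $\tfrac1t(\Ag-\A)=\pa_uA'[g]+tR$ and combine the fractions, which gives
\[
\frac1t\log\p{\frac{\Ag}{\A}}-\frac{\pa_uA'[g]}{\A}=t\int_0^1\frac{\A\,R-\mu\,\pa_uA'[g]\p{\pa_uA'[g]+tR}}{\A\p{\A+\mu(\Ag-\A)}}\,d\mu .
\]
Inserting $\ab{\pa_uA'[g]}\leq Cb\,\dA$ and $\ab{R}\leq Cb^2\p{\sinc{\al'}+s'^2}$ from Lemma~\ref{popurri}--2,6, together with the lower bounds for $\A$ (Lemma~\ref{A1}--1) and for $\A+\mu(\Ag-\A)$, yields the stated pointwise estimate term by term: the summand $\A R$ produces $b^2t\,\frac{\sinc{\al'}+s'^2}{\D}$, the summand $\mu(\pa_uA'[g])^2$ produces $t\,\p{b\dA/\D}^{2}$, and the cross term $\mu t\,\pa_uA'[g]R$ produces the $O(t^2)$ piece.

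Parts~2--4 follow by differentiating this identity $j=1,2,3$ times in $\pa\in\{\pa_\al,\pa_s\}$. Each $\pa$ either hits a numerator factor, producing $\pa^k(\Ag-\A)$, $\pa^k\pa_uA'[g]$ or $\pa^kR$ --- estimated by Lemma~\ref{popurri}--2,4,5,7,8 and, for $\pa^3R$, by direct differentiation --- or hits a denominator, producing extra factors $\pa^k A'$ over higher powers of $\A$ and of $\A+\mu(\Ag-\A)$, estimated by Lemma~\ref{A1}. Collecting everything, each resulting contribution is a product of a power $b^{1+\ell}$, a factor $t$ gained from the subtraction, and a kernel $\ms^{\,m}|s'|^k/\p{\D}^{l}$ possibly multiplied by one singular factor $\ab{\pa^3(u\pm u')}$, $\ab{\pa^3(uu')}$, $\ab{\pa^3(gg')}$ or $\ab{\pa^3(g-g')}$, which cannot be absorbed because $\pa^3u,\pa^3g$ are only $L^2$. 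These singular-factor contributions are exactly the ones displayed in the statements of parts~2--4; in part~4 the remaining contributions are collected into $\text{KERNEL}_b$, and $\|\text{KERNEL}_b\|_{L^1(\Omega_a)}\leq Cbt$ follows from Lemma~\ref{A2} after checking $m+1-2l\geq0$ in each kernel.

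The main obstacle is the third-order estimate, part~4: the Leibniz/Fa\`a-di-Bruno expansion of $\pa^3\p{\frac1t\log(\Ag/\A)-\pa_uA'[g]/\A}$ produces many terms, and the real work is the term-by-term bookkeeping showing that the numerator of each carries enough powers of $\sinc{\al'}$, $b^2s'^2$, $\ms$ and $|s'|$ to match the denominator power $\p{\D}^{l}$ so that Lemma~\ref{A2} applies and returns at least one power of $b$ and exactly one power of $t$. The reason this always works is uniform: $\Ag-\A$, $\pa_uA'[g]$ and all their derivatives carry the factor $b=a^2$, whereas $t^2R$ carries $b^2$ and $t^2$, so every cancellation of the form $X[u+tg]-X[u]-t\,\pa_uX$ gains simultaneously a power of $t$ and one more good power in the numerator --- which is precisely what makes the limit $t\to0$ in $H^3$ go through in Lemma~\ref{du}.
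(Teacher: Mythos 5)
Your proof is correct in spirit, and the mechanism you identify (the $t^2R$ remainder being explicit and quadratic, gaining a power of $t$ and a power of $b$ simultaneously) is exactly what drives the estimates. For part~1 your integral identity and the resulting three-term split match the paper's bound term-for-term, modulo a cosmetic difference: the paper integrates $\frac{d}{d\mu}\log A'[u+t\mu g]$ along the \emph{parameter} path, while you use the linear interpolation $\A+\mu(\Ag-\A)$. Both are valid and give the same control because the convex combination of two lower bounds $\A,\Ag\geq c\,\D$ is still $\geq c\,\D$.

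Where you genuinely depart from the paper is in parts~2--4. You propose to keep the $\mu$-integral representation and differentiate under the integral sign three times, doing the bookkeeping on the resulting Leibniz terms. The paper instead drops the $\mu$-integral after part~1 and performs an explicit algebraic telescoping of $\frac1t\big(\tfrac{\pa\Ag}{\Ag}-\tfrac{\pa\A}{\A}\big)-\pa\tfrac{\pa_uA'[g]}{\A}$ into four pieces $J_1,\dots,J_4$ (equation~\eqref{macroderivada}), each manifestly $O(t)$, and then differentiates those further, producing $J_{ij}$ and $J_{ijk}$ terms. The trade-off is this: your route keeps a single structural identity throughout, which is conceptually cleaner, but requires uniform-in-$\mu$ estimates for the derivatives of the $\mu$-dependent denominator $\A+\mu(\Ag-\A)$ (the bounds follow from Lemma~\ref{popurri}--4 together with Lemma~\ref{A1}--3, so this is not hard, but it is an extra layer); the paper's telescoping replaces the $\mu$-denominator by plain $\A,\Ag$ at the cost of a more intricate combinatorial split. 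Your description of the endgame is imprecise in one spot: you state $\|\text{KERNEL}_b\|_{L^1}\leq Cbt$ ``follows from Lemma~\ref{A2} after checking $m+1-2l\geq 0$''; in fact Lemma~\ref{A2} returns $Cb^{m+1-2l}$ when $m<2l-1$, $C\log(1/b)$ when $m=2l-1$, and $C$ when $m>2l-1$, so what has to be checked is that the $b$-power in the coefficient multiplied by the $b$-power returned by Lemma~\ref{A2} is $\geq 1$ overall, not merely that the exponent is nonnegative --- this is the same verification the paper does for each $J_{ijk}$. This is a bookkeeping gap, not a conceptual one, and the paper's own proof shows the account always balances.
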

\begin{proof}
First we focus on the inequality 1. We have that
\begin{align*}
\frac{dA'[u+tg]}{dt}=&b(g+g')\sinc{\al'}+4\left((ug'+u'g)\sinc{\alpha'}+2(u-u')(g-g')\right)b^2\\
+& 2tb^2\left(4gg'\sinc{\al'}+(g-g')^2\right)\\
=& \pa_u A'[g']+2tb^2\left(4gg'\sinc{\al'}+(g-g')^2\right).
\end{align*}
In addition
\begin{align*}
\log\left(\frac{A'[u+tg]}{A'[u]}\right)=\int_{0}^1 \frac{d}{d\mu}\log\left(A'[u+t\mu g]\right)d\mu
\end{align*}
and therefore
\begin{align*}
&\frac{1}{t}\log\left(\frac{A'[u+tg]}{A'[u]}\right)-\frac{\pa_u A'[g]}{A'[u]}\\
=&\int_{0}^1\left(\frac{\pa_u A'[g]+2b^2t\mu\left(4gg'\sinc{\al'}+(g-g')^2\right)}{A'[u+t\mu g]}-\frac{\pa_u A'[g]}{A'[u]}\right)d\mu\\
=&\frac{\pa_u A'[g]}{A'[u]}\int_{0}^1 \frac{A'[u]-A'[u+t\mu g]}{A'[u+t\mu g]}d\mu+ 2b^2t \left(4gg'\sinc{\al'}+(g-g')^2\right)\int_{0}^1\frac{\mu}{A'[u+\mu tg]}d\mu.
\end{align*}

By using lemma \ref{popurri}-1 and  \ref{popurri}-3 we can conclude

\begin{align*}
\left|\frac{A'[u+t\mu g] -A'[u]}{A'[u+t\mu g]}\right|\leq C \mu t \frac{b \sinc{\al'}+b^2s'^2}{\sinc{\al'}+b^2s'^2}+C\mu^2t^2b^2\frac{\sinc{\al'}+s'^2}{\sinc{\al'}+b^2s'^2}
\end{align*}
and therefore
\begin{align*}
\ab{\frac{\pa_u A'[g]}{A'[u]}\int_{0}^1 \frac{A'[u]-A'[u+t\mu g]}{A'[u+t\mu g]}d\mu}\leq C\frac{b\sinc{\al'}+b^2s'^2}{\sinc{\al'}+b^2s'^2}
\p{t\frac{b\sinc{\al'}+b^2 s'^2}{\sinc{\al'}+b^2s'^2}+t^2b^2 \frac{\sinc{\al'}+s'^2}{\sinc{\al'}+b^2s'^2}}.
\end{align*}
In addition
\begin{align*}
\ab{2b^2t\p{4gg'\sinc{\al'}+(g-g')^2}\int_{0}^1\frac{\mu}{A'[u+t\mu g]}d\mu}\leq Cb^2 t \frac{\sinc{\al'}+s'^2}{\sinc{\al'}+b^2 s'^2}
\end{align*}
Then by using lemma \ref{A2} we can conclude the inequality 1 of the lemma.

To prove  the inequality 2 we take one derivative
\begin{align}\label{macroderivada}
&\pa \frac{1}{t}\log\p{\frac{A'[u+tg]}{A'[u]}}-\pa \frac{\pa_u A'[g]}{A'[u]}=\frac{1}{t}\p{\frac{\pa A'[u+tg]}{A'[u+tg]}- \frac{\pa  A'[u]}{A'[u]}}-\pa \frac{\pa_u A'[g]}{A'[u]}\\
&=\frac{1}{t}\p{\frac{\pa A'[u+tg]-\pa A[u]}{A'[u+tg]}}-\frac{\pa \pa_u A'[u]}{A'[u]}+\frac{1}{t}\pa A[u] \p{\frac{1}{A'[u+tg]}-\frac{1}{A'[u]}}+\frac{\pa A'[u]}{A'[u]^2}\pa_u A'[g]\nonumber\\
&=\frac{1}{t}\p{\pa A'[u+tg]-\pa A'[u]}\p{\frac{1}{A'[u+tg]}-\frac{1}{A'[u]}}+\frac{1}{t}\frac{\pa A'[u+tg]-\pa A[u]-t \pa \pa_u A'[g]}{A'[u]}\nonumber\\
&+\frac{\pa A'[u]}{t}\p{\frac{1}{A'[u+tg]}-\frac{1}{A'[u]}+t \frac{\pa_u A'[g]}{A'[u]^2}}\nonumber\\
&=\frac{1}{t}\p{\pa A'[u+tg]-\pa A'[u]}\p{A'[u]-A'[u+tg]}\frac{1}{A'[u+tg]A'[u]}\nonumber\\
& +\frac{1}{t}\frac{\pa A'[u+tg]-\pa A[u]-t\pa\pa_uA'[g]}{A'[u]}\nonumber\\
&+\frac{1}{t}\frac{\pa A'[u]}{A'[u]^2A'[u+tg]}\p{A'[u]-A'[u+tg]}^2\nonumber \\ &+\frac{1}{t}\frac{\pa A'[u]}{A'[u]^2}\p{A'[u]-A'[u+tg]+t\pa_u A'[g]}\nonumber\\
&=J_1+J_2+J_3+J_4\nonumber.
\end{align}
And we can apply \ref{popurri} and \ref{A2} to obtain

\begin{align*}
&\ab{J_1}=\ab{\frac{1}{t}\frac{\Ag-\A}{\Ag\A}\pa(\Ag-\A)}\leq C t b^2\frac{\p{\sinc{\alpha'}+bs'^2}^2}{\p{\D}^2}\\
&\ab{J_2}=\ab{\frac{1}{t}\frac{\pa\Ag-\pa \A-t\pa \pa_u A'[g]}{\A}}\leq Ctb^2 \frac{\sinc{\alpha'}+s'^2}{\D}\\
&\ab{J_3}=\ab{\frac{\pa \A}{t}\frac{\p{\Ag-\A}^2}{\A^2\Ag}}\leq Ct b^3 \frac{\p{\sinc{\alpha'}+bs'^2}^3}{\p{\D}^3}\\
&\ab{J_4}=\ab{\frac{\pa A'[u]}{t}\frac{A'[u]-A'[u+tg]+t\pa_u A'[g]}{A'[u]^2}}\\
&\leq Ct b^3 \frac{\p{\sinc{\alpha'}+bs'^3}\p{\sinc{\alpha'}+s'^2}}{\p{\D}^2}.
\end{align*}
In order to prove the inequality 3 we compute two derivatives to obtain

\begin{align*}
&\pa J_1=\frac{1}{t}\frac{\pa^2\p{A'[u+tg]-A'[u]}\p{A'[u+tg]-A'[u]}}{A'[u+tg]A'[u]}+\frac{1}{t}\frac{\p{\pa A'[u+tg]-\pa A'[u]}^2}{A'[u+tg]A'[u]}\\
&-\frac{1}{t}\p{\pa A'[u+tg]-\pa A'[u]}\p{A'[u+tg]-A'[u]}\pa\frac{1}{\Ag\A}\\
&\equiv J_{11}+J_{12}+J_{13}.
\end{align*}
 Since
\begin{align*}
\pa\frac{1}{\Ag \A}=\frac{1}{\Ag\A}\p{\frac{\pa \Ag}{\Ag}+\frac{\pa \A}{\A}}
\end{align*}
and therefore
\begin{align}\label{paaga}
\ab{\pa\frac{1}{\Ag \A}}\leq Cb \frac{\sinc{\alpha'}+bs'^2}{\p{\D}^3}
\end{align}
we have that
\begin{align*}
&\ab{J_{11}}\leq Cb^2t \frac{\p{\sinc{\alpha'}+bs'^2}\p{\sinc{\alpha'}+b\p{\ab{\sin\p{\frac{\al'}{2}}}+|s'|+s'^2}}}{\p{\D}^2}\\
&\ab{J_{12}}\leq Cb^2t \frac{\p{\sinc{\alpha'}+bs'^2}^2}{\p{\D}^2}\\
&\ab{J_{13}}\leq Cb^3 t \frac{\p{\sinc{\alpha'}+bs'^2}^3}{\p{\D}^3}.
\end{align*}
In addition
\begin{align*}
&\pa J_2= \frac{1}{t} \frac{\pa^2 A'[u+tg]-\pa^2 A'[u]-t \pa^2 \pa_u A'[g]}{A'[u]}-\frac{\pa A'[u]}{t}\frac{\pa A'[u+tg]-\pa A'[u]-t \pa\pa_u A'[g]}{A'[u]^2}\\
&\equiv J_{21}+J_{22}.
\end{align*}
And we can estimate
\begin{align*}
&\ab{J_{21}}\leq Cb^2t \frac{\sinc{\alpha'}+\ms+s'^2+|s'|}{\D}\\
&\ab{J_{22}}\leq Cb^3t \frac{\p{\sinc{\alpha'}+bs'^2}\p{\sinc{\alpha'}+s'^2}}{\p{\D}^2}
\end{align*}
\begin{align*}
&\pa J_3=  (\Ag-\A)^2\pa \frac{\pa \A}{\Ag\A^2}+2 \frac{\pa A'[u]}{t}\frac{(A'[u]-A'[u+tg])(\pa A'[u]-\pa A'[u+tg])}{A'[u]^2A'[u+tg]}\\
&\equiv J_{32}+J_{31}
\end{align*}
Since
\begin{align*}
&\pa \frac{\pa \A}{\Ag\A^2}=\frac{\pa^2\A}{\Ag\A^2}-\frac{\pa\A\pa\Ag}{ \Ag^2\A^2}-2\frac{\pa\A^2}{ \Ag\A^3}\\
\end{align*}
we can estimate that
\begin{align}
&\ab{\pa \frac{\pa \A}{\Ag\A^2}}\leq Cb\frac{\p{\sinc{\alpha'}+b\p{\ms+s'^2+|s'|}}}{\p{\D}^3}\nonumber\\
&+Cb^2\frac{\p{\sinc{\alpha'}+bs'^2}^2}{\p{\D}^4}\label{otrapa}
\end{align}

And we have that
\begin{align*}
&\ab{J_{31}}\leq C b^3 t \frac{\p{\sinc{\alpha'}+b'^2}^3}{\p{\D}^3}\\
&\ab{J_{32}}\leq C b^3t \p{\frac{\p{\sinc{\alpha'}+b\p{\ms+s'^2+|s'|}}}{\p{\D}^3}+\frac{\p{\sinc{\alpha'}+bs'^2}^2}{\p{\D}^4}}\p{\sinc{\alpha'}+bs'^2}^2
\end{align*}

\begin{align*}
&\pa J_4 = \frac{\pa A'[u]}{t}\frac{\pa A'[u]-\pa A'[u+tg]+t\pa \pa_u A'[g]}{A'[u]^2}+\pa\p{\frac{\pa A'[u]}{A'[u]^2}}\frac{1}{t}\p{A'[u]-A'[u+tg]+t\pa_u A'[g]}\\
&\equiv J_{41}+J_{42},
\end{align*}
where $J_{41}$ is the same than $J_{22}$.
Since
\begin{align*}
\pa \p{\frac{\pa A'[u]}{A'[u]^2}}=\frac{\pa^2 A'[u]}{A'[u]^2}-2\frac{\p{\pa A'[u]}^2}{A'[u]^3}
\end{align*}
and then
\begin{align}
\ab{\pa \p{\frac{\pa A'[a]}{A'[a]^2}}}\leq  Cb \frac{\left(\sinc{\al'}+b\left(\left|\sin\left(\frac{\al'}{2}\right)\right|+|s'|+s'^2\right)\right)}{\p{\D}^2}+Cb^2\frac{\p{\sinc{\al}+bs'^2}^2}{\p{\D}^3}\label{pisco},
\end{align}

\begin{align*}
&\ab{J_{42}}\leq Cb^3t  \frac{\left(\sinc{\al'}+b\left(\left|\sin\left(\frac{\al'}{2}\right)\right|+|s'|+s'^2\right)\right)}{\p{\D}^2}\p{\sinc{\al'}+s'^2}\\&+ Cb^4t
\frac{\p{\sinc{\al}+bs'^2}^2}{\p{\D}^3}\p{\sinc{\al'}+s'^2}
\end{align*}
Finally we compute 3 derivatives. We have to differentiate $J_{11}, J_{12}, J_{13}, J_{21}, J_{22}, J_{31}, J_{32}$ and $J_{42}$.

For $J_{11}$ we have that

\begin{align*}
&\pa J_{11}=\frac{1}{t}\frac{\p{\pa^3 A'[u+tg]-\pa^3 A'[u]}\p{A'[u+tg]-A'[u]}}{A'[u+tg]A'[u]}+\frac{1}{t}\frac{\p{\pa^2 A'[u+tg]-\pa^2A'[u]}\p{\pa A'[u+tg]-\pa A'[u]}}{A'[u+tg]A'[u]}\\
&-\frac{1}{t} \p{\pa^2A'[u+tg]-\pa^2A'[u]}\p{A'[u+tg]-A'[u]}\pa\p{\frac{1}{\Ag\A}}\\
&\equiv J_{111}+J_{112}+J_{113}.
\end{align*}
In order to bound this terms we notice that
\begin{align*}
&\pa^3\Ag-\pa^3\A=4bt\pa^3(g+g')\sinc{\alpha'}\\&
+b^2 t \p{4\pa^3(ug'+gu')\sinc{\alpha'}+2(g-g')\pa^3(u-u')+2(u-u')\pa^3(g-g')}\\
+&b^2t\p{6\pa(u-u')\pa^2(g-g')+6\pa(g-g')\pa^2(u-u')}\\
&+b^2t^2\p{4\pa^3(gg')\sinc{\al'}+4(g-g')\pa^3(g-g')+12\pa\p{g-g'}\pa^2(g-g')}.
\end{align*}
Thus
\begin{align*}
&\ab{\pa^3\Ag-\pa^3\A}\leq Cbt\sinc{\al'}\ab{\pa^3(g+g')}+Cb^2t \sinc{\al'}\ab{\pa^3(ug'+gu')}\\
&+Cb^2t \p{\ms+|s'|}\ab{\pa^3(u-u')}+Cb^2t\p{\ms+|s'|}\ab{\pa^3(g-g')}\\&+Cb^2t\p{\ms+|s'|}+Cb^2t^2\sinc{\al'}\ab{\pa^3(gg')}
\end{align*}
And using lemma \ref{popurri} and \ref{paaga} we obtain
\begin{align*}
&\ab{J_{111}}\leq Cb\frac{\sinc{\alpha'}+bs'^2}{\p{\D}^2}\\
&\times \left( bt\sinc{\al'}\ab{\pa^3(g+g')}+Cb^2t \sinc{\al'}\ab{\pa^3(ug'+gu')}\right.\\
&+b^2t \p{\ms+|s'|}\ab{\pa^3(u-u')}+b^2t\p{\ms+|s'|}\ab{\pa^3(g-g')}\\&\left.+b^2t\p{\ms+|s'|}+b^2t^2\sinc{\al'}\ab{\pa^3(gg')}\right)\\
&\ab{J_{112}}\leq Cb^2t\frac{\p{\sinc{\alpha'}+bs'^2}\p{\sinc{\alpha'}+b\p{\ms+|s'|+s'^2}}}{\p{\D}^2}\\
&\ab{J_{113}}\leq  Cb^3t\frac{\p{\sinc{\alpha'}+bs'^2}^2\p{\sinc{\alpha'}+b\p{\ms+|s'|+s'^2}}}{\p{\D}^3}
\end{align*}

For $J_{12}$

\begin{align*}
&\pa J_{12}=\frac{2}{t}\frac{\p{\pa \Ag-\pa \A}\p{\pa^2 \Ag-\pa^2 \A}}{\Ag\A}-\frac{1}{t}\p{\pa \Ag-\pa \A}^2\pa \frac{1}{\Ag\A}\\
&\equiv J_{121}+J_{122},
\end{align*}
where $J_{121}$ is equivalent to $J_{112}$. Then we just need to bound $J_{122}$. By using lemma \ref{popurri} and \ref{paaga} we have that
\begin{align*}
\ab{J_{122}}\leq Cb^3t\frac{\p{\sinc{\al'}+bs'^2}^3}{\p{\D}^3}.
\end{align*}

For $J_{13}$

\begin{align*}
&\pa J_{13}=\frac{1}{t}\p{\pa^2 \Ag-\pa^2 \A}\p{\Ag-\A}\pa\frac{1}{\Ag\A}\\&+\frac{1}{t}\p{\pa \Ag-\pa \A}^2\pa\frac{1}{\Ag\A}\\
&+\frac{1}{t}\p{\pa \Ag-\pa\A}\p{\Ag-\A}\pa^2\frac{1}{\Ag\A}\\
&\equiv J_{131}+J_{132}+J_{133},
\end{align*}
where $J_{131}$ is equivalent to $J_{113}$ and $J_{132}$ is equivalent to $J_{122}.$ Then we just have to bound $J_{133}.$

Since
\begin{align*}
&\pa^2\frac{1}{\Ag\A}=-\pa\frac{1}{\Ag\A}\p{\frac{\pa \Ag}{\Ag}+\frac{\pa \A}{\A}}\\&-\frac{1}{\Ag\A}\p{\pa \frac{\pa\Ag}{\Ag}+\pa \frac{\pa\Ag}{\Ag}}
\end{align*}
Then using \eqref{pa1entrea1}, \eqref{pa1entrea2}, \eqref{paaga} and lemma \ref{A1} we obtain
\begin{align*}
&\ab{\pa^2\frac{1}{\Ag\A}}\\
&\leq Cb^2 \frac{\p{\sinc{\alpha'}+bs'^2}^2}{\p{\D}^4}+Cb\frac{\sinc{\alpha'}+b\p{\ms+|s'|+s'^2}}{\p{\D}^3}+Cb^2\frac{\left(\sinc{\alpha'}+bs'^2\right)^2}{\p{\D}^2}
\end{align*}
and thanks to lemma \ref{popurri}
\begin{align*}
&\ab{J_{133}}\leq Cb^2t\p{\sinc{\alpha'}+bs'^2}^2\\
&\times \p{b^2 \frac{\p{\sinc{\alpha'}+bs'^2}^2}{\p{\D}^4}+b\frac{\sinc{\alpha'}+b\p{\ms+|s'|+s'^2}}{\p{\D}^3}+b^2\frac{\left(\sinc{\alpha'}+bs'^2\right)^2}{\p{\D}^2}}.
\end{align*}

\begin{align*}
&\pa J_{21}=\frac{1}{t} \frac{\pa^3 \Ag - \pa^3 \A -t \pa^3\pa_u A[g]}{\A}\\&
+\frac{1}{t}\p{\pa^2 \Ag-\pa^2 \A-t\pa^2 \pa_u A'[g]}\pa \frac{1}{\A}\\
&\equiv J^*_{211}+J_{212}.
\end{align*}

We can estimate by using lemma \ref{popurri}
\begin{align*}
&\ab{J_{211}}\leq Cb^2 t\frac{\sinc{\alpha'}}{\D}|\pa^3(gg')|+Cb^2t \frac{\ms+|s'|}{\D}+Cb^2 t \frac{\ms+|s'|}{\D}\ab{\pa^3(g-g')}\\
&\ab{J_{212}}\leq Ct b^3\p{\sinc{\alpha'}+\ms+ s'^2+|s'|}\frac{\sinc{\alpha'}+bs'^2}{\p{\D}^2}
\end{align*}

For $J_{22}$ we have that

\begin{align*}
&\pa J_{22}=\frac{1}{t}\pa^2\frac{1}{\A} \p{\pa \Ag -\pa \A-t\pa \pa_u A'[g]}+\frac{1}{t}\pa \frac{1}{\A}\p{\pa^2 \Ag-\pa^2 \A-t\pa^2 \pa_u A'[g]}\\
&\equiv J_{221}+J_{222},
\end{align*}
where $J_{222}$ is equivalent to $J_{212}$. Then we have to bound $J_{221}$. From \ref{pisco} and lemma \ref{popurri} we have that
\begin{align*}
&\ab{J_{221}}\leq Ctb^2\p{\sinc{\alpha'}+bs'^2} \\&\times\p{b \frac{\left(\sinc{\al'}+b\left(\left|\sin\left(\frac{\al'}{2}\right)\right|+|s'|+s'^2\right)\right)}{\p{\D}^2}+b^2\frac{\p{\sinc{\al}+bs'^2}^2}{\p{\D}^3}}
\end{align*}

For $J_{31}$ we have that
\begin{align*}
&\pa J_{31}= \frac{2}{t}\pa\p{\frac{\pa \A}{\A^2\Ag}}(\A-\Ag)(\pa\A-\pa\Ag)\\&+\frac{1}{t}\pa^2\p{\frac{\pa \A}{\A^2\Ag}}(\A-\Ag)^2\\
&\equiv J_{311}+J_{312}.
\end{align*}
We notice that
\begin{align*}
&\pa^2 \frac{\pa \A}{\A^2\Ag}=\frac{\pa^3\A}{\A^2\Ag}-6\frac{\pa^2\A\pa\A}{\A^3\Ag}-2\frac{\pa^2\A\pa\Ag}{\A^2\Ag}+6\frac{\pa\A^3}{\A^3\Ag}\\
&+4\frac{\pa \A^2\pa\Ag}{\A^3\Ag^2}-\frac{\pa^2\Ag\pa\A}{\A^2\Ag^2}+2\frac{\pa\A\pa\Ag^2}{\A^2\Ag^3}
\end{align*}

By using lemma \ref{A1} we obtain that
\begin{align*}
&\ab{\frac{\pa^3 \A}{\Ag\A^2}}\leq Cb\frac{\sinc{\alpha'}}{\p{\D}^3}\ab{\pa^3(u+u')}+Cb^2\frac{\sinc{\alpha'}}{\p{\D}^3}\ab{\pa{uu'}}\\
&+Cb^2\frac{\ms+|s'|}{\p{\D}^2}+Cb^2\frac{\ms+|s'|}{\p{\D}^3}\ab{\pa^3(u-u')}.
\end{align*}
and
\begin{align*}
&\ab{\pa^2 \frac{\pa \A}{\A^2\Ag}-\frac{\pa^3 \A}{\Ag\A^2}}\leq Cb^2\frac{\ddA\dA}{\p{\D}^4}\\
&+Cb^2\frac{\ddA\dA}{\p{\D}^3}+Cb^3\frac{\dA^3}{\p{\D}^4}\\
&+Cb^3\frac{\dA^3}{\p{\D}^5}+Cb^2\frac{\ddA\dA}{\p{\D}^4}\\
&+Cb^3\frac{\dA\ddA}{\p{\D}^5}\\
&\leq Cb^2\frac{\ddA\dA}{\p{\D}^4}\\
&+Cb^3\frac{\dA^3}{\p{\D}^5}+Cb^2\frac{\ddA\dA}{\p{\D}^4}\\
&+Cb^2\frac{\ddA\dA}{\p{\D}^4}
\end{align*}
And we can estimate
\begin{align*}
&\ab{J_{311}}\leq C b^3 t \p{\frac{\p{\sinc{\alpha'}+b\p{\ms+s'^2+|s'|}}\p{\sinc{\al'}+bs'^2}^2}{\p{\D}^3}}\\
& + Cb^4 t \frac{\p{\sinc{\alpha'}+bs'^2}^4}{\p{\sinc{\al'}+b^2s'^2}^4}\\
&\ab{J_{312}}\leq Cb^2 t \dA^2\\
&\times \left(b\frac{\sinc{\alpha'}}{\p{\D}^3}\ab{\pa^3(u+u')}+b^2\frac{\sinc{\alpha'}}{\p{\D}^3}\ab{\pa^{3}\p{uu'}}\right.\\
&+b^2\frac{\ms+|s'|}{\p{\D}^2}+Cb^2\frac{\ms+|s'|}{\p{\D}^3}\ab{\pa^3(u-u')}\\
&+b^2\frac{\ddA\dA}{\p{\D}^4}\\
&+b^3\frac{\dA^3}{\p{\D}^5}+b^2\frac{\ddA\dA}{\p{\D}^4}\\
&\left.+b^2\frac{\ddA\dA}{\p{\D}^4}\right).
\end{align*}

For $J_{32}$ we have that
\begin{align*}
&\pa J_{32}=\frac{2}{t} \pa\p{\frac{\pa \A}{\A^2\Ag}}(\A-\Ag)(\pa\A-\pa\Ag)\\
&+\frac{2}{t}\frac{\pa \A}{\A^2\Ag}\p{\pa A -\pa \Ag}^2\\
&+\frac{2}{t}\frac{\pa \A}{\A^2\Ag}(\A-\Ag)(\pa^2 \A-\pa^2\Ag)\\
&\equiv J_{321}+J_{322}+J_{323},
\end{align*}
where $J_{321}=J_{311}$.  Then we need to estimate using lemma \ref{A1}
\begin{align*}
&\ab{J_{322}}\leq Cb^3t \frac{\p{\sinc{\alpha'}+bs'^2}^3}{\p{\D}^3}\\
&\ab{J_{323}}\leq Cb^3t \frac{\p{\sinc{\alpha'}+bs'^2}^2}{\p{\D}^3}\p{\sinc{\alpha'}+b\p{\ms +s'^2+|s'|}}\\
\end{align*}
Finally for $J_{42}$ we have that
\begin{align*}
&\pa J_{42}=\pa^3\p{\frac{1}{A}}\frac{1}{t}\p{A-\Ag+t\pa_u A[g]} +\pa^2\p{\frac{1}{\A}}\frac{1}{t}\p{\pa A-\pa Ag+t\pa_u A[g]}\\
&\equiv J_{421}+J_{422},
\end{align*}
where $J_{422}=J_{221}$. And then we just have to bound $J_{421}$. We notice that
\begin{align*}
\pa^3\frac{1}{\A}=\frac{\pa^3\A}{\A^2}-6\frac{\pa^2\A \pa \A}{\A^3}+6\frac{\pa \A ^3}{\A^4}
\end{align*}
And using lemma \ref{A1} we have that
\begin{align*}
&\frac{\pa^3\A}{\A^2}\leq Cb\frac{\sinc{\alpha'}}{\p{\D}^2}\ab{\pa^3(u+u')}+Cb^2\frac{\sinc{\alpha'}}{\p{\D}^2}\ab{\pa{uu'}}\\
&+Cb^2\frac{\ms+|s'|}{\p{\D}^2}+Cb^2\frac{\ms+|s'|}{\p{\D}^2}\ab{\pa^3(u-u')}\\
&\ab{\frac{\pa^2\A\pa \A}{\A^3}}\leq C b^2\frac{\p{\sinc{\al'}+b\p{\ms+s'^2+|s'|}\p{\sinc{\al'}+bs'^2}}}{\p{\D}^3}\\
&\ab{\frac{\pa\A^3}{\A^4}}\leq C b^3 \frac{\p{\sinc{\al'}+bs'^2}^3}{\p{\D}^4}.
\end{align*}
\begin{align*}
&\ab{J_{422}}\leq Cb^2t\p{\sinc{\al'}+s'^2}\\
&\times\left( b\frac{\sinc{\alpha'}}{\p{\D}^2}\ab{\pa^3(u+u')}+b^2\frac{\sinc{\alpha'}}{\p{\D}^2}\ab{\pa^{3}(uu')}\right.\\
&+b^2\frac{\ms+|s'|}{\p{\D}^2}+b^2\frac{\ms+|s'|}{\p{\D}^2}\ab{\pa^3(u-u')}\\
&b^2\frac{\p{\sinc{\al'}+b\p{\ms+s'^2+|s'|}\p{\sinc{\al'}+bs'^2}}}{\p{\D}^3}\\
&\left.b^3 \frac{\p{\sinc{\al'}+bs'^2}^3}{\p{\D}^4}\right).
\end{align*}

This finishes the proof of the lemma.
\end{proof}

\begin{lemma}\label{maindu1} Let $g\in H^{4,3}$ with $||g||_{H^{4,3}}=1$ and  $f(\alpha,s)\in L^2(\Omega)$ with supp$(f)\subset \T\times [-1,1]$. Then the following estimates hold:
\begin{enumerate}

\item\begin{align*}
\ab{\ab{\inti\intpi \pa^j \p{\frac{1}{t}\log\p{\frac{A'[u+tg]}{A'[u]}}-\frac{\pa_u A'[g]}{A'[u]}}f'd\al'ds' }}_{L^2}\leq Cbt,
\end{align*}
\item\begin{align*}
\ab{\ab{\inti\intpi \pa^j \p{\log\p{\frac{A'[u+tg]}{A'[u]}}}f'd\al'ds' }}_{L^2}\leq Cbt,
\end{align*}
for $j=0,1,2$.
\item \begin{align*}\ab{\ab{\inti\intpi \p{\pa^3\p{\frac{1}{t}\log\p{\frac{\Ag}{\A}}-\pa_u A'[g]}-Z(\al,\al',s,s')}f(\alpha',s')d\alpha'ds'}}_{L^2}\leq Cbt
\end{align*}
where
\begin{align*}
&Z(\alpha,\alpha',s,s')=\frac{1}{t}\frac{\p{\pa^3 \Ag-\pa^3\A}\p{\Ag-\A}}{\Ag\A}-\frac{1}{t}\frac{\pa^3\Ag-\pa^3\A-t\pa^3\pa_u A'[g]}{\A}\\
&+\frac{\pa^3\A}{t}\frac{\p{\A-\Ag}^2}{\A^2\Ag}.
\end{align*}
\item \begin{align*}
\ab{\ab{\inti\intpi  \p{\pa^3\log\p{\frac{\Ag}{\A}}-W(\alpha,\alpha',s,s')}d\alpha'ds'}}_{L^2}\leq Cbt
\end{align*}
where
\begin{align*}
W(\alpha, \alpha',s,s')=\pa^3 \A\p{\frac{1}{\Ag}-\frac{1}{\A}}+\frac{\p{\pa^3 \Ag-\pa^3\A}}{\Ag}
\end{align*}
\end{enumerate}
\end{lemma}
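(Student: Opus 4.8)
\emph{A plan for the proof.} All four estimates will be deduced from the pointwise bounds of Lemma \ref{estimacionesdu}: one inserts them into the left-hand sides, moves the $L^{2}$ norm inside the double integral with Minkowski's inequality, and bounds the resulting scalar $(\al',s')$-integrals with Lemma \ref{A2} --- the same mechanism used to prove Lemmas \ref{A4}, \ref{A5}, \ref{B15} and \ref{B16}. The relevant feature of Lemma \ref{estimacionesdu} is that each of its pointwise bounds is a finite sum of terms of the form $C\,b^{p}\,t\,\frac{\ms^{m}\ab{s'}^{k}}{\p{\D}^{l}}$ --- multiplied, in the $j=3$ case, by an extra factor $\ab{\pa^{3}(\cdots)}$ with $(\cdots)$ a sum or a product of $u,u',g,g'$ --- in which the power $p\ge1$ of $b$ is always large enough that, after integrating in $(\al',s')$ with Lemma \ref{A2}, each summand contributes at most $C\,b\,t\,\ab{\ab{f}}_{L^{2}}$; the borderline logarithmic case of Lemma \ref{A2} arises only together with $p\ge2$, and is absorbed since $b^{2}\log(1/b)\le Cb$ for $b$ small.

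For items 1 and 2 (the cases $j=0,1,2$) no third derivatives of $u$ or $g$ occur, since $H^{4,3}\subset C^{2}$ makes $u$, $g$ and their first two derivatives bounded; hence the pointwise inequalities of Lemma \ref{estimacionesdu}, parts 1--3, apply directly and the mechanism above gives item 1. For item 2 I would write $\log\p{\frac{\Ag}{\A}}=t\p{\frac1t\log\p{\frac{\Ag}{\A}}-\frac{\pa_{u}A'[g]}{\A}}+t\,\frac{\pa_{u}A'[g]}{\A}$; the first summand is controlled by item 1 (the extra factor $\ab{t}\le1$ only helps), and for $\pa^{j}\p{\frac{\pa_{u}A'[g]}{\A}}$ with $j\le2$ the quotient rule together with Lemma \ref{popurri}, part 2, and Lemma \ref{A1}, parts 1 and 3, again produces a sum of the above type with $p\ge1$, giving the bound $C\,b\,\ab{\ab{f}}_{L^{2}}$ and hence $C\,b\,t\,\ab{\ab{f}}_{L^{2}}$ after restoring the factor $t$.

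For items 3 and 4 ($j=3$), the quantities $\pa^{3}A'[u]$ and $\pa^{3}A'[u+tg]$ involve $\pa^{3}u$ and $\pa^{3}g$, which lie in $L^{2}$ but not in $L^{\infty}$, and this is exactly why the corrections $Z$ and $W$ must be subtracted: by construction they collect precisely the terms of $\pa^{3}\p{\frac1t\log\p{\frac{\Ag}{\A}}-\frac{\pa_{u}A'[g]}{\A}}$ (resp.\ of $\pa^{3}\log\p{\frac{\Ag}{\A}}$) in which a factor $\pa^{3}A$ is divided by one or two factors of $A$. After subtracting them, the remainder contains only derivatives of $u$ and $g$ of order at most two, so it is estimated as in items 1--2, now using Lemma \ref{popurri} to bound the differences $\pa^{i}A'[u+tg]-\pa^{i}A'[u]=O(t)$ (with the appropriate powers of $b$) that appear. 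It then remains to control the few terms in the right-hand side of Lemma \ref{estimacionesdu}, part 4, that still carry a factor $\ab{\pa^{3}(\cdots)}$: writing $\pa^{3}(v\pm v')=\pa^{3}v(\al,s)\pm\pa^{3}v(\al-\al',s-s')$, the $(\al,s)$-piece pulls out of the $(\al',s')$-integral and is bounded by $\ab{\ab{\pa^{3}v}}_{L^{2}}\le C\delta$ times a scalar integral controlled by Lemma \ref{A2}, while the translated piece is handled by Minkowski's inequality, the accompanying kernel carrying enough powers of $b$ for this to be admissible, just as in the proof of Lemma \ref{A5}; the remainder $\text{KERNEL}_{b}$ is treated directly, since $\ab{\ab{\text{KERNEL}_{b}}}_{L^{1}}\le C\,b\,t$ and $f\in L^{2}$.

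The step I expect to be the main obstacle is item 3: one must first carry out the lengthy differentiation of $\frac1t\log\p{\frac{\Ag}{\A}}-\frac{\pa_{u}A'[g]}{\A}$ and group the many quotient-rule terms so as to pin down the exact correction $Z$ that isolates the low-regularity obstruction, and then, in the surviving $\ab{\pa^{3}(\cdots)}$ terms, control a factor that is only in $L^{2}$ against a test function $f$ that is itself only in $L^{2}$; this is the only place where the full strength of Lemma \ref{estimacionesdu}, part 4 --- including the bookkeeping of the $L^{1}$-small remainder $\text{KERNEL}_{b}$ --- is needed. Item 4 is the same scheme but lighter, because $\pa^{3}\log\p{\frac{\Ag}{\A}}-W$ already involves only first and second derivatives of $A$.
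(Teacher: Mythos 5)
Your proposal is correct and uses exactly the same approach the paper takes: the paper's proof consists of a single sentence invoking Lemma \ref{estimacionesdu} for the pointwise bounds, Minkowski's inequality, and Lemma \ref{A2} for the resulting $(\al',s')$-integrals. You have simply unpacked the bookkeeping (the splitting of item 2 via item 1, the isolation of the $\pa^3$-terms into $Z$ and $W$, the $L^1$-smallness of $\text{KERNEL}_b$) that the paper leaves implicit.
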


\begin{proof}
The proof follows from Lemma \ref{estimacionesdu}, Lemma \ref{A2} and Minkowski's inequality.
\end{proof}

\begin{lemma}\label{maindu1bis} Let $g\in H^{4,3}$ with $||g||_{H^{4,3}}=1$ and  $f(\alpha,s)\in L^\infty(\Omega)$ with supp$(f)\subset \T\times [-1,1]$. Then the following estimates hold:
\begin{align*}
\ab{\ab{\inti\intpi Z(\alpha,\alpha',s,s') f'd\alpha'ds'}}_{L^2}\leq Cbt
\end{align*}
where
\begin{align*}
&Z(\alpha,\alpha',s,s')=\frac{1}{t}\frac{\p{\pa^3 \Ag-\pa^3\A}\p{\Ag-\A}}{\Ag\A}-\frac{1}{t}\frac{\pa^3\Ag-\pa^3\A-t\pa^3\pa_u A'[g]}{\A}\\
&+\frac{\pa^3\A}{t}\frac{\p{\A-\Ag}^2}{\A^2\Ag}.
\end{align*}
\begin{align*}
\ab{\ab{\inti\intpi W(\alpha,\alpha',s,s') f(\alpha',s')d\alpha'ds'}}_{L^2}\leq Cbt
\end{align*}
where
\begin{align*}
W(\alpha, \alpha',s,s')=\pa^3 \A\p{\frac{1}{\Ag}-\frac{1}{\A}}+\frac{\p{\pa^3 \Ag-\pa^3\A}}{\Ag}.
\end{align*}
\end{lemma}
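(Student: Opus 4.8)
The kernels $Z$ and $W$ are precisely the pieces of $\pa^3$ of the difference quotient $\tfrac1t\log\p{\tfrac{\Ag}{\A}}$ that were split off in Lemma~\ref{maindu1}, items 3 and 4, because they do not define bounded operators $L^2(\Omega)\to L^2(\Omega)$: once all derivatives are distributed they carry a factor $\pa^3 u$ or $\pa^3 u'$ against a kernel that is only borderline integrable (decay of type $\tfrac{\sinc{\al'}}{\D}$, $\tfrac{\ms+|s'|}{\D}$, and powers of these). The point of the present lemma is the gain obtained by testing against $\|f\|_{L^\infty}$ rather than $\|f\|_{L^2}$, exactly as in the passage from Lemma~\ref{A4} to Lemma~\ref{A5}. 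The plan is to derive a pointwise bound $|Z|\leq Cbt\sum_i K_i(\al',s')P_i$ with every $K_i$ a kernel of the type handled by Lemma~\ref{A2} satisfying $\|K_i\|_{L^1(\Omega)}\leq C$, and then to conclude by Minkowski's integral inequality.

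First I would unpack the three summands of $Z$ and the two of $W$. For the factors $\Ag-\A$, $\pa\Ag-\pa\A$, $\pa^2\Ag-\pa^2\A$ and $A'[u+tg]-A'[u]-t\pa_uA'[g]$ I would quote Lemma~\ref{popurri}; for $\pa^3\Ag-\pa^3\A$ I would use its explicit expansion, the one already computed in the proof of Lemma~\ref{estimacionesdu},
\[
\pa^3\Ag-\pa^3\A = 4bt\,\pa^3(g+g')\sinc{\al'}+b^2t\big(4\pa^3(ug'+gu')\sinc{\al'}+\dots\big)+b^2t^2(\dots);
\]
and for the denominators the lower bound $\A,\Ag\geq c(\sinc{\al'}+b^2s'^2)$ together with the size estimates of Lemma~\ref{A1}. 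The net effect is that $|Z|$ (and likewise $|W|$) is dominated by $Cbt$ times a finite sum of terms $K_i(\al',s')P_i$, where each $K_i$ has exponents $(m,k,l)$ with $k+m+1-2l\geq0$, so $\|K_i\|_{L^1(\Omega)}\leq C$ --- equality in $k+m+1-2l\geq0$ produces only a $\log(1/b)$, and is admissible because it always occurs together with a spare power of $b$ in the prefactor --- and each $P_i$ is $1$ or contains exactly one factor among $\pa^3(u\pm u')$, $\pa^3(uu')$, $\pa^3(g+g')$, the remaining factors being $\leq\delta$ (or $\leq 1$ for $g$) by the embedding $H^{4,3}\subset C^2$.

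Second, I would estimate term by term with Minkowski's integral inequality. If $P_i=1$ then $\|\int\int K_i f'\|_{L^2(\Omega)}\leq|\Omega|^{1/2}\|K_i\|_{L^1}\|f\|_{L^\infty}$. If $P_i$ carries a shifted third derivative $\pa^3 u'$, the integrand equals $K_i(\al',s')(\pa^3u\cdot f)(\al-\al',s-s')$, a convolution, whose $L^2$ norm is $\leq\|K_i\|_{L^1}\|\pa^3u\|_{L^2}\|f\|_{L^\infty}\leq C\delta\|f\|_{L^\infty}$; if $P_i$ carries $\pa^3u$ at the base point $(\al,s)$, it factors out of the inner integral and $\|\pa^3u\cdot(K_i*f)\|_{L^2}\leq\|\pa^3u\|_{L^2}\|K_i\|_{L^1}\|f\|_{L^\infty}$. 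The mixed products $\pa^3(uu')$ and $\pa^3(u-u')$ reduce to these two cases after distributing derivatives and bounding the lower-order factors by $\delta$. Summing the finitely many $bt$ prefactors yields $\|\int\int Zf'\|_{L^2(\Omega)}\leq Cbt\|f\|_{L^\infty}$; the argument for $W$ is identical and lighter, since Lemma~\ref{popurri} already exhibits the factor $bt$ in $\A-\Ag$, and the expansion above exhibits it in $\pa^3\Ag-\pa^3\A$.

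The main obstacle is purely organizational: with roughly a dozen terms between $Z$ and $W$, one must track, for each, the exact power of $b$ that survives in the prefactor and check that it outweighs the integrability margin $k+m+1-2l$ of the accompanying kernel in Lemma~\ref{A2}, so that the final bound is genuinely $Cbt$, every borderline $\log(1/b)$ having been absorbed by a spare power of $b$. No single estimate is difficult once Lemmas~\ref{popurri} and~\ref{A2} are available; the real risk is simply losing a power of $b$ somewhere in the bookkeeping.
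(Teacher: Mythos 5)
Your proof takes the same route as the paper's. The paper's proof of this lemma is the one-line citation ``The proof follows from Lemma~\ref{estimacionesdu}, Lemma~\ref{A2} and Minkowski's inequality,'' and you have simply unpacked exactly those ingredients: the pointwise bounds on the factors from Lemma~\ref{popurri} and the explicit $\pa^3$-expansions appearing in the proof of Lemma~\ref{estimacionesdu}, the $L^1$-integrability of the resulting kernels from Lemma~\ref{A2}, and the Minkowski/Young step that places the single surviving $\pa^3 u$ (or $\pa^3 g$) factor in $L^2$ while $f$ is taken in $L^\infty$ --- the same gain as in passing from Lemma~\ref{A4} to Lemma~\ref{A5}. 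Your remark that borderline terms give $\log(1/b)$ which is dominated by a spare factor of $b$, so that the final bound is genuinely $Cbt$, is also accurate and consistent with the bookkeeping in Lemma~\ref{estimacionesdu}.
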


\begin{proof}
The proof follows from Lemma \ref{estimacionesdu}, Lemma \ref{A2} and Minkowski's inequality.
\end{proof}

\begin{lemma}\label{maindu2} Let $g\in H^{4,3}$ with $||g||_{H^{4,3}}=1$ and  $f(\alpha,s)\in L^2(\Omega)$ with supp$(f)\subset \T\times [-1,1]$. Then the following estimates hold:

\begin{align*}
\ab{\ab{\inti\intpi \pa^j \frac{1}{b}\p{\frac{1}{t}\log\p{\frac{A'[u+tg]}{A'[u]}}-\frac{\pa_u A'[g]}{A'[u]}}\left|\sin\p{\frac{\al'}{2}}\right|f'd\al'ds' }}_{L^2}\leq Cbt\log\left(\frac{1}{b}\right),
\end{align*}

for $j=0,1,2$.
 \begin{align*}\ab{\ab{\inti\intpi \frac{1}{b}\p{\pa^3\p{\frac{1}{t}\log\p{\frac{\Ag}{\A}}-\pa_u A'[g]}-Z(\al,\al',s,s')}\left|\sin\p{\frac{\alpha'}{2}}\right|f(\alpha',s')d\alpha'ds'}}_{L^2}\leq Cbt \log\left(\frac{1}{b}\right)
\end{align*}
where
\begin{align*}
&Z(\alpha,\alpha',s,s')=\frac{1}{t}\frac{\p{\pa^3 \Ag-\pa^3\A}\p{\Ag-\A}}{\Ag\A}-\frac{1}{t}\frac{\pa^3\Ag-\pa^3\A-t\pa^3\pa_u A'[g]}{\A}\\
&+\frac{\pa^3\A}{t}\frac{\p{\A-\Ag}^2}{\A^2\Ag}.
\end{align*}

\end{lemma}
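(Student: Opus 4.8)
The plan is to follow the scheme already used for Lemmas \ref{maindu1} and \ref{maindu1bis}: reduce everything to the pointwise bounds of Lemma \ref{estimacionesdu}, and then combine Minkowski's inequality in its convolution form, $\|\int\!\!\int K(\al',s')f(\al-\al',s-s')\,d\al'ds'\|_{L^2}\le\|K\|_{L^1}\|f\|_{L^2}$, with the weighted integral estimates of Lemma \ref{A2}. Compared with Lemma \ref{maindu1}, two new features appear here: the overall factor $\tfrac1b$ and the weight $|\sin(\al'/2)|$ inside the kernel; this weight enters because $\mathcal G_3$ carries a factor $\sin(\al')$ and $|\sin(\al')|\le 2|\sin(\al'/2)|$. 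The whole point is that these two modifications essentially compensate one another, at the cost of an extra $\log(\tfrac1b)$.

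For $j=0,1,2$ I would write out $\pa^j\bigl(\tfrac1t\log(A'[u+tg]/A'[u])-\pa_u A'[g]/A'[u]\bigr)$ and apply items 1--3 of Lemma \ref{estimacionesdu}, which bound it pointwise by $C$ times a finite sum of terms of the shape
\[
b^2\,t^{\,p}\,\frac{|\sin(\al'/2)|^{m}\,|s'|^{k}}{(\sin^2(\al'/2)+b^2s'^2)^{l}},\qquad p\in\{1,2\},
\]
together with products that reduce to this form; the structural fact to keep in mind is that every such summand carries the explicit prefactor $b^2$. Dividing by $b$ therefore leaves only $b\,t^{p}$ in front, while multiplying the kernel by the extra weight $|\sin(\al'/2)|$ raises the exponent $m$ by one. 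Minkowski's inequality reduces the $L^2$ norm to $\|f\|_{L^2}$ times the $L^1$ norm of the kernel, and Lemma \ref{A2} closes it: the worst terms, for which the original exponent satisfied $m=2l-2$ and hence produced a genuine $b^{-1}$, now sit at the borderline $m=2l-1$ and contribute $\log(\tfrac1b)$, whereas all remaining terms contribute $O(1)$. Summing gives the claimed bound $Cbt\log(\tfrac1b)$.

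For the $j=3$ estimate one repeats this after subtracting $Z$. The function $Z$ is exactly the collection of those pieces in the third-derivative expansion performed in Lemma \ref{estimacionesdu} which are not absolutely convergent against a mere $L^2$ kernel — the same $Z$ as in Lemma \ref{maindu1}, treated separately with $f\in L^\infty$ in the companion $L^\infty$ version of the lemma. What is left after subtracting $Z$ is controlled pointwise by the corresponding right-hand side of Lemma \ref{estimacionesdu}, namely terms of the same $b^2t$-times-kernel shape as above plus the single term $\text{KERNEL}_b$ with $\|\text{KERNEL}_b\|_{L^1}\le Cbt$. The former are treated exactly as in the case $j\le2$; for the latter, the bound $\|\text{KERNEL}_b\|_{L^1}\le Cbt$ (rather than $Cb^2t$) comes precisely from one borderline application of Lemma \ref{A2} at the divergent exponent, so inserting the extra weight $|\sin(\al'/2)|$ moves that application to the logarithmic borderline and yields $\|\tfrac1b|\sin(\al'/2)|\,\text{KERNEL}_b\|_{L^1}\le Cbt\log(\tfrac1b)$; Minkowski's inequality then finishes.

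The main obstacle is purely one of bookkeeping. Three derivatives of $\tfrac1t\log(A'[u+tg]/A'[u])$ produce a large number of terms, and one must check, term by term, that after the division by $b$ and the gain of a single factor $|\sin(\al'/2)|$ the exponents fed into Lemma \ref{A2} never cause a genuine $b^{-1}$ blow-up, only at worst a $\log(\tfrac1b)$. This rests on the two facts stressed above: that every term appearing in Lemma \ref{estimacionesdu} carries a clean factor $b^2$, so dividing by $b$ is harmless, and that one factor $|\sin(\al'/2)|$ is precisely enough to move the critical kernels from the divergent regime $m<2l-1$ of Lemma \ref{A2} to the logarithmic borderline $m=2l-1$. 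A secondary, routine point is to verify that the subtracted $Z$ matches the non-$L^2$-integrable part of the third-derivative expansion term by term, so that the remainder genuinely falls under the corresponding estimate of Lemma \ref{estimacionesdu}.
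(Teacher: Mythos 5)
Your proposal is correct and follows exactly the paper's own (very terse) route: the paper's proof consists of the single sentence ``the proof is as in Lemma~\ref{maindu1} but using the extra $\sin(\alpha'/2)$,'' and Lemma~\ref{maindu1}'s proof in turn cites only Lemma~\ref{estimacionesdu}, Lemma~\ref{A2} and Minkowski. You have correctly unpacked what this means: every pointwise bound in Lemma~\ref{estimacionesdu} carries a clean $b^2$ prefactor so that dividing by $b$ still leaves $bt$, while the extra factor $|\sin(\alpha'/2)|$ raises the exponent $m$ in the kernels by one, turning the critical applications of Lemma~\ref{A2} (which for Lemma~\ref{maindu1} sat at $m+1-2l=-1$) into the borderline case $m=2l-1$, producing the $\log(1/b)$. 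For $j=3$ you also correctly identify that $Z$ is the same expression subtracted in Lemma~\ref{maindu1} (deferred to the $L^\infty$ companion Lemma~\ref{maindu2bis}), and that the remainder, including $\mathrm{KERNEL}_b$, is handled by the same mechanism; your remark that the $Cbt$ (rather than $Cb^2 t$) $L^1$ bound on $\mathrm{KERNEL}_b$ arises from one borderline application of Lemma~\ref{A2}, so that the extra $|\sin(\alpha'/2)|$ moves it to the logarithmic regime, is exactly the implicit assumption the paper also makes when it says the extra sine factor suffices.
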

\begin{proof}
The proof is as in lemma \ref{maindu1} but using the extra $\sin\p{\frac{\alpha'}{2}}$ of this term.
\end{proof}

\begin{lemma}\label{maindu2bis} Let $g\in H^{4,3}$ with $||g||_{H^{4,3}}=1$ and  $f(\alpha,s)\in L^\infty(\Omega)$ with supp$(f)\subset \T\times [-1,1]$. Then the following estimates hold:
\begin{align*}
\ab{\ab{\inti\intpi Z(\alpha,\alpha',s,s')\left|\sin\p{\frac{\al'}{2}}\right| f'd\alpha'ds'}}_{L^2}\leq Cbt\log\left(\frac{1}{b}\right)
\end{align*}
where
\begin{align*}
&Z(\alpha,\alpha',s,s')=\frac{1}{t}\frac{\p{\pa^3 \Ag-\pa^3\A}\p{\Ag-\A}}{\Ag\A}-\frac{1}{t}\frac{\pa^3\Ag-\pa^3\A-t\pa^3\pa_u A'[g]}{\A}\\
&+\frac{\pa^3\A}{t}\frac{\p{\A-\Ag}^2}{\A^2\Ag}.
\end{align*}

\end{lemma}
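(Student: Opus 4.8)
The plan is to follow the proof of Lemma \ref{maindu1bis} almost verbatim; the only new feature is the weight $\ms$ in the integrand, which will be absorbed at the moment of invoking Lemma \ref{A2} (the same device used in the proof of Lemma \ref{maindu2}). First I would record that $Z=Z_1+Z_2+Z_3$, with
\begin{align*}
Z_1=\frac{1}{t}\frac{\p{\pa^3 \Ag-\pa^3\A}\p{\Ag-\A}}{\Ag\A},\quad
Z_2=-\frac{1}{t}\frac{\pa^3\Ag-\pa^3\A-t\pa^3\pa_u A'[g]}{\A},\quad
Z_3=\frac{\pa^3\A}{t}\frac{\p{\A-\Ag}^2}{\A^2\Ag},
\end{align*}
these being exactly the summands of $\pa^3\p{\frac1t\log(\Ag/\A)-\frac{\pa_u A'[g]}{\A}}$ that carry a full third derivative of $A'$; up to the pieces already controlled there, they are the terms $J_{111}$, $J^{*}_{211}$ and $J_{312}$ appearing in the proof of Lemma \ref{estimacionesdu}--4.

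Next I would produce, for each $Z_i$, a pointwise estimate of the schematic shape $|Z_i|\le C\,b\,t\,K_i(\al',s')\,h_i$, where the explicit factor $b$ comes from the $\pa^3A'$–expansions, $K_i(\al',s')$ is a ratio of powers of $\ms$ and $|s'|$ over a power of $\D$ — precisely the kind of kernel treated in Lemma \ref{A2} — and $h_i$ is either $1$ or one of $|\pa^3 u|$, $|\pa^3 u'|$, $|\pa^3 g|$, $|\pa^3 g'|$, $|\pa^3(uu')|$, $|\pa^3(gg')|$, $|\pa^3(ug'+u'g)|$. This step is a direct expansion of $\pa^3\Ag-\pa^3\A$ as in the proof of Lemma \ref{estimacionesdu}--4, using Lemma \ref{popurri} for the differences $\Ag-\A$, $\pa\Ag-\pa\A$ and $\A-\Ag+t\pa_uA'[g]$, and Lemma \ref{A1} for $A'\ge c(\sinc{\al'}+b^2s'^2)$ and for $\pa A',\pa^2A',\pa^3A'$. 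Since $u-s\in V^\delta$ and $\|g\|_{H^{4,3}}=1$, every factor $h_i$ lies in $L^2(\Omega)$ with norm bounded uniformly for $\delta\le\delta_0$.

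Then I would multiply by $\ms$ and integrate. It remains to control the $L^1(\Omega)$ norm of the weighted kernels $\ms\,K_i$: these are of the form in Lemma \ref{A2}, and, checking term by term that the admissibility conditions $l\ge1$, $k,m\ge0$, $k+m+1-2l\ge0$ hold, the effect of the factor $\ms$ is to raise the relevant numerator exponent to a value $\ge 2l-1$, so that $\|\ms\,K_i\|_{L^1(\Omega)}\le C\log(1/b)$. To pass to the $L^2$ bound on $\inti\intpi Z\,\ms\,f'd\al'ds'$ I would split according to which variables carry $h_i$: if $h_i=h_i(\al,s)$ it comes out of the $d\al'ds'$ integral, the remainder is $\le C\,b\,t\,\|f\|_{L^\infty}\|\ms K_i\|_{L^1}$ pointwise, and taking $L^2$ over the bounded set $\Omega$ with $\|h_i\|_{L^2}\lesssim1$ closes it; if $h_i=h_i(\al-\al',s-s')$ I would apply Minkowski's integral inequality, moving $\|\cdot\|_{L^2_{\al,s}}$ inside the $d\al'ds'$ integral, bounding $\|h_i(\al-\al',s-s')\|_{L^2_{\al,s}}=\|h_i\|_{L^2}\lesssim1$ and $|f(\al',s')|\le\|f\|_{L^\infty}$, and recognizing $\|\ms K_i\|_{L^1}$ once more. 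Summing over $i=1,2,3$ yields $\|\inti\intpi Z\,\ms\,f'd\al'ds'\|_{L^2}\le C\,b\,t\,\log(1/b)$.

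The only genuinely laborious part will be the bookkeeping: tracking, for each summand produced by Lemma \ref{estimacionesdu}--4, the exact triple $(k,m,l)$, verifying that after inserting $\ms$ it lands in the range of Lemma \ref{A2} where at most a $\log(1/b)$ loss occurs, and keeping the $L^2$ factor (third derivatives of primed quantities, handled by Minkowski) cleanly separated from the $L^\infty$ factor ($f$, and third derivatives of un-primed quantities, which are pulled out of the inner integral). No estimate beyond Lemmas \ref{A1}, \ref{A2}, \ref{popurri} and \ref{estimacionesdu} will be needed.
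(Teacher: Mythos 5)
Your proposal is correct and takes essentially the same route as the paper: decompose $Z=Z_1+Z_2+Z_3$, invoke the pointwise estimates from Lemmas \ref{estimacionesdu}, \ref{popurri} and \ref{A1}, bound the resulting kernels via Lemma \ref{A2}, and close with Minkowski's inequality — which is exactly what the paper's one-line proof ("as in Lemma \ref{maindu1bis}, using the extra $\sin(\al'/2)$'') compresses.

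Two small remarks. First, the phrasing "the effect of $\ms$ is to raise the relevant numerator exponent to a value $\ge 2l-1$'' slightly misdescribes what happens: the kernels $K_i$ already satisfy the Lemma \ref{A2} admissibility conditions (with compensating powers of $b$ in front) before multiplication by $\ms$; that is why Lemma \ref{maindu1bis} closes with the bound $Cbt$. Multiplying by $\ms$ increases $m$ by one and therefore can only improve or preserve each $\|K_i\|_{L^1}$; you are not being rescued from a failing case. Second, and for the same reason, the extra factor $\ms$ is in fact unnecessary for the stated conclusion: since $|\sin(\al'/2)|\le 1$ and the proof of Lemma \ref{maindu1bis} rests on pointwise bounds $|Z_i|\lesssim bt\,K_i\,h_i$, the estimate $\big\|\inti\intpi Z\,\ms\,f'\,d\al'\,ds'\big\|_{L^2}\le Cbt\le Cbt\log(1/b)$ follows directly from Lemma \ref{maindu1bis}. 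So your (and the paper's) argument works, but the $\log(1/b)$ loss is never actually incurred; it is just written to match the rate in the companion Lemma \ref{maindu2}, where the genuine $1/b$ prefactor makes the $\ms$ gain and the logarithmic loss both necessary.
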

\begin{proof}
The proof is as in lemma \ref{maindu1bis} but using the extra $\sin\p{\frac{\alpha'}{2}}$ of this term.
\end{proof}

\begin{lemma}\label{paradifH1} The following estimate holds

\begin{align*}
\ab{\ab{\inti\intpi F_s(s-s')\p{-\frac{2}{a^3}\frac{1}{t}\log\p{\frac{\Ag}{\A}}+\frac{2}{a^3}\frac{\pa_u \A}{\A}}\left|\sin(\alpha')\right|d\al' ds'}}_{H^3}\leq C a\log\p{\frac{1}{b}}
\end{align*}
\end{lemma}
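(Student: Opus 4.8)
The plan is to recognise that, up to the constant prefactor $-2/a^{3}$ and the fixed $L^{\infty}$ weight $F_{s}(s-s')\left|\sin(\alpha')\right|$, the function inside the $H^{3}$–norm is nothing but the directional finite–difference error $\frac{1}{t}\log\p{\frac{\Ag}{\A}}-\frac{\pa_{u}A'[g]}{\A}$, together with its first three spatial derivatives. The pointwise size of this quantity and of $\pa$, $\pa^{2}$, $\pa^{3}$ applied to it has already been recorded in Lemma \ref{estimacionesdu}, parts 1--4. So the whole statement should follow by inserting those pointwise bounds into an $L^{2}$ estimate: bound $F_{s}$ in $L^{\infty}$, bound $\left|\sin(\alpha')\right|\le 2\ms$, apply Minkowski's inequality to pull the $L^{2}$–norm in $(\alpha,s)$ inside the $(\alpha',s')$–integral, and close each remaining kernel integral with Lemma \ref{A2}. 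The delicate point, which is where the precise form of the right–hand side comes from, is the bookkeeping of powers of $a$: since $b=a^{2}$, the prefactor $a^{-3}=(ab)^{-1}$ is badly singular as $a\to 0$.

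Concretely I would proceed as follows. First record the exact expansion $\Ag=\A+t\,\pa_{u}A'[g]+t^{2}b^{2}\p{4gg'\sinc{\alpha'}+(g-g')^{2}}$ — the identity used in the proof of Lemma \ref{popurri} — which shows that $\frac{1}{t}\log\p{\frac{\Ag}{\A}}-\frac{\pa_{u}A'[g]}{\A}$ is a genuine second–order remainder and hence, by Lemma \ref{popurri}-2 and Lemma \ref{A1}-1, carries two extra powers of $b=a^{2}$ available to absorb $a^{-3}$. Then, for the $L^{2}$ part of the $H^{3}$–norm I would quote Lemma \ref{estimacionesdu}-1, and for the $\pa$, $\pa^{2}$, $\pa^{3}$ parts Lemma \ref{estimacionesdu}-2,3,4; in the $\pa^{3}$ case the harmless piece $\mathrm{KERNEL}_{b}$ has $L^{1}$–norm $\le Cbt$ and is handled directly by Minkowski's inequality against the $L^{2}$ weight, so only the explicit rational kernels survive. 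Each such kernel is of the form (power of $b$)$\,\times t\times$(quotient of the type governed by Lemma \ref{A2}); after multiplying by $a^{-3}=(ab)^{-1}$, substituting $b=a^{2}$, and expanding every numerator with the help of Lemmas \ref{A1} and \ref{popurri}, one is left with integrals $\int\int\ms^{m}|s'|^{k}\p{\D}^{-l}$ in the admissible range $k+m+1-2l\ge 0$ of Lemma \ref{A2}, the critical cases $m=2l-1$ producing the factor $\log\p{\tfrac1b}$. The extra $\ms$ only improves the integrals, at worst turning a convergent one into a $\log$–divergent one, which is exactly consistent with the $\log(1/b)$ in the statement, and the power of $t$ that comes out is what yields the convergence to $0$ as $t\to 0$ used in the proof of Lemma \ref{dudaG}.

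I expect the main obstacle to be precisely this $a$–accounting in the third–derivative term $\pa^{3}\p{\frac{1}{t}\log\p{\frac{\Ag}{\A}}-\frac{\pa_{u}A'[g]}{\A}}$. There the denominators reach $\p{\D}^{4}$, which can be as small as $b^{8}s'^{8}$, so the margin left after dividing by $a^{3}$ is thin, and one has to verify term by term — among the differences $\pa^{3}\Ag-\pa^{3}\A$, their products with lower derivatives of $\A$ and with $\ms+|s'|$, and the $\mathrm{KERNEL}_{b}$ remainder — that each numerator still leaves enough of the $a$–budget and that the surviving $(\alpha',s')$–integral lands in the admissible range of Lemma \ref{A2}. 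Apart from this (somewhat lengthy but routine) verification, the argument is a direct transcription of the scheme already used for Lemmas \ref{maindu1}--\ref{maindu2bis}, carrying along the one extra factor $\ms$.
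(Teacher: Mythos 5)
Your proposal is correct and follows essentially the same route as the paper's (very terse) proof, which simply says the estimate is obtained as in Lemmas \ref{maindu2} and \ref{maindu2bis} after dividing by an extra factor of $a$; your identification of the integrand as $\tfrac{2}{a}$ times the one in Lemma \ref{maindu2} (via $a^{-3}=a^{-1}b^{-1}$), the bound $|\sin(\alpha')|\le 2\ms$ to convert the weight, and the closing scheme of Lemma \ref{estimacionesdu} $\to$ Minkowski $\to$ Lemma \ref{A2} are precisely the steps the paper invokes implicitly. The only discrepancy is that your $a$-budget accounting naturally produces $C\,a\,t\,\log(1/b)$ (which is what the subsequent limit $t\to 0$ in Lemma \ref{dudaG} actually requires), whereas the stated right-hand side in Lemma \ref{paradifH1} reads $Ca\log(1/b)$ without the $t$; this appears to be a typo in the paper rather than an error in your argument.
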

\begin{proof}
The proof is similar to that one in  lemmas \ref{maindu2} and \ref{maindu2bis} but you have to divide between $a$, because we are taking a derivative in $a$.
\end{proof}

\begin{lemma}\label{paradifH21andH22} The following estimates hold

\begin{align*}
\ab{\ab{\inti\intpi F_\rho(s-s')\frac{1}{a^2}\p{\frac{1}{t}\frac{\pa_a \Ag-\pa_a \A-t\pa_u\pa_a A[g]}{\Ag}}\left|\sin(\alpha')\right|d\alpha ds'}}\\
+\ab{\ab{\inti\intpi F_\rho(s-s')\frac{1}{a^2}\frac{\pa_u\pa_a A'[g]}{\Ag \A}\p{\A-\Ag}\left|\sin(\alpha')\right|d\al'ds'}}\leq Ca\log\p{\frac{1}{b}}
\end{align*}

\begin{align*}
&\ab{\ab{\inti\intpi F_\rho(s-s') \frac{1}{a^2}\frac{\pa_a A'[g]}{tA'[u]\Ag}\p{\A-\Ag+t\pa_u A'[g]}\left|\sin(\alpha')\right|d\alpha' ds'}}\\
&+\ab{\ab{\inti\intpi F_\rho(s-s') \frac{1}{a^2}\frac{\pa_a A[g]\pa_u A[g]}{\Ag \A^2}\p{\Ag-\A}\left|\sin(\alpha')\right| d\al' ds'}}\leq C a\log\p{\frac{1}{b}}.
\end{align*}

\end{lemma}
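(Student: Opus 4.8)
The plan is to follow the scheme already used for Lemmas \ref{maindu1}--\ref{maindu2bis} and \ref{paradifH1}: reduce each of the four integrals to an explicit pointwise kernel in $(\al',s')$, bound that kernel by the estimates on $A'$, $\pa_aA'$, $\pa_uA'$, $\pa_u\pa_aA'$ and their $\pa$--derivatives gathered in Lemmas \ref{A1}, \ref{dA1}, \ref{dA4} and \ref{popurri}, and then integrate with Minkowski's inequality together with the master integral estimate Lemma \ref{A2}. The structural fact making this possible is that $A'[u+tg]$ is a quadratic polynomial in $t$ (cf. the proof of Lemma \ref{popurri}), so the second--order Taylor remainders occurring here are \emph{exact}:
\begin{align*}
A'[u]-A'[u+tg]+t\,(\pa_uA')[g]&=-t^2b^2\p{4gg'\sinc{\al'}+(g-g')^2},\\
\pa_aA'[u+tg]-\pa_aA'[u]-t\,(\pa_u\pa_aA')[g]&=4a^3t^2\p{4gg'\sinc{\al'}+(g-g')^2}.
\end{align*}
Using $\|g\|_{H^{4,3}}=1$ and the embedding $H^{4,3}\subset C^2$, the first of these is $\le Cb^2t^2(\sinc{\al'}+s'^2)$ and the second $\le C|a|^3t^2(\sinc{\al'}+s'^2)$; the analogous bounds for their $\pa^j$--derivatives, $j\le3$, follow exactly as in Lemma \ref{popurri}--7,8.

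For the first asserted inequality I would treat the two integrals separately. In the first I would substitute the remainder identity, use $\Ag\ge c(\sinc{\al'}+b^2s'^2)$ from Lemma \ref{popurri}--1, and cancel the factor $\tfrac1{a^2t}$; what remains is $C|a|t$ times the kernel $\frac{(\sinc{\al'}+s'^2)\,\ms}{\sinc{\al'}+b^2s'^2}$, which Lemma \ref{A2} integrates to $C\log\p{\tfrac1b}$ (the borderline case $m=1=2l-1$, $k=2$, $l=1$); the $\pa^j$ versions for $j\le3$ bring in the more singular kernels of Lemmas \ref{A1}--3,4 and \ref{popurri}--7,8 and are controlled the same way. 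In the second integral I would use $|(\pa_u\pa_aA')[g]|\le C|a|(\sinc{\al'}+bs'^2)$ (differentiate $\pa_aA'$ and argue as in Lemma \ref{dA1}--1), $|\A-\Ag|\le Cbt(\sinc{\al'}+bs'^2)$ from Lemma \ref{popurri}--3, and the lower bounds on $\A$ and $\Ag$; after the $\tfrac1{a^2}$ the powers of $b$ recombine so that only $C|a|tb$ times $\frac{(\sinc{\al'}+bs'^2)^2}{(\D)^2}\ms$ survives, which Lemma \ref{A2} bounds by a constant. The second asserted inequality is identical in form, with $(\pa_uA')[g]$ (bounded by Lemma \ref{popurri}--2) replacing $(\pa_u\pa_aA')[g]$, with $(\pa_aA')[g]$ estimated by Lemma \ref{dA1}--1, and with the exact $O(t^2)$ remainder $\A-\Ag+t(\pa_uA')[g]$ in the role of the numerator of the first integral; as before, the last step is Minkowski plus Lemma \ref{A2}.

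I expect the main obstacle to be bookkeeping rather than analysis: after differentiating up to three times and expanding the products and quotients by the Leibniz and quotient rules, one obtains a long but finite list of terms, and for each of them one must verify that the power of $b$ carried by the numerator is at least the one demanded by the denominator, so that Lemma \ref{A2} applies with a non--negative exponent and, at the borderline indices, yields at worst $\log\p{\tfrac1b}$. This is the same computation already performed for $H_{21}$ and $H_{22}$ in Lemma \ref{dudaG} and for the analogous terms in Lemmas \ref{maindu1}, \ref{maindu2} and \ref{paradifH1}, so I do not expect any new phenomenon; the extra factor $t$ (which multiplies every term, including the one responsible for the $\log\p{\tfrac1b}$) is what then yields the limits $H_{21}\to0$ and $H_{22}\to0$ used in Lemma \ref{dudaG}.
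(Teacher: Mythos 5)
Your approach is essentially the paper's own: the paper simply points back to Lemmas \ref{maindu2} and \ref{maindu2bis} ``but dividing by $a$,'' which you carry out explicitly by exploiting the quadratic dependence of $A'[u+tg]$ on $t$ to write the $O(t^2)$ remainders exactly, bounding kernels pointwise via Lemmas \ref{A1}, \ref{dA1}, \ref{popurri}, and then applying Minkowski with Lemma \ref{A2}. Two small remarks: in the second integral of the first inequality the prefactor should be $C|a|t$ rather than $C|a|tb$ (the powers $\tfrac{1}{a^2}\cdot|a|\cdot bt=|a|t$), though this does not affect the conclusion since the remaining kernel integral is still $O(\log\tfrac1b)$; and you correctly observe that the bound actually carries an extra factor of $t$, which --- although absent from the stated $Ca\log(\tfrac1b)$ --- is what is genuinely used when this lemma is invoked to show $H_{21},H_{22}\to 0$ as $t\to 0$ in the proof of Lemma \ref{dudaG}.
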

\begin{proof}
The proof is similar to that one of lemmas \ref{maindu2} and \ref{maindu2bis} but dividing by $a$ because we lose a derivative in $a$.
\end{proof}

\begin{lemma}\label{paradifelultimo} The following estimates hold
\begin{align*}
&\ab{\ab{\inti\intpi F_s(s-s')\frac{1}{t}\p{\frac{\pa_a \Ag-\pa_a \A -t \pa_u\pa_a \A}{A[u+tg]}}\cos(\alpha')u'_\alpha d\alpha'ds'}}_{H^3}\\
&+\ab{\ab{ \inti\intpi F_s(s-s')\pa_u\pa_a \Ag \p{\frac{1}{\Ag}-\frac{1}{\A}}\cos(\alpha')u'_\alpha d\alpha'ds'}}_{H^3}\leq C a\log\p{\frac{1}{b}}
\end{align*}
\begin{align*}
&\ab{\ab{\inti\intpi f_s(s-s')\pa_a \A \frac{1}{t}\frac{\A-\Ag+t\pa_u \A}{\Ag\A}\cos(\alpha')u'_\alpha d\alpha'ds'}}\\
&+\ab{\ab{\inti\intpi F_s(s-s')\frac{\pa_a\A\pa_u \A}{\A} \p{\frac{1}{\Ag}-\frac{1}{\A}}\cos(\alpha')u'_\alpha d\alpha' ds'}}\leq C a \log\p{\frac{1}{b}}.
\end{align*}
\end{lemma}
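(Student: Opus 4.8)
The plan is to reproduce, for these four integrals, the scheme of the proofs of Lemmas \ref{maindu1}, \ref{maindu1bis}, \ref{maindu2} and \ref{maindu2bis}: apply $\pa^3$ — and the mixed derivatives $\pa_\alpha^2\pa_s$, $\pa_\alpha\pa_s^2$ — to each integrand, expand by Leibniz into a finite sum of terms, bound each resulting kernel pointwise by an expression of the form $C|a|\,|\sin(\al'/2)|^m|s'|^k\p{\D}^{-l}$ with $k+m+1-2l\geq 0$, multiplied by at most one factor carrying an $L^2$ (or, for a few terms, $L^\infty$) norm, using Lemmas \ref{A1}, \ref{dA1} and \ref{popurri}, and then invoke Minkowski's inequality together with Lemma \ref{A2}. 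The only structural difference with respect to Lemmas \ref{maindu2} and \ref{maindu2bis} is that one factor $b=a^2$ in the final bound becomes a single factor $a$: indeed $\pa_a A'$, and hence $\pa_u\pa_a A'[g]$ and $\pa\pa_a A'$, carry the leading power $a$ rather than $a^2$ (recall $\pa_a A'=8a(u+u')\sinc{\al'}+4a^3\p{uu'\sinc{\al'}+(u-u')^2}$, i.e. $\pa_a=2a\,\pa_b$), so every ratio occurring here is $O(a)$ instead of $O(b)$ — this is precisely the ``dividing by $a$ because we lose a derivative in $a$'' remark attached to Lemmas \ref{paradifH1} and \ref{paradifH21andH22}.

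For the first pair of terms I would start from the Taylor remainder, which by differentiating part 6 of Lemma \ref{popurri} in $a$ (using $b=a^2$) equals the explicit expression
\begin{align*}
\pa_a\Ag-\pa_a\A-t\,\pa_u\pa_a A'[u][g]=4a^3t^2\p{4gg'\sinc{\al'}+(g-g')^2}.
\end{align*}
Its $\pa^j$ ($j=0,1,2,3$) are $4a^3t^2$ times $\pa^j\p{4gg'\sinc{\al'}+(g-g')^2}$, which for $j\le2$ are $O\p{\sinc{\al'}+s'^2+\ms+|s'|}$ pointwise (using $\|g\|_{H^{4,3}}=1$ and $(g-g')^2\le C(\sinc{\al'}+s'^2)$, exactly as in parts 7--8 of Lemma \ref{popurri}) and for $j=3$ keep $\pa^3g'$ as an $L^2$ factor; dividing by $A'[u+tg]\ge c\p{\D}$ (Lemma \ref{popurri}-1) and by $t$ puts the first integral into the announced form, with a spare factor $t$. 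The second term of the first pair uses $|\pa_u\pa_a A'[g]|\le C|a|\p{\sinc{\al'}+bs'^2}$ (as in Lemma \ref{dA1}-1) together with $\frac{1}{\Ag}-\frac{1}{\A}=\frac{\A-\Ag}{\Ag\A}$ and $|\A-\Ag|\le Cbt\p{\D}$ (Lemma \ref{popurri}-3). For the second pair the relevant cancellations are $\A-\Ag+t\,\pa_u A'[u][g]=-t^2b^2\p{4gg'\sinc{\al'}+(g-g')^2}$ (Lemma \ref{popurri}-6) and again $|\A-\Ag|\le Cbt\p{\D}$, combined with the bounds on $\pa_a A'$ and $\pa_u A'$ from Lemmas \ref{dA1} and \ref{popurri}-2.

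Having the pointwise estimates, I would split each integral — as in the passage from Lemma \ref{A4} to Lemma \ref{A5} — into the terms whose surviving $L^2$ factor is $F_s(s-s')$, $u'_\alpha$ or $\pa^3u'_\alpha$ (handled with $f\in L^2$) and the few terms where the singular kernel must be paired with an $L^\infty$ factor; Minkowski's inequality then pulls the $L^2$ factor out of the $\al',s'$ integration, leaving $\int_{-\pi}^{\pi}\int_{-1}^{1}|\sin(\al'/2)|^m|s'|^k\p{\D}^{-l}d\al'ds'$, which Lemma \ref{A2} bounds by $C$, $C\log\p{\frac{1}{b}}$ or $Cb^{m+1-2l}$ according to the sign of $m-(2l-1)$. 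Collecting the powers of $a$, every term contributes exactly one power of $a$ — the worst case being $\int s'^2\p{\D}^{-1}d\al'ds'\lesssim b^{-1}=a^{-2}$ multiplied by the $a^3$ carried by the remainders — and keeping the worst logarithm gives the bound $Ca\log\p{\frac{1}{b}}$, the spare power of $t$ forcing both difference quotients to $0$ as $t\to0$, which is what Lemma \ref{dudaG} needs. The main obstacle is organizational rather than conceptual: triple differentiation of products of four or five factors generates a long list of terms, and for each one must simultaneously check that the denominator exponent $l$ stays in the admissible range of Lemma \ref{A2}, that the net power of $a$ is $1$ (not $3$ or $-1$), and that only one $L^2$- (or $L^\infty$-) controlled factor remains after the pointwise bound — the same bookkeeping that dominates the proofs of Lemmas \ref{estimacionesdu} and \ref{B17}.
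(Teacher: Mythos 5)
Your proposal correctly reproduces the paper's approach: the paper's own proof of this lemma is a one-sentence pointer to Lemmas \ref{maindu1} and \ref{maindu1bis} with the remark that one ``divides by $a$,'' and your sketch spells out exactly the machinery behind that remark — the same Leibniz expansion of $\pa^3$ and the mixed derivatives, the same Taylor-remainder cancellations descending from Lemma \ref{popurri}, the same $L^2$/$L^\infty$ splitting as in the passage from Lemma \ref{maindu1} to Lemma \ref{maindu1bis}, and the same final appeal to Minkowski's inequality and Lemma \ref{A2}. In particular, your observations that $\pa_a = 2a\,\pa_b$ and that $\pa_a A'[u+tg]-\pa_a A'[u]-t\,\pa_u\pa_a A'[g]=4a^3 t^2\p{4gg'\sinc{\al'}+(g-g')^2}$ (the $a$-derivative of Lemma \ref{popurri}-6) correctly isolate why a single power of $a$ replaces the $b$ of the earlier lemmas in the final bound.
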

\begin{proof}
The proof of this lemma is similar to the proofs of lemmas \ref{maindu1} and \ref{maindu1bis} but dividing by $a$ since we are taking a derivative in this parameter.
\end{proof}

\textbf{Conflict of Interest:} The authors declare that they have no conflict of interest.

 \bibliographystyle{abbrv}
 \bibliography{references}

\begin{tabular}{l}
\textbf{Angel Castro} \\
  {\small Departamento de Matem\'aticas} \\
 {\small Universidad Aut\'onoma de Madrid} \\
 {\small Instituto de Ciencias Matem\'aticas-CSIC}\\
 {\small C/ Nicolas Cabrera, 13-15, 28049 Madrid, Spain} \\
  {\small Email: angel\_castro@icmat.es} \\
\\
\textbf{Diego C\'ordoba} \\
  {\small Instituto de Ciencias Matem\'aticas} \\
 {\small Consejo Superior de Investigaciones Cient\'ificas} \\
 {\small C/ Nicolas Cabrera, 13-15, 28049 Madrid, Spain} \\
  {\small Email: dcg@icmat.es} \\
\\
\textbf{Javier G\'omez-Serrano} \\
{\small Department of Mathematics} \\
{\small Princeton University}\\
{\small 610 Fine Hall, Washington Rd,}\\
{\small Princeton, NJ 08544, USA}\\
 {\small Email: jg27@math.princeton.edu} \\
  \\

\end{tabular}

\end{document}